\documentclass{amsart}
\usepackage{amssymb,amsthm}
\usepackage{hyperref}
\usepackage{thmtools}
\usepackage{thm-restate}
\usepackage{todonotes}
\usepackage[T1]{fontenc}
\usepackage[utf8]{inputenc}
\usepackage{lmodern}
\usepackage{thmtools}
\usepackage[capitalize]{cleveref}
\usepackage{xspace}

\theoremstyle{plain}
\newtheorem{thm}{Theorem}[section]

\newtheorem{prop}[thm]{Proposition}
\newtheorem{lem}[thm]{Lemma}
\newtheorem{cor}[thm]{Corollary}

\theoremstyle{definition}
\newtheorem{defn}[thm]{Definition}

\theoremstyle{remark}

\numberwithin{equation}{section}

\renewcommand\epsilon{\varepsilon}

\DeclareMathOperator\Mod{{\mathrm Mod}}
\DeclareMathOperator\Th{{\mathrm Th}}

\crefname{thm}{Theorem}{Theorems}
\def\M{{\mathcal M}}
\newcommand{\mc}[1]{\mathcal{#1}}
\newcommand{\Inv}[2]{{#2}^{(-#1)}}

\renewcommand{\phi}{\varphi}
\newcommand{\Pinf}[1]{\Pi^{\mathrm{in}}_{#1}}
\newcommand{\Pinc}[1]{\Pi^{\mathrm{c}}_{#1}}
\newcommand{\Sinf}[1]{\Sigma^{\mathrm{in}}_{#1}}
\newcommand{\Sinc}[1]{\Sigma^{\mathrm{c}}_{#1}}
\mathchardef\mhyph="2D
\providecommand{\dSinf}[1]{d\mhyph\Sigma^{\mathrm{in}}_{#1}}
\newcommand{\dSinc}[1]{d\mhyph\Sigma^{\mathrm{c}}_{#1}}
\newcommand{\Wwedge}{%
  \mathop{
    \mathchoice{\bigwedge\mkern-15mu\bigwedge}
               {\bigwedge\mkern-12.5mu\bigwedge}
               {\bigwedge\mkern-12.5mu\bigwedge}
               {\bigwedge\mkern-11mu\bigwedge}
    }
}

\newcommand{\Vvee}{%
  \mathop{
    \mathchoice{\bigvee\mkern-15mu\bigvee}
               {\bigvee\mkern-12.5mu\bigvee}
               {\bigvee\mkern-12.5mu\bigvee}
               {\bigvee\mkern-11mu\bigvee}
    }
}

\newcommand{\TBPT}{{\hyperref[thm:Inf4.3]{The Belligerent Pairs Theorem}}\xspace}

\begin{document}

\title{Infinite Belligerent Jump Inversion and Computable Scott Analysis}
\author{Uri Andrews}
\author{David Gonzalez} 
\author{Hongyu Zhu}

\address[Gonzalez]{University of Notre Dame,
Department of Mathematics,
Hurley Hall, 255 Hurley, Notre Dame, IN 46556,
  USA}
\address[Andrews, Zhu]{University of Wisconsin-Madison,
Department of Mathematics,
480 Lincoln Drive, Madison, WI, 53706, USA} 

\email[Andrews]{andrews@math.wisc.edu}
\email[Gonzalez]{dgonza42@nd.edu}
\email[Zhu]{hongyu@math.wisc.edu}
	\maketitle

\begin{abstract}

Scott analysis provides two fundamental tools for studying countable structures: Scott sentences, which characterize structures up to isomorphism, and back-and-forth relations, which measure structural similarity. A recurring phenomenon in computable structure theory is that many notions naturally associated with level $\alpha$ of Scott analysis have effective complexity at approximately $2\alpha$ jumps. This discrepancy appears both in the complexity of the back-and-forth relations and in the passage from arbitrary infinitary formulas to computable infinitary formulas.

We develop two new coding tools, the Belligerent Pairs Theorem and Belligerent Jump Inversion Theorem, which allow information at complexity level $2\alpha$ to be reflected in computable structures whose distinguishing features already appear at level $\alpha$. These results extend Harrison-Trainor's finite unfriendly jump inversion uniformly throughout the computable ordinals.

As applications, we determine the optimal interaction between syntactic complexity and oracle complexity for computable Scott sentences and for formulas distinguishing computable structures. For every computable infinite ordinal $\alpha$, we determine the oracle needed to compute a $\Pinf\alpha$ Scott sentence for a computable structure which has a $\Pinf\alpha$ Scott sentence. Any computable structure with a $\Pinf\alpha$ Scott sentence has a computable $\Pinf{2\alpha}$ Scott sentence. We show that both of these bounds are sharp. We prove analogous optimal results for formulas witnessing failure of the $\alpha$-back-and-forth relation. We also obtain further applications, including a resolution of a question of Chen, Gonzalez, and Harrison-Trainor concerning the complexity of back-and-forth classes.
\end{abstract}

{\small\noindent\textbf{\textit{Keywords-}} Scott rank, Back-and-forth relations, Scott complexity, Jump inversion}

\section{Introduction}

A fundamental aspect of mathematical practice is the ability to communicate and relate abstract concepts precisely.
To collaborate effectively, mathematicians must be able to unambiguously describe complex mathematical structures to one another, ensuring they are studying the same object.
Equally important is the ability to compare different structures: identifying underlying differences or determining when two seemingly distinct objects are essentially the same for a given application.

In infinitary logic (a type of logic allowing infinite conjunctions and disjunctions), these intuitive practices are rigorously formalized through Scott analysis.
A \textit{Scott sentence} acts as an absolute structural description, capturing a countable model so precisely that it uniquely identifies the structure up to isomorphism.
Meanwhile, \textit{back-and-forth relations} formalize the process of comparison.
By engaging in a stepwise matching game between the elements of two models, these relations provide a precise mathematical measure of their structural similarity, isolating their overlaps and differences step-by-step.

For example, the first mathematical structure most children are introduced to, the natural numbers, is described to us by a sentence of infinitary logic. In addition to defining the operations of addition and multiplication, to describe $\mathbb{N}$, we must say that all natural numbers are reached by counting or, written in infinitary logic,
\[\forall x(x=0\lor x=1\lor x=1+1\lor\cdots).\]
Later in life, we may learn about number fields, and observe that every root of a $\mathbb{Q}$-polynomial in $\mathbb{Q}[\sqrt2]$ is also in $\mathbb{Q}[\zeta_8]$ (but not vice-versa).
Logicians express this fact in terms of the back-and-forth relations $\mathbb{Q}[\sqrt2]\geq_1\mathbb{Q}[\zeta_8]$ and $\mathbb{Q}[\sqrt2]\not\leq_1\mathbb{Q}[\zeta_8]$.

Communication necessarily happens in the realm of the finite, but our descriptions allow for infinite formulas.
In the above examples, this is not an issue.
The ellipses in the formula describing the natural numbers have an understood meaning that allows for a finite description of the infinite object.
The difference between $\mathbb{Q}[\sqrt2]$ and $\mathbb{Q}[\zeta_8]$ is even simpler to communicate; it is the existence of a root for $x^8-1$.
A robust way to capture the finiteness in communication is to insist that the description of a structure or the difference between two structure is \textit{computable}.

Some of our main results concern the computability of both Scott sentences and the formulas witnessing differences between structures satisfying particular back-and-forth relations (\autoref{FullScottSent} and \autoref{FullBNF}).
We measure these computability-theoretic concerns in two important ways that interact with the syntactic complexity of the formulas.
First, we resolve the needed syntactic complexity of a Scott sentence or difference-witnessing formulas in the case where we insist that the formulas are computable.
Second, we determine the least non-computable oracle needed to describe a Scott sentence or difference-witnessing formula of optimal syntactic complexity. These results are summarized in the following two theorems, the first describing the complexity of describing a single structure, and the second describing the problem of distinguishing two structures.

\begin{defn}
	For any ordinal $\alpha$, we define $\gamma(\alpha)$ as the least ordinal such that $\gamma(\alpha)+\alpha\ge2\alpha$.
	Specifically, if $\alpha$ is finite, then $\gamma(\alpha)=\alpha$. If $\alpha$ is a limit ordinal, then $\gamma(\alpha)=0$.
	If $\alpha$ is infinite and not a limit, then $\gamma(\alpha)$ is the least ordinal so that $\gamma(\alpha)+\alpha>\alpha$.
\end{defn}

\begin{thm}\label{FullScottSent}
	Let $\alpha$ be a computable 
	ordinal and let $\mc{A}$ be a computable structure with a
	$\Pinf{\alpha}$ Scott sentence. Then
	\begin{itemize}
		\item $\mc{A}$ has a $\Pinf{\alpha}$ Scott sentence which is computable from the oracle $\mathbf{0}^{(\gamma(\alpha))}$ (\autoref{37});
		\item $\mc{A}$ has a $\Pinf{2\alpha}$ Scott sentence which is computable (\autoref{55}).
	\end{itemize}
	Moreover, both of these are optimal: For each computable $\alpha\geq 2$, there exists a computable $\mc A$ of Scott complexity $\Pinf{\alpha}$ so that
	\begin{itemize}
		\item $\mc{A}$ has no $\Pinf{\alpha}$ Scott sentence computable in any oracle $\mathbf{0}^{(\delta)}$
		with $\delta<\gamma(\alpha)$ (\autoref{54});
		\item $\mc{A}$ has no $\Sinf{2\alpha}$ Scott sentence which is computable (\autoref{53}).
	\end{itemize}
\end{thm}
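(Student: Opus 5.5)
The theorem has four parts: two upper bounds and two matching lower bounds. For the upper bounds I would use the standard description of Scott sentences through back-and-forth relations. By Montalbán's theorem, $\mc A$ has a $\Pinf{\alpha}$ Scott sentence iff every automorphism orbit is $\Sinf{\beta}$-definable for some $\beta<\alpha$. For successor $\alpha=\beta+1$ this is equivalent to $\mc A$ being $\alpha$-atomic in the sense of the $\le_\beta$ relations. In that case the canonical Scott sentence has the form
\[\Wwedge_{\bar a}\forall \bar x\Bigl(\phi_{\bar a}^{\beta}(\bar x)\to \ldots\Bigr).\]
Here the $\phi^{\beta}_{\bar a}$ are the $\Pinf{\beta}$ formulas describing the $\beta$-back-and-forth type of $\bar a$, together with a $\forall\exists$ clause saying that every tuple realizes the $\beta$-type of some tuple of $\mc A$.

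In a computable structure the relations $\le_\beta$ on tuples are $\Pi^0_{2\beta}$-ish. More precisely, they are computable from $\mathbf 0^{(2\beta)}$ up to the usual offset conventions, and the formulas $\phi^\beta_{\bar a}$ can be written uniformly computably in $\mathbf 0^{(\delta)}$ whenever $\delta+\beta\ge 2\beta$. The key counting step is that building the $\Pinf\alpha$ sentence requires knowing which finitary disjunctions and conjunctions to take. That information is a question about $\le_\beta$, hence of complexity roughly $2\beta$. The syntax itself already absorbs $\beta$ quantifier levels, so the oracle only needs to supply the remaining $\gamma(\alpha)$ jumps. This yields $\mathbf 0^{(\gamma(\alpha))}$ (\autoref{37}).

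The $\Pinf{2\alpha}$ computable Scott sentence (\autoref{55}) follows by the dual trade-off. I would replace each oracle query at level $2\beta$ by an explicit computable infinitary formula of that complexity, and write the $\beta$-types computably as $\Pinc{2\beta}$ formulas (the effective back-and-forth types). The sentence then becomes computable $\Pinf{2\alpha}$, with some care at limit $\alpha$, where one takes the conjunction over $\beta<\alpha$ and uses $2\beta+\text{small}<2\alpha$.

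For the sharpness results (\autoref{54}, \autoref{53}) I would use \TBPT and the Belligerent Jump Inversion Theorem stated earlier. Start from a $\mathbf 0^{(2\beta)}$-level complete set, for instance a $\Sigma^0_{2\alpha}$-complete set $S$. The Belligerent Pairs Theorem supplies, uniformly, computable pairs of structures that are $\le_\beta$-comparable in one direction but not the other according to membership in $S$, and whose differences show up only at level $\alpha$ of the Scott analysis. I would glue these pairs into one computable structure $\mc A$, for example a disjoint union of components indexed by $n$, each with a marker. I would arrange that the orbits of $\mc A$ are $\Sinf\beta$-definable for $\beta<\alpha$, so that $\mc A$ has Scott complexity exactly $\Pinf\alpha$.

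Next I would argue by contradiction. A computable $\Sinf{2\alpha}$ Scott sentence, or a $\Pinf{\alpha}$ Scott sentence computable from $\mathbf 0^{(\delta)}$ with $\delta<\gamma(\alpha)$, would let us decide $S$ at too low a level. Evaluating whether the $n$-th component is of the first or second kind reduces to checking a subformula. That subformula would have complexity $\Sigma^0_{2\alpha}$ in the first case but also $\Pi^0_{2\alpha}$, or $\Sigma^0_{\delta+\alpha}$ with $\delta+\alpha<2\alpha$ in the second, contradicting completeness of $S$.

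The main obstacle is this last step. A Scott sentence need not mention the components separately, so one must show that any Scott sentence of the given form yields, uniformly in $n$, a formula separating the two alternatives. I would first try to extract such a formula by taking the back-and-forth type of the $n$-th marker. The separating formula can then be read off from the Scott sentence by substituting the marker and pushing the quantifiers through the components, using the fact that the belligerent pairs are $\le_\beta$-indistinguishable except for the one coded bit.
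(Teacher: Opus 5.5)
\paragraph{Lower bounds.} The optimality half has a genuine gap. Your upper bound via \autoref{55} is fine: with $\Pinc{2\beta}$ orbit formulas the canonical sentence is $\Pinc{2\beta+2}$, and the paper simply notes that the $\Pinc{2\alpha}$ formula defining $\{\mc B:\mc A\le_\alpha\mc B\}$ is already a Scott sentence. What the lower bounds need is \autoref{thm:lightfaceScott SentenceHard}: a computable $\mc A$ with a $\Pinf{\alpha}$ Scott sentence, together with a uniformly computable sequence $(\mc C_i)$ such that $\mc C_i\cong\mc A\iff i\in S$ for a $\Pi^0_{2\alpha}$-complete set $S$. The set cannot be $\Sigma^0_{2\alpha}$-complete, because by \autoref{55} this isomorphism problem is always $\Pi^0_{2\alpha}$.

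In this index-set form your ``main obstacle'' never arises. For any Scott sentence $\Phi$ of $\mc A$ we have $S=\{i:\mc C_i\models\Phi\}$. This set is $\Sigma^0_{2\alpha}$ if $\Phi\in\Sinc{2\alpha}$. It is $\Pi^0_{\delta+\alpha}$ with $\delta+\alpha<2\alpha$ if $\Phi\in\Pi^{\mathbf 0^{(\delta)}}_{\alpha}$ and $\delta<\gamma(\alpha)$. Nothing has to be extracted component by component, and the marker/back-and-forth-type extraction you sketch is not an argument as it stands.

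The real work is building $\mc A$, and gluing belligerent pairs does not do it. For $\alpha=\beta+1$, the pairs at level $\beta$ code only $\Sigma^0_{2\beta+1}$ sets, which is one quantifier short of $2\alpha=2\beta+2$. Moreover, the member $\mc A_\beta$ of such a pair has no $\Pinf{\beta+1}$ Scott sentence, since $\mc A_\beta\le_{\beta+1}\mc B_\beta\not\cong\mc A_\beta$. So a structure with a marked-off component isomorphic to $\mc A_\beta$ has none either: replacing that component by $\mc B_\beta$ gives a non-isomorphic structure satisfying all of its $\Pinf{\beta+1}$ sentences. Your plan offers no other source of the missing quantifier.

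The paper instead starts from the base case $\alpha=2$, a structure from \cite{ACHT} with a $\Pinf{2}$ Scott sentence and $\Pi^0_4$-complete isomorphism problem. It relativizes this to $\mathbf 0^{(2\beta+1)}$ and applies $\Inv{\beta}{\cdot}$, which raises Scott complexity by exactly $\beta$ while absorbing $2\beta+1$ jumps. This handles every $\alpha=\beta+2$; the cases $\alpha=\lambda$ and $\alpha=\lambda+1$ need separate direct constructions.

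\paragraph{First upper bound (\autoref{37}).} This part also has a gap. Suppose you use the $\Pinf{\beta}$ back-and-forth formulas $\phi^\beta_{\bar a}$ as orbit formulas. Then the forth clauses $\Wwedge_{a'}\exists y\,\phi^\beta_{\bar a a'}(\bar x,y)$, the back clauses $\forall y\Vvee_{a'}\phi^\beta_{\bar a a'}(\bar x,y)$, and your clause $\forall\bar y\Vvee_{\bar a}\phi^\beta_{\bar a}(\bar y)$ are all $\Pinf{\beta+2}$. So the sentence you describe is $\Pinf{\alpha+1}$, not $\Pinf{\alpha}$. To stay in $\Pinf{\beta+1}$ the orbit formulas must be $\Sinf{\beta}$. \autoref{Robustness} provides such formulas, but not effectively.

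The substance of the paper's proof is to choose them in the form $\exists\bar y\,\phi^{\delta}_{\bar a\bar a'}(\bar x,\bar y)$ with $\delta<\beta$, using the formulas of \autoref{bfDef} at the lower level $\delta$. It then shows that a valid pair $(\delta,\bar a')$ can be found $\Sigma^0_{\beta}(\mathbf 0^{(\gamma(\alpha))})$. A pair is valid when $\forall\bar c\bar d\,(\phi^\delta_{\bar a\bar a'}(\bar c,\bar d)\to\bar c\le_\beta\bar a)$ holds. By \autoref{lem:epsilon ordinal} there is $\epsilon<\beta$ with $\gamma(\alpha)+\epsilon>2\alpha$, so the $\Pi^0_{2\beta}$ relation $\le_\beta$ becomes $\Pi^0_{\epsilon}(\mathbf 0^{(\gamma(\alpha))})$. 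Only then does \autoref{cxAbsorption} place the canonical Scott sentence in $\Pi^{\mathbf 0^{(\gamma(\alpha))}}_{\alpha}$.

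Your count ``$\delta+\beta\ge 2\beta$'' does not capture this. The choice of orbit formulas must be absorbed into $\Sinf{\beta}$ disjunctions, so it has to sit strictly below level $\beta$ relative to the oracle.
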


Since the least ordinal where $\mc A\not\leq_{\alpha}\mc B$ is always a successor ordinal, we consider $\alpha=\beta+1$ to measure the complexity of the formula on which $\mc A$ and $\mc B$ differ.

\begin{thm}\label{FullBNF}
	Let $\alpha=\beta+1$ be a computable ordinal. Given two computable structures $\mc{A}$ and $\mc{B}$, such that $\mc{A} \nleq_{\alpha} \mc{B}$ (i.e., there is a $\Pinf{\alpha}$ formula $\psi$ with $\mc{A}\models\psi$ and  $\mc{B}\models\neg \psi$)
	\begin{itemize}
		\item there is a $\Pinf{\alpha}$ formula $\varphi$ computable in the oracle $\mathbf{0}^{(\gamma(\alpha))}$
		such that $\mc{A}\models\varphi$ and  $\mc{B}\models\neg\varphi$ (\autoref{bfDef}), and
		\item there is a $\Pinf{2\beta+1}$ formula which is computable
		such that $\mc{A}\models\varphi$ and  $\mc{B}\models\neg \varphi$ (\autoref{512}). 
		\end{itemize}
	Moreover, both of these are optimal: For every computable ordinal $\alpha=\beta+1$, there are computable $\mc{A}$ and $\mc{B}$ so that $\mc A \not \leq_\alpha \mc B$ for which
	\begin{itemize}
		\item $\mc{A}$ and $\mc{B}$ agree on all
		$\Pinf{\alpha}$ formulas computable in an oracle $\mathbf{0}^{(\delta)}$
		with $\delta<\gamma(\alpha)$ (\autoref{sharpGamma}), and
		\item $\mc{A}$ and $\mc{B}$ agree on all computable $\Sinf{2\beta+1}$ formulas (\autoref{513}).
	\end{itemize}
\end{thm}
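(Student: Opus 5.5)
The statement splits into two upper bounds and two sharpness results, all driven by the complexity of the back-and-forth relations between computable structures. Throughout I use the standard facts: $\mc A\le_{\beta+1}\mc B$ iff every $\Pinf{\beta+1}$ sentence true in $\mc A$ is true in $\mc B$, and for tuples, $(\mc A,\bar a)\le_{\beta+1}(\mc B,\bar b)$ iff for every $\bar d$ in $\mc B$ there is $\bar c$ in $\mc A$ with $(\mc B,\bar b\bar d)\le_\beta(\mc A,\bar a\bar c)$. For computable structures, the relation $\le_\alpha$ on tuples is $\Pi^0_{2\alpha}$-type (uniformly, computable in $\mathbf 0^{(2\alpha)}$ or so), while with $\gamma(\alpha)+\alpha\ge 2\alpha$ it is decided by $\mathbf 0^{(\gamma(\alpha))}$ relative to an $\alpha$-level infinitary formula.

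\textbf{Upper bounds.} I would define, by recursion on $\beta$, canonical formulas $\phi^{\mc A,\bar a}_{\beta}$ with the property $\mc B\models\phi^{\mc A,\bar a}_\beta(\bar b)$ iff $(\mc A,\bar a)\le_\beta(\mc B,\bar b)$, of the form
\[
\phi^{\mc A,\bar a}_{\beta+1}(\bar x)=\Wwedge_{\bar c}\exists\bar y\,\phi^{\mc A,\bar a\bar c}_{\beta}(\bar x\bar y)\wedge\cdots .
\]
Here the full $\Pinf{\beta+1}$ form uses a universal quantifier over $\bar y$ and a disjunction over $\bar c$ of the dual formulas. These formulas are $\Pinf\alpha$, and $\mc A$ satisfies $\phi^{\mc A}_\alpha$ while $\mc B$ fails it when $\mc A\not\le_\alpha\mc B$. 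The construction of such a formula requires enumerating the relevant tuples and, at limits, collapsing redundant conjuncts.

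For the first bullet (\autoref{bfDef}), I would choose, rather than the full canonical formula, a pruned version. At stage $\beta$ we keep only the disjuncts that are needed, and deciding which tuples realize which $\le_\delta$-types for $\delta<\alpha$ costs $\mathbf 0^{(\gamma(\alpha))}$ after the first $\gamma(\alpha)$ levels have been absorbed into the formula syntax. Concretely, for infinite successor $\alpha$ only the top finitely many levels need oracle help. This uses the analysis already developed for \autoref{37}.

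For the second bullet (\autoref{512}), I would replace each $\le_\delta$ condition by the computable $\Pinf{2\delta}$ formula expressing it via the uniform definability of back-and-forth relations on computable structures. Applied at $\alpha=\beta+1$, this gives the computable distinguishing formula. The key point is that we do not need the full relation $\le_{\beta+1}$ but only the single failure witness. That failure is of the form ``$\exists\bar d\,\forall\bar c\,\neg(\ldots\le_\beta\ldots)$'', and its negation gives $\Pinf{2\beta+1}$ rather than $\Pinf{2\beta+2}$.

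\textbf{Sharpness.} For both lower bounds I would use \TBPT and the Belligerent Jump Inversion Theorem, which were stated earlier. To show \autoref{sharpGamma}, I would fix a $\Sigma^0_{2\alpha}$-hard or $\mathbf 0^{(\gamma(\alpha))}$-diagonalizing set and build computable $\mc A,\mc B$ as belligerent pairs. The pair would be chosen so that they differ at level $\alpha$, but any $\Pinf\alpha$ formula computable in $\mathbf 0^{(\delta)}$ with $\delta<\gamma(\alpha)$ is, by the jump inversion, reflected to a formula at a lower level, where the pair agrees.

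To show \autoref{513}, I would diagonalize against all computable $\Sinf{2\beta+1}$ sentences $\psi_e$. For each $e$, I would use the belligerent pairs to code into a component the question of whether $\psi_e$ holds. That question is $\Sigma^0_{2\beta+1}$-type, and the jump inversion lets me realize it as a pair that is $\beta$-back-and-forth equivalent in the right direction. I would then arrange that $\mc A$ and $\mc B$ agree on each $\psi_e$ while the components still witness $\mc A\not\le_{\beta+1}\mc B$.

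The main obstacle is this last construction: simultaneously guaranteeing $\mc A\not\le_{\beta+1}\mc B$ and agreement on every computable $\Sinf{2\beta+1}$ sentence, uniformly through the ordinals, including limit $\beta$.
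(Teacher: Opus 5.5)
Your two optimality arguments have a genuine gap. The paper proves both with the same index-set count and no diagonalization. Apply \autoref{UnfriendlyPairs} at level $\beta$ to a $\Sigma^0_{2\beta+1}$-complete set $S$ (not a $\Sigma^0_{2\alpha}$ one), with the roles swapped so that the statement's $\mc A$ is the structure called $\mc B$ there, and use the uniformly computable sequence $\mc C_i$. If $\psi$ is true in $\mc A$ and false in $\mc B$, then $\{i : \mc C_i\models\psi\}=\mathbb N\smallsetminus S$, a $\Pi^0_{2\beta+1}$-complete set. A $\Pi^{\mathbf 0^{(\delta)}}_{\beta+1}$ separator would make this set $\Pi^0_{\delta+\beta+1}$, which contradicts completeness whenever $\delta+\beta+1<2\beta+1$; for infinite $\beta$, $\delta<\gamma(\alpha)$ means exactly $\delta+\beta=\beta$. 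A $\Sinc{2\beta+1}$ separator would make it $\Sigma^0_{2\beta+1}$, which always contradicts completeness.

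Your mechanism for \autoref{sharpGamma}, where a $\mathbf 0^{(\delta)}$-computable $\Pi_\alpha$ sentence is ``reflected to a lower level where the pair agrees'', is not an argument. Nothing converts such a sentence into a lower-level one, and the pair genuinely differs at level $\alpha$. Only the effectivity of a separator fails, and that is what the count detects. Your diagonalization for \autoref{513} is never carried out, and its stated target, agreement on every $\psi_e$, is impossible. By \autoref{512} some $\Pinc{2\beta+1}$ sentence is true in $\mc A$ and false in $\mc B$, and its negation is a computable $\Sinf{2\beta+1}$ sentence on which they disagree. The last bullet must be read one-sidedly: every computable $\Sinf{2\beta+1}$ sentence true in $\mc A$ holds in $\mc B$. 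That is exactly what the count gives.

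The proviso $\delta+\beta+1<2\beta+1$ is real. For $\beta$ a limit, \autoref{512} already yields a computable $\Pi_{\beta+1}$ separator, and for finite $\alpha$ Harrison-Trainor's bound is $\mathbf 0^{(\alpha-1)}$. In those cases the sharp oracle is $\mathbf 0^{(\gamma(\beta))}$, not $\mathbf 0^{(\gamma(\alpha))}$, so the third bullet cannot be proved as literally stated, by your argument or any other.

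On the upper bounds, your second bullet is essentially the paper's proof of \autoref{512} once stated precisely. Fix the witness $\bar d\in\mc B$ as a parameter. Let $\phi_{\bar d}(\bar x)$ be the computable $\Pi_{2\beta}$ formula from \autoref{computable version defining back-and-forth by Pi 2 alpha} expressing $(\mc B,\bar d)\le_\beta(\cdot,\bar x)$, and take $\forall\bar x\,\neg\phi_{\bar d}(\bar x)$.

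Your first bullet, however, is not a proof. The displayed recursion $\Wwedge_{\bar c}\exists\bar y\,\phi_\beta$ is $\Pi_{\beta+2}$. You never exhibit $\Sigma_\beta$ disjuncts that would make $\forall\bar y\Vvee_{\bar c}(\cdots)$ a $\Pi_{\beta+1}$ formula. Also, a $\Pi_\alpha$ definition of $\le_\alpha$ uniformly over \emph{all} $\mc B$ is not available in general, which is why \autoref{bfDef} is stated for a computable list (here just $\mc A,\mc B$).

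The missing idea is Harrison-Trainor's. Index the possible behaviours of $\bar y$ by tuples $\bar b\bar b'$ from structures in the list. As the disjunct for $\bar a'$, use $\top$ if $(\mc A_j,\bar b\bar b')\le_\beta(\mc A_i,\bar a\bar a')$ and the $\Sigma_\beta$ formula $\neg\varphi^\beta_{j,\bar b\bar b'}(\bar x,\bar y)$ otherwise. This case split is $\mathbf 0^{(2\beta+1)}$-computable, hence $\Sigma^0_\beta(\mathbf 0^{(\gamma)})$ because $\gamma+\beta\ge 2\beta+1$. The relativized \autoref{cxAbsorption} then collapses each disjunction to a $\Sigma^{\mathbf 0^{(\gamma)}}_\beta$ formula, and limits are handled by $2\lambda=\lambda$. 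Your remark that ``only the top finitely many levels need oracle help'' gestures at this arithmetic without supplying it. Appealing to the analysis of \autoref{37} is circular, since that theorem is proved from \autoref{bfDef}.
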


For finite $\alpha$, these statements recover the corresponding results of Harrison-Trainor \cite{mhtArith}. The main challenge addressed here is extending Harrison-Trainor's theory to arbitrary computable ordinals, where new transfinite phenomena arise and the interaction between syntactic complexity and oracle complexity becomes substantially more delicate.

The results of this paper are driven by a recurring $2\alpha$ versus $\alpha$ phenomenon in computable structure theory. Many structural notions arising from Scott analysis are indexed by an ordinal $\alpha$ and describe behavior at level $\alpha$, yet the effective complexity of recognizing or computing those notions is often on the level of $2\alpha$. The back-and-forth relations provide a prototypical example: although $\leq_\alpha$ measures similarity at level $\alpha$, its definition involves approximately $2\alpha$ quantifier alternations. Similar discrepancies arise when comparing computable infinitary formulas with arbitrary infinitary formulas. As a result, proving sharp Scott-theoretic lower bounds requires a mechanism that can take information available only at complexity level $2\alpha$ and encode it into structures whose distinguishing features appear already at level $\alpha$. The primary technical contribution of this paper is a new method for carrying out precisely this compression of complexity. Although developed to address questions in computable Scott analysis, the resulting machinery is considerably more general and applies whenever one seeks to reflect $2\alpha$-level information in $\alpha$-level structural behavior.

Achieving such compression requires building computable structures whose properties are controlled by information at a much higher level of complexity. We develop such a method in the form of our Belligerent Pairs Theorem and Belligerent Jump Inversion Theorem (\autoref{UnfriendlyPairsInformal} and \autoref{unfriendly jump inv thm informal}).
This effort lies in a long tradition tied to the history of computability theory.
The first use of non-computable information to build an effective object was the introduction of the priority method by Friedburg and Muchnik \cite{Fri57,Muc56} in the 1950s.
They approximated information about the halting set $0'$ to construct recursively enumerable sets that are incomparable in the Turing degrees.
Extensions of the priority method approximating $0''$ or even $0'''$ gave rise to more complicated, but very useful, methods to resolve questions in computability theory \cite{Sho61,Sacks63,Lac76}.
These arguments got tremendously complicated and unwieldy.
Lachlan's $0'''$ argument is even associated with the nickname ``the monster method,'' reflecting both the depth and the opaqueness of his, and related, arguments.
It was clear to many researchers that the development of more and more complicated and intractable methods was unsustainable.
This is evidenced by the emergence of systematic schemas to organize the combinatorics necessary in high-order approximations.
Examples include Harrington's worker constructions \cite{Har76}, Ash's $\eta$-systems \cite{Ash86}, and Montalb\'an's true stages \cite{MonAsh}.
These approaches were useful in the theory of degrees, just as their forebearers were.
That said, they proved especially helpful for computable structure theory.
Particular structural properties (related to the computability of the back-and-forth relations) emerged as important flags, indicating that certain models would be amenable to the use of these approximation schemas.
This crystallized in a series of remarkable victories.
Black-box theorems were developed that can be used with non-computable inputs to retrieve controlled computable output structures without engaging with the complications of the priority method or direct constructions at all.
Notable examples of this include the pair of structures theorem \cite{AK90}, the Ash-Knight metatheorem \cite{AK00}, the method of jump inversion \cite{GHKMMS}, the game metatheorem \cite{MonGame}, and Harrison-Trainor's method of finite unfriendly jump inversion \cite{mhtArith}.

We introduce two primary technical tools in this paper, called the belligerent pairs theorem and the belligerent jump inversion theorem.
As with the previously mentioned theorems, they are black-box tools that can be used to create computable objects from incomputable inputs.
The names of these theorems are meant to evoke the pair of structures theorem \cite{AK90}, the method of jump inversion \cite{GHKMMS} (later isolated in Chapter X.3 of \cite{Part2}), and Harrison-Trainor's method of finite unfriendly jump inversion \cite{mhtArith}.
In broad strokes, our results can be understood as updating the pair of structures theorem and its use in the method of jump inversion, incorporating Harrison-Trainor's unfriendliness.
The pair of structures theorem and (particularly its use in) the method of jump inversion are some of the most widely used tools in computable structure theory.
Jump inversion is a generalization of Marker extensions (from \cite{Mar89}).
Marker extensions allow examples at lower levels of computability to be extended up the arithmetic hierarchy; jump inversion goes beyond and through the whole hyperarithmetic hierarchy.
Harrison-Trainor's method of finite unfriendly jump inversion can be seen as an improvement of Marker extensions from the computability-theoretic perspective. Just as Marker extensions iterate through the arithmetic hierarchy, Harrison-Trainor's method uses unfriendliness to achieve a form of complexity compression unavailable to classical friendly constructions. The two main technical theorems below develop a transfinite analogue of this idea. They are stated informally here and formally in the background section as \autoref{UnfriendlyPairs} and \autoref{unfriendly jump inv thm}.

\begin{thm}[Belligerent Pairs Theorem]\label{UnfriendlyPairsInformal}
   For any computable ordinal $\alpha$, there are two structures $\mc{A}$ and $\mc{B}$ that differ on a $\Pinf{\alpha+1}$ formula, however, for any $\Sigma^0_{2\alpha+1}$ set $S$, there is a uniformly computable sequence of structures so that the $n$th structure in the sequence is isomorphic to $\mc A$ if $n\in S$ and isomorphic to $\mc B$ otherwise.
   Both $\mc{A}$ and $\mc{B}$ are of Scott rank at most $\alpha+1$.
\end{thm}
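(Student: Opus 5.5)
The plan is to build $\mc A$ and $\mc B$ by transfinite recursion on $\alpha$ and to obtain the uniform coding by combining two ingredients. The first is the Ash--Knight pair of structures theorem, which codes $\Sigma^0_{\beta}$ sets using $\beta$-friendly pairs that differ at level $\beta$. The second is Harrison-Trainor's unfriendly trick, which lets an extra layer of non-computable guessing be absorbed into the structure itself. The base case $\alpha=0$ is a pair differing on a $\Pinf1$ formula, for which a $\Sigma^0_1$ set must be coded. We take $\mc A$ to be a single point and $\mc B$ a single point together with a unary relation holding of it (or the reverse, depending on which side must be $\Pinf1$). We then enumerate the relation when $n$ enters $S$; this is the usual finite case.

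For the successor step, from a belligerent pair $(\mc A_\beta,\mc B_\beta)$ for $\beta$ (coding $\Sigma^0_{2\beta+1}$ sets) we want a pair for $\beta+1$ that codes $\Sigma^0_{2\beta+3}$ sets, so each step must absorb two jumps while the distinguishing formula rises by only one quantifier. Write a $\Sigma^0_{2\beta+3}$ set as $n\in S\iff\exists m\,\forall k\,R(n,m,k)$ with $R$ a $\Sigma^0_{2\beta+1}$ condition, uniformly in $(n,m,k)$. By induction we get uniformly computable structures $\mc C_{n,m,k}$ that are isomorphic to $\mc A_\beta$ when $R$ holds and to $\mc B_\beta$ otherwise. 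The new structures are then built as a Marker-style combination. $\mc A_{\beta+1}$ consists of infinitely many copies of a block of the form $\bigoplus_k\mc A_\beta$, together with one special block in which the $\mc A_\beta$ and $\mc B_\beta$ components are mixed according to a fixed pattern. $\mc B_{\beta+1}$ consists only of the generic mixed blocks, each of which has some coordinate equal to $\mc B_\beta$. The structure for $n$ is the disjoint union over $m$ of the blocks $\bigoplus_k \mc C_{n,m,k}$, padded with enough extra copies of every generic block to absorb the noise. The difference between the two structures is then a $\Sinf{}$ existential over blocks of a $\Pinf{}$ conjunction of the $\Pinf{\beta+1}$ properties, which one checks is a $\Pinf{\beta+2}$ sentence after negating appropriately. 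The unfriendliness is essential here. The extra $\forall$ quantifier comes for free in complexity because a single $\mc B_\beta$ coordinate already makes the block distinguishable at level $\beta+1$, so no back-and-forth computation needs to track it.

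For limit $\alpha$ with fundamental sequence $\beta_i\to\alpha$, note that $2\alpha+1=\alpha+1$ and a $\Sigma^0_{\alpha+1}$ set has the form $\exists m\,P_m$, where $P_m$ is $\Delta^0_{2\beta_{i}+1}$-ish uniformly in $m$. The natural construction uses disjoint sums of the pairs $(\mc A_{\beta_i},\mc B_{\beta_i})$ with an $\omega$-saturation padding, coded with a single existential on top.

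Throughout, we verify three things. First, isomorphism types are correct; this follows by induction plus padding, using that the union of infinitely many copies absorbs finitely many exceptions. Second, the distinguishing formula is $\Pinf{\alpha+1}$. Third, the Scott ranks of both structures are at most $\alpha+1$, which holds because each structure is a disjoint union of blocks whose automorphism orbits are $\Sinf{\alpha}$-definable by induction. The main obstacle I expect is the successor step: the coding must genuinely gain two jumps while the formula level rises by one. Uniformity of the induction through limits, which requires taking effective transfinite recursion over indices, is also delicate. If the direct Marker-style block fails, I would first try instead to invoke the Ash--Knight metatheorem with $2\alpha$-friendliness of the back-and-forth relations $\le_{2\beta}$, using the unfriendly pair only at the top layer.
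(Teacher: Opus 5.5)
Your successor step is the heart of the theorem, and it does not produce a belligerent pair. The block construction is an ordinary friendly Marker-style sum, and it raises the separating level by two rather than one. You invoke Harrison-Trainor's unfriendly trick in words, but nothing in the construction implements it.

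\begin{itemize}
\item \emph{The separating sentence is at level $\beta+3$.} The coded set must be $\Sigma$ on the $\mc A$ side, so ``this coordinate is $\mc A_\beta$'' is a $\Sinf{\beta+1}$ property, not a $\Pinf{\beta+1}$ one. Hence ``every coordinate of this block is $\mc A_\beta$'' is $\Pinf{\beta+2}$, and ``some block is entirely $\mc A_\beta$'' is $\Sinf{\beta+3}$. Dually, locating a single $\mc B_\beta$ coordinate already costs an existential quantifier, so it does not make a block distinguishable at level $\beta+1$.
\item \emph{No better sentence exists.} Read the construction naturally: both structures share the padding of mixed blocks, and $\mc A_{\beta+1}$ additionally has all-$\mc A_\beta$ blocks. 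Let $D$ be an all-$\mc A_\beta$ block and $D'$ a mixed block with infinitely many coordinates of each kind. Use $\mc A_\beta\le_{\beta+1}\mc B_\beta$ and the standard fact that $\le_\gamma$ passes to disjoint unions along a bijection of components. Then $(D,\bar d)\le_{\beta+1}(D',\bar c)$ when $\bar c$ copies $\bar d$ into $\mc A_\beta$ coordinates, and $D'\le_{\beta+1}D$. From these one gets $\mc A_{\beta+1}\equiv_{\beta+2}\mc B_{\beta+1}$, so your new pair agrees on every $\Pinf{\beta+2}$ sentence.
\item \emph{Concretely:} starting from your own base pair at $\beta=0$, you get a pair that codes $\Sigma^0_3$ sets but is $\equiv_2$. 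Iterating yields pairs separated at essentially the level they code, i.e.\ no compression at all.
\end{itemize}

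The missing idea is a genuinely unfriendly ingredient. The extra jump must be absorbed into a non-computable choice of infinitary conjunctions and disjunctions, not into further quantifiers. An example is a separating sentence saying that some non-computable non-principal type is omitted, which is $\Pinf{2}$ but not $\Pinc{2}$.

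The paper never builds such a successor step from scratch. It writes $\alpha=\lambda+n$. For $n=0$ it uses the $\lambda$-friendly pair $\omega^\lambda\cdot 2$, $\omega^\lambda$ and the pair of structures theorem, which suffices because $2\lambda+1=\lambda+1$. Otherwise it takes Harrison-Trainor's finite belligerent pair for $n$, relativized to the $\lambda$-th jump level, which supplies the unfriendliness. It then pushes the whole sequence through the limit-level inversion $\Inv{\lambda}{\cdot}$ built from that friendly pair, which adds exactly $\lambda$ to the separating level and to the Scott sentences.

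Two smaller problems:
\begin{itemize}
\item Your limit step is not specified enough to check. Different $n\in S$ are witnessed at different $\beta_i$, so it is unclear what the single structure $\mc A_\lambda$ is. Moreover, one existential over components carrying sentences of levels $\beta_i+1<\lambda$ separates only at level $\lambda$, which is too low to code a $\Sigma^0_{\lambda+1}$-complete set.
\item In the base case you cannot later add $P$ to an already-built point of a computable structure. You must add a fresh point when $n$ enters $S$.
\end{itemize}
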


The significance of this phenomenon is best understood in the context of earlier methods. Traditionally, constructions in computable structure theory exhibit a form of complexity conservation: information supplied to the construction at complexity level $\alpha$ is reflected in the resulting structure only at complexity level $\alpha$ or higher. Put differently, previous coding techniques could faithfully preserve complexity, but they generally did not compress it. The belligerent pairs theorem breaks this pattern. Information at level $2\alpha+1$ determines whether we build $\mc A$ or $\mc B$, yet the outcome is already visible through a $\Pinf{\alpha+1}$ distinction between the resulting structures. Thus information entering the construction at level $2\alpha+1$ is reflected at level $\alpha+1$. Of course, no actual complexity is destroyed: the distinguishing $\Pinf{\alpha+1}$ formula need not be computable, and the missing complexity reappears in the oracle required to describe that formula.
Harrison-Trainor's finite unfriendly jump inversion first exhibited this phenomenon within the arithmetic hierarchy. Extending it to arbitrary computable ordinals is more delicate: the transfinite setting introduces new interactions between syntactic and oracle complexity, reflected in the appearance of the function $\gamma(\alpha)$ in \autoref{FullScottSent} and \autoref{FullBNF}. Our theorem shows that complexity compression persists uniformly throughout the computable ordinals.

\begin{thm}[Belligerent Jump Inversion Theorem]\label{unfriendly jump inv thm informal}
    Given an infinite, computable ordinal $\alpha$ and a $\mathbf0^{(2\alpha+1)}$-computable graph $\mc G$ we can construct a computable graph $\Inv{\alpha}{\mc G}$.
    When compared to $\mc{G}$, the structural properties of $\Inv{\alpha}{\mc G}$ differ by $\alpha$.
    For example, the Scott complexities of $\mc{G}$ and $\Inv{\alpha}{\mc G}$ differ by $\alpha$.
\end{thm}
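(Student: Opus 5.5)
We would derive the Belligerent Jump Inversion Theorem from \TBPT, i.e.\ the formal version of \autoref{UnfriendlyPairsInformal}, following the classical template in which the pair of structures theorem yields jump inversion (as in Chapter X.3 of \cite{Part2}). Fix the belligerent pair $\mc A,\mc B$ for $\alpha$. These differ on a $\Pinf{\alpha+1}$ formula and have Scott rank at most $\alpha+1$. Given a $\mathbf0^{(2\alpha+1)}$-computable graph $\mc G$ on domain $\omega$, we define $\Inv{\alpha}{\mc G}$ as follows. Take a vertex sort with elements $\{v_n\}$. For each unordered pair $\{n,m\}$ attach, through a fresh connector, a copy $\mc C_{n,m}$ that is isomorphic to $\mc A$ if $n\mathrel{E^{\mc G}} m$ and to $\mc B$ otherwise. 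To stay within graphs, all of this is coded by a standard uniform graph coding that preserves back-and-forth levels up to a fixed finite shift; since $\alpha$ is infinite, that shift is absorbed. Uniformity is the first thing to check. The edge relation of $\mc G$ is $\Delta^0_{2\alpha+2}$ in the relativized sense, so we cannot feed it to the pairs theorem directly. However, $\mc G$ is $\mathbf0^{(2\alpha+1)}$-computable, so its edge set is $\Delta^0_{2\alpha+2}$, and in particular both $E$ and its complement are $\Sigma^0_{2\alpha+2}$. That is one level too many.

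The first step is therefore to recalibrate the indices. Since $\alpha$ is infinite, we can write $\alpha=\lambda+k$, so that $2\alpha+1=\lambda+2k+1$. We would apply \TBPT either at an appropriately chosen ordinal, or in its relativized form. The simplest route is to present $\mc G$ so that its edge relation and non-edge relation are each $\Sigma^0_{2\alpha+1}$. We do this by coding each pair $\{n,m\}$ with infinitely many candidate copies indexed by stages: a pair is an edge iff some index $s$ witnesses it in a $\Sigma^0_{2\alpha+1}$ way. This is the usual trick of replacing a $\Delta_{2}$ approximation by an $\exists$ over a $\Pi_1$ stabilization. Concretely, we attach copies $\mc C_{n,m,s}$, where $\mc C_{n,m,s}\cong\mc A$ iff $s$ is the stage after which the $\mathbf0^{(2\alpha+1)}$-computation of $E(n,m)$ has settled with answer ``edge'', a condition that is $\Sigma^0_{2\alpha+1}$-or-simpler. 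All other copies are isomorphic to $\mc B$. The pairs theorem then gives a uniformly computable sequence realizing these copies, and $\Inv{\alpha}{\mc G}$ is computable.

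The second step is the structural comparison. We would prove that for tuples $\bar u$ of vertices, $\mc G$-tuples satisfy $\bar a\le_\beta\bar b$ iff the corresponding tuples in $\Inv{\alpha}{\mc G}$ satisfy $\le_{\alpha+\beta}$, for all $\beta\ge1$. In the other direction, $\Pinf\beta$ formulas about $\mc G$ translate into $\Pinf{\alpha+\beta}$ formulas about $\Inv{\alpha}{\mc G}$: we replace each atomic edge statement by ``some attached copy satisfies the $\Pinf{\alpha+1}$ formula separating $\mc A$ from $\mc B$.'' Conversely, we would show that every $\Pinf{\alpha+\beta}$ Scott property of $\Inv{\alpha}{\mc G}$ comes from a $\Pinf\beta$ property of $\mc G$. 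The key input here is that $\mc A,\mc B$ have Scott rank at most $\alpha+1$, so each attached copy is described by its isomorphism type at level $\alpha+1$ and contributes no further information. This yields Scott complexity $\Gamma$ for $\mc G$ if and only if the complexity is ``$\alpha+\Gamma$'' for $\Inv{\alpha}{\mc G}$.

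The main obstacle is the back-and-forth transfer at limit and low levels. This includes showing that the extra stage-indexed copies, and the $\mc B$-copies that are not $\le_{\alpha}$-distinguishable from $\mc A$ below level $\alpha+1$, do not create spurious differences. To handle it, we would prove a lemma that $\mc A\le_\alpha\mc B$ and $\mc B\le_\alpha\mc A$ hold, i.e.\ that they are $\alpha$-equivalent; we expect this to follow from their differing first at $\Pinf{\alpha+1}$ and the construction in \TBPT. We would also use that infinitely many $\mc B$-copies absorb finitely many $\mc A$-copies up to $\equiv_\alpha$. With that lemma, the back-and-forth game on $\Inv{\alpha}{\mc G}$ reduces, by a standard induction on $\beta$, to the game on $\mc G$ shifted by $\alpha$.
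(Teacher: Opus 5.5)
Your overall plan (plug copies of the belligerent pair $\mc A,\mc B$ from \autoref{thm:Inf4.2} onto pairs of vertices of $\mc G$ and build the result uniformly) is the paper's plan. However, the encoding you chose cannot give a shift of exactly $\alpha$. Suppose each pair carries one copy, $\mc A$ for an edge and $\mc B$ for a non-edge, or, in your stage-indexed version, an edge means that \emph{some} copy is $\mc A$. Then the edge relation is only $\Sinf{\alpha+1}$-definable. Because $\mc A\le_{\alpha+1}\mc B$, no $\Pinf{\alpha+1}$ sentence holds in $\mc A$ and fails in $\mc B$. So a non-edge costs $\Pinf{\alpha+1}$, and an existentially quantified non-edge costs $\Sinf{\alpha+2}$.

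This genuinely breaks the statement. Let $\mc G$ be $K_\omega$ with the single edge $ab$ removed. Every orbit of $\mc G$ is existentially definable, so $\mc G$ has a $\Pinf 2$ Scott sentence. In your structure $\mc H$, every vertex $c\notin\{a,b\}$ satisfies $(\mc H,c)\le_{\alpha+1}(\mc H,a)$. To see this, take a tuple through $a$ and:
\begin{itemize}
\item send $a\mapsto c$, $b\mapsto b'\notin\{a,b\}$, and the other vertices to fresh ones;
\item match the part of the tuple lying in the $\mc B$-copy at $\{a,b\}$ into the $\mc A$-copy at $\{c,b'\}$ at level $\alpha$, using $\mc A\le_{\alpha+1}\mc B$ (in the stage-indexed version, just use other $\mc B$-copies at $\{c,b'\}$);
\item copy everything else identically, and answer later moves in untouched copies using $\mc A\equiv_\alpha\mc B$. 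This is exactly your ``absorption'' at work.
\end{itemize}
Hence every $\Sinf{\alpha+1}$ formula true of $a$ is true of $c$, so the orbit of $a$ is not $\Sinf{\alpha+1}$-definable. By \autoref{Robustness}, $\mc H$ has no $\Pinf{\alpha+2}$ Scott sentence. Your transfer lemma fails already at $\beta=1$ for the same reason, since $(\mc G,c)\not\le_1(\mc G,a)$.

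The missing idea is the paper's paired encoding. Each pair $u,v$ carries two structures $R_{u,v},L_{u,v}$, isomorphic to $(\mc A,\mc B)$ for an edge and to $(\mc B,\mc A)$ for a non-edge. Then $R_{u,v}\models\psi$ iff $L_{u,v}\models\neg\psi$, so the edge relation is $\Delta^{\mathrm{in}}_{\alpha+1}$. This makes the translation $\Sinf\beta\to\Sinf{\alpha+\beta}$ exact. It also makes the image of $\Inv{\alpha}{\cdot}$ $\Pinf{\alpha+2}$-definable (partition axioms, the Scott sentences of $\mc A$ and $\mc B$, and the swap axiom). Together these give the forward directions of the Scott complexity claims.

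Your recalibration step rests on an off-by-one error. At infinite levels Post's theorem shifts by one, so a $\mathbf 0^{(2\alpha+1)}$-computable graph is already $\Delta^0_{2\alpha+1}$; this is why \autoref{thm:Inf4.3}(1) is phrased that way. Both $E$ and $\neg E$ are therefore uniformly $\Sigma^0_{2\alpha+1}$, and the paper simply applies \autoref{thm:Inf4.2} to them to produce $R_{u,v}$ and $L_{u,v}$ computably. The stage trick is unnecessary. As an attempt to absorb a genuine extra jump it could not succeed anyway. In your own accounting, ``$s$ is the settling stage'' contains $\forall t\ge s$, so it is a $\Pi$ condition at level $2\alpha+1$, not a $\Sigma$ one. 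Moreover, \autoref{prop:bestPossible} shows that a $\mathbf 0^{(2\alpha+2)}$-computable input cannot be handled while keeping the shift at $\alpha$.

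Finally, for the reverse direction of the Scott complexity claims, the paper does not run a back-and-forth induction. It pulls a $\Sinf{\alpha+\beta}$ sentence about $\Inv{\alpha}{\mc G}$ back to a $\Sinf\beta$ sentence about $\mc G$ as follows:
\begin{itemize}
\item over an oracle $\mathbf d\ge X^{(2\alpha+1)}$ the pair is $(\alpha+1)$-friendly;
\item given $\mc C$, one finds $Y\ge\mathbf d$ with $Y^{(\alpha+1)}\equiv_T\mc C\oplus\mathbf d^{(\alpha+1)}$ and lets $Y$ build $\Inv{\alpha}{\mc C}$ via \autoref{lightface hardness};
\item one then applies \autoref{vdB}.
\end{itemize}
A direct back-and-forth proof may work for the paired encoding, but it is not the routine induction you suggest and would have to be carried out.
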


Viewed from this perspective, belligerent jump inversion should be regarded as a complexity-compression theorem. Earlier jump inversion constructions transformed non-computable information into computable structures while largely preserving the complexity at which that information could be recovered. By contrast, the graph $\Inv{\alpha}{\mc G}$ encodes information about a $\mathbf 0^{(2\alpha+1)}$-computable graph in a form whose structural consequences appear only $\alpha$ levels higher than those of the resulting computable copy. The gap between $2\alpha+1$ and $\alpha$ is precisely what makes the construction useful for the Scott-theoretic applications discussed above.

We can also directly contrast these theorems to the Pair of Structures Theorem \cite{AK00} and the Jump Inversion Theorem  \cite{GHKMMS}. The Pair of Structures Theorem reflects $\Pi_\alpha/\Sigma_\alpha$ information in the realization of a $\Pinf{\alpha}$-formula of a computable structure. To do this, they fix structures $\mc A$ and $\mc B$ which differ on a $\Pinf{\alpha}$ formula, yet are chosen to satisfy a property called ``friendliness'' so as to make them amenable to the method. This friendliness requires that the back-and-forth relations on tuples from $\mc A$ and $\mc B$ are computably enumerable. Our approach, following Harrison-Trainor, is the opposite. We deliberately construct structures whose $\alpha$ back-and-forth relations encode information at complexity level $2\alpha$, and we exploit this complexity directly in proving \autoref{FullScottSent} and \autoref{FullBNF}.
In this sense, Belligerent Jump Inversion is the conceptual opposite of the Pair of Structures Theorem: the latter succeeds because its coding structures are maximally friendly, whereas ours succeed because they are maximally hostile to friendliness.

While our primary applications concern computable Scott analysis, the machinery has further consequences. We mention one representative example, resolving a question of Chen, Gonzalez, and Harrison-Trainor \cite[Question 1.4]{CGHT}.

\begin{restatable}{thm}{ThmMainC}\label{AnswerChenGonzalezHT}
For each computable $\alpha\geq2$, there is a computable structure $\mc{A}$ such that the set
\[\{\mc{B} | \mc{A}\leq_\alpha \mc{B}\} \]
is lightface $\Pi^0_{2\alpha}$, but not lightface $\Sigma^0_{2\alpha}$.
\end{restatable}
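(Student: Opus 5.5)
The upper bound holds for any computable $\mc A$. By the standard analysis of back-and-forth relations, the relation $\mc A\leq_\alpha\mc B$ is lightface $\Pi^0_{2\alpha}$ uniformly in (indices for) $\mc B$. For finite $\alpha$ this is an induction: $\bar a\leq_{\beta+1}\bar b$ says that for every $\bar d$ there is $\bar c$ with $\bar b\bar d\leq_\beta\bar a\bar c$, which adds a $\forall\exists$ pair. The transfinite version uses the usual care at limits, where $\leq_\lambda$ is the conjunction of $\leq_\beta$ over $\beta<\lambda$ and is therefore $\Pi^0_{2\lambda}$ (note $2\lambda=\lambda$). So the content of the theorem is the lower bound, and I would obtain it from the Belligerent Pairs Theorem (\autoref{UnfriendlyPairsInformal}, formally \autoref{UnfriendlyPairs}).

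Write $\alpha=\beta+1$ in the successor case. The Belligerent Pairs Theorem applied at $\beta$ gives structures $\mc C,\mc D$ which differ on a $\Pinf{\beta+1}=\Pinf{\alpha}$ formula. Arrange, or check from the construction, that $\mc C\leq_\alpha\mc D$ fails while $\mc C\leq_\alpha\mc C$ holds trivially. The theorem also gives, for each $\Sigma^0_{2\beta+1}$ set $S$, a uniformly computable sequence $(\mc M_n)$ with $\mc M_n\cong\mc C$ if $n\in S$ and $\mc M_n\cong\mc D$ otherwise. Take $\mc A$ to be a computable copy of $\mc C$; one exists since $S=\omega$ is allowed. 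Now suppose $\{\mc B:\mc A\leq_\alpha\mc B\}$ were $\Sigma^0_{2\alpha}$. Then for every $\Sigma^0_{2\beta+1}$ set $S$ we would have $n\in S\iff\mc A\leq_\alpha\mc M_n$, so $S$ would be $\Sigma^0_{2\alpha}=\Sigma^0_{2\beta+2}$. That is trivially true, so this naive reduction is off by one level and cannot give a contradiction.

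To gain the missing level I would use a disjoint-union or product trick. Take $\mc B_n=\bigoplus_m \mc M_{\langle n,m\rangle}$, and let $\mc A$ be the corresponding sum of copies of $\mc C$ (or a sum mixing $\mc C$ and $\mc D$ components). Then $\mc A\leq_\alpha\mc B_n$ expresses ``for all $m$, $\langle n,m\rangle\in S$''. This is possible when $\leq_\alpha$ on such sums reduces componentwise, which needs an $\alpha\geq2$ argument about matching components, typically by tagging the components with distinct constants or finite labels. This makes the set $\Pi^0_{2\beta+2}$-hard, since any $\Pi^0_{2\beta+2}$ set has the form $\forall m\,(\langle n,m\rangle\in S)$ for some $\Sigma^0_{2\beta+1}$ set $S$. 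A $\Pi^0_{2\alpha}$-complete set that is also $\Sigma^0_{2\alpha}$ contradicts the hierarchy theorem. Here the $\leq_\alpha$ requirement is what forces the universal quantifier: it demands that $\mc A$'s $\Pi_\alpha$ theory hold in $\mc B$.

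For limit $\alpha$, we have $2\alpha=\alpha$. Here I would take $\mc A=\bigoplus_k\mc C_k$, where $(\mc C_k,\mc D_k)$ is the belligerent pair at level $\beta_k$ for a fundamental sequence $\beta_k\to\alpha$. Define $\mc B_S=\bigoplus_k\mc M^{(k)}$, coding the $k$-th $\Sigma^0_{2\beta_k+1}$ condition. Then $\mc A\leq_\alpha\mc B_S$ holds iff every component is a copy of $\mc C_k$, which is a $\Pi^0_\alpha$-complete predicate. Uniformity of the Belligerent Pairs Theorem in the ordinal is needed here. The main obstacle in both cases is verifying that the back-and-forth relation on the direct sums decomposes exactly componentwise. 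This requires that $\mc C$ and $\mc D$ satisfy $\mc D\leq_\alpha\mc C$ or similar one-sided comparability, so that a $\mc D$-component cannot be rescued by matching it against a different component. I would secure this by labeling the components so that they are $\Sigma_1$-distinguishable, and by using the Scott rank bound $\alpha+1$ from the theorem.
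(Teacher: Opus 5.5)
\paragraph{A gap in the successor case.} Your upper bound is fine, and so is your limit case. The limit case needs no decomposition lemma: if sort $k$ carries $\mc D_k$ instead of $\mc C_k$, a $\Sinf{\beta_k+1}\subseteq\Pinf{\beta_k+2}$ sentence detects it, and $\beta_k+2<\alpha$. The successor case, however, has a genuine gap, and it is not the decomposition issue you flag.

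\begin{itemize}
\item \emph{The orientation is forced the wrong way.} You need $\mc C\not\leq_\alpha\mc D$, but \autoref{UnfriendlyPairs}(1) gives the opposite, $\mc C\leq_{\beta+1}\mc D$. This cannot be arranged away. If $\mc C\not\leq_{\beta+1}\mc D$, then \autoref{512} gives a $\Pinc{2\beta+1}$ sentence $\phi$ true in $\mc C$ and false in $\mc D$. Then $S=\{n:\mc M_n\models\phi\}$ would be $\Pi^0_{2\beta+1}$, contradicting completeness.
\item \emph{Consequence for your sum.} Since $\mc C\leq_\alpha\mc M_n$ for every $n$, the sum $\bigoplus_m\mc C$ lies $\leq_\alpha$-below every $\mc B_n$, so that condition is trivial. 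The only nontrivial condition the pair offers is $\mc D\leq_\alpha\mc M_n\iff n\notin S$, which is $\Pi^0_{2\beta+1}$.
\item \emph{This is unavoidable for any two-valued coding.} Let $\mc E,\mc F$ be computable and let $\mc M_k$ be uniformly computable, each isomorphic to $\mc E$ or $\mc F$. By the same use of \autoref{512}, $\{k:\mc E\leq_{\beta+1}\mc M_k\}$ is either all of $\omega$ or $\Pi^0_{2\beta+1}$.
\item \emph{So the extra level never appears.} Even granting perfect componentwise decomposition over labeled sums, $\mc A\leq_\alpha\bigoplus_m\mc M_{\langle n,m\rangle}$ is a universal quantifier over $\Pi^0_{2\beta+1}$ conditions. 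It therefore stays $\Pi^0_{2\beta+1}$ and never reaches $\Pi^0_{2\beta+2}$. Your $\forall m$ step needs a $\Sigma^0_{2\beta+1}$-hard condition of the form ``$\mc E\leq_{\beta+1}\mc M$'', and this kind of coding cannot supply one.
\end{itemize}

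\paragraph{What is missing.} You need a single computable structure whose $\leq_\alpha$-cone is already $\Pi^0_{2\alpha}$-hard. The paper takes $\mc A$ from \autoref{thm:lightfaceScott SentenceHard}. That $\mc A$ has a $\Pinf\alpha$ Scott sentence, so $\mc A\leq_\alpha\mc B$ iff $\mc B\cong\mc A$. It also comes with uniformly computable $\mc C_i$ such that $\mc C_i\cong\mc A$ iff $i\in S$, for a $\Pi^0_{2\alpha}$-complete $S$. Building this $\mc A$ takes more than the pairs theorem:
\begin{itemize}
\item For $\alpha=2$, it is the direct construction of \cite{ACHT}: a $\Pinf2$ Scott sentence with a $\Pi^0_4$-complete isomorphism problem.
\item For $\alpha=\beta+2$, one applies the full Belligerent Jump Inversion Theorem at $\beta$ to that example relativized to $\mathbf 0^{(2\beta+1)}$. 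This base case is where the extra level comes from.
\item For $\alpha=\lambda+1$, the paper gives a separate tree-of-types argument with Marker extensions.
\end{itemize}
Your proposal has no substitute for any of these ingredients, so as it stands it does not prove the theorem for any successor $\alpha$, including $\alpha=2$.
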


The article is written in five sections, including the current one, which includes introductory material.
The second section concerns the background on Scott analysis and jump inversion needed to understand our arguments and their relation to the literature. We also give a more formal statement of our main technical theorems in this section.
The third section demonstrates upper bounds for how difficult it is to distinguish two structures, or one structure from every other structure.
The fourth section develops the machinery of infinite belligerent jump inversions.
The fifth and final section proves a range of applications of this machinery, including the ones mentioned above.

\section{Background and Notation}

We begin by defining fundamental notions needed in the results proved in this paper.
This includes a discussion of infinitary logic, the back-and-forth relations, friendly jump inversion, Scott sentences, Scott rank, Scott complexity, and the universality of graphs. 
We will also provide a few important theorem statements regarding these notions.
For example, we state the pair of structures theorem \cite{AK90} and Scott's isomorphism theorem \cite{Sco65}.
From there, we discuss the results from \cite{mhtArith} regarding finite unfriendly jump inversion, which will inform and, at times, contrast many of our main results.
Finally, we state our two main technical results formally.

\subsection{Infinitary Logic}
The infinitary logic $L_{\omega_1\omega}$ allows countably infinite disjunctions and conjunctions, denoted $\Vvee$ and $\Wwedge$, but only finite strings of quantifiers.
Just like first-order logic, we obtain a normal form for any sentence in $L_{\omega_1\omega}$ by moving negations inside of the quantifiers and infinitary connectives.
In this form, $\Vvee\exists$ alternates with $\Wwedge\forall$.
We classify formulas in this normal form as $\Sigma_\alpha$ or $\Pi_\alpha$ for $\alpha\in\omega_1$:

\begin{enumerate}

\item $\varphi(\bar{x})$ is $\Sinf{0}$ and $\Pinf{0}$ if it is finitary and quantifier-free.

\item  For $\alpha\geq 1$,

\begin{enumerate}

\item  $\varphi(\bar{x})$ is $\Sinf{\alpha}$ if it has form $\Vvee_i\exists \bar{u}_i~\psi_i(\bar{x},\bar{u}_i)$, where each $\psi_i$ is $\Pinf{\beta_i}$ for some $\beta_i < \alpha$,

\item  $\varphi(\bar{x})$ is $\Pinf\alpha$ if it has form $\Wwedge_i\forall\bar{u}_i~\psi_i(\bar{x},\bar{u}_i)$, where each $\psi_i$ is $\Sinf{\beta_i}$ for some $\beta_i < \alpha$.

\item  $\varphi(\bar{x})$ is $\dSinf\alpha$ if it is the conjunction of a $\Sinf{\alpha}$ and $\Pinf\alpha$ formula.

\end{enumerate}
\end{enumerate}

Computable infinitary formulas are $L_{\omega_1\omega}$ formulas in which the infinite disjunctions and conjunctions are over computably enumerable (c.e.) sets.
To make this precise, we need to assign indices to the formulas, as in \cite[Chapter 3]{Part2}.
We classify computable infinitary formulas as $\Sinc{\alpha}$, $\Pinc{\alpha}$, and $\dSinc\alpha$ for computable ordinals $\alpha$ using the same normal form described above with the infinitary connectives restricted to c.e.\ sets. If $\phi\in \Sinf{\alpha}$ and the disjunctions and conjunctions in a formula $\phi$ are $X$-c.e., then we say $\phi\in \Sigma_{\alpha}^X$ (and similar for $\Pinf{\alpha}$ and $\dSinf{\alpha}$ formulas).

We will make use of the following proposition, which states that many formulas that use more than just c.e. disjunctions are still equivalent to computable infinitary formulas.
\begin{prop}[{\cite[Proposition 7.12]{AK00}}]\label{cxAbsorption}
	Suppose that a formula $\varphi(\bar{x})$ is a disjunction of a $\Sigma^0_\alpha$ set of (indices for) $\Sinc\alpha$ formulas. Then $\varphi(\bar{x})$ is equivalent to a $\Sinc\alpha$ formula, and we can find this $\Sinc\alpha$ formula uniformly.
\end{prop}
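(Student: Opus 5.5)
The plan is to prove \autoref{cxAbsorption} by induction on $\alpha$, using the standard normal form of $\Sinc\alpha$ formulas. Write the $\Sigma^0_\alpha$ set of indices as $W$. Every $\Sinc\alpha$ formula $\psi_e$ in $W$ is, by definition, a c.e.\ disjunction $\Vvee_{j}\exists\bar u_j\,\theta_{e,j}(\bar x,\bar u_j)$. Here each $\theta_{e,j}$ is $\Pinc{\beta_{e,j}}$ for some $\beta_{e,j}<\alpha$, and the disjuncts can be enumerated uniformly in $e$. Then $\varphi$ is equivalent to the disjunction over pairs $(e,j)$ with $e\in W$ of $\exists\bar u_j\,\theta_{e,j}$. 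So it suffices to absorb the $\Sigma^0_\alpha$ condition ``$e\in W$'' into the formula itself.

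The key step is to express membership in $W$ as a formula. Since $W$ is $\Sigma^0_\alpha$, for $\alpha\ge1$ we can write $e\in W \iff \exists n\, R(e,n)$, where $R$ is $\Pi^0_{\beta}$ for some $\beta<\alpha$. When $\alpha$ is a limit, we use the uniform fundamental sequence and a relation $R$ that is $\Pi^0_{\beta_n}$ with $\beta_n<\alpha$ depending on $n$. A set that is $\Pi^0_\beta$ (in the computable-ordinal-notation sense, $\Delta^0_1$ in $\emptyset^{(\beta)}$ complement-style) can be turned uniformly into a $\Pinc\beta$ \emph{sentence} $\chi_{e,n}$. This sentence contains no variables or relation symbols beyond $\top$ and $\bot$, and it is true iff $R(e,n)$ holds. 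This is done by induction on $\beta$. A $\Sigma^0_1$ condition $\exists s\,\text{(computable)}$ becomes the c.e.\ disjunction of $\top$ over the witnesses $s$ found. Complementation gives the $\Pi$ level by dualizing, and at higher levels we alternate c.e.\ $\Vvee$ and $\Wwedge$ following the quantifier prefix, with limit levels handled via the fundamental sequence.

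Next we assemble the formula. Set
\[\varphi'(\bar x)=\Vvee_{e,n,j}\exists\bar u_j\,\bigl(\chi_{e,n}\wedge\theta_{e,j}(\bar x,\bar u_j)\bigr).\]
The index set of this disjunction is computable. Each conjunct inside is a conjunction of two formulas of $\Pi$-type at levels below $\alpha$, namely $\Pinc{\beta_n}$ and $\Pinc{\beta_{e,j}}$. A finite conjunction of such formulas is $\Pinc{\max(\beta_n,\beta_{e,j})}$, after merging the outer $\Wwedge\forall$ blocks; the merge is a routine normal-form manipulation that is uniform in indices. Hence $\varphi'$ is $\Sinc\alpha$. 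It is equivalent to $\varphi$, since a disjunct is true exactly when $R(e,n)$ holds, so that $e\in W$, and the corresponding disjunct of $\psi_e$ holds. Every step is effective in the index of $W$ and in the indices of the $\psi_e$, which gives the uniformity.

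The main obstacle is the limit and successor bookkeeping with ordinal notations. One must check that ``$\Sigma^0_\alpha$'' in the hyperarithmetic sense, namely c.e.\ in $\emptyset^{(\alpha)}$ for infinite $\alpha$ (with the usual offset $\emptyset^{(\alpha-1)}$ for finite $\alpha$), matches the $\exists n\,\Pi^0_{<\alpha}$ decomposition exactly. It must also match the translation into $\Pinc\beta$ sentences with the right levels. I would handle this by first proving a lemma: the set $\{(\beta,x): x\in\emptyset^{(\beta)}\}$ for $\beta<\alpha$ is uniformly defined by $\Sinc{\beta+1}$ sentences, or by $\Sinc{\beta}$ sentences in the infinite case. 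This lemma is proved by effective transfinite recursion, and the statement then follows from it.
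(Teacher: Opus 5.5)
Your argument is correct and is essentially the standard Ash--Knight proof, which the paper cites without reproducing: represent membership in the $\Sigma^0_\alpha$ set by uniformly computable infinitary sentences (you unfold one level further, using $\Pinc{<\alpha}$ sentences under the existential witness $n$), conjoin these with the given formulas, and redistribute to restore the $\Sinc\alpha$ normal form. Two details need tidying: the outer index set is c.e.\ rather than computable, which is all you need, and ranging over all $e$ requires each $\psi_e$ to be well formed even for $e\notin W$; the latter is automatic when, as in every use in the paper, the formulas form a computable list from which $W$ merely selects, and otherwise a routine effective recursion that sanitizes indices arranges it.
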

We will also use the relativized form of the above proposition, which follows readily from the same argument as the one presented in \cite[Proposition 7.12]{AK00}.

\subsection{Back-and-forth relations}

We define the standard (asymmetric) back-and-forth relations $\leq_\alpha$.

\begin{defn}

Let $\mathcal{A},\mathcal{B}$ be structures in the same countable language.  Suppose that $\bar{a}$ in $\mathcal{A}$ and $\bar{b}$ in $\mathcal{B}$ are tuples of the same length.

\begin{enumerate}

\item  $(\mathcal{A},\bar{a})\leq_0(\mathcal{B},\bar{b})$ if $\bar{b}$ and $\bar{a}$ satisfy the same atomic type.\footnote{If the language is infinite, we say $(\mathcal{A},\bar{a})\leq_0(\mathcal{B},\bar{b})$ if the
	 atomic type of $\bar{a}$ and $\bar{b}$ agree on the first $|\bar{a}|$ atomic formulas.}

\item  For $\alpha > 1$, $(\mathcal{A},\bar{a})\leq_\alpha (\mathcal{B},\bar{b})$ if for each $\bar{d}$ and each $1\leq\beta < \alpha$, there exists $\bar{c}$ such that $(\mathcal{B},\bar{b},\bar{d})\leq_\beta(\mathcal{A},\bar{a},\bar{c})$.

\end{enumerate}

\end{defn}

If it is clear that $\bar{a}\in\mathcal{A}$ and $\bar{b}\in\mathcal{B}$, we may write $\bar{a}\leq_\alpha\bar{b}$ instead of $(\mathcal{A},\bar{a})\leq_\alpha (\mathcal{B},\bar{b})$. The expression $\bar{a}\equiv_\alpha\bar{b}$ will also be used as shorthand for $\bar{a}\leq_\alpha\bar{b}$ and $\bar{a}\geq_\alpha\bar{b}$. By carefully writing down the definition of the back-and-forth relation, we note that each step adds two quantifiers. A double transfinite induction gives the following lemma (also seen in the introduction of \cite{CGHT} and Exercise VIII.6 of \cite{Part2}).

\begin{lem}\label{computable version defining back-and-forth by Pi 2 alpha}
	For any $\alpha$ and $\bar{a}\in \mc A$, there is a  $\Pinf{2\alpha}$ formula $\phi(\bar{x})$ so that $\mc B\models \phi(\bar{b})$ if and only if $(\mc A, \bar{a})\leq_\alpha (\mc B, \bar{b})$.
	
	Similarly, for any $\alpha$ and $\bar{a}\in \mc A$, there is a  $\Pinf{2\alpha}$ formula $\rho(\bar{x})$ so that $\mc B\models \rho(\bar{b})$ if and only if $ (\mc B, \bar{b})\leq_\alpha(\mc A, \bar{a})$.
	
	Further, the formulas $\phi$ and $\rho$ are uniformly computable from a presentation of $\alpha$ and $(\mc A, \bar{a})$.
\end{lem}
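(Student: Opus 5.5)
We prove both parts simultaneously by transfinite induction on $\alpha$, constructing $\phi^{\mc A,\bar a}_\alpha$ (defining $\{\bar b : \bar a\leq_\alpha \bar b\}$) and $\rho^{\mc A,\bar a}_\alpha$ (defining $\{\bar b: \bar b\leq_\alpha \bar a\}$). The point of doing both at once is that the definition of $\leq_\alpha$ flips the roles of the two structures at each step. So the formula for ``$\bar a\leq_\alpha \bar y$'' is built from formulas of the form ``$(\bar y,\bar d)\leq_\beta(\bar a,\bar c)$'', which are exactly instances of the $\rho$-family at $\beta$. To get the uniformity claim, we will actually define the map $(\beta,\mc A,\bar a)\mapsto(\phi,\rho)$ by effective transfinite recursion on the given presentation of $\alpha$. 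Then the recursion theorem delivers a single computable procedure producing indices.

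For the base case $\alpha=0$, take $\phi_0(\bar x)=\rho_0(\bar x)$ to be the conjunction of the (first $|\bar a|$) atomic and negated atomic formulas true of $\bar a$ in $\mc A$. This is finitary and quantifier-free, so it is $\Pinf 0$, and it is computable from the atomic diagram of $\mc A$. For $\alpha\geq 1$, unwinding the definition gives
\[\phi_\alpha(\bar x)=\Wwedge_{1\le\beta<\alpha}\ \forall \bar u\ \Vvee_{\bar c\in \mc A^{<\omega}} \rho^{\mc A,\bar a\bar c}_\beta(\bar x,\bar u),\]
where the quantifier $\forall\bar u$ really ranges over each finite length separately, giving a conjunction over lengths. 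Since $\bar c$ ranges over a computable set of tuples, the disjunction is c.e.\ given $\mc A$. For $\rho_\alpha$ we need $\bar y\leq_\alpha\bar a$, that is, for every $\bar d\in\mc A$ there is $\bar v$ with $(\mc A,\bar a\bar d)\leq_\beta (\mc B,\bar y\bar v)$. This gives
\[\rho_\alpha(\bar x)=\Wwedge_{1\le\beta<\alpha}\ \Wwedge_{\bar d\in\mc A^{<\omega}}\ \exists\bar v\ \phi^{\mc A,\bar a\bar d}_\beta(\bar x,\bar v).\]
Here the outer quantifier over the $\bar d$ became a conjunction and the inner one an existential.

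The complexity count is then straightforward. Assume inductively that $\phi_\beta,\rho_\beta$ are $\Pinf{2\beta}$. Then $\exists\bar v\,\phi_\beta$ and $\Vvee\rho_\beta$ are $\Sinf{2\beta+1}$, so each conjunct is $\Pinf{2\beta+2}$. Because $2\beta+2\le 2\alpha$ whenever $\beta<\alpha$, both $\phi_\alpha$ and $\rho_\alpha$ are $\Pinf{2\alpha}$. For limit $\alpha$ this uses that $2\alpha=\sup_{\beta<\alpha}(2\beta+2)$, which holds because ordinal multiplication $2\cdot\alpha$ is continuous. For successor $\alpha=\gamma+1$ we get $2\alpha=2\gamma+2$, and the conjunct with $\beta=\gamma$ attains this bound exactly. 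Correctness, namely that $\mc B\models\phi_\alpha(\bar b)$ iff $\bar a\leq_\alpha\bar b$ and likewise for $\rho_\alpha$, follows immediately from the inductive hypothesis applied to the extended tuples $\bar a\bar c$ and $\bar a\bar d$.

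The step needing the most care is uniformity, together with the ordinal arithmetic used to certify the $\Pinf{2\alpha}$ label in the computable-formula indexing of \cite[Chapter 3]{Part2}. There, a $\Pinc{\delta}$ index must specify, for each conjunct, an ordinal notation below $\delta$. So we must effectively produce notations for $2\beta+1<2\alpha$ from the notations for $\beta<_O\alpha$. I would handle this by fixing a computable presentation of $\alpha$ and working inside a computable presentation of the ordinal $2\alpha$, namely the order $\alpha\times 2$ ordered lexicographically, with $\beta\mapsto 2\beta$ computable. The whole construction is then a single application of effective transfinite recursion, with the limit and successor cases handled uniformly.
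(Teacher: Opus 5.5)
Your proposal is correct and is the ``double transfinite induction'' the paper invokes without writing out: simultaneous recursion on the $\phi$- and $\rho$-families, where each level adds one $\forall\Vvee$ (resp.\ $\Wwedge\exists$) block so the level-$\beta$ conjuncts are $\Pinf{2\beta+2}$ with $2\beta+2\le 2\alpha$, and effective transfinite recursion on the notation gives uniformity. One small caution: let your conjunctions range over all $\beta<\alpha$, including $\beta=0$, since with your base case at $\le_0$ the restriction $\beta\ge 1$ (which appears to be carried over from the Ash--Knight convention, where $\le_1$ is the base case) makes $\phi_1=\top$ and collapses every level; the complexity count is unchanged because $2\cdot 0+2\le 2\alpha$ for $\alpha\ge 1$.
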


Karp \cite{Kar65} proved that these back-and-forth relations are intimately tied with the hierarchy of formulas discussed in the previous subsection.

\begin{thm}

Let $\bar{a}$ in $\mathcal{A}$ and $\bar{b}$ in $\mathcal{B}$ be tuples of the same length.  For all countable ordinals $\alpha\geq 1$, the following are equivalent:

\begin{enumerate}

\item  $(\mathcal{A},\bar{a})\leq_\alpha(\mathcal{B},\bar{b})$,

\item  all $\Pinf\alpha$ formulas satisfied by $\bar{a}$ in $\mathcal{A}$ are satisfied by $\bar{b}$ in $\mathcal{B}$,

\item  all $\Sinf \alpha$ formulas satisfied by $\bar{b}$ in $\mathcal{B}$ are satisfied by $\bar{a}$ in $\mathcal{A}$.

\end{enumerate}

\end{thm}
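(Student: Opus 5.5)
The plan is to prove Karp's theorem by simultaneous transfinite induction on $\alpha\geq 1$, showing (1)$\Rightarrow$(2), (2)$\Leftrightarrow$(3), and (2)$\Rightarrow$(1).

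The equivalence (2)$\Leftrightarrow$(3) is purely syntactic. The negation of a $\Pinf\alpha$ formula, once negations are pushed inward, is a $\Sinf\alpha$ formula, and conversely. So "every $\Pinf\alpha$ formula true of $\bar a$ is true of $\bar b$" is the contrapositive of "every $\Sinf\alpha$ formula true of $\bar b$ is true of $\bar a$." I will check this directly from the normal form definition.

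For (1)$\Rightarrow$(2), fix a $\Pinf\alpha$ formula $\Wwedge_i\forall\bar u_i\,\psi_i(\bar x,\bar u_i)$ true of $\bar a$, where each $\psi_i$ is $\Sinf{\beta_i}$ with $\beta_i<\alpha$. Suppose toward contradiction that $\mc B\models\neg\psi_i(\bar b,\bar d)$ for some $i$ and some $\bar d$.
\begin{itemize}
\item If $\beta_i=0$, then $\psi_i$ is quantifier-free. Take $\beta=1$ in the definition of $\leq_\alpha$ (legal since $\alpha\geq1$) to obtain $\bar c$ with $(\mc B,\bar b,\bar d)\leq_1(\mc A,\bar a,\bar c)$. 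This forces equal atomic types, so $\neg\psi_i(\bar a,\bar c)$ holds in $\mc A$, a contradiction.
\item If $\beta_i\geq1$, apply the definition with $\beta=\beta_i$ to get $\bar c$ with $(\mc B,\bar b,\bar d)\leq_{\beta_i}(\mc A,\bar a,\bar c)$. By the induction hypothesis in form (3), the $\Sinf{\beta_i}$ formula $\psi_i$, which is true of $(\bar a,\bar c)$ in $\mc A$, is also true of $(\bar b,\bar d)$ in $\mc B$. This is again a contradiction.
\end{itemize}
One bookkeeping point: the induction hypothesis must be stated with the roles of the two structures swapped. Since the claim is quantified over all pairs, this is harmless. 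I also need the base level to say that $\leq_1$ preserves quantifier-free formulas; this follows from the definition because $\leq_1$ unpacks to $\leq_0$ on extensions.

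For (2)$\Rightarrow$(1), suppose $\bar a\not\leq_\alpha\bar b$. Then there are $\bar d$ and $\beta<\alpha$ with $1\leq\beta$ such that for every $\bar c$ we have $(\mc B,\bar b,\bar d)\not\leq_\beta(\mc A,\bar a,\bar c)$. By the induction hypothesis in the form (1)$\Leftrightarrow$(2), for each $\bar c$ there is a $\Pinf\beta$ formula $\theta_{\bar c}(\bar x,\bar y)$ true of $(\bar b,\bar d)$ but false of $(\bar a,\bar c)$. Only countably many tuples $\bar c$ exist, so $\Theta=\Wwedge_{\bar c}\theta_{\bar c}$ is still $\Pinf\beta$. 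It holds of $(\bar b,\bar d)$, and fails of $(\bar a,\bar c)$ for every $\bar c$. Hence $\forall\bar y\,\neg\Theta(\bar x,\bar y)$ is a $\Pinf\alpha$ formula, since $\neg\Theta$ is $\Sinf\beta$ with $\beta<\alpha$. It is true of $\bar a$ and false of $\bar b$, contradicting (2).

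The main obstacle I expect is making the countable-conjunction step legitimate. Only countably many formulas may be used, so countability of $\mc A$ is needed, which holds in this paper's setting of countable structures. I also have to take care with the base case $\beta=0$ versus the requirement $\beta\geq1$ in the definition. The $\leq_0$ footnote handles infinite languages by restricting to finitely many atomic formulas at each stage, and I must check that this still lets $\leq_1$ capture every quantifier-free formula. It does, because longer tuples reveal more atomic formulas and variables can be padded.
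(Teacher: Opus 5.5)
You won't find a proof of this in the paper to compare against: the statement is quoted as background from Karp \cite{Kar65}. Your route is the standard one. You do simultaneous induction, get (2)$\Leftrightarrow$(3) by pushing negations inward, get (1)$\Rightarrow$(2) by pulling witnesses back along the back-and-forth condition, and get (2)$\Rightarrow$(1) by conjoining countably many distinguishing $\Pinf\beta$ formulas. Your inductive steps are correct for $\alpha\geq 2$, including the role swap and the appeal to countability.

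\textbf{The gap.} The induction is never anchored at $\alpha=1$, and every higher level depends on it, since the steps at $\alpha\geq2$ may invoke the theorem at level $1$. In (1)$\Rightarrow$(2) you take $\beta=1$ ``legal since $\alpha\geq1$''. But $\beta=1<\alpha$ requires $\alpha\geq2$; at $\alpha=1$ every $\psi_i$ is quantifier-free and no $\beta$ with $1\le\beta<\alpha$ exists. In (2)$\Rightarrow$(1) you pick $\beta$ with $1\le\beta<\alpha$, which again does not exist at $\alpha=1$. Nor can you rerun your argument with $\beta=0$. Each failure $\bar b\bar d\not\le_0\bar a\bar c$ does give a quantifier-free $\theta_{\bar c}$, but $\Wwedge_{\bar c}\theta_{\bar c}$ is not $\Pinf0$, because those formulas are finitary. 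So $\forall\bar y\,\neg\Theta$ would only be $\Pinf2$, one level too high.

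\textbf{The fix.} First fix $\leq_1$ explicitly. As printed, the paper's clause (stated for $\alpha>1$ with $1\le\beta<\alpha$) leaves $\leq_1$ undefined. Take $(\mc A,\bar a)\le_1(\mc B,\bar b)$ to mean that for each $\bar d$ there is $\bar c$ with $(\mc B,\bar b\bar d)\le_0(\mc A,\bar a\bar c)$; this is what you use implicitly. Then prove the case $\alpha=1$ by hand.

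For (1)$\Rightarrow$(2), suppose $\mc B\models\neg\psi_i(\bar b,\bar d)$. Lengthen $\bar d$ to $\bar d\bar d'$ so that every atomic subformula of $\psi_i$ is among the first $|\bar b\bar d\bar d'|$ atomic formulas. Pick $\bar c\bar c'$ with $\bar b\bar d\bar d'\le_0\bar a\bar c\bar c'$, and conclude $\mc A\models\neg\psi_i(\bar a,\bar c)$.

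For (2)$\Rightarrow$(1), suppose $\bar d$ witnesses $\bar a\not\le_1\bar b$. Let $\theta(\bar x,\bar y)$ be the finite conjunction of the first $|\bar b\bar d|$ atomic formulas or their negations, as they hold of $\bar b\bar d$ in $\mc B$. This single $\theta$ fails of $\bar a\bar c$ for every $\bar c$. Hence $\forall\bar y\,\neg\theta$ is a $\Pinf1$ formula true of $\bar a$ and false of $\bar b$. The finiteness of this diagram is what replaces the infinite conjunction at the bottom level.

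With this base case in place, the rest of your proof goes through as written, including your $\beta_i=0$ subcase for $\alpha\geq2$ via $\le_1$.
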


It is often useful to code a piece of non-computable information as the difference between the construction of two distinct computable structures.
One context where this comes up is when we consider elements of $2^\omega$ as structures with domain $\omega$ by bijectively assigning each relation $R$ and tuple $\bar{p}\in\omega^{<\omega}$ of the apropriate size to an index and putting $1$ in that index if $R(\bar{p})$ holds and $0$ if $\lnot R(\bar{p})$ holds.
We say that $(\mathcal{A},\mathcal{B})$ is $(\mathbf\Sigma^0_\alpha,\mathbf\Pi^0_\alpha)$-hard, if for every $\mathbf\Sigma^0_\alpha$ subset $K\subseteq2^\omega$, there is a continuous operator $\Gamma:2^\omega\to2^\omega$ such that $\Gamma^X\cong\mathcal{A}$ if $X\in K$ and $\Gamma^X\cong\mathcal{B}$ if $X\not\in K$.
An immediate consequence of the above theorem is that if $(\mathcal{A},\mathcal{B})$ is $(\mathbf\Sigma^0_\alpha,\mathbf\Pi^0_\alpha)$-hard, then $\mathcal{A}\leq_\alpha\mathcal{B}$.
This is because, were $\mathcal{A}\not\leq_\alpha\mathcal{B}$ to be true, this would be witnessed by some $\Pinf \alpha$ formula $\varphi$ that is true in $\mathcal{A}$ yet false in $\mathcal{B}$. It would follow, if $(\mathcal{A},\mathcal{B})$ were also $(\mathbf\Sigma^0_\alpha,\mathbf\Pi^0_\alpha)$-hard, that membership in an arbitrary $K\in\mathbf\Sigma^0_\alpha$ could be decided in a $\mathbf{\Pi}_\alpha$ manner by checking if $\Gamma(x)\models\varphi$, a contradiction.
It turns out that the converse of this observation is also true.

\begin{thm}[\cite{AK90}]\label{bold-face hardness}
$(\mathcal{A},\mathcal{B})$ is $(\mathbf\Sigma^0_\alpha,\mathbf\Pi^0_\alpha)$-hard if and only if $\mathcal{A}\leq_\alpha\mathcal{B}$.
\end{thm}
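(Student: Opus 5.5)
The forward direction has already been argued in the text preceding the statement. If $\mathcal A\not\leq_\alpha\mathcal B$, Karp's theorem gives a $\Pinf\alpha$ sentence true in $\mathcal A$ and false in $\mathcal B$. Pulling it back along a continuous $\Gamma$ would make every $\mathbf\Sigma^0_\alpha$ set $\mathbf\Pi^0_\alpha$, which is impossible. So the work is the converse: assuming $\mathcal A\leq_\alpha\mathcal B$, build for each $\mathbf\Sigma^0_\alpha$ set $K$ a continuous $\Gamma$ with $\Gamma^X\cong\mathcal A$ if $X\in K$ and $\Gamma^X\cong\mathcal B$ otherwise. Since everything is boldface, we may relativize to a parameter $P$ that computes presentations of $\mathcal A$ and $\mathcal B$, a notation for $\alpha$, and whatever fragment of the back-and-forth relations we need. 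We may also take $K$ to be $\Sigma^0_\alpha(P)$. It then suffices to produce a $P$-computable, uniform-in-$X$ procedure, i.e.\ a $\Sigma^{0}_\alpha(X\oplus P)$-to-structure reduction.

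The construction I would use is the Ash--Knight $\alpha$-system (metatheorem) approach. Work with the family of finite partial isomorphism attempts: pairs $(\mathcal C,\bar c)$ with $\mathcal C\in\{\mathcal A,\mathcal B\}$ and $\bar c$ a finite tuple. Order them by the relations $\leq_\beta$ for $\beta<\alpha$. These restricted relations can be handed to the construction through the oracle $P$; this is exactly where boldface saves us from needing friendliness. The induction on $\alpha$ goes as follows.

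\textbf{Base and successor case.} For $\alpha=\beta+1$, write $X\in K$ as $\exists n\,R(X,n)$ with $R$ a $\Pi^0_\beta$ predicate. By induction, the $\Sigma^0_\beta/\Pi^0_\beta$ questions can be answered by the $\beta$-level machinery. The construction builds a copy of $\mathcal B$ while tentatively guessing. When a witness $n$ appears to work at level $\beta$, the hypothesis $\mathcal A\leq_{\beta+1}\mathcal B$ lets us switch: for the current finite piece $\bar b$, there is a $\bar a$ with $(\mathcal B,\bar b)\leq_\beta(\mathcal A,\bar a)$. We therefore extend toward $\mathcal A$ with the guarantee that the remaining task is a $\beta$-level one, handled by the inductive hypothesis applied to the pair of tuples.

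\textbf{Limit case.} We diagonalize along a fundamental sequence $\beta_i\to\alpha$, using $\mathcal A\leq_{\beta_i}\mathcal B$ at each stage.

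Formally, I would state the inductive claim for tuples: if $(\mathcal C,\bar c)\leq_\alpha(\mathcal D,\bar d)$, then the pair is $(\mathbf\Sigma^0_\alpha,\mathbf\Pi^0_\alpha)$-hard via operators that extend a given finite partial structure. I would then verify it using Ash's metatheorem with the $\leq_\beta$ relations ($\beta<\alpha$) in the oracle.

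The main obstacle is the bookkeeping ensuring the limit structure is a genuine isomorphic copy. We must ensure that infinitely many switches in the wrong case do not occur, and that the true outcome stabilizes along the true path of the approximation. In practice I would import the Ash--Knight metatheorem (or Montalb\'an's true-stage machinery) as a black box rather than redo the priority argument. This reduces the proof to checking the extendibility condition of an $\alpha$-system. That condition is exactly the back-and-forth clause: for each $\bar d$ and each $\beta<\alpha$ there is a $\bar c$ with $(\mathcal B,\bar b,\bar d)\leq_\beta(\mathcal A,\bar a,\bar c)$.
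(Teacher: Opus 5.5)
Your proposal is correct and matches the paper. The forward direction is the Karp-theorem argument given just before the statement. The converse is exactly the paper's remark: relativize the Pair of Structures Theorem, uniformly in the oracle so as to get a continuous $\Gamma$, to a real $P$ computing $\mathcal A$, $\mathcal B$, a notation for $\alpha$, and the relations $\leq_\beta$ for $\beta<\alpha$; this makes the pair $\alpha$-friendly relative to $P$. Your hand-sketched successor/limit induction is not needed, and on its own it would not handle the infinitely many nested switches, so it is invoking the relativized Ash--Knight metatheorem, as you ultimately do, that carries the argument.
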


What is more, in \cite{AK90}, the tougher direction of the above result is effectivized.
They consider an index sets of recursive structures instead of effective Borel sets.
We say that $(\mathcal{A},\mathcal{B})$ is $(\Sigma^0_\alpha,\Pi^0_\alpha)$-hard if for every $\Sigma^0_\alpha$ set $K\subseteq\omega$, there is an effective map $f:\omega\to\omega$ such that if $x\in K$ then $\Phi_{f(x)}\cong\mathcal{A}$ and  if $x\not\in K$ then $\Phi_{f(x)}\cong\mathcal{B}$
Here, $\Phi_{f(x)}$ codes a structure by coding the corresponding element of $2^\omega$ as a computable real.
To ensure that $(\mathcal{A},\mathcal{B})$ is $(\Sigma^0_\alpha,\Pi^0_\alpha)$-hard in the presence of $\mathcal{A}\leq_\alpha\mathcal{B}$, additional effectiveness assumptions are needed.

\begin{defn}
$\mc{A}$ and $\mc{B}$ are $\alpha$-friendly if the set of triples $(\bar{a},\bar{b},\beta)$ where $\beta<\alpha$, $\bar{a}\in\mc{A}$ and $\bar{b}\in\mc{B}$ so that $\bar{a}\leq_\beta\bar{b}$ is computably enumerable, and similarly for the set of triples $(\bar{b},\bar{a},\beta)$  so that $\bar{b}\leq_\beta\bar{a}$.
\end{defn}

\begin{thm}[The Pair of Structures Theorem \cite{AK90}]\label{lightface hardness}
If $\mathcal{A}\leq_\alpha\mathcal{B}$ and $\mc{A}$ and $\mc{B}$ are $\alpha$-friendly, then $(\mathcal{A},\mathcal{B})$ is $(\Sigma^0_\alpha,\Pi^0_\alpha)$-hard.
\end{thm}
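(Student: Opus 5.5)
We prove the Pair of Structures Theorem by transfinite induction on $\alpha$, building the structure for each $n$ effectively. Since $\alpha$-friendliness makes the back-and-forth relations $\leq_\beta$ ($\beta<\alpha$) c.e., we can organize the construction with Ash's metatheorem on $\alpha$-systems, or equivalently Montalbán's true stages. Fix a $\Sigma^0_\alpha$ set $K$ and a computable approximation to $\Delta^0_\alpha$ information along true stages. For each $n$, the $n$th construction builds a computable structure by finite stages. At each stage $s$ it commits to a finite partial isomorphism $p_s$ from the part built so far into $\mc A$ or into $\mc B$; the target is chosen by the stage-$s$ guess at whether $n\in K$.

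The key invariant is this: whenever the guess changes between stages $s<t$ and the stages are $\beta$-apart (i.e. $s$ and $t$ agree as true stages up to level $\beta$), the tuples mapped to satisfy $\bar c_s \leq_\beta \bar c_t$ in the appropriate direction. This is what lets us re-map the finite part built so far at the later stage. Concretely, when we move from a tuple $\bar a$ in $\mc A$ to a tuple $\bar b$ in $\mc B$ with $\bar a\leq_\beta\bar b$, the definition of $\leq_\beta$ lets us extend $\bar b$ by any $\bar d$ and find $\bar c$ with $(\bar b,\bar d)\leq_\gamma(\bar a,\bar c)$ for $\gamma<\beta$. Because the relations are c.e., we can search for such witnesses effectively. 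Each extension thus lowers the level, and this matches the true-stage combinatorics exactly.

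The base of the construction comes from the hypothesis $\mc A\leq_\alpha\mc B$. A $\Sigma^0_\alpha$ condition $n\in K$ means that some $\Pi^0_\beta$ witness ($\beta<\alpha$) eventually appears true. We start building toward $\mc B$. When a new witness is guessed at level $\beta$, we switch toward $\mc A$, which is legitimate because the empty tuples satisfy $\mc A\leq_\alpha \mc B$, hence $\geq$ holds at every $\beta<\alpha$ in the needed direction. When that guess is injured, we switch back using the relation at the lower level.

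Verification has two parts. First, along the true stages the maps $p_s$ stabilize coherently, so their union gives an isomorphism onto $\mc A$ if $n\in K$ and onto $\mc B$ otherwise. Totality (ontoness) is ensured by interleaving back-steps that add elements of the target. Second, everything is uniform in $n$. The main obstacle is limit levels $\beta$: we must ensure the c.e. enumeration of $\leq_\beta$ for limit $\beta$ provides witnesses compatible with all $\gamma<\beta$ simultaneously. This is exactly where the $\alpha$-system or true-stage bookkeeping is needed rather than a naive finite-injury argument. I would handle it by directly invoking the Ash--Knight metatheorem with the instance given by the triples $(\bar a,\bar b,\beta)$, checking its extendability condition from the definition of $\leq_\beta$.
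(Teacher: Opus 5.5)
The paper gives no proof of this classical result (it is quoted from \cite{AK90}), and your plan is the standard Ash--Knight argument. You apply Ash's metatheorem to the $\alpha$-system of (structure, tuple) pairs, ordered by the back-and-forth relations $\le_\beta$ for $\beta<\alpha$ (c.e.\ by friendliness), with the monotone $\Delta^0_\alpha$ instruction ``aim at $\mc A$ once a witness for $n\in K$ has appeared''; the true run then switches at most once, from $\mc B$ to $\mc A$, and no induction on $\alpha$ is needed.

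Two points to tighten. First, the switch happens after a nonempty $\bar b$ has been built, so it is licensed by the unfolded clause of $\mc A\le_\alpha\mc B$ (every $\bar b\in\mc B$ has an $\bar a\in\mc A$ with $(\mc B,\bar b)\le_\beta(\mc A,\bar a)$ for each $\beta<\alpha$), not merely by $\mc A\geq_\beta\mc B$ for the empty tuples. Second, the real content of the extendability check is the induction that pulls a chain $\ell^0\le_{\beta_0}\ell^1\le_{\beta_1}\cdots\le_{\beta_{k-1}}\ell^k$ back to an extension of $\ell^0$: each link is used once to produce a witness at one specific lower level, followed by a single crossing into $\mc A$ at level $\beta_0<\alpha$ when the instruction switches. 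Limit levels therefore require nothing beyond friendliness; no witness compatible with all $\gamma<\beta$ simultaneously is ever needed.
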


Note that the leftward direction of \autoref{bold-face hardness} follows from \autoref{lightface hardness} by relativizing \autoref{lightface hardness} to a degree which makes $\mc A$ and $\mc B$ friendly.

This remarkable result has been widely applied in computable structure theory \cite{GHKMMS,CGHT,GK}.
In the setting of $\alpha$-friendliness, it is the best you can hope for, as it turns out that  $\mathcal{A}\not\leq_\alpha\mathcal{B}$ is witnessed by a computable formula in this setting.
It turns out that, outside of the setting of $\alpha$-friendliness, more is possible.
In particular, our belligerent pairs theorem will give structures $\mc{A}$ and $\mc{B}$ that have $\mathcal{A}\leq_\alpha\mathcal{B}$  but not $\mathcal{A}\geq_\alpha\mathcal{B}$, which can code roughly double the number of jumps as $\alpha$-friendly structures.
More precisely, we show that there are computable sturctures $\mc{A},\mc{B}$ so that $\mc A \leq_\alpha \mc B$ and $\mathcal{A}$ so that $\mc A \leq_\alpha \mc B$ and $\mathcal{A}\not\geq_\alpha\mathcal{B}$, yet $(\mc A,\mc B)$ is $(\Sigma^0_{2\alpha+1},\Pi^0_{2\alpha+1})$-hard.

The pair of structures theorem is tied to our understanding of jump inversion, just as our new belligerent pairs theorem will be tied to a new notion of belligerent jump inversion.
In \cite{GHKMMS}, they use the pair of structures theorem to prove the iterated-jump inversion theorem, which inverts the notion of the $(\alpha+1)$th ``structural jump'' (see \cite{GHKMMS} for details).
\begin{thm}[\cite{GHKMMS},{\cite[Theorem X.5]{Part2}}]
For every computable ordinal $\alpha$ and every structure $\mc{A}$, there is a structure $\mc{C}$ whose $(\alpha+1)th$ structural jump is effectively bi-interpretable with $\mc{A}\oplus \mathbf{0}^{\alpha+1}$.
\end{thm}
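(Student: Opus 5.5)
The plan is to prove this by a Marker-style coding of $\mc A$ into $\mc C$, using a friendly pair from the Pair of Structures Theorem (\autoref{lightface hardness}) as the gadget. First fix, uniformly in the computable ordinal $\alpha$, two computable structures $\mc E_0$ and $\mc E_1$ with the following properties:
\begin{itemize}
\item $\mc E_0 \leq_{\alpha} \mc E_1$ but $\mc E_0\not\equiv_{\alpha+1}\mc E_1$;
\item the pair is $(\alpha+1)$-friendly;
\item the difference is witnessed by a computable $\Pinc{\alpha+1}$ sentence $\theta$ with $\mc E_0\models\theta$ and $\mc E_1\models\neg\theta$;
\item each of $\mc E_0,\mc E_1$ has computable $\beta$-back-and-forth relations for all $\beta\le\alpha$, uniformly.
\end{itemize}
The standard choice is a pair of well-orders such as $\omega^{\alpha}$ versus $\omega^{\alpha}\cdot 2$, suitably adjusted to land exactly at level $\alpha+1$, or the corresponding trees; their back-and-forth relations are classically computed, so friendliness is immediate.

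Then build $\mc C$ from $\mc A$. The domain contains the domain of $\mc A$ as a unary-named sort. For each relation symbol $R_i$ of $\mc A$ and each tuple $\bar a$ of the right arity, we attach, through a definable connecting sort, a fresh copy of $\mc E_0$ if $\mc A\models R_i(\bar a)$ and of $\mc E_1$ otherwise. Any relations of $\mc A$ are removed from the language of $\mc C$.

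The interpretation of $\mc A\oplus\mathbf 0^{(\alpha+1)}$ in the $(\alpha+1)$th structural jump of $\mc C$ proceeds in two parts. The relation ``the copy attached to $\bar a$ satisfies $\theta$'' is $\Pinc{\alpha+1}$ in $\mc C$, so it is among the relations added in the jump, and it recovers $R_i$. The $\mathbf 0^{(\alpha+1)}$ part is available in the $(\alpha+1)$th jump of any structure, by the usual coding of $\Pi^0_{\alpha+1}$ facts as $\Pinc{\alpha+1}$ sentences. Both translations are effective and uniform.

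The converse direction, that the jump of $\mc C$ is effectively interpretable in (indeed computable from) $\mc A\oplus\mathbf 0^{(\alpha+1)}$, is the main obstacle. One has to show that the jump adds no information beyond $\mc A$ and $\mathbf 0^{(\alpha+1)}$: every $\Pinc{\alpha+1}$ relation on tuples of $\mc C$ must be decidable, uniformly, from $\mc A\oplus\mathbf 0^{(\alpha+1)}$.

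I would attack this through a decomposition lemma for back-and-forth types. A tuple of $\mc C$ splits into a tuple $\bar a$ from the $\mc A$-sort together with finitely many tuples inside attached copies of $\mc E_0$ or $\mc E_1$. I would prove, by induction on $\beta\le\alpha+1$, that the $\beta$-back-and-forth type of such a tuple in $\mc C$ is determined, uniformly and computably, by three things: the atomic diagram of $\mc A$ on $\bar a$ together with finitely many bits recording which gadget sits at each relevant tuple; the $\beta$-back-and-forth types of the pieces inside the gadgets; and the count of gadgets of each kind, which is infinite and so harmless. The gadget-level relations are computable by friendliness, so the only non-computable input at levels $\le\alpha$ is finitely much of $\mc A$.

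Consequently, each $\Pinc{\alpha+1}$ formula evaluated on $\mc C$ becomes a $\Pi^0_{\alpha+1}$ question relative to $\mc A$, decided by $\mc A\oplus\mathbf 0^{(\alpha+1)}$. The delicate point is the step from $\Sigma^0_{\alpha+1}(\mc A)$ to $\mathbf 0^{(\alpha+1)}\oplus\mc A$. This uses the fact that the gadget parts are uniformly computable, so quantifier alternations over them can be absorbed into the computable oracle $\mathbf 0^{(\alpha+1)}$, while $\mc A$ enters only through finite atomic information. I would handle this absorption with \autoref{cxAbsorption}. Finally, I would verify that the two interpretations compose to the identity up to effective isomorphism, which is routine once the decomposition lemma is in place.
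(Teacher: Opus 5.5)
There is a genuine gap. It lies in the choice of gadgets, not in the absorption step you flag as delicate.

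\textbf{The jump sees through your gadgets.} By your own accounting, the $(\alpha+1)$st jump contains every $\Pinc{\alpha+1}$ relation of $\mc C$. Yet your gadgets are told apart by the $\Pinc{\alpha+1}$ sentence $\theta$, so the jump can quantify over the $\mc A$-sort. For binary $R_i$, the formula ``for all $y$ in the $\mc A$-sort, the copy attached to $(x,y)$ satisfies $\theta$'' is again $\Pinc{\alpha+1}$, since relativizing $\theta$ to the attached copy does not raise its complexity. It defines $\{x:\forall y\,R_i(x,y)\}$, a $\Pi_1$ relation of $\mc A$.

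Such relations are in general not computable from $\mc A\oplus\mathbf 0^{(\alpha+1)}$. For instance, let $R_i$ be the non-adjacency relation of a graph made of infinitely many isolated vertices and infinitely many disjoint edges. Then this set is the set of isolated vertices. It is not $\Sinf{1}$-definable even with parameters, so it is not relatively intrinsically computable relative to any oracle. Hence the jump of your $\mc C$ cannot be effectively interpreted in $\mc A\oplus\mathbf 0^{(\alpha+1)}$. This already happens at $\alpha=0$, where your description of $\mc C'$ is certainly correct.

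In terms of your plan, the decomposition lemma is false at the top level $\beta=\alpha+1$. The type of an $\mc A$-sort tuple depends on the gadgets hanging off all of its extensions, not on finitely many bits. \autoref{cxAbsorption} cannot help, because the offending quantifier ranges over $\mc A$, not over the computable gadgets.

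\textbf{What the standard construction does differently.} The construction the paper has in mind (sketched after the statement, and imitated with belligerent pairs in \autoref{prop:limit4.3} and \autoref{thm:Inf4.3}) differs from yours in two essential ways.
\begin{enumerate}
\item The two gadgets are back-and-forth equivalent at the levels the jump sees, and are separated only one level higher. Then swapping gadgets away from a given tuple does not change its type at those levels, which is exactly what makes a decomposition lemma true.
\item A single gadget makes $R_i$ definable only one-sidedly at the separating level. So the gadgets are attached as complementary ordered pairs: $(\mc E_0,\mc E_1)$ at tuples in $R_i$ and $(\mc E_1,\mc E_0)$ at tuples outside it (the $R_{u,v}/L_{u,v}$ device). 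This makes $R_i$ and its complement equally simple, hence computable in the jump.
\end{enumerate}

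\textbf{Indexing for infinite $\alpha$.} With the paper's conventions, $\mathbf 0^{(\alpha+1)}$ is only $\Delta^0_{\alpha+1}$. So your step ``a $\Pi^0_{\alpha+1}$ question decided by $\mc A\oplus\mathbf 0^{(\alpha+1)}$'' fails even for computable $\mc A$. Correspondingly, the $(\alpha+1)$st jump can only make $\Delta^{\mathrm c}_{\alpha+1}$ relations computable. The right gadgets are then an $(\alpha+1)$-friendly pair with $\mc E_0\le_{\alpha+1}\mc E_1$ and $\mc E_1\not\le_{\alpha+1}\mc E_0$, used in complementary pairs. Note that this forces $\mc E_0\equiv_\alpha\mc E_1$, not merely $\mc E_0\le_\alpha\mc E_1$. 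Under that indexing, your single-gadget coding makes $R_i$ merely $\Pinc{\alpha+1}$, so it is the interpretation of $\mc A$ in the jump that breaks.
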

This theorem provides a way to construct structures that resemble $\mc{A}$, but have the information of $\mc{A}$ hidden behind $\alpha+1$ jumps.
It is proven by first considering $\mc{A}$ as an equivalent graph $\mc{G}$.
Then one replaces each edge in $\mc{G}$ with a particular structure and a non-edge with a similar but distinct structure.
The structures used in this process and their theoretical properties are furnished by the pair of structures theorem.
In particular, the structures used are $(\alpha+1)$-friendly.
We will follow a similar pattern of replacing edges and non-edges with particular structures when proving the belligerent equivalent of this theorem, but we will use structures provided by our Belligerent Pairs Theorem.

\subsection{Scott analysis}
One of the main attractions of infinitary logic is its ability to describe any countable structural property, a fact formalized by the following theorem.

\begin{thm}[Vaught's refinement of the Lopez-Escobar Theorem, Lopez-Escobar \cite{Lop65}, Vaught \cite{Vau74}]\label{Lopez-Escobar}
	If $\mathbb{K}\subseteq \Mod(L)$ is any set of $L$-structures which is closed under isomorphism and is $\mathbf{\Pi}^0_\alpha$, then there exists some $\Pinf{\alpha}$ $L$-formula which is true of exactly the models in $\mathbb{K}$. Similarly if $\mathbb{K}$ is $\mathbf{\Sigma}^0_{\alpha}$ then it is defined by a $\Sinf\alpha$ formula.
\end{thm}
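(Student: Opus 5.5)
The natural plan is the classical Vaught-transform argument. The $\Sigma$ half follows from the $\Pi$ half by complementation: if $\mathbb{K}$ is $\mathbf{\Sigma}^0_\alpha$, its complement is $\mathbf{\Pi}^0_\alpha$, and the negation of a $\Pinf\alpha$ sentence pushed into normal form is $\Sinf\alpha$. So we concentrate on invariant $\mathbf{\Pi}^0_\alpha$ sets $\mathbb{K}\subseteq \Mod(L)$, viewed as subsets of $2^\omega$ via the coding of structures on domain $\omega$ described earlier.

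We will prove, by induction on $\alpha\geq 1$, a stronger statement about the \emph{Vaught transforms}. For a Borel set $B\subseteq\Mod(L)$ and a finite tuple $\bar p$ of distinct naturals, write $[\bar p]$ for the basic open set of injections $\omega\to\omega$ (or permutations of $\omega$, with the logic action) extending $\bar p$. Define
\[ B^{*\bar p}=\{\mathcal M : \forall^* g\in[\bar p]\ (g\cdot\mathcal M\in B)\},\qquad B^{\triangle\bar p}=\{\mathcal M : \exists^* g\in[\bar p]\ (g\cdot\mathcal M\in B)\}, \]
using ``comeager many'' and ``nonmeagerly many''. The claim is that if $B$ is $\mathbf{\Pi}^0_\alpha$, then there is a $\Pinf\alpha$ formula $\varphi_{\bar p}(\bar x)$ such that $\mathcal M\in B^{*\bar p}$ iff $\mathcal M\models\varphi_{\bar p}(\bar p)$, and dually $\mathbf{\Sigma}^0_\alpha$ sets have $\Sinf\alpha$ transforms $B^{\triangle\bar p}$. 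Once this is shown, take $\bar p$ empty: since $\mathbb{K}$ is invariant, $\mathbb{K}^{*}=\mathbb{K}$, giving the required sentence.

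For the base case, a basic open set $B$ is determined by finitely many atomic facts about finitely many naturals $\bar q$. By the Baire category theorem, $B^{\triangle\bar p}$ holds iff there exist elements extending $\bar p$ to a tuple whose atomic type satisfies those facts. This is an existential formula, and open sets are countable unions of basic ones, so $\mathbf{\Sigma}^0_1$ gives $\Sinf1$. For the inductive step, write a $\mathbf{\Sigma}^0_\alpha$ set as $B=\bigcup_n B_n$ with each $B_n\in\mathbf{\Pi}^0_{\beta_n}$, $\beta_n<\alpha$. Then $B^{\triangle\bar p}=\bigcup_n B_n^{\triangle\bar p}$, and the key lemma (Montgomery/Vaught, via Kuratowski--Ulam) is
\[ B_n^{\triangle\bar p}\ \text{holds iff}\ \exists \bar q\supseteq\bar p\ \ B_n^{*\bar q}. \]
The right side is $\exists\bar u\,\varphi^n_{\bar q}(\bar x,\bar u)$ with $\varphi^n$ in $\Pinf{\beta_n}$ by induction, so the whole is a disjunction of such, hence $\Sinf\alpha$. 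The $\Pi$ case is dual: $B^{*\bar p}=\neg(\neg B)^{\triangle\bar p}$, which becomes $\bigwedge\forall$.

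The main obstacle is justifying the identity $B^{\triangle\bar p}=\bigcup_{\bar q\supseteq\bar p}B^{*\bar q}$ for $B$ Borel. It requires that $g\mapsto g\cdot\mathcal M$ is continuous, so that $\{g: g\cdot\mathcal M\in B\}$ has the Baire property, together with the localization principle for sets with the Baire property in the Polish group $S_\infty$. Moving from the ordinals attached to individual tuples $\bar q$ to a single formula in the variables $\bar x$ also needs care: the tuples $\bar q$ are replaced by existentially quantified variables, and we must check that the formula depends only on the pattern of $\bar q$ relative to $\bar p$. Since the language and the set of tuple patterns are countable, all conjunctions and disjunctions stay countable.
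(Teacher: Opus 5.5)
The paper gives no proof of this statement; it is quoted as a classical result of Lopez-Escobar and Vaught. Your proposal is the standard Vaught-transform proof from that literature, and it is correct.

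The one point you leave open is that the formula for $B^{*\bar q}$ should depend only on $|\bar q|$. This follows from equivariance. Use the pullback $g^*\mathcal M$, defined by $g^*\mathcal M\models R(\bar i)$ iff $\mathcal M\models R(g\bar i)$; this is $g^{-1}\cdot\mathcal M$ for the logic action, and it is what makes $\mathcal M\models\varphi(\bar p)$ the right target. With this convention, $(h\circ g)^*\mathcal M=g^*(h^*\mathcal M)$ for every permutation $h$, and $g\mapsto h\circ g$ is a homeomorphism from $[\bar p]$ onto $[h\bar p]$. Hence $\mathcal M\in B^{*h\bar p}$ iff $h^*\mathcal M\in B^{*\bar p}$. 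So a formula that works for one injective tuple works for every injective tuple of that length, once you include the distinctness conditions on the quantified variables.
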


We will make use of the following effective version of Theorem \ref{Lopez-Escobar}:

\begin{thm}[Vanden Boom's Lopez-Escobar Theorem \cite{VandenBoom}]\label{vdB}
    If $\mathbb{K}\subseteq \Mod(L)$ is a set of $L$-structures which is closed under isomorphism and is $\Pi^0_\alpha$, then there exists a $\Pinc{\alpha}$ $L$-formula which is true of exactly the models in $\mathbb{K}$. Moreover, one can find the formula effectively in a $\Pi^0_\alpha$-description of $\mathbb{K}$.
\end{thm}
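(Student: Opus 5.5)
The plan is to run the classical Vaught-transform proof of \autoref{Lopez-Escobar}, checking that each step can be carried out uniformly and computably from a $\Pi^0_\alpha$-description of $\mathbb{K}$. Fix a computable name for the Borel code of $\mathbb{K}$: a well-founded computable tree whose leaves are basic clopen sets $[\sigma]$ of $\Mod(L)$, with $\sigma$ a finite piece of atomic diagram on $\{0,\dots,n-1\}$, and whose internal nodes are alternately computable unions and intersections, ranked so that the root is $\Pi^0_\alpha$.

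For every node $X$ of the code and every tuple $\bar{a}$ of distinct natural numbers, I will define a formula $X^{*}(\bar{x})$ by effective transfinite recursion on the code. Its intended meaning, in a countable structure $\mc M$ with tuple $\bar{m}$, is the Baire-category statement that $X$ is forced in $\mc M$ below $\bar m$. Precisely, for comeager many bijections $\pi:\omega\to\mc M$ extending $\bar a\mapsto\bar m$, the pullback $\pi^{-1}(\mc M)$ lies in $X$. The clauses are as follows.
\begin{itemize}
\item For a leaf $[\sigma]$, $X^{*}$ is the finitary quantifier-free formula saying that $\sigma$ holds on $\bar x$. If $\sigma$ mentions numbers outside $\bar a$, it is instead the quantifier-free statement that this is consistent.
\item For an intersection $X=\bigcap_i X_i$, set $X^{*}(\bar x)=\Wwedge_i\forall\bar y\,X_i^{*}(\bar x,\bar y)$, where the $\bar y$ range over extensions of $\bar a$ with new distinct elements.
\item For a union $X=\bigcup_i X_i$, set $X^{*}(\bar x)=\Wwedge_{\bar y}\Vvee_{\bar z}\exists\,\ldots$. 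Unwinding this in normal form, the usual trick is the "dense below" clause: $X^{*}(\bar x)=\Wwedge\forall\bar y\Vvee_i\exists\bar z\,X_i^{*}(\bar x,\bar y,\bar z)$. To keep the levels exact, I will instead use the standard complement form. I will transform $\Sigma$-nodes to $\Vvee\exists$-formulas of the same level, and transform $\Pi$-nodes to $\Wwedge\forall$ applied to these, as in Vaught's proof.
\end{itemize}
By induction on the code, a $\Sigma^0_\beta$ node yields a $\Sinc\beta$ formula and a $\Pi^0_\beta$ node yields a $\Pinc\beta$ formula. All conjunctions and disjunctions range over computable sets of indices (tuples and children), so the construction is uniform in the code, which gives the "moreover" clause.

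Next comes the semantic verification, by induction on the code. The claim is that $\mc M\models X^{*}(\bar m)$ iff $\{\pi\supseteq(\bar a\mapsto\bar m): \pi^{-1}\mc M\in X\}$ is comeager in the corresponding basic open set of the space of bijections. The leaf case is immediate. The intersection case uses that a countable intersection of sets is comeager in an open set $U$ iff each set is comeager in every basic open subset of $U$. The union case uses that a set with the Baire property is non-meager iff it is comeager in some basic open set; this is the Montgomery/Baire property lemma applied through the induction, since all pullbacks are Borel.

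Finally, the $0$--$1$ law gives the result. Because $\mathbb{K}$ is closed under isomorphism, for each countable $\mc M$ the set of $\pi$ with $\pi^{-1}\mc M\in\mathbb{K}$ is either everything or empty. Hence $\mc M\models \mathbb{K}^{*}$ (the transform at the empty tuple) iff $\mc M\in\mathbb{K}$.

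The main obstacle I expect is getting the level bookkeeping exactly right at the union and intersection steps, so that no extra quantifier alternation is introduced. The forcing clause for unions naturally has the shape ``$\forall$ extension $\exists$ further extension'', which threatens to add a level. I would handle this as Vanden Boom does: define the transform only on $\Sigma$-nodes, using the $\Vvee\exists$ clause. Then obtain $\Pi$-nodes by negating the transform of the complementary $\Sigma$ code, and rely on the Baire-category lemma to show that the $\Vvee\exists$ clause alone already captures ``comeager below some extension''. I must also check that the induction really uses only computable (c.e.) index sets at limit levels of $\alpha$.
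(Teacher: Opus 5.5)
The paper does not prove this statement. It is quoted from Vanden Boom and used only as a black box, in Item (4) of \autoref{thm:Inf4.3}. Your overall route is the standard proof and the right one: Vaught transforms built by effective recursion along a computable Borel code, the Baire property of the pullbacks, and the $0$--$1$ law at the empty tuple. However, two steps fail as you have written them.

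The first is the leaf clause. When $\sigma$ mentions numbers outside $\bar a$, whether $\pi^{-1}\mc M\in[\sigma]$ for comeager (or non-meager) many $\pi$ extending $\bar a\mapsto\bar m$ is not a condition on the atomic type of $\bar m$. Take $\sigma$ saying $R(0,1)$ and $\bar a=(0)$. The comeager version is then $\forall y\,(y\neq x\rightarrow R(x,y))$, and the non-meager version is $\exists y\,(y\neq x\wedge R(x,y))$. By contrast, ``$\sigma$ is consistent with $\bar x$'' is simply true. So inside an $\exists\bar y$ whose $\bar y$ may be too short, your leaf clause yields a wrong formula, and ``the leaf case is immediate'' is not correct. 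The repair is to let the quantified extensions $\bar y$ range only over tuples long enough that $\bar x\bar y$ covers the numbers of $\sigma$. Then the leaf really is the quantifier-free $\sigma(\bar x\bar y)$, and its quantifier is absorbed into the level-$1$ clause.

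The second, more serious, problem is the inductive invariant. You use one invariant for every node: $X^{*}(\bar m)$ holds iff the pullback $P_X(\bar m)=\{\pi\supseteq(\bar a\mapsto\bar m):\pi^{-1}\mc M\in X\}$ is comeager in $N_{\bar m}$. The $\Vvee\exists$ clause you settle on for $\Sigma$-nodes does not express this. Take $X=[\sigma]$ with $\sigma$ saying $U(0)$, viewed as a one-term union. Your clause gives $\exists z\,U(z)$, while comeagerness of $P_X(\emptyset)$ means $\forall z\,U(z)$. So the union step of your verification is false as stated.

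You need two dual invariants:
\begin{itemize}
\item for $\Pi$-nodes, $X^{*}(\bar m)$ holds iff $P_X(\bar m)$ is comeager in $N_{\bar m}$;
\item for $\Sigma$-nodes, $X^{\triangle}(\bar m)$ holds iff $P_X(\bar m)$ is non-meager in $N_{\bar m}$, equivalently comeager in some $N_{\bar m\bar m'}$.
\end{itemize}
The clauses are then $(\bigcup_i X_i)^{\triangle}(\bar x)=\Vvee_i\exists\bar y\,X_i^{*}(\bar x\bar y)$ and $(\bigcap_i X_i)^{*}(\bar x)=\Wwedge_i\forall\bar y\,X_i^{\triangle}(\bar x\bar y)$. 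The Baire-property lemma is needed in both forms. For unions you use: non-meager iff comeager in some basic open subset. For intersections you use: comeager iff non-meager in every basic open subset. Your stated justification for intersections (``comeager in every basic open subset'') only matches your single invariant.

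Your last paragraph points toward this fix. ``Negate the $\Sigma$-transform of the complementary code'' is an equivalent way to organize it, since $(\neg B)^{\triangle}=\neg(B^{*})$. But the semantic induction must be restated with the two invariants. Once it is, the rest goes through as you describe: the level count, the computability of all index sets (including at limit levels), and the final $0$--$1$ argument applied to the $\Pi$-transform of $\mathbb K$ at the empty tuple.
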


Scott showed that the isomorphism class of any single countable structure is definable in the sense of Theorem \ref{Lopez-Escobar}.
\begin{thm}[\cite{Sco65}]
Any countable sturcture $\mc{A}$ has an $\mc L_{\omega_1,\omega}$ formula $\varphi$ called a \emph{Scott sentence} such that for any countable $\mc{B}$ we have
\[\mc{B}\models \varphi \iff \mc{B}\cong\mc{A}.\]
\end{thm}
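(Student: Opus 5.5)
Scott's theorem is classical, and I would prove it by the standard back-and-forth argument. Fix a countable structure $\mc{A}$. For each tuple $\bar{a}\in\mc{A}$ and each countable ordinal $\alpha$, I define by transfinite recursion a formula $\phi^\alpha_{\bar a}(\bar x)$. Let $\phi^0_{\bar a}$ be the conjunction of the atomic and negated atomic formulas true of $\bar a$; in an infinite language, take the first $|\bar a|$ of them, matching the footnote convention. At successor stages set
\[\phi^{\alpha+1}_{\bar a}(\bar x)=\phi^\alpha_{\bar a}(\bar x)\wedge\Wwedge_{c\in\mc A}\exists y\,\phi^\alpha_{\bar a c}(\bar x,y)\wedge\forall y\Vvee_{c\in\mc A}\phi^\alpha_{\bar a c}(\bar x,y),\]
and at limit stages take the conjunction of all earlier formulas. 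Since $\mc A$ is countable, all these conjunctions and disjunctions are countable, so each $\phi^\alpha_{\bar a}$ lies in $L_{\omega_1\omega}$.

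The first key step is a stabilization lemma. For fixed $\alpha$, the relation $\bar a\sim_\alpha\bar b$ defined by $\mc A\models\phi^\alpha_{\bar a}(\bar b)$ is an equivalence relation, and it refines as $\alpha$ increases. There are only countably many tuples, so each strictly decreasing chain of such equivalence relations has countable length. Hence there is a countable ordinal $\alpha^*$ such that for all tuples $\bar a,\bar b$ of the same length, $\sim_{\alpha^*}$ coincides with $\sim_{\alpha^*+1}$.

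The Scott sentence is then
\[\varphi=\phi^{\alpha^*}_{\emptyset}\wedge\Wwedge_{\bar a\in\mc A}\forall\bar x\bigl(\phi^{\alpha^*}_{\bar a}(\bar x)\to\phi^{\alpha^*+1}_{\bar a}(\bar x)\bigr).\]
To check $\mc A\models\varphi$, note that $\mc A\models\phi^{\alpha}_{\bar a}(\bar a)$ by induction on $\alpha$. Stabilization then gives the implications, since $\mc A\models\phi^{\alpha^*}_{\bar a}(\bar b)$ means $\bar a\sim_{\alpha^*}\bar b$, hence $\bar a\sim_{\alpha^*+1}\bar b$, which is $\mc A\models\phi^{\alpha^*+1}_{\bar a}(\bar b)$. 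Some care is needed to make the formulas symmetric enough that $\sim_\alpha$ really is an equivalence relation; I would verify this alongside the induction, or else phrase stabilization directly in terms of the sets of $\bar b$ satisfying $\phi^\alpha_{\bar a}$.

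The main step is the converse: if $\mc B$ is countable and $\mc B\models\varphi$, then $\mc B\cong\mc A$. Consider the family of pairs $(\bar a,\bar b)$ with $\mc B\models\phi^{\alpha^*}_{\bar a}(\bar b)$. The empty pair lies in this family by the first conjunct of $\varphi$. Whenever $(\bar a,\bar b)$ is in the family, the second conjunct gives $\phi^{\alpha^*+1}_{\bar a}(\bar b)$ in $\mc B$, and this yields both extension properties:
\begin{itemize}
\item for each $c\in\mc A$ there is $d\in\mc B$ with $\phi^{\alpha^*}_{\bar a c}(\bar b,d)$;
\item for each $d\in\mc B$ there is $c\in\mc A$ with $\phi^{\alpha^*}_{\bar a c}(\bar b,d)$.
\end{itemize}
Each pair in the family satisfies the same atomic type, because $\phi^{\alpha^*}$ implies $\phi^0$ (with the finite-language caveat handled by extending tuples). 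The family is therefore a back-and-forth system. Enumerate $\mc A$ and $\mc B$ and alternate the forth and back steps. The union of the resulting chain of partial maps is an isomorphism $\mc A\cong\mc B$.

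I expect the main obstacle to be the stabilization lemma, specifically making the cardinality or ordinal argument rigorous and confirming that stabilization at a single stage suffices. Once that is in place, the back-and-forth construction is routine.
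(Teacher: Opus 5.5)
Your argument is correct. It is the standard proof of Scott's theorem via the recursively defined Scott formulas $\phi^\alpha_{\bar a}$ and a countable stabilization ordinal.

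The paper itself gives no proof of this theorem; it is cited from \cite{Sco65} as background. The only related argument in the paper is \autoref{canonicalScott}, which builds a Scott sentence from formulas $\chi_{\bar a}$ defining automorphism orbits. Your sentence is essentially that canonical Scott sentence with $\chi_{\bar a}:=\phi^{\alpha^*}_{\bar a}$:
\begin{itemize}
\item Given the antecedent, your consequent $\phi^{\alpha^*+1}_{\bar a}$ amounts to the $\Wwedge\exists$ and $\forall\Vvee$ clauses of \autoref{canonicalScott}, and $\bar x\equiv_0\bar a$ is already implied.
\item You add the conjunct $\phi^{\alpha^*}_{\emptyset}$. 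This is needed to start the back-and-forth, because here the formula attached to the empty tuple is a nontrivial sentence rather than $\top$.
\end{itemize}

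The two routes differ in what they need as input. \autoref{canonicalScott} requires knowing in advance that each $\chi_{\bar a}$ defines an orbit. Your route needs only $\sim_{\alpha^*}=\sim_{\alpha^*+1}$, which is exactly what makes the back-and-forth go through. Orbit-definability of $\phi^{\alpha^*}_{\bar a}$ then follows by running your argument with $\mc B=\mc A$, starting from the pair $(\bar a,\bar b)$.

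What your route does not give is control over complexity. Each step of your recursion adds two quantifier alternations; your $\phi^\alpha_{\bar a}$ are the symmetric analogues of the $\Pinf{2\alpha}$ formulas in \autoref{computable version defining back-and-forth by Pi 2 alpha}. That is why, whenever the paper needs Scott sentences of optimal complexity, it obtains orbit definitions from \autoref{Robustness} and feeds them into \autoref{canonicalScott}.

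Some smaller points:
\begin{itemize}
\item The parenthetical ``finite-language caveat'' should read \emph{infinite}-language.
\item Your symmetry worry is moot. You only use that the sets $\{(\bar a,\bar b):\mc A\models\phi^\alpha_{\bar a}(\bar b)\}$ are non-increasing in $\alpha$. In any case, $\mc B\models\phi^\alpha_{\bar a}(\bar b)$ holds iff $(\mc A,\bar a)$ and $(\mc B,\bar b)$ are symmetrically $\alpha$-back-and-forth equivalent, and that relation is an equivalence relation by induction.
\item The stabilization step you flagged is not a real obstacle. For each pair that ever drops out, take the least stage at which it does, and let $\alpha^*$ be the supremum of these countably many countable ordinals. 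Your argument via strictly decreasing chains is also fine as stated.
\end{itemize}
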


Because formulas are ranked in $\mc L_{\omega_1,\omega}$, we can associate a rank to a structure by looking at its Scott sentences.
\begin{defn}
The least $\alpha$ such that $\mc{A}$ has a $\Pinf{\alpha+1}$ Scott sentence is called the \emph{Scott rank} of $\mc{A}$ and is denoted $SR(\mc{A})$.
\end{defn}
There have been several other notions of Scott rank proposed.
This one has emerged as preferred because of its close connection to other concepts throughout mathematical logic.
\begin{thm}[\cite{MonSR}]\label{Robustness}
Fix a countable structure $\mc{A}$. The following are equivalent:
\begin{enumerate}
	\item $SR(\mc{A})\leq\alpha$.
	\item The automorphism orbit of the each tuple in $\mc{A}$ is described by a $\Sinf{\alpha}$ formula.

\end{enumerate}
\end{thm}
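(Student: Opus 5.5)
We prove the two directions separately. The implication (2)$\Rightarrow$(1) is a direct construction of a Scott sentence. The implication (1)$\Rightarrow$(2) needs a combinatorial analysis of the back-and-forth relations inside $\mc A$, and that is where the work is. Throughout we use Karp's theorem to move between $\leq_\beta$ and agreement on $\Pinf\beta$ or $\Sinf\beta$ formulas.

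For (2)$\Rightarrow$(1), suppose that for each tuple $\bar a\in\mc A$ there is a $\Sinf\alpha$ formula $\phi_{\bar a}$ defining the orbit of $\bar a$. For $\alpha\ge1$ we may conjoin the finitary atomic diagram of $\bar a$ to $\phi_{\bar a}$ and stay $\Sinf\alpha$. We take as Scott sentence the conjunction of $\phi_{\emptyset}$ with
\[\Wwedge_{\bar a}\forall\bar x\Big(\phi_{\bar a}(\bar x)\rightarrow \Wwedge_{b}\exists y\,\phi_{\bar ab}(\bar x,y)\ \wedge\ \forall y\Vvee_b\phi_{\bar ab}(\bar x,y)\Big).\]
Here $\neg\phi_{\bar a}$ is $\Pinf\alpha$, the clauses $\exists y\,\phi_{\bar ab}$ are $\Sinf\alpha$, and the clause $\forall y\Vvee_b\phi_{\bar ab}$ is $\Pinf{\alpha+1}$. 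Hence the whole sentence is $\Pinf{\alpha+1}$. If a countable $\mc B$ satisfies it, then the family of pairs $(\bar a,\bar b)$ with $\mc B\models\phi_{\bar a}(\bar b)$ is a nonempty back-and-forth system of partial isomorphisms. The atomic diagrams agree because they were built into the $\phi_{\bar a}$, so a standard back-and-forth argument gives $\mc B\cong\mc A$.

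For (1)$\Rightarrow$(2), fix a $\Pinf{\alpha+1}$ Scott sentence $\psi=\Wwedge_i\forall\bar y\,\theta_i(\bar y)$ with each $\theta_i$ a $\Sinf\alpha$ formula. The plan has two steps.

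\textbf{Step one.} We show a rigidity statement: whenever $\bar a,\bar b\in\mc A$ satisfy $\bar b\leq_\alpha\bar a$, they are automorphic. We would prove this by building a countable $\mc B$ with $\mc B\models\psi$ by a back-and-forth/Henkin construction. The idea is to start from $\mc A$ with $\bar b$ distinguished, and to interleave extensions supplied by the definition of $\leq_\alpha$ so that the resulting structure is a copy of $\mc A$ in which $\bar a$ and $\bar b$ play the same role. Each instance of $\theta_i$ is $\Sinf\alpha$, and $\leq_\alpha$ preserves such formulas in the right direction, so $\mc B$ will satisfy every conjunct of $\psi$. Hence $\mc B\cong\mc A$, and we transfer this back to an automorphism sending $\bar a$ to $\bar b$.

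\textbf{Step two.} We turn rigidity into a single $\Sinf\alpha$ definition. A countable conjunction of the $\Sinf\alpha$ formulas separating $\bar a$ from each non-automorphic $\bar b$ is not itself $\Sinf\alpha$, so we cannot simply take that conjunction. Instead we aim for a formula of the form
\[\sigma_{\bar a}(\bar x)=\exists\bar y\ \Wwedge_{\beta<\alpha}\mathrm{tp}_{\Pi_\beta}(\bar a\bar c)(\bar x,\bar y)\]
for a well-chosen witness $\bar c$. This is $\Sinf\alpha$, since it is an existential over a countable conjunction of $\Pinf\beta$ types with $\beta<\alpha$. For it to define the orbit of $\bar a$, we need $\bar a\bar c$ to be ``$\alpha$-free'' in Montalbán's sense: no extension of $\bar a\bar c$ can be moved to a $\leq_\beta$-incomparable position for any $\beta<\alpha$. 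Given such a $\bar c$, any $\bar b$ satisfying $\sigma_{\bar a}$ has a witness $\bar d$ with $\bar b\bar d\leq_\alpha\bar a\bar c$. Step one then makes $\bar b\bar d$ automorphic to $\bar a\bar c$, so $\bar b$ lies in the orbit of $\bar a$.

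\textbf{Main obstacle.} The hard step is proving that such a witness $\bar c$ exists for every $\bar a$. We would argue by contradiction. If no extension of $\bar a$ is $\alpha$-free, then we can build a sequence of successive extensions, each failing freeness. Using these failures as a dense family of requirements, a Baire-category/forcing construction would produce a countable model of $\psi$ that is not isomorphic to $\mc A$, contradicting that $\psi$ is a Scott sentence. Getting the quantifier bookkeeping right at limit $\alpha$, where the conjunction over $\beta<\alpha$ is genuinely infinite, is the delicate point.
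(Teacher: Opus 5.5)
Your (2)$\Rightarrow$(1) is correct. It is exactly Scott's canonical sentence, which is all the paper itself says about this theorem (\autoref{canonicalScott}); for (1)$\Rightarrow$(2) the paper simply cites Montalb\'an \cite{MonSR}. Your sketch of (1)$\Rightarrow$(2), however, has genuine gaps.

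First, the formula $\sigma_{\bar a}$ cannot serve at any $\alpha$: at limits its complexity is wrong, and at successors it is too weak. If $\alpha$ is a limit, $\Wwedge_{\beta<\alpha}\mathrm{tp}_{\Pi_\beta}(\bar a\bar c)$ is a $\Pinf{\alpha}$ formula, so $\sigma_{\bar a}$ is $\Sinf{\alpha+1}$ and in general not $\Sinf{\alpha}$. If $\alpha=\gamma+1$, the conjunction is just the $\Pi_\gamma$-type of $\bar a\bar c$. A witness $\bar d$ then gives only $\bar a\bar c\le_\gamma\bar b\bar d$, which is one level short of the $\le_\alpha$ that Step one requires (and in the opposite direction from what you wrote).

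What you actually need is Montalb\'an's property: a \emph{single} $\beta<\alpha$ and a tuple $\bar c$ such that for all $\bar b\bar d$, $\bar a\bar c\le_\beta\bar b\bar d$ implies $\bar a\le_\alpha\bar b$. In his terminology this says $\bar a$ is \emph{not} $\alpha$-free. You have the terminology inverted: $\alpha$-free tuples are the obstruction. With this property, $\tau_{\bar a}(\bar x)=\exists\bar y\,\varphi^\beta_{\bar a\bar c}(\bar x,\bar y)$, where $\varphi^\beta_{\bar a\bar c}$ is the $\Pinf{\beta}$ formula defining $\{\bar u:\bar a\bar c\le_\beta\bar u\}$, is $\Sinf{\beta+1}\subseteq\Sinf{\alpha}$. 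Limits then cause no extra difficulty.

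Second, Step one as sketched is not an argument. If the structure you build is a copy of $\mc A$, then knowing it satisfies $\psi$ tells you nothing, and nothing in the sketch produces an automorphism sending $\bar a$ to $\bar b$. The real obstruction is that $\bar b\le_\alpha\bar a$ only lets you answer an extension $\bar a\bar d$ by some $\bar b\bar c$ with $\bar a\bar d\le_\beta\bar b\bar c$ for some $\beta<\alpha$. The level drops and the direction flips, so $\le_\alpha$ is not a back-and-forth relation on its own.

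Rigidity should instead come \emph{after} you have the formulas $\tau_{\bar a}$ for all tuples. If $\bar a\le_\alpha\bar b$, then $\mc A\models\tau_{\bar b}(\bar a)$, so $\bar b\le_\alpha\bar a$ and $\le_\alpha$ is symmetric. The same reasoning applied to $\exists z\,\tau_{\bar a c}(\bar x,z)$ shows that $\equiv_\alpha$ is a back-and-forth relation on $\mc A$, so $\tau_{\bar a}$ defines the orbit.

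So everything rests on your ``main obstacle'': showing that a $\Pinf{\alpha+1}$ Scott sentence rules out $\alpha$-free tuples. This is the substantive part of Montalb\'an's theorem, and you only assert that a Baire-category construction would produce a counterexample. To carry it out, you must use an $\alpha$-free tuple to build $\mc B\not\cong\mc A$ with $\mc A\le_{\alpha+1}\mc B$, i.e.\ every tuple of $\mc B$ is $\le_\alpha$ some tuple of $\mc A$. That condition is what guarantees $\mc B\models\forall\bar y\,\theta_i(\bar y)$ for each $\Sinf{\alpha}$ formula $\theta_i$. No such construction appears in your proposal.
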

There are many other equivalences known to $SR(\mc{A})\leq\alpha$, we only list the most relevant one above.
This theorem, in its many forms, is referred to as Montalb\'an's robustness theorem.
One of the directions of Montalb\'an's robustness theorem, namely (2) implies (1), was known for a long time.
In Scott's original work \cite{Sco65}, he demonstrates how to assemble a Scott sentence from a collection of definitions for automorphism orbits.
\begin{thm}\label{canonicalScott}
If for each $\bar{a}\in\mc{A}$, $\chi_{\bar a}$ defines the automorphism orbit of $\bar a$, then
\[\Wwedge_{\bar{a}\in\mc{A}}\forall\bar{x} \bigg(\chi_{\bar{a}}(\bar{x})\rightarrow \Bigg( \bar{x}\equiv_0\bar{a} \land \bigg(\Wwedge_{a'\in\mc{A}}\exists y \chi_{\bar{a},a'}(\bar{x},y)\bigg) \land \bigg(\forall y\Vvee_{a'\in\mc{A}}\chi_{\bar{a},a'}(\bar{x},y)\bigg)\Bigg)\]
is a Scott sentence for $\mc{A}$.
\end{thm}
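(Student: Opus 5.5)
We show that any countable $\mc B$ satisfying the displayed sentence $\Phi$ is isomorphic to $\mc A$, by a back-and-forth construction. The converse, $\mc A\models\Phi$, is immediate. Fix $\bar a\in\mc A$ and $\bar x$ with $\mc A\models\chi_{\bar a}(\bar x)$. Since $\chi_{\bar a}$ defines the orbit of $\bar a$, there is an automorphism sending $\bar a$ to $\bar x$. Hence $\bar x\equiv_0\bar a$. For each $a'$, the element $y$ obtained by applying this automorphism to $a'$ witnesses $\chi_{\bar a,a'}(\bar x,y)$. Conversely, for any $y$, pulling back along the automorphism gives an $a'$ with $\chi_{\bar a,a'}(\bar x,y)$. (We read $\chi_{\bar a,a'}$ as $\chi_{\bar a a'}$, and the empty tuple's formula $\chi_{\emptyset}$ as trivially true.)

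For the main direction, let $\mc B\models\Phi$ be countable, and enumerate $\mc A=\{a_0,a_1,\dots\}$ and $\mc B=\{b_0,b_1,\dots\}$. We build finite tuples $\bar a_s\in\mc A$ and $\bar b_s\in\mc B$ of equal length, each extending the previous one, maintaining the invariant
\[\mc B\models\chi_{\bar a_s}(\bar b_s).\]
Start with the empty tuples. The invariant holds because $\chi_\emptyset$ is true in $\mc A$ and hence holds, or by convention is taken to hold.

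At a forth step, we add the least $a_i$ not yet in $\bar a_s$. The conjunct of $\Phi$ for $\bar a_s$, applied to $\bar b_s$ via the invariant, yields $\mc B\models\exists y\,\chi_{\bar a_s,a_i}(\bar b_s,y)$. Choose such a $y$, and set $\bar a_{s+1}=\bar a_s a_i$ and $\bar b_{s+1}=\bar b_s y$.

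At a back step, we add the least $b_j$ not yet in $\bar b_s$. The clause $\forall y\Vvee_{a'}\chi_{\bar a_s,a'}(\bar b_s,y)$ gives some $a'$ with $\mc B\models\chi_{\bar a_s,a'}(\bar b_s,b_j)$. Extend by $a'$ and $b_j$.

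In both steps the invariant is preserved. It also gives $\bar b_s\equiv_0\bar a_s$ at every stage, so each finite map $\bar a_s\mapsto\bar b_s$ is a partial isomorphism: it is well defined and injective because equality is atomic, and it preserves atomic formulas. Alternating forth and back steps, the union of these maps is a total, surjective isomorphism $\mc A\to\mc B$.

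The only delicate point is the base case. We must ensure the empty tuple is handled correctly, so that the first step can proceed using the $\bar a=\emptyset$ conjunct; this is where the convention that $\chi_\emptyset$ holds is used. Everything else is routine.
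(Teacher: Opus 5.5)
Your proof is the standard back-and-forth argument behind Scott's construction. The paper gives no proof of its own and attributes the construction to Scott. Your forth and back steps, the check that $\mc A$ satisfies the sentence, and the passage from $\bar b_s\equiv_0\bar a_s$ to an isomorphism are all fine.

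The one thing to fix is the base case. Your first justification, ``$\chi_\emptyset$ is true in $\mc A$ and hence holds,'' is invalid, because truth of a sentence in $\mc A$ tells you nothing about $\mc B$. The convention $\chi_\emptyset=\top$ is also not optional; the statement is false without it. If $\psi$ is any sentence true in $\mc A$, the formulas $\psi\wedge\chi_{\bar a}$ still define the automorphism orbits in $\mc A$. But in any $\mc B\models\neg\psi$, every conjunct of the resulting sentence holds vacuously. For instance, take $\mc A$ an infinite set in the empty language, $\psi=\exists x\exists y\,(x\neq y)$, and $\mc B$ a one-element set.

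So the theorem must be read with $\chi_\emptyset$ taken to be $\top$, or equivalently with the empty-tuple clause stated without the hypothesis $\chi_\emptyset$. Drop the first justification and keep only the convention, which you already adopt explicitly. With that reading your argument is complete.
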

\noindent The above sentence is sometimes called a canonical Scott sentence for $\mc{A}$.

One may wonder how well the robustness theorem effectivizes.
It turns out the lightface theory of Scott sentences is not nearly as neat as the boldface theory. The core issue is that the formulas in \autoref{Robustness} needn't be computable infinitary formulas. On the other hand, the formulas in \autoref{computable version defining back-and-forth by Pi 2 alpha} are computable, but require twice as many quantifiers. We will see this tension as a recurring theme.

There are many theorems showing how the lightface theory is wilder, but perhaps the starkest is the following.
\begin{thm}[\cite{ACHT},\cite{KLM}]
The index set of the computable structures with a $\Pinc 2$ Scott sentence is $\Pi_1^1$-complete.
\end{thm}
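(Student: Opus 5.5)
The statement is cited from \cite{ACHT} and \cite{KLM}, so the paper uses it only as motivation; here is the plan I would follow to prove it, with an upper bound and a matching hardness reduction.

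\textbf{Upper bound.} For each index $e$, write $\varphi_e$ for the $e$-th $\Pinc 2$ sentence. A computable structure $\mc{A}$ has a $\Pinc 2$ Scott sentence exactly when there is some $e$ with $\mc{A}\models\varphi_e$ and every countable $\mc{B}\models\varphi_e$ is isomorphic to $\mc{A}$. The leading number quantifier is harmless, and $\mc{A}\models\varphi_e$ is arithmetic, indeed $\Pi^0_2$ in $\mc{A}$. The only delicate clause is the universal one. Written naively, ``$\forall\mc{B}\,(\mc{B}\models\varphi_e\rightarrow\mc{B}\cong\mc{A})$'' looks $\Pi^1_2$, since isomorphism is $\Sigma^1_1$.

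To bring it down to $\Pi^1_1$ I would use the boldface theory. If $\mc{A}\models\varphi_e$ and $\varphi_e$ really is a Scott sentence, then $SR(\mc{A})\le1$. By \autoref{Robustness}, every orbit is then defined by a $\Sinf1$ formula, which we may take to be a finitary existential formula $\chi_{\bar a}$. Given such definitions, $\mc{B}\cong\mc{A}$ is equivalent to an arithmetic-in-$(\mc{B},(\chi_{\bar a})_{\bar a})$ back-and-forth condition: the $\chi$'s determine a back-and-forth system, as in \autoref{canonicalScott}. So I would rewrite the condition as follows: there is $e$ such that for every $\mc{B}$ and every assignment $\bar a\mapsto\chi_{\bar a}$ of existential formulas, either $\mc{B}\not\models\varphi_e$, or the assignment fails to define the orbits of $\mc{A}$, or the induced back-and-forth condition holds between $\mc{A}$ and $\mc{B}$. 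Checking that this is equivalent to the original condition, and that it is $\Pi^1_1$ (one set quantifier followed by an arithmetic matrix), is the first step.

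\textbf{Hardness.} I would reduce the $\Pi^1_1$-complete set of indices of well-founded computable trees $T\subseteq\omega^{<\omega}$. The aim is a uniform computable construction $T\mapsto\mc{A}_T$ with two properties:
\begin{enumerate}
\item if $T$ is well-founded, $\mc{A}_T$ has a $\Pinc2$ Scott sentence;
\item if $T$ is ill-founded, $\mc{A}_T$ has none.
\end{enumerate}
The natural template is to let $\mc{A}_T$ have one component per node $\sigma\in T$, built so that $\sigma$'s orbit is existentially definable and its definition can be read off from the subtree above $\sigma$. When $T$ is well-founded, these definitions are found along the rank function of $T$, uniformly enough that the canonical Scott sentence of \autoref{canonicalScott} is a c.e.\ conjunction, hence $\Pinc 2$. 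When $T$ has an infinite path, the components along that path should behave like the Harrison ordering $\omega_1^{CK}(1+\eta)$. Then either some orbit is not $\Sinf1$-definable at all, so $SR(\mc{A}_T)>1$; or, in the cleaner variant used in \cite{ACHT}, $\mc{A}_T$ has a $\Pinf2$ Scott sentence but every one of them must use non-hyperarithmetic disjunctions.

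\textbf{Main obstacle.} The hard step is the ill-founded case: one must rule out \emph{every} computable $\Pinc2$ Scott sentence, not just the canonical one. I would first try an overspill argument. A computable $\varphi_e$ holding in $\mc{A}_T$ is a hyperarithmetic object, so it cannot separate $\mc{A}_T$ from a variant $\mc{A}_T'$ obtained by ``extending the path'' in a pseudo-well-founded way, since $\mc{A}_T$ and $\mc{A}_T'$ satisfy the same computable infinitary sentences. Ensuring that such an $\mc{A}_T'$ exists, is countable, and is not isomorphic to $\mc{A}_T$, while keeping the well-founded case clean, is where most of the work lies.
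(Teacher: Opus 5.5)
The paper only quotes this result from the literature and gives no proof, so your plan has to stand on its own. Both halves have real problems.

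\textbf{Upper bound.} Your rewritten condition is not equivalent to the original. If $SR(\mc A)>1$, then no assignment $\bar a\mapsto\chi_{\bar a}$ defines the orbits. The middle disjunct then holds for every $\mc B$, so the condition is satisfied by $\varphi_e=\top$. Moreover, ``the assignment fails to define the orbits'' is not arithmetic: orbit equivalence in a computable structure is in general only $\Sigma^1_1$, and this puts a $\Sigma^1_1$ clause under your universal set quantifier. Both issues disappear if you drop orbits altogether:
\begin{itemize}
\item $SR(\mc A)\le 1$ is $\Pi^0_4$ for computable $\mc A$ (Montalb\'an's Lemma II.67, quoted in the applications section);
\item when $SR(\mc A)\le 1$ holds, $\mc B\cong\mc A$ iff $\mc A\le_2\mc B$, and this is $\Pi^0_4(\mc B)$ by \autoref{computable version defining back-and-forth by Pi 2 alpha}.
\end{itemize}
Hence $\mc A$ has a $\Pinc{2}$ Scott sentence iff $SR(\mc A)\le 1$ and $\exists e\,[\mc A\models\varphi_e\wedge\forall\mc B\,(\mc B\models\varphi_e\rightarrow\mc A\le_2\mc B)]$. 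This is visibly $\Pi^1_1$.

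\textbf{Hardness.} This half contains no construction, and the mechanisms you suggest for the ill-founded case cannot work.
\begin{itemize}
\item The set $\{T: SR(\mc A_T)\le 1\}$ is arithmetical and contains every well-founded tree. Since well-foundedness is $\Pi^1_1$-complete, it must also contain ill-founded trees. For those $T$ you need $SR(\mc A_T)\le 1$ and still no $\Pinc2$ Scott sentence. So ``some orbit is not $\Sinf1$-definable'' is unavailable, and Harrison-ordering behaviour (the standard way to force Scott rank $\omega_1^{CK}+1$) points in the wrong direction.
\item Your ``cleaner variant'' is impossible outright. By \autoref{thm:MHT3.5}, every computable structure with a $\Pinf2$ Scott sentence has a $\mathbf 0''$-computable one, hence one with hyperarithmetic disjunctions.
\item The overspill step fails as stated. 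By \autoref{55}, such an $\mc A_T$ has a $\Pinc{4}$ Scott sentence, so no $\mc A_T'\not\cong\mc A_T$ satisfies all computable infinitary sentences true of $\mc A_T$. Any separation must happen exactly at the level of $\Pinc2$ sentences. The $\Pi^1_1$ content can only come from the quantifier over countable models of a candidate sentence.
\item In the well-founded case, the rank function of a computable well-founded tree need not be computable. Orbit definitions ``found along the rank function'' therefore need not give a c.e.\ conjunction.
\end{itemize}
What is missing is an actual coding. For instance, uniformly in $T$, you could aim for a computable $\mc A_T$ with $SR(\mc A_T)\le 1$ and a $\Pinc2$ sentence $\varphi_T$ true of it, such that all countable models of $\varphi_T$ not isomorphic to $\mc A_T$ arise from infinite paths through $T$. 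You would then still need a real proof that, for ill-founded $T$, every $\Pinc2$ sentence true of $\mc A_T$ has a non-isomorphic countable model.
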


Many of the applications of belligerent jump inversion in this paper regard the lightface theory of Scott rank and Scott sentences.
Some help to put structure on this difficult part of the theory, others emphasize that some examples are quite misbehaved from this perspective.

There are some already known results about the lightface theory of Scott rank that we will need to prove some of our results.
This is particularly true when it comes to our understanding of Scott complexity, a refinement of Scott rank introduced in \cite{AGNHTT}.
\begin{defn}
The Scott complexity of $\mc{A}$ is the least complexity $\Gamma$ among $\{\Pinf{\alpha},\Sinf{\alpha},\dSinf{\alpha}\}_{\alpha\in\omega_1}$ such that $\mc{A}$ has a Scott sentence of complexity $\Gamma$.
\end{defn}
\noindent In \cite{AGNHTT}, they show that Scott complexity is well defined and that, up to Wadge equivalence, these are the only possible optimal syntactic forms for Scott sentences.
To show that this notion is well-defined, they rely on the following theorem.
\begin{thm}[A. Miller \cite{Mil83}]\label{AMiller}
If $\mc{A}$ has a $\Pinf{\alpha}$ Scott sentence and a $\Sinf{\alpha}$ Scott sentence, it must have a $\dSinf{\beta}$ Scott sentence for some $\beta<\alpha$.
\end{thm}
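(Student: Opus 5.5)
The plan is to extract a single disjunct of a $\Sinf{\alpha}$ Scott sentence that is already a Scott sentence for $\mc{A}$. We then use the $\Pinf{\alpha}$ Scott sentence, through Montalb\'an's robustness theorem (\autoref{Robustness}), to trim that disjunct down to a $\dSinf{\beta}$ sentence.

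Let $\Phi=\Vvee_i \exists\bar u_i\,\psi_i(\bar u_i)$ be a $\Sinf{\alpha}$ Scott sentence, with each $\psi_i$ a $\Pinf{\beta_i}$ formula and $\beta_i<\alpha$. Since $\mc{A}\models\Phi$, fix $i$ and a tuple $\bar a\in\mc{A}$ with $\mc{A}\models\psi_i(\bar a)$. Every disjunct implies $\Phi$, so any countable $\mc{B}\models\exists\bar u\,\psi_i(\bar u)$ satisfies $\mc{B}\cong\mc{A}$. Hence $\theta:=\exists\bar u\,\psi_i(\bar u)$ is itself a Scott sentence for $\mc{A}$, of complexity $\Sinf{\beta_i+1}$. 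We now split into two cases.

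\emph{Case $\beta_i+1<\alpha$.} This always happens when $\alpha$ is a limit. Here $\theta$ is trivially $\dSinf{\beta_i+1}$ (conjoin it with a true $\Pinf{0}$ sentence), and we are done with $\beta=\beta_i+1$.

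\emph{Case $\alpha=\beta+1$ and $\beta_i=\beta$.} This is the main case, and the step where the $\Pinf{\alpha}$ Scott sentence is really needed. A $\Pinf{\beta+1}$ Scott sentence means $SR(\mc{A})\le\beta$. By \autoref{Robustness}, the automorphism orbit of $\bar a$ is then defined by a $\Sinf{\beta}$ formula $\chi(\bar u)$. The candidate sentence is
\[\exists\bar u\,\chi(\bar u)\;\wedge\;\forall\bar u\,\bigl(\neg\chi(\bar u)\vee\psi_i(\bar u)\bigr).\]

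We check it is of the right complexity. The first conjunct is $\Sinf{\beta}$. In the second, $\neg\chi$ and $\psi_i$ are both $\Pinf{\beta}$, so their disjunction is $\Pinf{\beta}$ after pushing the disjunction inside. For $\beta\ge1$ the universal quantifier is absorbed into the outer $\Wwedge\forall$, so the whole conjunct is $\Pinf{\beta}$ and the sentence is $\dSinf{\beta}$ with $\beta<\alpha$.

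We check it is a Scott sentence. $\mc{A}$ satisfies it: $\bar a$ witnesses the first conjunct, and any $\bar a'$ with $\chi(\bar a')$ is automorphic to $\bar a$, hence satisfies $\psi_i$. Conversely, suppose $\mc{B}$ satisfies it. Pick $\bar b$ with $\chi(\bar b)$; the second conjunct gives $\psi_i(\bar b)$, so $\mc{B}\models\theta$ and therefore $\mc{B}\cong\mc{A}$.

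The only delicate point I expect is bookkeeping at the bottom level $\beta=0$, i.e.\ $\alpha=1$. There the universal quantifier cannot be absorbed into a $\Pinf{0}$ formula, so the argument above does not go through as written. I would handle $\alpha=1$ separately, using the convention that $\dSinf{0}$ sentences are finitary quantifier-free; it is likely degenerate, since with a $\Sinf{1}$ and a $\Pinf{1}$ Scott sentence the structure is finite and described accordingly. Otherwise I would simply read the statement as applying for $\alpha\ge2$.
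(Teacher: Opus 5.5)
The paper gives no proof of this statement; it only cites Miller, whose 1983 argument predates Montalb\'an's robustness theorem. Your route is the now-standard short proof from robustness. You pass to a single disjunct $\exists\bar u\,\psi_i$, and in the one hard case $\alpha=\beta+1$, $\beta_i=\beta$, you replace it by $\exists\bar u\,\chi\wedge\forall\bar u\,(\neg\chi\vee\psi_i)$, where $\chi$ is the $\Sinf{\beta}$ orbit formula from \autoref{Robustness}. This is correct for all $\alpha\ge 2$. The complexity count is right, including limit $\beta$, and you correctly use $\chi$ only inside $\mc A$, which is all robustness provides. Your case split also makes visible that the $\Pinf{\alpha}$ Scott sentence is needed only in that successor case. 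The trade-off is that everything rests on the hard direction of \autoref{Robustness} (Scott rank $\le\beta$ implies $\Sinf{\beta}$-definable orbits), a much later and deeper result than the statement itself. Within this paper, where that theorem is a black box, your argument is the natural one and uses the same ingredients as the proof of \autoref{37}.

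One correction at the bottom level. Your suggested treatment of $\alpha=1$ does not work as stated. A $\dSinf{0}$ sentence is finitary quantifier-free, so a finite structure cannot be ``described accordingly.'' The right observation is that the case is vacuous: no countable structure has a $\Sinf{1}$ Scott sentence at all.

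To see this, suppose a disjunct $\exists\bar u\,\psi(\bar u)$, with $\psi$ quantifier-free, holds in $\mc A$ via $\bar a$. By your own extraction step it is then a Scott sentence. It also holds in any finitely generated substructure containing $\bar a$, and in every countable superstructure of $\mc A$. So $\mc A$ would be finitely generated, yet isomorphic to the superstructure obtained by freely adjoining $\omega$ new elements, which is not finitely generated. In a relational language this simply says $\mc A$ would be finite and isomorphic to its one-point extensions.

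Since quantifier-free sentences are in particular $\Sinf{1}$, the same remark disposes of $\alpha=0$. With this addition, your proof covers the statement for every $\alpha$.
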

\noindent A lightface version of this theorem was proven by Alvir, Knight, and McCoy.
\begin{thm}[{Alvir, Knight, and McCoy \cite[Theorem 3.2]{AKM}}]\label{AKMTheorem}
If $\mc{A}$ has a $\Pinc{\alpha}$ Scott sentence and a $\Sinc{\alpha}$ Scott sentence, it must have a $\dSinc{\beta}$ Scott sentence for some $\beta<\alpha$.
\end{thm}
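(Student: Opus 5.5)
The plan is to effectivize Miller's proof of \autoref{AMiller}, keeping every formula we build inside the computable infinitary hierarchy. Let $\Phi$ be a $\Sinc{\alpha}$ Scott sentence and $\Theta$ a $\Pinc{\alpha}$ Scott sentence for $\mc A$.

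First I would treat the case where $\alpha$ is a limit. Write $\Phi=\Vvee_{i\in W}\exists\bar u_i\,\psi_i(\bar u_i)$ with $W$ c.e.\ and each $\psi_i$ in $\Pinc{\beta_i}$ for some $\beta_i<\alpha$. Since $\mc A\models\Phi$, some disjunct $i$ holds in $\mc A$. Fix that $i$ non-uniformly; this is allowed, because we only need the resulting sentence to be computable. Any $\mc B\models\exists\bar u_i\,\psi_i$ satisfies $\Phi$, so $\mc B\cong\mc A$. Hence $\exists\bar u_i\,\psi_i$ is a computable Scott sentence of complexity $\Sinc{\beta_i+1}$. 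As $\beta_i+1<\alpha$, this is in particular $\dSinc{\beta_i+1}$, and we are done.

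In the successor case $\alpha=\beta+1$, the same reasoning produces a single tuple $\bar a\in\mc A$ and a $\Pinc{\beta}$ formula $\psi(\bar u)$ with $\mc A\models\psi(\bar a)$, such that every model of $\exists\bar u\,\psi$ is isomorphic to $\mc A$. The intended Scott sentence is
\[\exists\bar u\,\sigma(\bar u)\;\wedge\;\forall\bar u\,\bigl(\sigma(\bar u)\rightarrow\psi^*(\bar u)\bigr).\]
Here $\sigma$ is a $\Sinc{\beta}$ formula defining the automorphism orbit of $\bar a$, and $\psi^*$ is a $\Pinc{\beta}$ Scott sentence for the expanded structure $(\mc A,\bar a)$. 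The first conjunct is $\Sinc{\beta}$ and the second is $\Pinc{\beta}$. The conjunction is therefore $\dSinc{\beta}$, and checking that it is a Scott sentence is immediate.

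The main obstacle is producing $\sigma$ and $\psi^*$ as computable formulas. Boldface, they come from \autoref{Robustness} applied to the $\Pinf{\beta+1}$ sentence $\Theta$, but those formulas need not be computable. I would try to extract them directly from $\Theta$ and $\psi$.

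\emph{Orbit formula.} Write $\Theta=\Wwedge_j\forall\bar v\,\chi_j(\bar v)$ with $\chi_j$ in $\Sinc{\beta}$. I would use the $\chi_j$ together with the disjuncts inside the $\Sinc{\beta}$ formulas to locate a computable $\Sinc\beta$ formula true of $\bar a$ that pins down its orbit. To make the disjunctions over suitable index sets collapse to $\Sinc\beta$ formulas, I would use \autoref{cxAbsorption}.

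\emph{Expanded Scott sentence.} I would take $\psi^*=\psi\wedge\rho$, where $\rho(\bar u)$ is the $\Pinc{\beta}$ formula obtained from $\Theta$ by absorbing its leading universal block into the parameters $\bar u$. The aim is to show that $\psi\wedge\rho$ forces $(\mc B,\bar b)\cong(\mc A,\bar a)$.

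If the direct extraction fails, my fallback is to work with the particular disjunct of $\Phi$ chosen above. I would argue by contradiction, as in Alvir--Knight--McCoy: if no computable $\dSinc\beta$ sentence works, one builds two computable-in-the-limit approximations of models of $\Theta\wedge\neg\Phi$. This keeps every conjunction and disjunction used c.e. I expect this step to carry essentially all the difficulty; the rest is bookkeeping.
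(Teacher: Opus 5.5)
Your limit case is correct (it does not even need $\Theta$), and $\exists\bar u\,\sigma\wedge\forall\bar u\,(\sigma\to\psi^*)$ is the right target shape in the successor case. But the successor case is the whole content of the theorem, which the paper only quotes from Alvir, Knight, and McCoy without proof. There you never actually produce the computable formulas you need.

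\emph{The orbit step.} Getting a computable $\Sinc{\beta}$ definition of the orbit of $\bar a$ from a computable $\Pinc{\beta+1}$ Scott sentence is itself an effective form of Montalb\'an's robustness theorem. ``Use the $\chi_j$ and \autoref{cxAbsorption}'' is not an argument for it. Absorption only collapses a $\Sigma^0_\beta$-indexed disjunction of $\Sinc{\beta}$ formulas, and you identify neither the disjuncts nor why their disjunction would be exactly the orbit.

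\emph{The $\psi^*$ step} fails as stated. Instantiating the universal block of $\Theta$ at $\bar u$ gives conjunctions of the $\Sinc{\beta}$ matrices $\chi_j$, which are in general only $\Pinc{\beta+1}$. Worse, since $\mc A\models\Theta$, every such instance holds of every tuple of $\mc A$. So $\psi\wedge\rho$ defines the same set as $\psi$ and cannot separate $\bar a$ from other tuples satisfying $\psi$, which may lie in other orbits.

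\emph{The fallback.} $\Theta\wedge\neg\Phi$ has no countable models at all, so it only makes sense as a proof by contradiction. That contradiction argument is exactly the missing step, and you leave it unexecuted.

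Here is how to close the gap. You need neither $\psi^*$ nor an exact orbit definition. It suffices to find a computable $\Sinc\beta$ formula $\sigma(\bar u)$ with $\emptyset\neq\sigma(\mc A)\subseteq\psi(\mc A)$. Then $\exists\bar u\,\sigma(\bar u)\wedge\forall\bar u\,(\neg\sigma(\bar u)\vee\psi(\bar u))$ is a computable $\dSinc\beta$ sentence true in $\mc A$. Every model of it satisfies $\exists\bar u\,\psi$, hence is isomorphic to $\mc A$.

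To get $\sigma$, suppose none exists, i.e., every computable $\Sinc\beta$ formula realized in $\mc A$ is realized by a tuple failing $\psi$. Let $\Omega=\forall\bar u\,\neg\psi(\bar u)$. Take finite sets of sentences in new constants drawn from $\Theta$, $\Omega$, their partial instances, and instances of subformulas of the $\chi_j$ and of $\neg\psi$, such that the latter part is simultaneously realized in $\mc A$. These sets form a consistency property:
\begin{itemize}
\item the clauses for the realized part are immediate;
\item instantiating $\Theta$ is harmless because $\mc A\models\Theta$;
\item instantiating $\Omega$ at constants $\bar d$ is where the assumption enters. Apply it to the formula obtained by existentially quantifying the other constants out of the finite realized part.
\end{itemize}
That formula is computable $\Sinc\beta$ precisely because $\Theta$ is computable $\Pinc{\beta+1}$ and $\psi$ is computable $\Pinc\beta$. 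The model existence theorem then gives a countable model of $\Theta\wedge\Omega$. This is impossible, since such a model is isomorphic to $\mc A\models\exists\bar u\,\psi$.
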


One of the main reasons to care about Scott complexity is that this notion is just as robust as Scott rank, featuring analogs of Montalb\'an's robustness theorem (see \cite{AGNHTT}).
One relevant related fact is that $\Sinf\alpha$ Scott sentences are essentially a parameterized version of $\Pinf\alpha$ Scott sentences, as seen in the following proposition.
\begin{prop}[{\cite[Proposition 1.10]{AGNHTT}}]\label{parameterizeSR}
$\mc{A}$ has a $\Sinf\alpha$ Scott sentence if and only if for some $\bar p\in\mc A$ and $\beta<\alpha$, $(\mc{A},\bar p)$ has a $\Pinf \beta$ Scott sentence.
\end{prop}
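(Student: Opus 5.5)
The plan is to prove the two directions separately. The backward direction is a direct syntactic observation. The forward direction needs one genuinely semantic step: picking out a single automorphism orbit by a Baire category argument.

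For ($\Leftarrow$), suppose $(\mc A,\bar p)$ has a $\Pinf\beta$ Scott sentence with $\beta<\alpha$. Replace the new constants by free variables to get a $\Pinf\beta$ formula $\phi(\bar x)$. Then the sentence $\exists\bar x\,\phi(\bar x)$ is $\Sinf{\beta+1}$, and since $\beta+1\le\alpha$ it is also $\Sinf\alpha$. It holds in $\mc A$, witnessed by $\bar p$. Conversely, if $\mc B\models\phi(\bar q)$ then $(\mc B,\bar q)\cong(\mc A,\bar p)$, so in particular $\mc B\cong\mc A$. Hence $\exists\bar x\,\phi(\bar x)$ is a $\Sinf\alpha$ Scott sentence for $\mc A$.

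For ($\Rightarrow$), write the $\Sinf\alpha$ Scott sentence as $\Vvee_i\exists\bar u_i\,\psi_i(\bar u_i)$ with each $\psi_i\in\Pinf{\beta_i}$ and $\beta_i<\alpha$. Because $\mc A$ satisfies it, we can fix $i$ and a tuple $\bar p$ with $\mc A\models\psi_i(\bar p)$; set $\psi=\psi_i$ and $\beta=\beta_i$. Every countable $\mc B$ with $\mc B\models\psi(\bar q)$ for some $\bar q$ satisfies the Scott sentence, so $\mc B\cong\mc A$. The difficulty is that $\psi(\bar x)$ need not pin down a single pair $(\mc A,\bar q)$ up to isomorphism; it only guarantees that the underlying structure is $\mc A$.

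To deal with this, I will consider the class $X$ of pairs $(\mc B,\bar q)$ with $\mc B\models\psi(\bar q)$, viewed in the space of structures for the language expanded by constants. Up to isomorphism, $X$ is the union of the countably many isomorphism classes of $(\mc A,\bar q)$, where $\bar q$ ranges over tuples of $\mc A$ satisfying $\psi$. The class $X$ is $\mathbf\Pi^0_\beta$, and it becomes Polish after refining the topology. So by the Baire category theorem one of these isomorphism classes, say that of $(\mc A,\bar q)$, is nonmeager in $X$. Since isomorphism classes are orbits of the logic action of $S_\infty$, an Effros/Vaught-transform argument then shows that this orbit is relatively open in $X$ (equivalently, after passing to an extension $\bar q\bar r$ by finitely many further parameters). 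Relative openness means there is a finitary formula $\theta$, which I expect to be existential over quantifier-free, such that $\psi(\bar x)\wedge\theta(\bar x)$ defines exactly this isomorphism class. If $\beta\ge2$, the formula $\psi\wedge\theta$ is still $\Pinf\beta$, so it is a $\Pinf\beta$ Scott sentence for $(\mc A,\bar q)$; if extra parameters $\bar r$ were needed, I would instead absorb them as constants, which keeps the formula $\Pinf\beta$. The cases $\beta\le1$ are handled directly: then $\alpha\le2$, and one can simply enlarge $\bar q$ until it determines the finitely relevant configuration, possibly replacing $\beta$ by a value that is still $<\alpha$.

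The main obstacle I expect is this relative-openness step. It requires making precise the topology in which $X$ is Polish while the orbits remain orbits of a Polish group action, and then checking that the resulting open condition can be expressed without raising the complexity above $\Pinf\beta$. I would first try to handle it by applying Vaught transforms inside $X$ directly, rather than by passing to the refined topology.
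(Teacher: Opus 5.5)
The paper gives no proof of this proposition (it cites \cite{AGNHTT} and only remarks that the limit case is immediate), so your argument has to stand on its own. The backward direction is fine. The plan for the forward direction, Baire category over the countably many orbits making up $X$ followed by Effros to get a relatively open orbit, is viable. However, there is a genuine gap exactly at the step you flag, and the way you propose to fill it is wrong.

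In the logic topology $X$ is only $\mathbf\Pi^0_\beta$, so Baire category is not available. In general no orbit is relatively open there, and no finitary existential $\theta$ cuts out an orbit of $X$. For instance, take unary predicates $U_i$, and let $\mc A$ realize infinitely often, for every finite binary string $\sigma$, the type ``$U_i(x)$ iff $i<|\sigma|$ and $\sigma(i)=1$''. Then $\mc A$ has a $\Pinf{3}$ Scott sentence $\chi$; put $\psi(x):=\chi\wedge x=x$. Inside $X$, the truth of any finitary formula $\theta(c)$ depends only on finitely many $U_i(c)$, so $\theta$ never isolates an orbit, even though each orbit is defined by the $\Pinf{1}$ formula $\Wwedge_i U_i^{\pm}(x)$. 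Once you refine the topology, the basic open sets are defined by infinitary formulas. So $\theta$ becomes $\exists\bar y$ of a finite conjunction of those, and whether $\psi\wedge\theta$ stays $\Pinf{\beta}$ depends entirely on which refinement you take. Your fallback of naming the witnesses $\bar r$ as constants also fails. The sentence $\psi(\bar c)\wedge\theta(\bar c,\bar d)$ pins down $(\mc A,\bar q)$ but not $(\mc A,\bar q\bar r)$, since the isomorphism need not send $\bar d$ to $\bar r$. The existential quantifier has to stay.

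The missing idea is the choice of topology. Write $\beta=\beta_0+1$; the limit case is the same with $\Pinf{<\beta}$ in place of $\Pinf{<\beta_0}$. Refine the logic topology by declaring clopen the sets $\{\mc M:\mc M\models\eta(\bar c,\bar n)\}$, for $\bar n\in\omega$ and $\eta$ ranging over a countable subformula-closed family of $\Pinf{<\beta_0}$ formulas. The family should contain every $\eta_{jk}$, where $\psi=\Wwedge_j\forall\bar v_j\,\xi_j$ and $\xi_j=\Vvee_k\exists\bar w_k\,\eta_{jk}$. This topology has the properties you need:
\begin{itemize}
\item It is Polish, and $S_\infty$ still acts continuously.
\item The instances of each $\xi_j$ are open, so $X$ is $G_\delta$.
\item Every basic open set is given by a finite conjunction $\theta(\bar c,\bar n)$ with $\exists\bar y\,\theta(\bar c,\bar y)\in\Sinf{\beta_0}\subseteq\Pinf{\beta}$.
\end{itemize}
Baire and Effros then yield an orbit $O$ and such a $\theta$ with $\emptyset\neq X\cap\{\theta(\bar c,\bar n)\}\subseteq O$. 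By invariance, $O$ is defined by $\psi(\bar c)\wedge\exists\bar y\,\theta(\bar c,\bar y)$. This is a $\Pinf{\beta}$ Scott sentence for any $(\mc A,\bar q)$ whose isomorphism class is $O$.

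The usual recipe of making the whole fragment of $\psi$ clopen is not good enough. If the $\Pinf{\beta_0}$ pieces become open, then $\exists\bar y\,\theta$ is only $\Sinf{\beta}$ and you get a $\dSinf{\beta}$ sentence. Only for $\beta=2$ does this refinement coincide with the logic topology, and only there is your finitary $\theta$ correct.

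Two minor points. First, $\beta\le 1$ does not force $\alpha\le 2$; if $\alpha\ge 3$, just regard $\psi$ as $\Pinf{2}$. Second, for limit $\alpha$ nothing is needed, since the chosen disjunct is already a $\Pinf{\beta+2}$ Scott sentence with $\beta+2<\alpha$.
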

Note that, in the special case that $\alpha$ is a limit, a $\Sinf\alpha$ Scott sentence holds if and only if one of its $\Pinf \beta$ disjuncts holds, so the above proposition is straightforward to conclude.
This observation was also made in \cite{AGNHTT}, where they use it to note that $\Sinf\alpha$ and $\dSinf\alpha$ are not possible Scott complexities when $\alpha$ is a limit.

\subsection{Structures as Graphs}
It is often convenient to assume the structures we are working with are graphs, as our jump inversion will be conceived as replacing edges and nonedges with more complicated structures.
This assumption does not lose any generality.
This is because we can transform a structure into a graph without changing any of its relevant properties.
The construction is classical, going back at least as far as Lavrov \cite{Lav63}.
The following is detailed in \cite[Chapter VI.3.2]{Part1}.

\begin{thm}[\cite{Part1} Lemma VI.26 and Theorem VI.27]\label{toGraph}
Every structure $\mc{A}$ has a graph $\mc{G}_\mc{A}$ with which it is effectively bi-interpretable.
In particular (and important for our purposes),
\begin{enumerate}
	\item $\mathbf{d}$ computes a copy of $\mc{A}$ if and only if it computes a copy of $\mc{G}_\mc{A}$.
	\item The Scott complexity (and hence Scott rank) of $\mc{A}$ is the same as that of $\mc{G}_\mc{A}$.
	\item For every $X\in2^\omega$, $\alpha\in\omega_1$ and $\varphi\in\Sigma^X_\alpha$ in the language of $\mc{A}$ there is a $\varphi_*\in\Sigma^X_\alpha$ in the language of graphs with $\mc{A}\models\varphi$ if and only if $\mc{G}_\mc{A}\models\varphi_*$.
	\item For every $X\in2^\omega$, $\alpha\in\omega_1$ and $\psi\in\Sigma^X_\alpha$ in the language of graphs there is a $\psi^*\in\Sigma^X_\alpha$ in the language of $\mc{A}$ with $\mc{A}\models\psi^*$ if and only if $\mc{G}_\mc{A}\models\psi^*$.
\end{enumerate}
The above occurs with all possible uniformity.
\end{thm}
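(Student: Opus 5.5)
The plan is the classical Lavrov-style coding as presented in \cite[Chapter VI.3.2]{Part1}. Build $\mc{G}_\mc{A}$ uniformly from $\mc{A}$, and then verify that the two structures are effectively bi-interpretable using formulas that are both $\Sinc{1}$ and $\Pinc{1}$. Items (1)--(4) and the preservation of Scott complexity will then follow from general facts about such bi-interpretations.

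\textbf{Construction.} Fix an enumeration $R_0,R_1,\dots$ of the relation symbols, where $R_i$ has arity $k_i$; functions and constants are replaced by their graphs. The vertices of $\mc{G}_\mc{A}$ consist of three kinds of vertices.
\begin{enumerate}
\item A vertex for each $a\in\mc{A}$, tagged by attaching a small gadget (say a triangle) that appears nowhere else.
\item For each $i$ and each $\bar a\in\mc{A}^{k_i}$, a ``tuple node'' joined by paths of distinct lengths $1,\dots,k_i$ to the coordinate vertices, so that the order of coordinates can be read off.
\item A cycle attached to the tuple node, of length $p_{2i}$ if $R_i(\bar a)$ holds and of length $p_{2i+1}$ if it fails. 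Here the $p_j$ are distinct large primes chosen so that these cycles cannot be confused with any other gadget.
\end{enumerate}
This construction is computable uniformly in the atomic diagram of $\mc{A}$. Conversely, the domain of $\mc{A}$, each $R_i$, and each $\neg R_i$ are defined in $\mc{G}_\mc{A}$ by finitary existential formulas. Because both $R_i$ and its complement are $\Sinc1$-definable, every relation is $\Delta^{\mathrm c}_1$-definable, uniformly in $i$. In the other direction, $\mc{G}_\mc{A}$ is interpreted in $\mc{A}$ on the domain $\mc{A}^{<\omega}\times\omega$, coding gadget vertices by the tuple they are attached to together with a position index. Its edge relation is $\Delta^{\mathrm c}_1$, defined by a computable disjunction over the atomic type of the tuple. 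The composite isomorphisms $\mc{A}\to\mc{A}$ and $\mc{G}_\mc{A}\to\mc{G}_\mc{A}$ are likewise $\Delta^{\mathrm c}_1$-definable, which gives effective bi-interpretability.

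\textbf{Consequences.} Item (1) is immediate from uniformity in both directions: a copy of $\mc{A}$ computes a copy of $\mc{G}_\mc{A}$, and the $\Delta^{\mathrm c}_1$ interpretation recovers a copy of $\mc{A}$ from any copy of $\mc{G}_\mc{A}$. For (3) and (4), translate formulas by induction on $\alpha$. Atomic formulas become $\Delta^{\mathrm c}_1$ formulas. A quantifier over elements becomes a quantifier restricted to the definable domain, and a quantifier over interpreted tuples in $\mc{A}^{<\omega}$ becomes a countable disjunction or conjunction over lengths. Connectives commute with the translation. Since $X$-c.e.\ connectives stay $X$-c.e., the level is preserved for $\alpha\ge1$, and the case $\alpha=0$ is absorbed at level $1$. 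Note that (4) should read $\mc{G}_\mc{A}\models\psi$ iff $\mc{A}\models\psi^*$.

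\textbf{Scott complexity, the expected main obstacle.} Translating a Scott sentence $\varphi$ of $\mc{G}_\mc{A}$ to $\varphi^*$ only guarantees that a model $\mc{B}\models\varphi^*$ interprets a graph isomorphic to $\mc{G}_\mc{A}$. To conclude $\mc{B}\cong\mc{A}$, we must add the sentence asserting that the composite map from $\mc{B}$ into the structure re-interpreted from $\mc{G}_\mc{B}$ is an isomorphism, together with the sentence asserting that the interpreted graph is well defined. These assertions are $\Pinc2$ or of lower complexity, so conjoining them preserves $\Pinf\alpha$, $\Sinf\alpha$ and $\dSinf\alpha$ for $\alpha\ge 3$. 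For small $\alpha$, I would instead use the gadget rigidity directly: in $\mc{G}_\mc{A}$ the coding structure is determined by $\forall\exists$ conditions that can be merged into the outermost block of the Scott sentence. This is the step that needs genuine case checking against the Scott complexity hierarchy of \cite{AGNHTT}, and I would follow \cite[Theorem VI.27]{Part1} for it.
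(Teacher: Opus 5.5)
The paper does not prove this statement; it imports it from \cite{Part1}. Your overall route is the standard one: Lavrov-style coding, $\Delta^{\mathrm c}_1$ interpretations in both directions, then (1), (3), (4) by translating formulas level by level. You are also right that (4) should read $\mc G_{\mc A}\models\psi$ iff $\mc A\models\psi^*$. However, two steps fail as written.

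\textbf{The gadgets.} Your gadgets do not yield the existential definitions you assert.
\begin{itemize}
\item A tuple node for a tuple $(a,a,\dots)$ is joined to $a$ by an edge and by a path of length $2$, so it lies on a triangle. The triangle tag therefore no longer singles out element vertices.
\item As soon as some relation has arity at least $3$, chaining coordinate paths through element vertices and other tuple nodes produces cycles of every sufficiently large length. So a cycle of length $p_{2i}$ does not certify an $R_i$-node. Concretely, take a node of a relation of arity at least $k_i+2$ whose first $k_i$ coordinates are $\bar x$. It lies on a spurious $p_{2i}$-cycle through two of its unused coordinate paths, and hence witnesses your existential formula for $R_i(\bar x)$.
\end{itemize}
Existential formulas cannot say ``has degree $2$'' or ``carries no triangle''. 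So you must attach to each vertex role a marker that cannot arise accidentally and that no other marker contains, and require these markers in the defining formulas. Only then are each role and its complement $\Sigma^{\mathrm c}_1$.

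\textbf{The argument for (2).} This part is aimed at the wrong direction. If $\chi$ is a Scott sentence for $\mc G_{\mc A}$, nothing needs to be added to $\chi^*$. The graph interpretation is given by the same formulas in every structure $\mc B$ of the language and produces $\mc G_{\mc B}$. Hence $\mc B\models\chi^*$ iff $\mc G_{\mc B}\cong\mc G_{\mc A}$ iff $\mc B\cong\mc A$, and the composite-map sentence you add is valid in every $\mc B$.

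The direction that needs work is the converse. It is also the one the paper uses when it realizes the pairs of \autoref{prop:limit4.2} and \autoref{thm:Inf4.2} as graphs. For a Scott sentence $\varphi$ of $\mc A$, the translation $\varphi_*$ holds in every graph whose decoded structure is isomorphic to $\mc A$. That includes graphs with stray vertices, extra edges, or missing or duplicated tuple nodes. You need a sentence $\theta$ in the language of graphs with $\mc H\models\theta$ iff $\mc H\cong\mc G_{\mc C}$ for some $\mc C$, and you must show $\theta$ is $\Pinf{2}$. This is exactly where the existential recognizability of roles is used. Then $\varphi_*\wedge\theta$ has the right complexity for $\Pinf{\alpha}$ and $\dSinf{\alpha}$ with $\alpha\ge 2$, and for $\Sinf{\alpha}$ with $\alpha\ge 3$.

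Your threshold $\alpha\ge 3$ is thus too conservative, but your small-$\alpha$ patch cannot work. A genuine $\forall\exists$ condition cannot be absorbed into a $\Pinf{1}$ or $\dSinf{1}$ sentence, and in fact (2) is false at that level:
\begin{itemize}
\item An infinite graph has no $\Pinf{1}$, $\Sinf{1}$ or $\dSinf{1}$ Scott sentence, since such a sentence also holds in a finite induced subgraph containing its existential witnesses.
\item A one-element structure with infinitely many unary predicates has a $\dSinf{1}$ Scott sentence, but its coding graph is infinite.
\end{itemize}
So (2) should be read for infinite $\mc A$. There both sides have Scott complexity at least $\Pinf{2}$ and never $\Sinf{2}$: by \autoref{parameterizeSR}, a $\Sinf{2}$ Scott sentence would give some $(\mc A,\bar p)$ a $\Pinf{1}$ one, forcing finiteness. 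This restricted version covers every use of (2) in the paper.
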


\subsection{Finite Unfriendly Jump Inversion}
Finite unfriendly jump inversion was introduced and explored thoroughly in \cite{mhtArith}.
Most of our results either extend or contrast with the finite case.
Our two main technical results generalize the following finite analogs.

\begin{thm}\label{thm:MHT4.2}
	Fix $n\in\omega$. Let $S \subseteq \mathbb{N}$ be a complete $\Sigma^0_{2n+1}$ set. There are computable structures $\mc{A} \ncong \mc{B}$ and a sequence of computable structures $\mc{C}_i$ such that
	\[ i \in S \Longrightarrow \mc{C}_i \cong \mc{A} \]
	and
	\[ i \notin S \Longrightarrow \mc{C}_i \cong \mc{B}.\]
	Moreover:
	\begin{enumerate}
		\item $\mc A\leq_{n+1}\mc B$, but $\mc{B} \nleq_{n+1} \mc{A}$ and there is a $\Sigma_{n+1}^{\mathbf{0}^{(n)}}$ sentence $\varphi$ such that $\mc{A} \models \varphi$ and $\mc{B} \not\models \varphi$;
		\item $\mc{A}$ and $\mc{B}$ have $\Pinf{n+2}$ Scott sentences. Specifically, $\mc{A}$ has a $\Pinf{n+2}$ Scott sentence and $\mc{B}$ has a $\Pinf{n+1}$ Scott sentence.
	\end{enumerate}
	This is uniform in $n$.
\end{thm}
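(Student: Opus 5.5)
This is Harrison-Trainor's finite theorem from \cite{mhtArith}, so I will proceed by induction on $n$, building the pair $(\mc A_n,\mc B_n)$ together with a uniform reduction. Each step will gain two jumps of coding power while the back-and-forth level goes up by only one.

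\textbf{Base case $n=0$.} Let $S$ be $\Sigma^0_1$. I take $\mc B$ to be a structure with a $\Pinf 1$ Scott sentence and $\mc A$ to be $\mc B$ with one extra witness added. For instance, $\mc B$ is a single element with a unary predicate false, and $\mc A$ is the same element with the predicate true. Then $\mc C_i$ starts as a copy of $\mc B$ and switches at the stage $i$ enters $S$. The sentence $\exists x\,P(x)$ is $\Sigma^{\mathbf 0}_1$ and separates the two, and $\mc A\le_1\mc B$ is immediate.

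\textbf{Inductive step: using unfriendliness.} Suppose $(\mc A_n,\mc B_n)$ is $\Sigma^0_{2n+1}$/$\Pi^0_{2n+1}$ hard uniformly. A $\Sigma^0_{2n+3}$ set has the form $i\in S\iff\exists x\,\forall y\,R(i,x,y)$ with $R$ a $\Sigma^0_{2n+1}$ predicate. I form, for each $i$ and $x$, a structure $\mc D_{i,x}$ that is $\cong\mc A_n$ if $\forall y R$ holds and $\cong\mc B_n$ otherwise. The $\forall y$ is absorbed as follows: a $\Pi^0_{2n+2}$ condition is a conjunction of $\Sigma^0_{2n+1}$ conditions, so I take a suitable direct-sum or "$\omega$ copies" construction in which one $\mc B_n$-component destroys $\mc A_n$-ness. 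Concretely, I put infinitely many $\mc A_n$'s together with the $\mc C$'s for each $y$, choosing the combination so that "all components $\cong\mc A_n$" becomes a $\Pinf{n+1}$ property of the combination, namely $\forall$ component $\Pinf{n+1}$-true. This uses $\mc A_n\le_{n+1}\mc B_n$ only in the direction needed.

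The existential over $x$ is handled by a cone or tree gluing: $\mc C_i$ consists of a root with children $\mc D_{i,x}$ for all $x$, plus infinitely many default $\mc B$-type children. Then $\mc A_{n+1}$ is "some child is of type $\mc A$-kind" and $\mc B_{n+1}$ is "no child is". The $\exists$ over children raises syntactic level by one, giving $\Sigma_{n+2}^{\mathbf 0^{(n+1)}}$, and the relativized formula with the oracle tracks the non-computable distinguishing formula from the previous level.

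\textbf{Checking the properties and the main obstacle.} I then verify by induction:
\begin{itemize}
\item $\mc A_{n+1}\le_{n+2}\mc B_{n+1}$;
\item $\mc B_{n+1}\not\le_{n+2}\mc A_{n+1}$;
\item the Scott sentence bounds $\Pinf{n+3}$ and $\Pinf{n+2}$, obtained by quantifying over children and using the inductive Scott sentences;
\item uniformity in $n$, which follows because every step is effective.
\end{itemize}
The main obstacle is the $\forall y$ step. Making a countable conjunction of $\Sigma^0_{2n+1}$ codings cost only one back-and-forth level instead of two requires the structures to be unfriendly, so that $\mc A_n$ versus $\mc B_n$ is already $\Pinf{n+1}$-distinguishable. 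I would first try to use exactly the asymmetry $\mc A\le_{n+1}\mc B$, $\mc B\not\le_{n+1}\mc A$ to show that the combined structure's type is determined at level $n+2$.
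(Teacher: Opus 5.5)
\textbf{The inductive step fails, at exactly the point you flag as the obstacle.} Your claim that ``all components are $\cong\mc A_n$'' is a $\Pinf{n+1}$ property cannot hold. The relation $\mc A_n\le_{n+1}\mc B_n$ says precisely that every $\Pinf{n+1}$ sentence true in $\mc A_n$ is true in $\mc B_n$. So the only level-$(n+1)$ separation available is the $\Sinf{n+1}$ sentence $\varphi_n$ true in $\mc A_n$. ``Every component satisfies $\varphi_n$'' is then $\Pinf{n+2}$, and the existential over children gives a $\Sinf{n+3}$ sentence, not a $\Sinf{n+2}$ one.

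This is not an artifact of the chosen formula. After padding so that the number of $\mc B_n$-components and the number of good children are each $0$ or $\omega$, each child is $\mc D^+=\omega\mc A_n$ or $\mc D^-=\omega\mc A_n\oplus\omega\mc B_n$. Then $\mc A_{n+1}=\omega\mc D^+\oplus\omega\mc D^-$ and $\mc B_{n+1}=\omega\mc D^-$. Identify $\mc D^+$ with the $\omega\mc A_n$-summand of $\mc D^-$ and match the remaining components bijectively. The componentwise relations $\mc A_n\le_{n+1}\mc A_n$ and $\mc A_n\le_{n+1}\mc B_n$ give $(\mc D^+,\bar d)\le_{n+1}(\mc D^-,\bar d)$ for every tuple $\bar d$, hence $\mc D^-\le_{n+2}\mc D^+$. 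Matching children in the same way gives $\mc B_{n+1}\le_{n+2}\mc A_{n+1}$, contradicting the required $\mc B_{n+1}\not\le_{n+2}\mc A_{n+1}$. This argument used only $\mc A_n\le_{n+1}\mc B_n$, so sums and cones achieve just the friendly rate of one back-and-forth level per quantifier, whatever the previous pair is. The asymmetry you planned to exploit is exactly what forbids the $\Pinf{n+1}$ description.

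\textbf{The missing idea.} In Harrison-Trainor's argument, which the paper follows (see the remark after \autoref{thm:MHT4.3}), the pairs theorem and jump inversion are proved in an intertwined induction. The compression is created once, by a direct construction at the first nontrivial level. This is the case $n=1$: a $\Sigma^0_3$ set coded by a pair separated at level $2$, whose separating $\Sinf2$ sentence cannot be computable and needs $\mathbf 0'$. It is then transported upward. Relativize that case to $\mathbf 0^{(2n)}$, regard the structures as graphs, and apply $\Inv{n}{\cdot}$, which replaces each edge by $(\mc A_n,\mc B_n)$ and each non-edge by $(\mc B_n,\mc A_n)$.

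The swapped pair is the device your construction lacks. Since $R_{u,v}\models\varphi_n$ iff $L_{u,v}\models\neg\varphi_n$, the edge relation is both $\Sinf{n+1}$ and $\Pinf{n+1}$, so a $\Sinf\ell$ fact about the graph becomes a $\Sinf{n+\ell}$ fact. Meanwhile the computable construction absorbs $2n$ jumps, because a $\Delta^0_{2n+1}$ edge relation can be fed to the level-$n$ pairs on both sides. Together these give $\Sigma^0_{2n+3}$ coding with separation at level $n+2$, i.e.\ the case $n+1$. The paper's proof of \autoref{thm:Inf4.2} follows the same pattern with $\Inv{\lambda}{\cdot}$.

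Separately, your concrete base case is wrong as written. $\forall x\,P(x)$ is a $\Pinf1$ sentence true in $\mc A$ and false in $\mc B$, so $\mc A\not\le_1\mc B$. Also, a one-element computable structure cannot switch. $\mc A$ must be $\mc B$ plus an extra $P$-witness, as your prose suggests.
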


\begin{thm}\label{thm:MHT4.3}
	For each graph $\mc{G}$ and $n\in\omega$, there is a graph $\Inv{n}{\mc{G}}$ such that:
	\begin{enumerate}
		\item Given a $\mathbf{0}^{(2n)}$-computable copy of $\mc{G}$, there is a computable copy of $\Inv{n}{\mc{G}}$.
		\item Given a $\mathbf{0}^{(n)}$-computable copy of $\Inv{n}{\mc{G}}$ there is a $\mathbf{0}^{(2n)}$-computable copy of $\mc{G}$.
		\item Given a $\Sinf\ell$ sentence $\varphi$, there is a $\Sinf{n+\ell}$ sentence $\varphi_*$ such that for all $\mc{G}$, $\mc{G} \models \varphi$ if and only if $\Inv{n}{\mc{G}} \models \varphi_*$.
		\item For $\ell\geq2$, $\mc{G}$ has a $\Pinf\ell$ Scott sentence if and only if $\Inv{n}{\mc{G}}$ has a $\Pinf{n+\ell}$ Scott sentence.
		\item For all $\ell\geq 3$, $\mc{G}$ has a $\Sinf\ell$ Scott sentence if and only if  $\Inv{n}{\mc{G}}$ has a $\Sinf{n+\ell}$ Scott sentence.
	\end{enumerate}
	Moreover, (1) and (2) are uniform.
\end{thm}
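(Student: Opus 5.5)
The plan is to build $\Inv{n}{\mc G}$ by the classical replace-edges-by-structures pattern, using the pair $(\mc A,\mc B)$ of \autoref{thm:MHT4.2} as the gadgets. First build a two-sorted structure and then turn it into a graph via \autoref{toGraph}, which preserves every item we need.
\begin{itemize}
\item The first sort is a set of vertices $V$, identified with the domain of $\mc G$.
\item For each unordered pair $\{u,v\}$ of distinct vertices we attach two components $C_{u,v}$ and $C'_{u,v}$. These are marked by quantifier-free relations linking them to $u$, $v$ and to each other, so that they are distinguishable.
\item We arrange $C_{u,v}\cong\mc A$ iff $uEv$ (else $\cong\mc B$), and $C'_{u,v}\cong\mc A$ iff $\neg uEv$ (else $\cong\mc B$).
\end{itemize}
The swapped second copy is what makes (2) work. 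Without it, the non-edges would only be $\Pi^0_{2n+1}$.

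For (1), a $\mathbf 0^{(2n)}$-computable copy of $\mc G$ makes both $E$ and its complement $\Delta^0_{2n+1}$, hence $\Sigma^0_{2n+1}$. Each of them $m$-reduces to the complete $\Sigma^0_{2n+1}$ set $S$. Composing with the uniformly computable sequence $\mc C_i$ of \autoref{thm:MHT4.2} gives uniformly computable copies of all the $C_{u,v}$ and $C'_{u,v}$, hence a computable copy of $\Inv{n}{\mc G}$.

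For (2), take a $\mathbf 0^{(n)}$-computable copy of $\Inv{n}{\mc G}$. The vertex sort and the attachment relations are computable in it. We have $uEv$ iff $C_{u,v}\models\varphi$ and $\neg uEv$ iff $C'_{u,v}\models\varphi$, where $\varphi$ is the $\Sigma^{\mathbf 0^{(n)}}_{n+1}$ sentence of \autoref{thm:MHT4.2}(1). Relative to this copy, both conditions are therefore $\Sigma^0_{2n+1}$, so $E$ is $\Delta^0_{2n+1}$, i.e.\ $\mathbf 0^{(2n)}$-computable. This is uniform.

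For (3), we translate by induction on formulas:
\begin{itemize}
\item relativize quantifiers to the vertex sort, which is quantifier-free definable;
\item replace $x E y$ by ``$\varphi$ holds in the component $C_{x,y}$'' and $\neg xEy$ by ``$\varphi$ holds in $C'_{x,y}$''.
\end{itemize}
Both replacements are $\Sinf{n+1}$, so a $\Sinf\ell$ (resp.\ $\Pinf\ell$) formula becomes $\Sinf{n+\ell}$ (resp.\ $\Pinf{n+\ell}$).

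For the forward directions of (4) and (5), take the translated Scott sentence of $\mc G$ and conjoin a $\Pinf{n+2}$ sentence saying the following:
\begin{itemize}
\item every element is a vertex or lies in exactly one attached component;
\item each pair of vertices has exactly its two components;
\item each component satisfies the Scott sentence of $\mc A$ or of $\mc B$ (these are $\Pinf{n+2}$ and $\Pinf{n+1}$ respectively).
\end{itemize}
Since $\ell\ge2$, this stays within $\Pinf{n+\ell}$. For (5) we instead use \autoref{parameterizeSR}: a $\Sinf\ell$ sentence amounts to a $\Pinf{\ell-1}$ Scott sentence over parameters. The hypothesis $\ell\ge3$ ensures $\ell-1\ge2$, so the $\Pinf$ case applies to $(\mc G,\bar p)$.

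The backward directions are the main obstacle. I would prove a back-and-forth transfer lemma: for vertex tuples $\bar u,\bar v$ and $k\ge1$,
\[(\Inv{n}{\mc G},\bar u)\le_{n+k}(\Inv{n}{\mc G},\bar v)\iff(\mc G,\bar u)\le_k(\mc G,\bar v),\]
together with a version for tuples that also contain elements inside components. The proof goes by induction on $k$. The base case $k=1$ uses exactly the defining properties of the gadgets: $\mc A\le_{n+1}\mc B$ but $\mc B\not\le_{n+1}\mc A$, and inside a component nothing more than $n+1$ levels is visible. The subtle point is handling tuples whose entries sit inside components. For these I would use that the components are only compared as whole copies of $\mc A$ or $\mc B$, so an element inside a component can be matched to one in the corresponding component of the other side.

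With the lemma in hand, Montalb\'an's \autoref{Robustness} finishes the argument. A $\Pinf{n+\ell}$ Scott sentence for $\Inv{n}{\mc G}$ gives $\Sinf{n+\ell-1}$ definitions of automorphism orbits, so orbits there are determined by $\le_{n+\ell-1}$-types. By the lemma, orbits of vertex tuples in $\mc G$ are determined by $\le_{\ell-1}$-types. This yields $\Sinf{\ell-1}$ orbit definitions in $\mc G$, hence a $\Pinf\ell$ Scott sentence by \autoref{canonicalScott}. Item (5) is the parameterized version of the same argument, again via \autoref{parameterizeSR}.
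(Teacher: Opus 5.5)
\paragraph{The main gap.} The paper does not prove this statement; it quotes it from Harrison-Trainor. Its proof of the transfinite version, \autoref{thm:Inf4.3}, uses exactly your construction. Your items (1)--(3) and the forward halves of (4)--(5) match that proof.

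The gap is in the last step of the backward halves. Grant your transfer lemma. What it gives is that in $\mc G$, $\bar u\le_{\ell-1}\bar v$ implies $\bar u,\bar v$ are automorphic. That does not yield $\Sinf{\ell-1}$ orbit definitions. It only shows that the orbit of $\bar u$ is cut out by countably many $\Pinf{\ell-1}$ formulas, one separating $\bar u$ from each non-automorphic tuple. So the orbit is merely $\Pinf{\ell-1}$-definable, and \autoref{canonicalScott} then gives only a $\Pinf{\ell+1}$ Scott sentence.

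The condition ``$\le_\alpha$ implies automorphic'' is not among the equivalents of Scott rank $\le\alpha$, and it is strictly weaker. For instance, take a graph whose components, each in infinitely many copies, are of two kinds:
\begin{itemize}
\item a root joined by an edge to one vertex of a $k$-cycle, for every $k\ge3$;
\item for each $N\ge3$, a root joined likewise to $k$-cycles for $3\le k\le N+2$ and to a path of length $N+10$ ending in a $K_4$.
\end{itemize}
In this graph $\le_1$ implies automorphic. Yet every existential formula true of a root of the first kind holds of some root of the second kind, so that orbit is not $\Sinf1$-definable.

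\paragraph{Two ways to repair it.}
\begin{itemize}
\item[(a)] Transfer Montalb\'an's other criterion, the absence of $\alpha$-free tuples, instead of the relation. Suppose $\bar u$ were $(\ell-1)$-free in $\mc G$. Take $\beta<\ell-1$ and a tuple $\bar w\bar e$ in $\Inv{n}{\mc G}$, with $\bar w$ vertices and $\bar e$ inside components over pairs from $\bar u\bar w$. Freeness gives $\bar u'\bar w'$ with $\bar u\bar w\le_\beta\bar u'\bar w'$ and $\bar u\not\le_{\ell-1}\bar u'$. Your lemma, with $\bar e$ mapped isomorphically to some $\bar e'$, lifts the first to $\bar u\bar w\bar e\le_{n+\beta}\bar u'\bar w'\bar e'$. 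Item (3) together with Karp's theorem lifts the second to $\bar u\not\le_{n+\ell-1}\bar u'$. So $\bar u$ would be $(n+\ell-1)$-free in $\Inv{n}{\mc G}$, contradicting its Scott rank.
\item[(b)] Argue as the paper does in item (4) of \autoref{thm:Inf4.3}, which needs no back-and-forth analysis of $\Inv{n}{\mc G}$ at all. Pull each $\Sinf{n+\ell}$ sentence $\phi$ back to a $\Sinf\ell$ sentence $\phi^*$ with $\mc G\models\phi^*\iff\Inv{n}{\mc G}\models\phi$, as follows.
  \begin{itemize}
  \item Fix $\mathbf d\ge_T\mathbf 0^{(2n)}$ computing $\phi$, so that $\mc A,\mc B$ are $(n+1)$-friendly relative to $\mathbf d$.
  \item For a graph $\mc C$, pick $Y\ge_T\mathbf d$ with $Y^{(n)}\equiv_T\mc C\oplus\mathbf d^{(n)}$. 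Then $Y$ computes $\Inv{n}{\mc C}$ by the pair of structures theorem.
  \item Hence ``$\Inv{n}{\mc C}\models\phi$'' is uniformly $\Sigma^0_\ell(\mc C\oplus\mathbf d^{(n)})$, and Vanden Boom's theorem yields $\phi^*$.
  \end{itemize}
  Then pull back a Scott sentence of $\Inv{n}{\mc G}$, using that $\mc G\mapsto\Inv{n}{\mc G}$ is injective on isomorphism types. This handles the $\Pi$ and $\Sigma$ cases at once.
\end{itemize}

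\paragraph{Two smaller points.}
\begin{itemize}
\item Your $\Pinf{n+2}$ description of the image of $\Inv{n}{\cdot}$ must also say that exactly one of $C_{u,v},C'_{u,v}$ satisfies $\varphi$. Otherwise a structure with both components of every pair isomorphic to $\mc A$ satisfies it together with the translated Scott sentence of the countable complete graph, since your translation then makes both $xEy$ and $\neg xEy$ true.
\item In the backward half of (5), the parameters that \autoref{parameterizeSR} provides for $\Inv{n}{\mc G}$ may lie inside components rather than in the vertex sort. They must first be traded for the vertices indexing those components; route (b) avoids this issue.
\end{itemize}
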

These results are highly related to each other, with \autoref{thm:MHT4.2} used to prove  \autoref{thm:MHT4.3} for a fixed $n$.
From there,  \autoref{thm:MHT4.3} for $n$ is used to prove  \autoref{thm:MHT4.2} for $n+1$.
 \autoref{thm:MHT4.3} describes the procedure that we call unfriendly jump inversion.
That said, direct application of \autoref{thm:MHT4.2} is useful as well.

Many (but not all) of our applications were proven in \cite{mhtArith} at finite levels using this machinery.
We will explicitly note when this is the case throughout our exposition of the applications.

\subsection{Formal Statement of Infinite Belligerent Jump Inversion}
We now give a formal statement of our main technical tools.
They are comparable to the theorems in the finite case, but are extended and more detailed.
We intend these theorems to be usable as a black box in future constructions, so we have taken care to state many of their potentially useful formal properties and relativizations.

\begin{restatable}[Belligerent Pairs Theorem]{thm}{ThmMainA}
	\label{thm:Inf4.2}\label{UnfriendlyPairs}
	Fix an $X$-computable ordinal $\alpha$. Let $S \subseteq \mathbb{N}$ be a complete $\Sigma^0_{2\alpha+1}(X)$ set. There are $X$-computable structures $\mc{A} \ncong \mc{B}$ and a sequence of $X$-computable structures $\mc{C}_i$ such that
	\[ i \in S \Longrightarrow \mc{C}_i \cong \mc{A} \]
	and
	\[ i \notin S \Longrightarrow \mc{C}_i \cong \mc{B}.\]
	Moreover:
	\begin{enumerate}
		\item   $\mc A\leq_{\alpha+1} \mc B$, but $\mc{B} \nleq_{\alpha+1} \mc{A}$; furthermore, there is a  $\Sigma_{\alpha+1}^{X^{(\gamma(\alpha))}}$
		formula $\varphi$ with $\mc{A}\models\varphi$ and  $\mc{B}\not\models\varphi$.
		\item $\mc{A}$ and $\mc{B}$ have $\Pinf{\alpha+2}$ Scott sentences. Specifically, $\mc A$ has a $\Pinf{\alpha+2}$ Scott sentence and $\mc B$ has a $\Pinf{\alpha+1}$ Scott sentence.
	\end{enumerate}
	This is uniform in $\alpha$.
\end{restatable}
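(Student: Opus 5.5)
We will prove the theorem by transfinite induction on $\alpha$, intertwined with the Belligerent Jump Inversion Theorem (\autoref{unfriendly jump inv thm}), following the pattern described after \autoref{thm:MHT4.3}. For finite $\alpha$ the statement is \autoref{thm:MHT4.2}, relativized to $X$; there $\gamma(n)=n$, so the $\Sigma^{\mathbf 0^{(n)}}_{n+1}$ formula is exactly the claimed one.

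\textbf{Successor step.} Suppose the Pairs Theorem holds at $\beta$. Then jump inversion at $\beta$ is available: replace edges and non-edges of a $X^{(2\beta+1)}$-computable graph by copies of $\mc A_\beta$ and $\mc B_\beta$, which shifts Scott complexity and back-and-forth levels by $\beta$. To get the pair at $\alpha=\beta+1$, we need $(\mc A,\mc B)$ whose $\Sigma^0_{2\alpha+1}=\Sigma^0_{2\beta+3}$ membership is coded. We plan to start from a base finite unfriendly pair for two extra jumps: the $n=1$ case of \autoref{thm:MHT4.2}, relativized to $X^{(2\beta+1)}$. This gives $X^{(2\beta+1)}$-computable $\mc A_0,\mc B_0$ and a uniform sequence $\mc C^0_i$ coding $\Sigma^0_3(X^{(2\beta+1)})=\Sigma^0_{2\alpha+1}(X)$. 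We then apply $\Inv{\beta}{\cdot}$ (via \autoref{toGraph}) uniformly to all of these. This yields $X$-computable structures whose back-and-forth behaviour is shifted by $\beta$: $\mc A_0\le_2\mc B_0$ becomes $\le_{\beta+2}$. This already overshoots by one, so instead we aim to use $n=0$-type coding with a $\Sigma^0_1$ over $X^{(2\beta+2)}$. Concretely, we will check whether jump inversion at $\beta$ combined with a Marker-type $\exists/\forall$ extension adds exactly one level syntactically while absorbing two jumps. We expect this bookkeeping to be where the factor $2\alpha+1$ versus $\alpha+1$ is verified.

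\textbf{Limit step.} Let $\alpha=\lambda$ be a limit (and also handle $\lambda+k$). Fix a fundamental sequence $\beta_k\to\lambda$. A complete $\Sigma^0_{2\lambda+1}(X)$ set is $\Sigma^0_1$ over $X^{(2\lambda)}=X^{(\lambda)}$, so $i\in S$ iff there exist $s$ and $k$ with some $\Pi^0_{2\beta_k+1}$ condition holding. We plan to take $\mc B$ to be a disjoint sum (under a unary predicate) of infinitely many copies of every $\mc B_{\beta_k}$, and $\mc A$ to be $\mc B$ plus one extra component of type $\mc A_{\beta_k}$ (or a suitable limit analogue). The induction hypothesis then supplies uniformly computable components, guessing which $k$ witnesses membership. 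Verification of $\mc A\le_{\lambda+1}\mc B$ is by componentwise back-and-forth. For $\mc B\not\le_{\lambda+1}\mc A$, we use the sentence ``there exists a component satisfying the $\Sigma_{\beta_k+1}$ formula for some $k$.'' Its disjunction over $k$ needs the uniform $X^{(\gamma(\beta_k))}$ indices, which is where $\gamma(\alpha)$ emerges: at limits $\gamma=0$ is attainable only because the $\gamma(\beta_k)$ are finite and can be absorbed, whereas for $\alpha=\lambda+k$ the oracle $X^{(\lambda)}$ is genuinely needed.

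\textbf{Scott sentences.} Using \autoref{canonicalScott} and \autoref{Robustness}, we show that orbits in $\mc B$ are $\Sinf{\alpha}$ definable and orbits in $\mc A$ are $\Sinf{\alpha+1}$ definable. These definitions are assembled from the component Scott sentences given by induction, with the one extra quantifier coming from the existence of the special component in $\mc A$.

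\textbf{Main obstacle.} The main obstacle is the successor step. We must make sure that the combination of jump inversion by $\beta$ with one more coding layer yields exactly level $\alpha+1$ back-and-forth with $\Sigma^0_{2\alpha+1}$ hardness, without losing a level at infinite successors, where $\beta+1$ and $1+\beta$ differ. We would first try to put the new finite unfriendly layer on the outside, i.e.\ invert by $\beta$ and then apply \autoref{thm:MHT4.2} with $n=1$ relativized appropriately. If the ordinal arithmetic fails, we would reorder the layers.
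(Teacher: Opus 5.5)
Your proposal has two genuine gaps: the limit step as described fails, and the successor step is left unresolved.

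\emph{The limit step.} Suppose $\mc A$ is $\mc B$ plus one extra component coming from a level $\beta_k<\lambda$. Then the sentence you propose (``some component satisfies the $\Sinf{\beta_k+1}$ distinguishing formula, for some $k$'') is a disjunction of formulas of rank $<\lambda$, hence $\Sinf{\lambda}$. If it holds in $\mc A$ and fails in $\mc B$, you get $\mc B\not\le_\lambda\mc A$. But $\mc A\le_{\lambda+1}\mc B$ implies $\mc B\le_\lambda\mc A$ directly from the definition, so requirement (1) is violated.

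Relatedly, ``$\exists s\,\exists k$ of a lower-level condition'' only describes effective unions of lower-level sets, i.e.\ $\Sigma^0_\lambda$ sets. In the paper's convention, $\Sigma^0_{\lambda+1}$ is $\Sigma^0_1$ over $X^{(\lambda+1)}$, not over $X^{(\lambda)}$, and has the shape $\exists s\,\forall k$.

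The idea you are missing is that at a limit no belligerence is needed. Since $2\lambda+1=\lambda+1$, the paper (\autoref{prop:limit4.2}) takes the friendly pair $\mc A=\omega^\lambda\cdot 2$, $\mc B=\omega^\lambda$ and applies the ordinary Pair of Structures Theorem. The same pair gives a friendly inversion $\Inv{\lambda}{\cdot}$ (\autoref{prop:limit4.3}). Also, the $\gamma(\beta_k)$ need not be finite; e.g.\ $\gamma(\omega+1)=\omega$.

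\emph{The successor step.} Here you discarded the option that actually works. Take Harrison-Trainor's $n=1$ pair relativized to $X^{(2\beta+1)}$. For infinite $\beta$ and the paper's convention, this is exactly a $\Delta^0_{2\beta+1}(X)$ oracle, a legal input for $\Inv{\beta}{\cdot}$, and $\Sigma^0_3$ over it is $\Sigma^0_{2\beta+3}=\Sigma^0_{2\alpha+1}$. Following it with $\Inv{\beta}{\cdot}$ gives a $\Sinf{\beta+2}$ separating sentence. Since $\beta+2=\alpha+1$, this is precisely the level the theorem asks for, not an overshoot.

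Your replacements cannot work:
\begin{itemize}
\item An $X^{(2\beta+2)}$-computable input is not accepted by $\Inv{\beta}{\cdot}$.
\item A Marker-type extension absorbs one jump per level, which is exactly the limitation to be overcome.
\item Putting the finite layer outside gains nothing, since $2+2\beta=2\beta$ and $1+\beta=\beta$ for infinite $\beta$.
\end{itemize}

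The right principle is: finite unfriendly layer inside, relativized high; transfinite inversion outside. The paper does this in one shot rather than by induction. It writes $\alpha=\lambda+n$, so $2\alpha+1=\lambda+2n+1$, takes Harrison-Trainor's level-$n$ pair relativized to $\mathbf 0^{(\lambda+1)}$, and applies only the friendly $\Inv{\lambda}{\cdot}$. That inversion shifts the $\Sinf{n+1}$ separating sentence to $\Sinf{\alpha+1}$, and the $\Pinf{n+2}$/$\Pinf{n+1}$ Scott sentences to $\Pinf{\alpha+2}$/$\Pinf{\alpha+1}$. This avoids needing the full belligerent inversion at every $\beta<\alpha$.

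You also need neither componentwise back-and-forth nor tracking $\gamma$ through the construction. If $\mc A\not\le_{\alpha+1}\mc B$, \autoref{512} would give a $\Pinc{2\alpha+1}$ separating sentence, making $S$ a $\Pi^0_{2\alpha+1}(X)$ set, contradicting completeness. The $X^{(\gamma(\alpha))}$-computable separating formula comes directly from \autoref{bfDef} applied to the computable pair.
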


\begin{restatable}[Belligerent Jump Inversion Theorem]{thm}{ThmMainB}
	\label{thm:Inf4.3}\label{unfriendly jump inv thm}
	For each graph $\mc{G}$ and $X$-computable
	ordinal $\alpha$, there is a graph $\Inv{\alpha}{\mc{G}}$ such that:
	\begin{enumerate}
		\item Given a $\Delta^0_{2\alpha+1}(X)$ 
		copy of $\mc{G}$, there is an $X$-computable copy of $\Inv{\alpha}{\mc{G}}$. This is uniform in $X$ on indices.
		\item For any $\mathbf{d}\geq_T X^{(\gamma(\alpha))}$, if $\mathbf{d}$ computes a copy of $\Inv{\alpha}{\mc{G}}$, then there is a $\Delta_{\alpha+1}(\mathbf{d})$
		copy of $\mc{G}$. Moreover, if $\mathbf{d}\geq_T X^{(2\alpha+1)}$ (if $\alpha$ is finite, then $\mathbf{d}\geq_T X^{(2\alpha)}$ suffices), then $\mathbf{d}$ computes a copy of $\Inv{\alpha}{\mc{G}}$ if and only if there is a $\Delta_{\alpha+1}(\mathbf{d})$ copy of $\mc{G}$. Both statements are uniform in $X$ on indices.
		\item For any computable ordinal $\beta$, given a $\Sinf\beta$ sentence $\varphi$, there is a $\Sinf{\alpha+\beta}$ sentence $\varphi_*$ such that for all $\mc{G}$, $\mc{G} \models \varphi$ if and only if $\Inv{\alpha}{\mc{G}} \models \varphi_*$. Moreover, if $\phi$ is $\mathbf{a}$-computable, then $\phi_*$ is uniformly computable from $\mathbf{a}\oplus X^{(\gamma(\alpha))}$.
		\item For any computable ordinal $\beta$, given a $\Sinf{\alpha+\beta}$ sentence $\phi$, there exists a $\Sinf{\beta}$ sentence $\phi^*$ so that for all $\mc G$, $\mc G\models \phi^*$ if and only if $\Inv{\alpha}{\mc{G}}\models \phi$. Moreover, if $\phi$ is $\mathbf{a}$-computable, then $\phi^*$ is uniformly computable from $(\mathbf{a}\oplus X^{(2\alpha+1)})^{(\alpha+1)}$ (or $(\mathbf{a}\oplus X^{(2\alpha)})^{(\alpha)}$ if $\alpha$ is finite).
		\item For any ordinal $\beta$, $\mc{G}$ has a $\Pinf\beta$ Scott sentence if and only if $\Inv{\alpha}{\mc{G}}$ has a $\Pinf{\alpha+\beta}$ Scott sentence.
		\item For any ordinal $\beta$, $\mc{G}$ has a $\Sinf\beta$ Scott sentence if and only if $\Inv{\alpha}{\mc{G}}$ has a $\Sinf{\alpha+\beta}$ Scott sentence.
		\item For any ordinal $\beta$, $\mc{G}$ has a $\dSinf \beta$ Scott sentence if and only if $\Inv{\alpha}{\mc{G}}$ has a $\dSinf{\alpha+\beta}$ Scott sentence.
	\end{enumerate}
\end{restatable}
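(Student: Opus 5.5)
The plan is to prove the Belligerent Jump Inversion Theorem by the classical edge-replacement scheme of \cite{GHKMMS}, with the friendly pair replaced by the belligerent pair of \autoref{UnfriendlyPairs}. The theorem will be proved together with \autoref{UnfriendlyPairs} by transfinite induction on $\alpha$, as in the finite case, where \autoref{thm:MHT4.2} for $n$ gives \autoref{thm:MHT4.3} for $n$, which in turn gives \autoref{thm:MHT4.2} for $n+1$. Here I take \autoref{UnfriendlyPairs} at level $\alpha$ as given. Fix the pair $\mc A_\alpha,\mc B_\alpha$ from \autoref{UnfriendlyPairs} relative to $X$, with $\mc A_\alpha\leq_{\alpha+1}\mc B_\alpha$, $\mc B_\alpha\not\leq_{\alpha+1}\mc A_\alpha$, and uniform $\Sigma^0_{2\alpha+1}(X)$-hardness.

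Define $\Inv{\alpha}{\mc G}$ to have one sort of ``vertex'' elements, a copy of $\mc G$'s domain, and for each unordered pair $\{u,v\}$ of vertices an attached gadget. The gadget is a copy of $\mc A_\alpha$ if $uEv$ and a copy of $\mc B_\alpha$ otherwise, linked to $u,v$ by a fixed finitary pattern. The whole thing is then recoded as a graph via \autoref{toGraph}.

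\textbf{(1)} Given a $\Delta^0_{2\alpha+1}(X)$ copy of $\mc G$, the edge relation is in particular $\Sigma^0_{2\alpha+1}(X)$, so it $m$-reduces to the complete set $S$. The uniform sequence $\mc C_i$ then builds every gadget computably, uniformly in indices.

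\textbf{(3)} The translation $\varphi\mapsto\varphi_*$ goes by induction on $\varphi$. Atomic $uEv$ becomes ``the gadget on $u,v$ satisfies the $\Sigma^{X^{(\gamma(\alpha))}}_{\alpha+1}$ sentence distinguishing $\mc A_\alpha$ from $\mc B_\alpha$''. Its negation is the corresponding $\Pinf{\alpha+1}$ sentence, obtained by \autoref{bold-face hardness}/Karp from $\mc A_\alpha\leq_{\alpha+1}\mc B_\alpha$ together with the fact that $\mc B_\alpha$ has a $\Pinf{\alpha+1}$ Scott sentence. Quantifiers over $\mc G$ become quantifiers over vertices, which are $\Pinf{1}$-definable from the gadget structure. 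The offset is then $\alpha+\beta$, using $\alpha+1+\beta'$ bookkeeping for $\beta\geq 1$, and the oracle cost is $X^{(\gamma(\alpha))}$.

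\textbf{(2)} Given $\mathbf d\geq X^{(\gamma(\alpha))}$ computing $\Inv{\alpha}{\mc G}$, the set $E$ is $\Sigma_{\alpha+1}(\mathbf d)$ by (3), and non-edges are $\Sigma_{\alpha+1}(\mathbf d)$ via $\mc B_\alpha$'s Scott sentence relativized. So $E$ is $\Delta_{\alpha+1}(\mathbf d)$. The converse for $\mathbf d\geq X^{(2\alpha+1)}$ is (1) relativized to $\mathbf d$, noting $\Delta_{\alpha+1}(\mathbf d)\subseteq\Delta_{2\alpha+1}(\mathbf d)$ over $X$.

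\textbf{(4)} The reverse translation $\phi\mapsto\phi^*$ goes by induction on $\phi$ in the expanded language. A $\Sinf{\alpha+\beta}$ statement about $\Inv{\alpha}{\mc G}$ with parameters inside gadgets is reduced to statements about $\mc G$ plus the $\leq_\gamma$-types, $\gamma\leq\alpha$, of tuples in $\mc A_\alpha,\mc B_\alpha$. The key lemma, proven by induction on $\gamma$, is that the back-and-forth type of a tuple in $\Inv{\alpha}{\mc G}$ at level $\alpha+\gamma$ is determined by the $\leq_\gamma$-type in $\mc G$ of its vertex projection together with the $\equiv_{\alpha}$-types of its gadget parts. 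Writing down these types requires knowing $\leq_{\le\alpha}$ on the gadgets, which costs $X^{(2\alpha+1)}$ by \autoref{computable version defining back-and-forth by Pi 2 alpha}, and then $\alpha+1$ further jumps to evaluate disjunctions. This gives the stated oracle.

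\textbf{(5)--(7)} The forward directions follow from (3) applied to Scott sentences, after adding the $\Pinf{\alpha+2}$ sentence axiomatizing ``every gadget is $\mc A_\alpha$ or $\mc B_\alpha$''. That sentence costs level $\alpha+2\leq\alpha+\beta$ for $\beta\geq2$, and the small-$\beta$ cases are handled separately or are vacuous. The backward directions use (4) when $\mc G'$ ranges over graphs, together with Montalbán's robustness \autoref{Robustness} and \autoref{parameterizeSR} to transfer orbit definability. Specifically, an orbit of $\bar a$ in $\Inv{\alpha}{\mc G}$ being $\Sinf{\alpha+\beta}$-definable forces the orbit of its vertex part in $\mc G$ to be $\Sinf{\beta}$-definable, by the key lemma. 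The $\dSinf{}$ case combines the two.

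The main obstacle is that key back-and-forth lemma at limit and successor-of-limit stages. In particular, the gadget types must be absorbed exactly at level $\alpha$ and not $\alpha+1$, which requires precise knowledge from \autoref{UnfriendlyPairs}(2) of the Scott ranks and of $\equiv_\alpha$ between tuples of $\mc A_\alpha$ and $\mc B_\alpha$. I would first try to strengthen the inductive hypothesis of \autoref{UnfriendlyPairs} so that it records, for every tuple $\bar a\in\mc A_\alpha$, a matching $\bar b\in\mc B_\alpha$ with $\bar a\equiv_\alpha\bar b$.
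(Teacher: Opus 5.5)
\textbf{There is a genuine gap, and it is in the construction itself.} You attach a \emph{single} gadget per pair: $\mc A_\alpha$ on an edge and $\mc B_\alpha$ on a non-edge. Because $\mc A_\alpha\leq_{\alpha+1}\mc B_\alpha$, every $\Sinf{\alpha+1}$ sentence true in $\mc B_\alpha$ is true in $\mc A_\alpha$. So ``the gadget on $u,v$ is $\mc B_\alpha$'' can be said in $\Pinf{\alpha+1}$ form but never in $\Sinf{\alpha+1}$ form.

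This breaks your step in (2). You say non-edges are $\Sigma_{\alpha+1}(\mathbf d)$ ``via $\mc B_\alpha$'s Scott sentence'', but that Scott sentence is $\Pinf{\alpha+1}$. It only shows $\neg E\in\Pi_{\alpha+1}(\mathbf d)$, which is the complement of what you already had. So you get a copy of $\mc G$ with $\Sigma_{\alpha+1}(\mathbf d)$ edge relation, not a $\Delta_{\alpha+1}(\mathbf d)$ copy.

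The same defect breaks (3) and (5)--(7) at finite $\beta$. A quantifier-free matrix translates to a Boolean combination of $\Sinf{\alpha+1}$ and $\Pinf{\alpha+1}$ formulas, so a single existential block over it already lands in $\Sinf{\alpha+2}$. Your shift is really $\alpha+1+\beta$, and the extra $1$ is absorbed only when $\beta$ is infinite. Better formulas will not fix this. The random graph has a $\Pinf{2}$ Scott sentence. In your $\Inv{\alpha}{\cdot}$, however, an edge pair of vertices is $\leq_{\alpha+1}$ a non-edge pair (gadgetwise, from $\mc A_\alpha\leq_{\alpha+1}\mc B_\alpha$). Hence the orbit of a non-edge pair is not $\Sinf{\alpha+1}$-definable, and by Montalb\'an's robustness theorem there is no $\Pinf{\alpha+2}$ Scott sentence. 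Finite $\beta$ is exactly what the applications use, e.g.\ inverting a structure with a $\Pinf{2}$ Scott sentence.

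\textbf{The missing idea is the paper's two-gadget coding.} For each pair $u\neq v$, attach two gadgets $R_{u,v},L_{u,v}$, with $(R_{u,v},L_{u,v})\cong(\mc A,\mc B)$ on an edge and $\cong(\mc B,\mc A)$ on a non-edge. Let $\psi$ be the $X^{(\gamma(\alpha))}$-computable $\Sinf{\alpha+1}$ sentence true in $\mc A$ and false in $\mc B$. Then $E(u,v)\iff R_{u,v}\models\psi\iff L_{u,v}\models\neg\psi$, and $\neg E(u,v)\iff L_{u,v}\models\psi$, so $E$ is $\Delta^{\mathrm{in}}_{\alpha+1}$. This gives the sharp offset in (3) and the forward half of (2) immediately. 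It also explains why (1) assumes a $\Delta^0_{2\alpha+1}(X)$ copy: both $E$ and $\neg E$ are fed into the Belligerent Pairs sequence.

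\textbf{Two smaller corrections.}
\begin{itemize}
\item The converse in (2) is not ``(1) relativized to $\mathbf d$''. Relativizing changes the gadgets, and the fixed $X$-pair is not $\Sigma^0_{2\alpha+1}(\mathbf d)$-hard. Instead, $\mathbf d\geq X^{(2\alpha+1)}$ makes $\mc A,\mc B$ $(\alpha+1)$-friendly relative to $\mathbf d$. The Pair of Structures Theorem, applied to the $\Sigma_{\alpha+1}(\mathbf d)$ sets $E$ and $\neg E$, then builds the copy.
\item For (4), the paper needs no back-and-forth ``key lemma''. For any $\mc C$, pick $Y\geq\mathbf d$ with $Y^{(\alpha+1)}\equiv_T\mc C\oplus\mathbf d^{(\alpha+1)}$ (Macintyre). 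By Pair of Structures, $Y$ computes $\Inv{\alpha}{\mc C}$, so ``$\Inv{\alpha}{\mc C}\models\phi$'' is uniformly $\Sigma^0_\beta(\mc C\oplus\mathbf d^{(\alpha+1)})$. Vanden Boom's theorem (\autoref{vdB}) then yields $\phi^*$ with the stated oracle. The backward halves of (5)--(7) follow by applying (4) to a Scott sentence of $\Inv{\alpha}{\mc G}$ and using injectivity of $\Inv{\alpha}{\cdot}$. No orbit transfer or robustness argument is needed, and the strengthening of the Belligerent Pairs Theorem you anticipate is unnecessary.
\end{itemize}
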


Note that there is a small difference between the above statement and \autoref{thm:MHT4.3}.
In particular, in \autoref{unfriendly jump inv thm}, we look at $\Delta^0_{2\alpha+1}(X)$ copies instead of $\mathbf{0}^{(2n)}$-computable copies. In addition to giving the relativized version of the theorem, we also switch to $\Delta$-notation. This is because Post's theorem differs by one jump on infinite ordinals, i.e., $X^{(2\alpha)}$ is $\Delta^0_{2\alpha}$ instead of being $\Delta^0_{2\alpha+1}$.
This is exactly the cause for the parenthetical comments giving different bounds in (2) and (4). When using jump notation, we have this off-by-one issue.

Note that applying \autoref{unfriendly jump inv thm}(2) to the degree $X^{(\alpha+1)}$, we get a $\Delta_{\alpha\cdot 2 +1}(X)$ copy of $\mc G$, whereas the hypothesis in \autoref{unfriendly jump inv thm}(1) considers $\Delta^0_{2\alpha+1}(X)$ copies of $\mc G$. Thus, we have an asymmetry based on the multiplication by $2$ on different sides of the ordinal $\alpha$.
Of course, in the finite case, it does not matter which side the $2$ is multiplied on.
This creates a symmetry between (1) and (2) in the finite case; in the infinite case, we find an important difference that was not previously observed in the argument.

\section{Computability of Distinguishing Formulas}\label{sec:distinguish}

We aim to understand the oracle needed to witness a $\Pinf\alpha$ difference between two computable structures $\mc A$ and $\mc B$.
This is tied to the issue of friendliness.
An $\alpha$-friendly pair $\mc A$ and $\mc B$ which differ on a $\Pinf{\alpha}$ formula must also differ on a $\Pinc{\alpha}$ formula (follows from the argument in \cite[Theorem 15.2]{AK00}).
Our aim, to use belligerent pairs for jump inversion, is informed by the fact that there is more possible computability-theoretic coding power given by computable structures whose $\Pinf\alpha$ difference is not computable, i.e., is not $\Pinc{\alpha}$.
It is natural, then, to investigate just how much an unfriendly pair may differ from a friendly one by seeing which oracle is required to make an arbitrary computable pair $\mc A$ and $\mc B$ act more like a friendly one. That is, which oracle $X$ has the property that if computable $\mc A$ and $\mc B$ differ on a $\Pinf{\alpha}$ formula, then they differ on a $\Pi_{\alpha}^X$-formula.

This result will also enable us to understand the oracle $X$ needed for a $\Pinf\alpha$ Scott sentence of a computable structure to yield a $\Pi_\alpha^X$ Scott sentence.
We will eventually use the method of belligerent jump inversion to show that this is optimal and contrast this result with a different perspective on the effectiveness of Scott sentences (namely, how many quantifiers are needed for a completely effective Scott sentence).

In the finite setting, Harrison-Trainor showed the following theorems
\begin{thm}[{\cite[Theorem 3.2]{mhtArith}}]
	Let $\mc{A}$ and $\mc{B}$ be computable structures, and suppose that $\mc{A} \not\leq_n \mc{B}$. Then there is a $\Pi_n^{\mathbf{0}^{(n-1)}}$ sentence $\varphi$ such that $\mc{A} \models \varphi$ and $\mc{B} \not\models \varphi$.
\end{thm}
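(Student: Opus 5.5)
The plan is to prove a stronger statement about tuples, by induction on $n$: for computable $\mc A,\mc B$ and tuples $\bar a\in\mc A$, $\bar b\in\mc B$ with $\bar a\not\leq_n\bar b$, there is a $\Pi_n^{\mathbf 0^{(n-1)}}$ formula $\varphi(\bar x)$ with $\mc A\models\varphi(\bar a)$ and $\mc B\models\neg\varphi(\bar b)$. I also want this with as much uniformity in $(\bar a,\bar b)$ as possible, for a reason explained below. The theorem is the case of empty tuples. The dual form, a $\Sigma_n^{\mathbf 0^{(n-1)}}$ formula true of $\bar b$ and false of $\bar a$, comes by negation.

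\emph{Base case $n=1$.} Here $\bar a\not\leq_1\bar b$ means there is $\bar d\in\mc B$ such that $\bar b\bar d\not\leq_0\bar a\bar c$ for every $\bar c\in\mc A$. Let $\delta(\bar x,\bar y)$ be the finite atomic diagram of $\bar b\bar d$. Then $\varphi(\bar x)=\forall\bar y\,\neg\delta(\bar x,\bar y)$ is a finitary $\Pi_1$ formula, hence computable. It is true of $\bar a$ and false of $\bar b$.

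\emph{Inductive step $n=m+1$.} Since $n$ is finite and the relations $\leq_\beta$ are monotone in $\beta$, $\bar a\not\leq_n\bar b$ gives a $\bar d\in\mc B$ with $\bar b\bar d\not\leq_m\bar a\bar c$ for all $\bar c$. For each $\bar c$, the induction hypothesis (with the roles of the structures swapped, in its $\Sigma$ form) gives a $\Sigma_m^{\mathbf 0^{(m-1)}}$ formula $\sigma_{\bar c}(\bar x,\bar y)$ that is true of $\bar a\bar c$ in $\mc A$ and false of $\bar b\bar d$ in $\mc B$. Set
\[
\varphi(\bar x)=\forall\bar y\,\Vvee_{\bar c\in\mc A}\sigma_{\bar c}(\bar x,\bar y).
\]
This is $\Pi_{m+1}$, and it is false of $\bar b$: at $\bar y=\bar d$, every disjunct fails. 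It is a $\Pi_n^{\mathbf 0^{(m)}}$ formula provided the map $\bar c\mapsto\sigma_{\bar c}$ is $\mathbf 0^{(m)}$-computable. By Proposition~\ref{cxAbsorption}, relativized to $\mathbf 0^{(m-1)}$, it even suffices that the family of disjuncts be $\Sigma^0_1(\mathbf 0^{(m)})=\Sigma^0_2(\mathbf 0^{(m-1)})$.

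\emph{Main obstacle.} The delicate points are that $\varphi$ must still hold at $\bar a$, and that the disjuncts must be chosen effectively. Whether $\bar b\bar d\not\leq_m\bar a\bar c$ holds is a $\Sigma^0_{2m}$ question, so it cannot be used directly to select the disjuncts. Moreover, $\varphi(\bar a)$ needs, for \emph{every} $\bar e\in\mc A$, some disjunct true at $\bar a\bar e$, not merely at $\bar a\bar c$.

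To handle this I would strengthen the inductive hypothesis to a uniform one. There should be a $\mathbf 0^{(m)}$-computable map $(\bar u,\bar v)\mapsto\sigma_{\bar u,\bar v}$, with $\bar u\in\mc B$ and $\bar v\in\mc A$, producing $\Sigma_m^{\mathbf 0^{(m-1)}}$ formulas with two properties:
\begin{itemize}
\item $\sigma_{\bar u,\bar v}$ is \emph{always} false of $\bar u$ in $\mc B$, and
\item $\sigma_{\bar u,\bar v}$ is true of $\bar v$ whenever $\bar u\not\leq_m\bar v$.
\end{itemize}
The first property is arranged by building $\sigma$ as a disjunction over candidate separating formulas, each guarded so that it is enumerated only while the oracle sees it as false at $\bar u$. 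Falsity of a $\Sigma_m^{\mathbf 0^{(m-1)}}$ formula is $\Pi^0_1(\mathbf 0^{(m-1)})$-flavoured only after one level is absorbed, and this is exactly where the extra jump is spent.

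With this map, replace the disjunction in $\varphi$ by $\Vvee_{\bar v}\sigma_{\bar b\bar d,\bar v}$ evaluated at $\bar x\bar y$. This expresses $\neg(\bar b\bar d\leq_m\bar x\bar y)$ at $\bar a$. Finally, take $\Wwedge_{\bar d}$ over all $\bar d$, each conjunct guarded in the same way so that it is vacuous unless it witnesses the failure. Verifying that the guarded disjuncts keep $\varphi(\bar a)$ true while staying within $\mathbf 0^{(n-1)}$ is the step I expect to require the most care.
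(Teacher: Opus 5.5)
Your base case is correct, and so is your first inductive formula $\varphi(\bar x)=\forall\bar y\,\Vvee_{\bar c}\sigma_{\bar c}(\bar x,\bar y)$. The worry that $\varphi(\bar a)$ might fail is unfounded: for $\bar y=\bar e$, the disjunct indexed by $\bar c=\bar e$ holds.

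\textbf{The gap.} The gap is the strengthened uniform hypothesis you fall back on, which is false as stated already for $m=1$. At $m=1$ it asks for a $\mathbf{0}'$-computable assignment $(\bar u,\bar v)\mapsto\sigma_{\bar u,\bar v}$ of c.e.\ disjunctions of finitary existential formulas, each disjunct false at $\bar u$ for \emph{every} pair. Falsity at $\bar u$ is $\Pi^0_1$, and an enumerated disjunct cannot be withdrawn, so no guarding can work. Here is a counterexample.
\begin{itemize}
\item Let $\mc B$ have a root $u$ carrying an $R_i$-leaf for each $i$ in a c.e.\ set $C$.
\item Let $\mc A$ have roots $v_k$ carrying $R_i$-leaves for $i$ in c.e.\ sets $V_k$.
\item Pad both structures with infinitely many components carrying every colour.
\end{itemize}
Then $u\not\leq_1 v_k$ iff $V_k\not\subseteq C$, and an existential formula holds at a root iff one of finitely many finite colour sets lies inside that root's colour set. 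Build $C$ and the $V_k$ by finite injury against the $k$-th $\mathbf{0}'$-computable candidate at $(u,v_k)$:
\begin{itemize}
\item whenever the current approximation to the candidate enumerates a disjunct satisfied at $v_k$, put that disjunct's ($k$-reserved) colours into $C$;
\item whenever the approximation changes, add a fresh colour to $V_k$.
\end{itemize}
At the limit index, either some disjunct holds at $u$, or $u\not\leq_1 v_k$ while no disjunct holds at $v_k$. So the induction you propose cannot be carried out. Relatedly, falsity of a $\Sigma_m^{\mathbf{0}^{(m-1)}}$ formula in a computable structure is $\Pi^0_m(\mathbf{0}^{(m-1)})=\Pi^0_{2m-1}$, not ``$\Pi^0_1(\mathbf{0}^{(m-1)})$-flavoured''. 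The final guarded $\Wwedge_{\bar d}$ step is also never actually specified.

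\textbf{The fix.} No uniformity is needed; you relativized Proposition~\ref{cxAbsorption} to the wrong oracle. The inner disjunction only has to be $\Sigma_m^{\mathbf{0}^{(m)}}$. Proposition~\ref{cxAbsorption} relativized to $\mathbf{0}^{(m)}$ absorbs any $\Sigma^0_m(\mathbf{0}^{(m)})=\Sigma^0_{2m}$ set of disjuncts, so a $\Sigma^0_{2m}$ selection is exactly what you can afford. Concretely:
\begin{itemize}
\item Let $\Theta$ be the set of all $\Sigma_m^{\mathbf{0}^{(m-1)}}$ formulas $\theta(\bar x,\bar y)$ with $\mc B\not\models\theta(\bar b,\bar d)$. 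This is a $\Pi^0_{2m-1}$ set.
\item Then $\forall\bar y\,\Vvee_{\theta\in\Theta}\theta(\bar x,\bar y)$ is $\Pi_{m+1}^{\mathbf{0}^{(m)}}$.
\item It fails at $\bar b$, via $\bar y=\bar d$.
\item It holds at $\bar a$: for each $\bar e$, the non-uniform induction hypothesis applied to $\bar b\bar d\not\leq_m\bar a\bar e$ puts into $\Theta$ a formula true at $\bar a\bar e$.
\end{itemize}

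\textbf{Comparison with the paper.} The paper only cites this result. Its transfinite analogue (Theorem~\ref{bfDef}, which follows Harrison-Trainor's finite argument) propagates something different: a uniform \emph{definition} $\varphi^m_{\bar p}$ of $\{\bar q:\bar p\leq_m\bar q\}$ over a computable list of structures, at the cost of one extra jump ($\Pi_m^{\mathbf{0}^{(m)}}$). It then separates by $\forall\bar y\,\neg\varphi^{n-1}_{\bar d}(\bar y)$, as in the proof of Theorem~\ref{512}. The repaired version of your argument gets the needed $\Sigma_m^{\mathbf{0}^{(m)}}$ definition of $\{\bar c:\bar d\not\leq_m\bar c\}$ on computable structures for free, as the disjunction of all separating formulas. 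This avoids the separate definability induction.
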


\begin{thm}[{\cite[Theorem 3.5]{mhtArith}}]\label{thm:MHT3.5}
	If a computable structure $\mc{A}$ has a $\Pinf n$ Scott sentence, then it has a $\Pi_n^{\mathbf{0}^{(n)}}$ Scott sentence.
\end{thm}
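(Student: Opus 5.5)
The plan is to take a $\Pinf{n}$ Scott sentence, pass through Montalb\'an's robustness theorem (\autoref{Robustness}) to definitions of automorphism orbits, and then replace those orbit definitions by formulas whose infinitary connectives range over $\mathbf 0^{(n)}$-c.e.\ sets. This is only a sketch, and the central step below is not yet justified.

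\emph{Setup.} Write $n=m+1$; the case $n\le 1$ is essentially finitary, so assume $m\ge 1$. Since $\mc A$ has a $\Pinf{m+1}$ Scott sentence, $SR(\mc A)\le m$. By \autoref{Robustness}, every tuple $\bar a\in\mc A$ has its automorphism orbit defined by a $\Sinf m$ formula. By Karp's theorem, this means there is a finite extension $\bar a\bar c$ such that any $\bar b$ is automorphic to $\bar a$ as soon as some $\bar d$ satisfies $(\mc A,\bar a\bar c)\leq_{m-1}(\mc A,\bar b\bar d)$. So the orbit of $\bar a$ is defined by
\[\exists\bar y\ \Theta_{\bar a\bar c}(\bar x,\bar y),\]
where $\Theta_{\bar a\bar c}$ is the conjunction of the $\Pinf{m-1}$-type of $\bar a\bar c$.

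\emph{Effective orbit formulas.} I want to replace $\Theta_{\bar a\bar c}$ by the conjunction $\Theta^*_{\bar a\bar c}$ of only those $\Pinf{m-1}$ formulas in the class $\Pi^{\mathbf 0^{(m-1)}}_{m-1}$ that hold of $\bar a\bar c$. In the computable structure $\mc A$, truth of such formulas is decided by $\mathbf 0^{(m-1+m-1)}$, which lies below $\mathbf 0^{(n)}$ in the finite case. I would not search for the right witness $\bar c$. Instead I would form $\chi_{\bar a}=\Vvee_{\bar c}\exists\bar y\,\Theta^*_{\bar a\bar c}$, restricted to those $\bar c$ for which $\Theta^*_{\bar a\bar c}$ implies (within $\mc A$) membership in the orbit. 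The Scott sentence is then
\[\Wwedge_{\bar a}\exists\bar x\,\chi_{\bar a}(\bar x)\ \wedge\ \forall\bar x\,\Vvee_{\bar a}\chi_{\bar a}(\bar x),\]
which is $\Pinf{m+1}$. It is shown to be a Scott sentence by the standard back-and-forth argument behind \autoref{canonicalScott}. It is $\mathbf 0^{(n)}$-computable provided the index sets of the connectives are $\mathbf 0^{(n)}$-c.e. In particular the restriction on $\bar c$ must be checkable from $\mathbf 0^{(n)}$, which is where the extra jump beyond the truth computation is spent.

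\emph{Main obstacle.} The key step is showing that the restricted type $\Theta^*$ carries as much information as the full type $\Theta$. Concretely, I need: if $\bar b\bar d$ in an \emph{arbitrary} countable $\mc B\models$ (the sentence) satisfies $\Theta^*_{\bar a\bar c}$, then $(\mc A,\bar a\bar c)\leq_{m-1}(\mc B,\bar b\bar d)$. The result of \cite[Theorem 3.2]{mhtArith} (relativized to parameters) gives exactly this when $\mc B$ is computable. For arbitrary $\mc B$, I would argue inside $\mc A$ itself, using the second conjunct to reduce to that case. The second conjunct forces every tuple of $\mc B$ to satisfy some $\chi_{\bar a'}$, so the relevant back-and-forth comparisons are among types realized in $\mc A$, and these are distinguished by $\Pi^{\mathbf 0^{(m-1)}}_{m-1}$ formulas by the computable case. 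An induction on $m$ then propagates the correspondence up the levels. If this reduction fails, I would instead run the induction directly with the formulas of \autoref{computable version defining back-and-forth by Pi 2 alpha}. In that version I would absorb the excess quantifiers into the oracle using \autoref{cxAbsorption}, relativized to $\mathbf 0^{(m)}$, and then verify that the resulting oracle is at most $\mathbf 0^{(n)}$.
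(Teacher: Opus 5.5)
\paragraph{Where the gap is.} Your overall plan matches what the paper does in its transfinite analogue \autoref{37}; the finite case itself is only quoted from \cite{mhtArith}. The plan is: take $\Sinf m$ orbit formulas $\exists\bar y\,\Theta_{\bar a\bar c}$ from \autoref{Robustness}, replace them by oracle-computable formulas that agree with them inside $\mc A$, and take a disjunction over good witnesses $\bar c$. The genuine gap is in how you assemble the Scott sentence, and it is tied to a misidentified obstacle.

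\paragraph{Your sentence is not a Scott sentence.} The sentence $\Wwedge_{\bar a}\exists\bar x\,\chi_{\bar a}(\bar x)\wedge\forall\bar x\Vvee_{\bar a}\chi_{\bar a}(\bar x)$ fails in general, because it omits the conditional back-and-forth clauses of \autoref{canonicalScott}.
\begin{itemize}
\item Let $\mc A$ consist of infinitely many disjoint stars, each with infinitely many leaves. Its orbits are existentially definable, so $m=1$ and $\Theta$ is the atomic type.
\item Let $\mc B$ be $\mc A$ plus one star with exactly two leaves.
\item Pad any tuple of $\mc B$ by the centre and two leaves of each star it meets. This gives the atomic type of a configuration in $\mc A$ that pins down the orbit, so every tuple of $\mc B$ satisfies some $\chi_{\bar a}$.
\item Since $\mc A$ is a union of components of $\mc B$ and the $\chi_{\bar a}$ are existential, every $\chi_{\bar a}$ is realized in $\mc B$.
\end{itemize}
So $\mc B$ satisfies your sentence, yet $\mc B\not\cong\mc A$.

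\paragraph{The obstacle you flag does not arise.} Use the canonical sentence
$\Wwedge_{\bar a}\forall\bar x\,\bigl(\chi_{\bar a}(\bar x)\rightarrow(\bar x\equiv_0\bar a\wedge\Wwedge_{a'}\exists y\,\chi_{\bar aa'}(\bar x,y)\wedge\forall y\Vvee_{a'}\chi_{\bar aa'}(\bar x,y))\bigr)$ instead. Then the pairs $(\bar a,\bar b)$ with $\mc B\models\chi_{\bar a}(\bar b)$ form a back-and-forth family in \emph{any} model $\mc B$, whatever $\chi_{\bar a}$ means there. The sentence is true in $\mc A$ as soon as each $\chi_{\bar a}$ defines the orbit of $\bar a$ within $\mc A$.

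So you never need to control $\Theta^*$ in an arbitrary $\mc B$. Your sketch for that step does not establish it anyway; it effectively presupposes $\mc B\cong\mc A$. You only need the computable case you already cite. That is \cite[Theorem 3.4]{mhtArith}, the finite version of \autoref{bfDef}, applied to the one-element list $\{\mc A\}$. It gives a single $\mathbf 0^{(m-1)}$-computable $\Pi_{m-1}$ formula $\varphi_{\bar a\bar c}$ with $\mc A\models\varphi_{\bar a\bar c}(\bar b\bar d)$ iff $\bar a\bar c\le_{m-1}\bar b\bar d$.

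\paragraph{The oracle count also needs repair.}
\begin{itemize}
\item Your claim that $\mathbf 0^{(2m-2)}$ is below $\mathbf 0^{(n)}=\mathbf 0^{(m+1)}$ fails for $m\ge 4$.
\item The restriction to good $\bar c$ is not something $\mathbf 0^{(n)}$ can decide. Since $SR(\mc A)\le m$, $\bar b$ lies in the orbit of $\bar a$ iff $\bar b\le_m\bar a$. By \autoref{computable version defining back-and-forth by Pi 2 alpha} this is $\Pi^0_{2m}$. So the set of good $\bar c$ is only known to be $\Pi^0_{2m}$ and need not be $\mathbf 0^{(n)}$-c.e.
\end{itemize}
What makes the bound work is absorption, not decidability. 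We have $\Pi^0_{2m}\subseteq\Sigma^0_{2m+1}=\Sigma^0_m(\mathbf 0^{(n)})$. So \autoref{cxAbsorption}, relativized to $\mathbf 0^{(n)}$, makes $\chi_{\bar a}=\Vvee_{\bar c\text{ good}}\exists\bar y\,\varphi_{\bar a\bar c}(\bar x,\bar y)$ uniformly equivalent to a $\Sigma_m^{\mathbf 0^{(n)}}$ formula. The canonical sentence is then $\Pi_{m+1}^{\mathbf 0^{(n)}}=\Pi_n^{\mathbf 0^{(n)}}$.

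If you prefer to keep $\Theta^*$, note that its set of conjuncts is $\Pi^0_{2m-2}$, so it too must be absorbed rather than decided. This is exactly the computation in the proof of \autoref{37}, with $\gamma(\alpha)$ in the role of $n$.
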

We complete this story by explaining the transfinite analog, the main results for the section.

\begin{thm}\label{bfDef}
	Let $(\mc{A}_i)_{i \in \omega}$ be a computable list of computable structures. For each infinite computable ordinal $\alpha$, and $\bar{a} \in \mc{A}_i$ there is a  $\Pi_\alpha^{\mathbf{0}^{(\gamma(\alpha))}}$ sentence $\varphi^\alpha_{i,\bar{a}}$ such that for all $\bar{b} \in \mc{A}_j$
	\[ (\mc{A}_i,\bar{a}) \leq_\alpha (\mc{A}_j,\bar{b}) \iff \mc{A}_j \models \varphi_{i,\bar{a}}^\alpha(\bar{b}).\]
	Moreover, this is uniform in $i$, $\bar{a}$, and $\alpha$.
\end{thm}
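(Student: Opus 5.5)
The plan is to build the formulas $\varphi^\alpha_{i,\bar a}$ by effective transfinite recursion on $\alpha$, following the unfolding of the definition of $\leq_\alpha$. Simultaneously, and uniformly in $i,j,\bar a,\bar b$, we track the arithmetic complexity of the relation $R_\beta=\{(i,\bar a,j,\bar b): (\mc A_i,\bar a)\leq_\beta(\mc A_j,\bar b)\}$. By \autoref{computable version defining back-and-forth by Pi 2 alpha} each such relation is defined by a uniformly computable $\Pinc{2\beta}$ formula. Evaluated on a computable list of computable structures, this makes $R_\beta$ a uniformly $\Pi^0_{2\beta}$ set (up to the usual off-by-one for infinite ordinals), hence $\Delta^0_{2\beta+1}$.

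The target shape is
\[\varphi^\alpha_{i,\bar a}(\bar x)=\Wwedge_{\beta<\alpha}\forall\bar y\;\Vvee_{\bar c\in\mc A_i}\theta^\beta_{\bar a\bar c}(\bar x,\bar y),\]
where $\theta^\beta_{\bar a\bar c}$ is a $\Sinf{\beta'}$ formula, for some $\beta'<\alpha$, expressing on the structures of the list that $(\bar x,\bar y)\leq_\beta \bar a\bar c$. By Karp's theorem, $(\bar x\bar y)\leq_\beta\bar a\bar c$ fails exactly when some $\Pinf\beta$ property of $\bar x\bar y$ fails at $\bar a\bar c$. Since we only need correctness on the countably many tuples of the structures $\mc A_k$, we can take $\theta^\beta_{\bar a\bar c}$ to be a disjunction of inductively built formulas $\varphi^{\beta}_{k,\bar e}$ and their negations, chosen according to the truth of $R_\beta$ and $R_{<\beta}$ between $\bar e$ and $\bar a\bar c$. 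These index sets are $\Delta^0_{2\beta+1}$. Writing $\gamma=\gamma(\alpha)$, the key move is to apply the relativized form of \autoref{cxAbsorption} relative to $\mathbf 0^{(\gamma)}$. A disjunction over a $\Sigma^0_{\beta'}(\mathbf 0^{(\gamma)})$ set of indices for $\Sigma^{\mathbf 0^{(\gamma)}}_{\beta'}$ formulas is again a $\Sigma^{\mathbf 0^{(\gamma)}}_{\beta'}$ formula, found uniformly. So the non-effective choices of disjuncts cost no extra oracle once $\gamma+\beta'\geq 2\beta+1$.

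The work then splits into cases according to the form of $\alpha$. If $\alpha$ is a limit, then $\gamma=0$. Each conjunct indexed by $\beta<\alpha$ may use syntactic level up to any $\beta'<\alpha$, and since $2\beta+2<\alpha$, we choose $\beta'=2\beta+2$ and absorb everything computably. In that case the formula $\varphi^\alpha$ is simply the computable formula of \autoref{computable version defining back-and-forth by Pi 2 alpha} at level $\beta$ for each conjunct, regrouped. If $\alpha=\lambda+n$ with $\lambda$ a limit and $n\geq 1$, the conjuncts with $\beta<\lambda$ are handled as in the limit case. For $\beta=\lambda+m$ with $m<n$ we may only use syntax of level below $\alpha$, namely $\beta'\le \lambda+n-1$. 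Here we need $\gamma+\lambda+n-1\geq 2(\lambda+m)+1$, i.e.\ $\gamma+\lambda\ge\lambda\cdot 2$. This is exactly the defining property of $\gamma(\alpha)$ as the least ordinal with $\gamma+\alpha\geq 2\alpha$ (for $\alpha=\lambda+n$ this reduces to $\gamma+\lambda\geq\lambda\cdot2$). In this range the induction should be carried out on the "tail" $m$, with the $\lambda$-part of the relation computed once by the oracle $\mathbf 0^{(\gamma)}$ together with $\beta'$ further levels of syntax.

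I expect the main obstacle to be the bookkeeping at the successor-of-limit levels: each $\theta^\beta$ must come out $\Sinf{\beta'}$ with $\beta'$ strictly below $\alpha$ (so that $\varphi^\alpha$ is genuinely $\Pinf\alpha$), while the $\Delta^0_{2\beta+1}$ index sets are expressed as $\Sigma^0_{\beta'}(\mathbf 0^{(\gamma)})$. This requires the off-by-one care between $\Delta^0_{\delta}$ and jumps at infinite ordinals noted after \autoref{unfriendly jump inv thm}. I would first try to prove the auxiliary claim that $R_{\lambda+m}$ is uniformly $\Delta^0_{\lambda\cdot 2+m+1}$, and then check that $\gamma+\lambda+m$ reaches this level. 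Uniformity in $i,\bar a,\alpha$ comes from the recursion theorem on notations, since every step, including the absorption of \autoref{cxAbsorption}, is uniform.
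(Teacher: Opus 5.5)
Your limit case is fine and matches the paper's, but the successor step has a genuine gap. For $\alpha=\beta+1$, your formula is $\Pinf{\alpha}$ only if the top matrix $\theta^{\beta}_{\bar a\bar c}$ is $\Sinf{\beta}$. Yet $\theta^\beta_{\bar a\bar c}$ is supposed to define, on the list, the set $\{\bar x\bar y : \bar x\bar y\leq_\beta\bar a\bar c\}$. By the very Karp observation you cite, the failure of this relation is a disjunction of $\Pinf{\beta}$ formulas. So the relation is a conjunction of $\Sinf{\beta}$ conditions, i.e.\ of $\Pinf{\beta+1}$ shape, and restricting to countably many tuples does not make it $\Sinf{\beta}$. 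For example, let $\beta=\lambda$ be a limit. Any $\Sinf{\lambda}$ formula true of $\bar a\bar c$ has a $\Sinf{\delta}$ disjunct true of $\bar a\bar c$ for some $\delta<\lambda$, so it holds at every list tuple $\bar d\leq_\delta\bar a\bar c$. It therefore fails once the list contains, for cofinally many $\delta<\lambda$, tuples that are $\leq_\delta$ but not $\leq_\lambda$ $\bar a\bar c$. The same happens already at level $1$. Take an equivalence structure with one infinite class and infinitely many classes of each finite size. The points $\leq_1$ a point $e$ of the infinite class are exactly the points of that class, and no existential formula true of $e$ excludes points of large finite classes. Your own proposal shows the same problem: the disjunction you describe contains the $\Pinf{\beta}$ formulas $\varphi^\beta_{k,\bar e}$. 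So it is $\Sinf{\beta+1}$, and $\forall\bar y\Vvee_{\bar c}\theta^\beta_{\bar a\bar c}$ comes out $\Pinf{\beta+2}$. This is the usual two-quantifiers-per-level cost, a syntactic obstruction that no oracle bookkeeping via absorption can remove.

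The missing idea is to put a conjunction over all list tuples outside, and to use the induction hypothesis only through $\neg\varphi^\beta$, which is $\Sinf{\beta}$:
\[\varphi^{\beta+1}_{i,\bar a}(\bar x)=\Wwedge_{j}\Wwedge_{\bar b\bar b'\in\mc A_j}\forall\bar y\,\Vvee_{\bar a'\in\mc A_i}\begin{cases}\neg\varphi^\beta_{j,\bar b\bar b'}(\bar x,\bar y) & \text{if } \bar b\bar b'\not\leq_\beta\bar a\bar a',\\ \top & \text{if } \bar b\bar b'\leq_\beta\bar a\bar a'.\end{cases}\]
This says that every list tuple $\leq_\beta\bar x\bar y$ is $\leq_\beta$ some $\bar a\bar a'$, and it is $\Pinf{\beta+1}$. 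It holds of $\bar a$ in $\mc A_i$ (take $\bar a'=\bar y$), so Karp's theorem gives the forward direction. For the converse, let $\bar x=\bar b\in\mc A_j$ and look at the conjunct indexed by $\bar b\bar b'$ with $\bar y=\bar b'$; this recovers the $\forall\exists$ clause of $\leq_{\beta+1}$. The non-effective choice between $\top$ and $\neg\varphi^\beta$ is then absorbed by the relativized absorption proposition, as you intended.

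Separately, your ordinal arithmetic is off. In the paper $2\beta$ means $2\cdot\beta$, so $2\lambda=\lambda$ and $\leq_{\lambda+m}$ is $\Pi^0_{\lambda+2m}$ on computable structures. Your bound $\Delta^0_{\lambda\cdot 2+m+1}$ is far too generous, and the requirement $\gamma+\lambda\geq\lambda\cdot 2$ you derive from it is not what $\gamma(\alpha)$ provides. For $\alpha=\omega\cdot 2+1$ we have $\gamma(\alpha)=\omega$, but $\omega+\omega\cdot 2=\omega\cdot 3<\omega\cdot 4$. With the correct bound, for $\alpha=\beta+1$ and $\beta=\lambda+m$, you only need $\gamma(\alpha)+\beta\geq\lambda+2m+1$. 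This holds because $\gamma(\alpha)+\lambda>\lambda$ forces $\gamma(\alpha)+\lambda\geq\lambda+\omega$.
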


\begin{proof}
	We begin with the case of a limit ordinal $\alpha$. Then we let $\phi_{i,\bar{a}}^\alpha$ be the $\Pinc{2\alpha}=\Pinc{\alpha}$ formula $\phi$ guaranteed by \autoref{computable version defining back-and-forth by Pi 2 alpha}. Then for all $\bar{b}\in \mc B$ for any structure $\mc B$, $\mc B\models \phi(\bar{b})$ if and only if $(\mc A, \bar{a})\leq_\alpha (\mc B, \bar{b})$.
	
	Now we will verify the result for $\beta+1$, assuming we know it for $\beta=\lambda+n$ where $\lambda$ is a limit ordinal.
	Let $\gamma=\gamma(\beta+1)$, which is also the least such that $\gamma+\beta>\beta$ by definition.
	We proceed similarly to the finite inductive case of the proof of \cite[Theorem 3.4]{mhtArith}.
	For each $i$ and $\bar{c} \in \mc{A}_i$ let $\varphi^\beta_{i,\bar{c}}(\bar{x})$ be a  $\Pi_\beta^{\mathbf{0}^{(\gamma)}}$ formula such that
	\[ (\mc{A}_i,\bar{c}) \leq_\beta (\mc{A}_j,\bar{b}) \iff \mc{A}_j \models \varphi_{i,\bar{c}}^\beta(\bar{b}).\]
	Then for each $i$ and each tuple $\bar{a} \in \mc{A}_i$ consider the $\Pinf{\beta+1}$ formula
	\[ \varphi^{\beta+1}_{i,\bar{a}}(\bar{x}) = \Wwedge_j \Wwedge_{\bar{b},\bar{b}' \in \mc{A}_j} \forall \bar{y} \Vvee_{\bar{a}' \in \mc{A}_i} \begin{cases}
		\neg \varphi^{\beta}_{j,\bar{b}\bar{b}'}(\bar{x},\bar{y})	& (\mc{A}_j,\bar{b}\bar{b}') \not\leq_\beta (\mc{A}_i,\bar{a}\bar{a}') \\
		\top & (\mc{A}_j,\bar{b}\bar{b}') \leq_\beta (\mc{A}_i,\bar{a}\bar{a}')
	\end{cases}.\]
	
	For fixed $\bar{b},\bar{b}' \in \mc{A}_j$ and $\bar{a} \in \mc{A}_i$, deciding for some $\bar{a}' \in \mc{A}_i$ whether $(\mc{A}_j,\bar{b}\bar{b}') \leq_\beta (\mc{A}_i,\bar{a}\bar{a}')$ is $\mathbf{0}^{(2\cdot\beta+1)}$-computable by \autoref{computable version defining back-and-forth by Pi 2 alpha}.
	We use \autoref{cxAbsorption} relativized to $\mathbf{0}^{(\gamma)}$.
	In the subformula
	\[ \Vvee_{\bar{a}' \in \mc{A}_i} \begin{cases}
		\neg \varphi^\beta_{j,\bar{b}\bar{b}'}(\bar{x},\bar{y})	& (\mc{A}_j,\bar{b}\bar{b}') \not\leq_\beta (\mc{A}_i,\bar{a}\bar{a}') \\
		\top & (\mc{A}_j,\bar{b}\bar{b}') \leq_\beta (\mc{A}_i,\bar{a}\bar{a}'),
	\end{cases}\]
	which formula to use is determined by  $\mathbf{0}^{(2\cdot\beta+1)}$.
	
	Note that this is a $\Sigma^0_{\beta}(\mathbf{0}^{(\gamma)})$ set of indices: It suffices to show $\gamma+\beta\ge2\cdot\beta+1$. By assumption the former is greater than $\beta$, and as $\beta$ is infinite, we have $\gamma+\beta \ge\beta+\omega =\lambda+\omega >\lambda+2n+1=2\cdot\beta+1$.
	
	Also, the formulas in the disjunction are $\Sinf\beta$, and, by induction, are uniformly $\mathbf{0}^{(\gamma)}$-computable.
	Therefore, the subformula is equivalent to a  $\Sigma_\beta^{\mathbf{0}^{(\gamma)}}$ formula by the relativized version of \autoref{cxAbsorption}.
	Thus $\varphi^{\beta+1}_{i,\bar{a}}(\bar{x})$ is equivalent to a $\Pi_{\beta+1}^{\mathbf{0}^{(\gamma)}}$ formula.

	We must show that $\varphi^{\beta+1}_{i,\bar{a}}(\bar{x})$ is such that
	\[ (\mc{A}_i,\bar{a}) \leq_{\beta+1} (\mc{A}_j,\bar{b}) \iff \mc{A}_j \models \varphi^{\beta+1}_{i,\bar{a}}(\bar{b}).\]
	This part of the proof is identical to \cite[Theorem 3.4]{mhtArith}.
	
	We first note that $(\mc{A}_i,\bar{a})\models  \varphi^{\beta+1}_{i,\bar{a}}(\bar{a})$.
	Consider $\bar{b},\bar{b}' \in \mc{A}_j$ and $\bar{y}\in \mc{A}_i$.
	Let $\bar{a}'=\bar{y}$.
	If $\bar{b},\bar{b}' \leq_\beta \bar{a},\bar{y}=\bar{a},\bar{a}'$, then the disjunct corresponding to $\bar{a}'$ is true as it has the formula $\top$.
	If $\bar{b},\bar{b}' \not\leq_\beta \bar{a},\bar{y}=\bar{a},\bar{a}'$ then the disjunct corresponding to $\bar{a}'$ is still true as, by definition, $\neg \varphi^\beta_{j,\bar{b}\bar{b}'}(\bar{a},\bar{y})$ holds.
	Therefore, the disjunct corresponding to $\bar{a}'=\bar{y}$ is always true and $(\mc{A}_i,\bar{a})\models  \varphi^{\beta+1}_{i,\bar{a}}(\bar{a})$.
	Because $\varphi^{\beta+1}_{i,\bar{a}}(\bar{x})$ is a $\Pinf{\alpha+1}$ formula, this menas that if $ (\mc{A}_i,\bar{a}) \leq_{\beta+1} (\mc{A}_j,\bar{b})$ we have $\mc{A}_j \models \varphi^{\beta+1}_{i,\bar{a}}(\bar{b})$ as well.
	
	Now suppose that $\mc{A}_j \models \varphi^{\beta+1}_{i,\bar{a}}(\bar{b})$.
	Given, $\bar{b}'\in\mc{A}_j$, we find $\bar{a}'\in\mc{A}_i$ such that $\bar{b}\bar{b}' \leq_\beta \bar{a}\bar{a}'$.
	Note that, taking $\bar{y}=\bar{b}'$, we obtain that
	\[(\mc{A}_j,\bar{b},\bar{b'})\models  \Vvee_{\bar{a}' \in \mc{A}_i} \begin{cases}
		\neg \varphi^{\beta}_{j,\bar{b}\bar{b}'}(\bar{b},\bar{b'})	& (\mc{A}_j,\bar{b}\bar{b}') \not\leq_\beta (\mc{A}_i,\bar{a}\bar{a}') \\
		\top & (\mc{A}_j,\bar{b}\bar{b}') \leq_\beta (\mc{A}_i,\bar{a}\bar{a}')
	\end{cases}.\]
	We always have that $\varphi^{\beta}_{j,\bar{b}\bar{b}'}(\bar{b},\bar{b'})$, so one of the $\top$ conjuncts must hold.
	Thus we may select an $\bar{a}'$ with $(\mc{A}_j,\bar{b}\bar{b}') \leq_\beta (\mc{A}_i,\bar{a}\bar{a}')$.
	We have $(\mc{A}_i,\bar{a}) \leq_{\beta+1} (\mc{A}_j,\bar{b})$ as desired.
\end{proof}

\begin{cor}\label{Cor--IsThisRight?}
	Let $(\mc A_i)_{i\in \omega}$ be a computable list of computable structures. For each infinite computable ordinal $\alpha$, and $\bar{a}\in \mc A_i$ there is a $\Pi^{\mathbf{0}^{(\gamma(\alpha))}}_{\alpha+1}$ sentence $\psi^\alpha_{i,\bar{a}}$ such that for all $\bar{b}\in \mc A_j$,
		\[ (\mc{A}_j,\bar{b}) \leq_\alpha (\mc{A}_i,\bar{a})  \iff \mc{A}_j \models \psi_{i,\bar{a}}^\alpha(\bar{b}).\]
	Moreover, this is uniform in $i$, $\bar{a}$, and $\alpha$.
\end{cor}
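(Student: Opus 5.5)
We want a $\Pi_{\alpha+1}^{\mathbf{0}^{(\gamma(\alpha))}}$ formula defining $\{\bar b : (\mc A_j,\bar b)\leq_\alpha(\mc A_i,\bar a)\}$. The plan is to copy the inductive step in the proof of \autoref{bfDef} with the roles reversed, using \autoref{bfDef} itself at level $\alpha$ as the inner building block. By definition, $(\mc A_j,\bar b)\leq_\alpha(\mc A_i,\bar a)$ holds iff for every $\bar y\in\mc A_i$ and every $\beta<\alpha$ there is $\bar b'\in\mc A_j$ with $(\mc A_i,\bar a\bar y)\leq_\beta(\mc A_j,\bar b\bar b')$. Since the relations $\leq_\beta$ only get stronger as $\beta$ grows, I treat two cases.

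\emph{Successor case, $\alpha=\beta+1$.} I quantify only over $\beta$ and set
\[\psi^\alpha_{i,\bar a}(\bar x)=\Wwedge_{\bar y\in\mc A_i}\Vvee_{\bar b'\text{-length }k}\exists \bar z\,\varphi^\beta_{i,\bar a\bar y}(\bar x,\bar z),\]
that is, for each $\bar y$, the formula says $\exists\bar z\,\varphi^\beta_{i,\bar a\bar y}(\bar x,\bar z)$, where $\bar z$ has the length of $\bar y$. By \autoref{bfDef}, each $\varphi^\beta_{i,\bar a\bar y}$ is $\Pi_\beta^{\mathbf 0^{(\gamma(\beta))}}$ and is obtained uniformly.
\begin{itemize}
\item \emph{Oracle.} We need $\gamma(\beta)\le\gamma(\alpha)$. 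If $\beta$ is infinite this holds because $\gamma(\beta)+\beta\ge 2\beta$ forces $\gamma(\beta)+\beta>\beta$. If $\beta$ is finite we instead use the finite theorem of \cite{mhtArith} with oracle $\mathbf 0^{(\beta-1)}$, and then $\beta-1\le\gamma(\alpha)$ for finite $\alpha$; the finite/limit cases need to be handled separately in this way.
\item \emph{Complexity.} Each conjunct is $\Sigma_{\beta+1}$, the conjunction ranges over a computable index set, and the formula has the variables $\bar x$ free, so $\psi$ is $\Pi_{\beta+2}=\Pi_{\alpha+1}$. Note this does not require deciding any back-and-forth relation, so no use of \autoref{cxAbsorption} is needed here.
\item \emph{Correctness.} $\mc A_j\models\psi(\bar b)$ iff for all $\bar y$ there is $\bar b'$ with $(\mc A_i,\bar a\bar y)\leq_\beta(\mc A_j,\bar b\bar b')$, which is exactly $\bar b\leq_{\beta+1}\bar a$ (the witnesses for $\beta'<\beta$ come from monotonicity).
\end{itemize}

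\emph{Limit case.} Here I expect $\gamma(\alpha)=0$. I take the $\Pi_{2\alpha}$ formula $\rho$ from \autoref{computable version defining back-and-forth by Pi 2 alpha}, which is computable and $\Pinc{2\alpha}=\Pinc\alpha\subseteq\Pinc{\alpha+1}$. Alternatively I would take $\Wwedge_{\beta<\alpha}\Wwedge_{\bar y}\exists\bar z\,\varphi^\beta_{i,\bar a\bar y}$, but then the oracles $\mathbf 0^{(\gamma(\beta))}$ could be unbounded below $\alpha$. The $\rho$ route avoids this, so I would use it.

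The main obstacle is bookkeeping the oracle in the successor case when $\beta$ is a limit or finite, i.e.\ checking that the oracle required for $\varphi^\beta$ really is at most $\mathbf 0^{(\gamma(\alpha))}$ and that the uniformity in $\alpha$ survives. If that comparison fails, I would fall back to the computable $\Pi_{2\beta}$ formula for $\leq_\beta$ whenever $2\beta\le\beta+1$.
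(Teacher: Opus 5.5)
Your proof works, and it is organized differently from the paper's. The paper uses one formula for every $\alpha$, namely $\Wwedge_{\beta<\alpha}\Wwedge_{\bar c\in\mc A_i}\exists\bar d\,\varphi^\beta_{i,\bar a\bar c}(\bar x,\bar d)$, and asserts that the uniformity of \autoref{bfDef} makes it $\mathbf 0^{(\gamma(\alpha))}$-computable. You split into two cases:
\begin{itemize}
\item For $\alpha=\beta+1$ you keep only the top level $\beta$ and recover the lower levels by monotonicity of the back-and-forth relations. This simplification is cosmetic: for infinite successor $\alpha$, every $\beta<\alpha$ has $\gamma(\beta)\le\gamma(\alpha)$, so the paper's conjunction is also fine there.
\item For limit $\alpha$ you drop \autoref{bfDef} and use the computable $\Pinc{2\alpha}=\Pinc{\alpha}$ formula $\rho$ of \autoref{computable version defining back-and-forth by Pi 2 alpha}. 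Here your route gains something. At a limit $\gamma(\alpha)=0$, yet the paper's conjunction contains formulas $\varphi^\beta$ whose oracles $\mathbf 0^{(\gamma(\beta))}$ are noncomputable. For example, $\alpha=\omega\cdot 2$ includes $\varphi^{\omega+1}$, which needs $\mathbf 0^{(\omega)}$. So the paper's one-line justification does not literally cover limits, while your appeal to $\rho$ does.
\end{itemize}

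However, the one inequality you argue, $\gamma(\beta)\le\gamma(\alpha)$, is argued in the wrong direction.
\begin{itemize}
\item To prove it you must show that $\gamma(\alpha)$ satisfies the condition defining $\gamma(\beta)$, that is, $\gamma(\alpha)+\beta\ge 2\beta$.
\item Your stated implication, that $\gamma+\beta\ge 2\beta$ forces $\gamma+\beta>\beta$, would at best give $\gamma(\alpha)\le\gamma(\beta)$. It is also false for limit $\beta$: take $\gamma=0$.
\item The implication you need is the converse. Write $\beta=\lambda+n$ with $\beta$ infinite. Then $\gamma+\beta>\beta$ forces $\gamma+\lambda>\lambda$, hence $\gamma+\lambda\ge\lambda+\omega$, and so $\gamma+\beta\ge\lambda+\omega>\lambda+2n=2\beta$. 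This is exactly the ordinal computation in the proof of \autoref{bfDef}.
\item Concretely, $\gamma(\beta)=0$ when $\beta$ is a limit, and $\gamma(\beta)=\gamma(\alpha)$ when $\beta$ is an infinite successor.
\end{itemize}

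Three small clean-ups:
\begin{itemize}
\item Since $\alpha$ is assumed infinite, $\alpha=\beta+1$ forces $\beta$ to be infinite. Your finite-$\beta$ branch, and its appeal to \cite{mhtArith}, is therefore vacuous.
\item The stray $\Vvee$ in your display should be removed. The intended formula is $\Wwedge_{\bar y\in\mc A_i}\exists\bar z\,\varphi^\beta_{i,\bar a\bar y}(\bar x,\bar z)$.
\item Uniformity in $\alpha$ is unproblematic, since a notation for $\alpha$ tells you whether it is a successor or a limit.
\end{itemize}
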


\begin{proof}
By the definition of the back-and-forth relations $(\mc{A}_j,\bar{b}) \leq_\alpha (\mc{A}_i,\bar{a})$ if and only if for each $\beta<\alpha$ and $\bar{c}\in \mc{A}_i$ there is a $\bar{d}\in \mc{A}_j$ such that  $(\mc{A}_j,\bar{b},\bar{d}) \geq_\beta (\mc{A}_i,\bar{a},\bar{c})$.
By \autoref{bfDef}, this can be stated as
\[\psi_{i,\bar{a}}^\alpha(\bar{x}):=\Wwedge_{\beta<\alpha}\Wwedge_{\bar{c}\in\mc{A}_i}\exists \bar{d} ~ \varphi^\beta_{\bar{a},\bar{c}}(\bar{x},\bar{d}). \]
The uniformity of \autoref{bfDef} gives that this formula is $\mathbf{0}^{(\gamma(\alpha))}$-computable and it is also $\Pinf{\alpha+1}$ by its apparent syntactic form.
\end{proof}

\begin{cor}\label{cor:twobnfDef}
Let $\alpha$ be an infinite computable ordinal and $\mc{A}$ and $\mc{B}$ be computable structures with $\mc{A}\not\leq_\alpha\mc{B}$.
Then there is a $\Pi_\alpha^{\mathbf{0}^{(\gamma(\alpha))}}$ sentence $\varphi$ with $\mc{A}\models\varphi$ and $\mc{B}\models\neg\varphi$.
\end{cor}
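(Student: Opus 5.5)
We obtain this directly from \autoref{bfDef}, applied to the computable list consisting of $\mc A_0=\mc A$ and $\mc A_1=\mc B$ (with all other $\mc A_i$ equal to $\mc A$, say). Take $i=0$ and $\bar a$ the empty tuple. \autoref{bfDef} then yields a $\Pi_\alpha^{\mathbf{0}^{(\gamma(\alpha))}}$ sentence $\varphi=\varphi^\alpha_{0,\emptyset}$. For every $j$, this sentence satisfies
\[ \mc A_0\leq_\alpha \mc A_j \iff \mc A_j\models\varphi. \]

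Next we check the two required properties. For $j=0$, reflexivity of $\leq_\alpha$ gives $\mc A\leq_\alpha\mc A$, so $\mc A\models\varphi$. For $j=1$, the hypothesis $\mc A\not\leq_\alpha\mc B$ gives $\mc B\not\models\varphi$, that is, $\mc B\models\neg\varphi$.

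The one point to watch is that \autoref{bfDef} is stated for tuples $\bar a\in\mc A_i$, and we are applying it to the empty tuple, producing sentences. Nothing in that proof uses nonemptiness: the limit case comes from \autoref{computable version defining back-and-forth by Pi 2 alpha}, and the successor case only quantifies over extensions $\bar a'$, $\bar b'$. If one wants to avoid the empty tuple anyway, we can instead use Karp's theorem: $\mc A\not\leq_\alpha\mc B$ yields a $\Pinf\alpha$ sentence true in $\mc A$ and false in $\mc B$. Since $\alpha\geq\omega$, no parameters are needed to express the back-and-forth relation at this level, and the empty-tuple instance of the construction in \autoref{bfDef} handles it verbatim. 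No further obstacle arises; the corollary is essentially a specialization of \autoref{bfDef}.
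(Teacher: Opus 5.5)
Your proposal is correct and is exactly the intended argument: the paper states this corollary without a separate proof, as the specialization of \autoref{bfDef} to the list $\mc A,\mc B$ with $\bar a$ the empty tuple, combined with reflexivity $\mc A\leq_\alpha\mc A$. Your closing remark about avoiding the empty tuple via Karp's theorem is superfluous, since it still relies on the empty-tuple instance, but it does no harm.
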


We now turn to showing that $\mathbf 0^{(\gamma(\alpha))}$ is the degree which is required to compute a $\Pinf\alpha$ Scott sentence for a computable structure. We first establish a Lemma about ordinal addition which demonstrates the relative power of computing from $\mathbf 0^{(\gamma(\alpha))}$.

\begin{lem}\label{lem:epsilon ordinal}
	Let $\alpha=\beta+1$ be any infinite successor ordinal. Then there exists some $\epsilon<\beta$ so that $\gamma(\alpha)+\epsilon > 2\alpha$.
\end{lem}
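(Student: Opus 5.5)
We work with the Cantor normal form of $\beta$. Write $\beta=\lambda+n$ with $\lambda$ a limit ordinal (nonzero, since $\alpha$ is infinite) and $n<\omega$, so $\alpha=\lambda+n+1$. Write $\lambda=\omega^{\delta}\cdot k+\mu$ in normal form, where $\omega^\delta$ is the largest power of $\omega$ that occurs, $k\ge1$, and $\mu<\omega^\delta\cdot k$ collects the remaining terms.

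\textbf{Computing $\gamma(\alpha)$.} By definition, $\gamma(\alpha)$ is the least $\gamma$ with $\gamma+\alpha>\alpha$. Ordinal addition absorbs any summand smaller than the leading power: $\gamma+\omega^\delta=\omega^\delta$ whenever $\gamma<\omega^\delta$. Hence $\gamma+\alpha=\alpha$ for every $\gamma<\omega^\delta$. On the other hand, $\omega^\delta+\alpha=\omega^\delta\cdot(k+1)+\mu+n+1>\alpha$. Therefore $\gamma(\alpha)=\omega^\delta$.

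\textbf{Computing $2\alpha$.} We have $2\alpha=2\cdot(\lambda+n+1)=\lambda+2n+2$, because $2\cdot\lambda=\lambda$ for a limit $\lambda$. This is the same computation already used in the proof of \autoref{bfDef}.

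\textbf{Choice of $\epsilon$.} We need $\epsilon<\beta=\lambda+n$ with
\[\omega^\delta+\epsilon>\lambda+2n+2.\]
Take $\epsilon=\omega^\delta\cdot(k-1)+\mu+2n+3$. Then
\[\omega^\delta+\epsilon=\omega^\delta\cdot k+\mu+2n+3=\lambda+2n+3>\lambda+2n+2=2\alpha.\]

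\textbf{Checking $\epsilon<\beta$.} If $k\ge2$ or $\mu\neq0$, then $\omega^\delta\cdot(k-1)+\mu<\lambda$. The left side is a limit ordinal, or $0$ if $k=1$ and $\mu=0$, and the next limit ordinal above it is at most $\lambda$. Adding the finite number $2n+3$ keeps it below $\lambda\le\beta$, so $\epsilon<\beta$.

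The one case needing care is $k=1$ and $\mu=0$, that is, $\lambda=\omega^\delta$. Here $\epsilon=2n+3$ is finite, and $\beta=\omega^\delta+n$ is infinite, so again $\epsilon<\beta$.

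\textbf{Main obstacle.} The step most likely to go wrong is the ordinal-arithmetic bookkeeping: correctly identifying $\gamma(\alpha)$ as the leading power $\omega^\delta$ (and not $\omega^\delta\cdot k$ or $\lambda$), and keeping track of which side each summand is added on, since ordinal addition is not commutative. A cleaner uniform description of the witness is $\epsilon=\rho+2n+3$, where $\rho$ is the unique ordinal with $\omega^\delta+\rho=\lambda$. The key point is that this $\rho$ is strictly below $\lambda$ (or equal to $0$), hence $\epsilon<\beta$.
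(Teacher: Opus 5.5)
Your proof is correct, but it takes a more computational route than the paper. The paper never identifies $\gamma(\alpha)$. It uses only the defining property $\gamma(\alpha)+\alpha>\alpha$, which upgrades to $\gamma(\alpha)+\alpha\ge\alpha+\omega=2\alpha+\omega$, and then splits on the form of $\beta$:
\begin{itemize}
\item if $\beta=\epsilon+1$, it takes $\epsilon$ to be the predecessor of $\beta$, so $\gamma(\alpha)+\epsilon+2\ge 2\alpha+\omega$ forces $\gamma(\alpha)+\epsilon>2\alpha$;
\item if $\beta$ is a limit, continuity of $\xi\mapsto\gamma(\alpha)+\xi$ along a cofinal sequence $\delta_i\to\beta$ gives some $\delta_i$ with $\gamma(\alpha)+\delta_i>2\alpha$, which is not explicit.
\end{itemize}
You instead compute $\gamma(\alpha)=\omega^\delta$ from the Cantor normal form. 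You then exhibit a single uniform witness $\epsilon=\rho+2n+3$ with $\omega^\delta+\rho=\lambda$, with no successor/limit split.

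The paper's argument is shorter and avoids normal forms altogether. Yours gives more information: an explicit value of $\gamma(\alpha)$ and an explicit, comparatively small witness. Your witness always lies below $\lambda$, whereas the paper's successor-case witness is $\lambda+n-1$.

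Your case distinction when checking $\epsilon<\beta$ is unnecessary. We always have $\rho\le\lambda$. If $\rho=\lambda$, then $\omega^\delta+\lambda=\lambda$, hence $\omega^\delta+\alpha=\alpha$, contradicting $\gamma(\alpha)+\alpha>\alpha$; this is the same fact the paper's proof rests on. So $\rho<\lambda$ outright, and $\rho+2n+3<\lambda\le\beta$ because $\lambda$ is a limit.
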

\begin{proof}
	First, suppose that $\beta$ is a successor and $\beta=\epsilon+1$. Then $\gamma(\alpha)+\epsilon + 2\geq \alpha+\omega = 2 \alpha+\omega$. Thus $\gamma(\alpha)+\epsilon > 2\alpha$.
	
	Next suppose that $\beta$ is a limit and $\beta = \sup_i {\delta_i}$. Then $\gamma(\alpha)+\beta\geq \alpha+\omega$, so $\sup_i (\gamma(\alpha)+\delta_i)\geq \alpha+\omega$. Thus there exists some $i$ so that $\gamma(\alpha)+\delta_i>2\alpha$, and we choose $\epsilon = \delta_i$.
\end{proof}

\begin{thm}\label{thm:compScottSent}\label{37}
Let $\alpha$ be a computable ordinal. If a computable structure $\mc{A}$ has a $\Pinf\alpha$ Scott sentence, then it has a $\Pi_\alpha^{\mathbf{0}^{(\gamma(\alpha))}}$ Scott sentence.
\end{thm}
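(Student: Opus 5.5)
We split by the form of $\alpha$. If $\alpha$ is finite, then $\gamma(\alpha)=\alpha$ and the statement is exactly \autoref{thm:MHT3.5}, so nothing new is needed. For infinite $\alpha$ the strategy is to realize one of the two classical Scott sentences effectively relative to $\mathbf{0}^{(\gamma(\alpha))}$:
\begin{itemize}
\item in the limit case, a sentence built directly from the formulas of \autoref{computable version defining back-and-forth by Pi 2 alpha} (where $\Pinc{2\delta}$ formulas with $\delta<\alpha$ are already below level $\alpha$);
\item in the successor case, the canonical Scott sentence of \autoref{canonicalScott}, with orbit-defining formulas computed from $\mathbf{0}^{(\gamma(\alpha))}$.
\end{itemize}

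\textbf{Limit case.} Here $\gamma(\alpha)=0$, so we need a computable $\Pi_\alpha$ Scott sentence. A $\Pinf\alpha$ sentence for limit $\alpha$ is a conjunction of $\Pinf{\delta}$ sentences with $\delta<\alpha$. I would then argue, via \autoref{Robustness}, that $\mc A$ has Scott rank $<\alpha$, so in fact $\mc A$ has a $\Pinf{\delta+1}$ Scott sentence for some $\delta<\alpha$. Next I would use the standard fact that in a structure of Scott rank $\le\delta$, the relation $\bar a\le_{\delta}\bar b$ inside $\mc A$ implies that $\bar a$ and $\bar b$ are automorphic. Then the orbit of $\bar a$ is defined by the formula ``$(\mc A,\bar x)\ge_{\delta}(\mc A,\bar a)$''. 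By \autoref{computable version defining back-and-forth by Pi 2 alpha} this is uniformly a computable $\Pinc{2\delta}$ formula, hence $\Pinc{<\alpha}$ because $2\delta<\alpha$ for a limit $\alpha$. Plugging these orbit formulas into the canonical Scott sentence of \autoref{canonicalScott} gives a computable formula of complexity below $\alpha$, and in particular a $\Pinc\alpha$ Scott sentence.

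\textbf{Successor case $\alpha=\beta+1$.} A $\Pinf{\beta+1}$ Scott sentence means $SR(\mc A)\le\beta$. By \autoref{Robustness}, every automorphism orbit is $\Sinf\beta$-definable. The plan is to produce these definitions uniformly as $\Sigma_\beta^{\mathbf{0}^{(\gamma)}}$ formulas, where $\gamma=\gamma(\alpha)$, and then insert them into \autoref{canonicalScott}. That yields a $\Pi_{\beta+1}^{\mathbf{0}^{(\gamma)}}$ sentence: the outer $\Wwedge\forall$, the $\Wwedge\exists$ of $\Sigma_\beta$, and the $\forall\Vvee$ of $\Sigma_\beta$ are all within $\Pi_{\beta+1}$. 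The disjunctions and conjunctions range over tuples of the computable structure $\mc A$, so they are computable.

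To get the orbit formulas, I would take $\epsilon<\beta$ from \autoref{lem:epsilon ordinal}, so that $\gamma+\epsilon>2\alpha$. Since $\gamma+\epsilon\ge 2\epsilon+2$ (using $\epsilon<\beta$), a $\Sigma^0_{\epsilon}(\mathbf{0}^{(\gamma)})$ set can uniformly decide, for tuples of $\mc A$, all the relations $\le_\delta$ with $\delta\le\beta$. These relations are $\Pi^0_{2\delta}$ by \autoref{computable version defining back-and-forth by Pi 2 alpha}, and they are arithmetic in $\mathbf{0}^{(2\beta+1)}$. In particular this oracle knows which tuples lie in the same orbit: since the Scott rank is at most $\beta$, the relation $\equiv_\beta$ inside $\mc A$ coincides with being automorphic.

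The orbit of $\bar a$ should then be expressible as a disjunction of the $\Sigma_\beta^{\mathbf{0}^{(\gamma)}}$-type formulas obtained from \autoref{bfDef} and \autoref{Cor--IsThisRight?} at levels $<\beta$, with the disjuncts (and the case distinctions deciding which ones to include) selected by this $\Sigma^0_\beta(\mathbf{0}^{(\gamma)})$ information. The relativized \autoref{cxAbsorption} collapses the selection into a single $\Sigma_\beta^{\mathbf{0}^{(\gamma)}}$ formula, in the same way as in the proof of \autoref{bfDef}.

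\textbf{Main obstacle.} The hard step is showing that each orbit really has a $\Sinf\beta$ definition of this particular effective shape: a disjunction, indexed by data decidable from $\mathbf{0}^{(2\beta+1)}$, of negations or instances of the back-and-forth formulas from \autoref{bfDef}. Robustness only provides some $\Sinf\beta$ definition, not one of this form. I would first try the definition ``$\exists\bar y$ such that $\bar x\bar y$ is not $\le_{\beta'}$-below any tuple outside the orbit'', for suitable $\beta'<\beta$, proving it correct from $SR(\mc A)\le\beta$ via Montalb\'an's characterization of Scott rank through $\beta$-freeness. If this fails at limit $\beta$, I would instead let the existential block range over finitely many levels $\delta_i\to\beta$, taking a disjunction over $i$.
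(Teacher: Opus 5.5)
Your finite case is fine, but both infinite cases have genuine gaps.

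\textbf{Limit case.} The step ``$\mc A$ has Scott rank $<\alpha$'' is false, and everything after it depends on it. $\Pinf\lambda$ is a genuine Scott complexity for limit $\lambda$. A structure of that complexity has Scott rank exactly $\lambda$, since by minimality it has no $\Pinf{\delta+1}$ Scott sentence for $\delta<\lambda$.

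The paper even supplies a computable example. The structure $\mc A$ in the limit case of \autoref{thm:lightfaceScott SentenceHard} has a $\Pinf\lambda$ Scott sentence and a $\Pi^0_\lambda$-complete isomorphism problem. A $\Pinf{\delta+1}$ Scott sentence with $\delta<\lambda$ would, by \autoref{55}, put that problem in $\Pi^0_{2\delta+2}$ with $2\delta+2<\lambda$. So no argument can produce, as yours would, a computable Scott sentence of level below $\alpha$ in general. Moreover, with orbit formulas of levels cofinal in $\lambda$, the canonical sentence of \autoref{canonicalScott} only has the form of a $\Pinf{\lambda+1}$ sentence, so the orbit route is the wrong tool here.

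The paper's proof is the idea of your first bullet applied to $\mc A$ itself. \autoref{computable version defining back-and-forth by Pi 2 alpha} gives a $\Pinc{2\alpha}=\Pinc\alpha$ sentence $\psi$ with $\mc B\models\psi$ iff $\mc A\le_\alpha\mc B$. Since $\mc A\le_\alpha\mc B$ transfers the given $\Pinf\alpha$ Scott sentence to $\mc B$, $\psi$ is already a computable Scott sentence.

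\textbf{Successor case.} Your framework (canonical Scott sentence, $\epsilon$ from \autoref{lem:epsilon ordinal}, relativized \autoref{cxAbsorption}) is the paper's. However, the step you flag as the main obstacle is the heart of the proof and stays open.

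Your candidate does not close it. Consider ``$\exists\bar y$ with $\bar x\bar y\not\le_{\beta'}\bar c\bar d$ for all $\bar c\bar d$ with $\bar c$ outside the orbit''. Each clause is the negation of a $\le_{\beta'}$-condition, already $\Pinf{\beta'+1}$ in \autoref{Cor--IsThisRight?}. The infinite conjunction and the existential add two more levels. Also, $\beta'$ must be at least the level $\delta$ supplied by robustness, which can be the predecessor of $\beta$. So the candidate overshoots $\Sinf\beta$ precisely for successor $\beta$; limit $\beta$ is the harmless case.

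The missing idea is to keep the robustness disjunct $\exists\bar y\,\rho(\bar x,\bar y)$, with $\rho\in\Pinf\delta$, $\delta<\beta$, true of $\bar a$ via a witness $\bar a'$. Then replace $\rho$ by the back-and-forth condition of that witness. Since $\le_\delta$ preserves $\Pinf\delta$ formulas, $\exists\bar y\,\varphi^\delta_{\bar a\bar a'}(\bar x,\bar y)$ defines exactly the orbit of $\bar a$. This formula says ``some $\bar y$ has $\bar a\bar a'\le_\delta\bar x\bar y$'', with $\varphi^\delta$ from \autoref{bfDef}, and it is $\Sigma_{\delta+1}\subseteq\Sigma_\beta$ relative to $\mathbf 0^{(\gamma(\alpha))}$.

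This particular pair cannot be located effectively, so search instead for good pairs $(\delta,\bar a')$. Call a pair good if $\bar a\bar a'\le_\delta\bar c\bar d$ implies $\bar c\le_\beta\bar a$ for all $\bar c\bar d$.
\begin{itemize}
\item Goodness is a $\Pi^0_{2\beta}$ condition by \autoref{computable version defining back-and-forth by Pi 2 alpha}, hence $\Delta^0_\epsilon(\mathbf 0^{(\gamma(\alpha))})$ by the choice of $\epsilon$.
\item Good pairs exist by robustness.
\item Each good pair defines the orbit, because $SR(\mc A)\le\beta$ makes $\bar c\le_\beta\bar a$ imply that $\bar c$ is automorphic to $\bar a$.
\end{itemize}
By \autoref{cxAbsorption}, the disjunction over good pairs is a uniformly obtained $\Sigma_\beta^{\mathbf 0^{(\gamma(\alpha))}}$ orbit formula. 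After that, the rest of your plan goes through.
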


\begin{proof}
Due to \autoref{thm:MHT3.5}, we may assume $\alpha$ is infinite. 
We begin with the case that $\alpha$ is a limit ordinal.
By \autoref{computable version defining back-and-forth by Pi 2 alpha},
there is a $\Pinc{2\alpha}=\Pinc\alpha$ formula $\psi$ such that for any $\mc{B}$, we have $\mc{B}\models\psi$ if and only if $\mc A\leq_\alpha \mc{B}$.
As $\mc{A}$ has a $\Pinf\alpha$ Scott sentence, this means that $\mc{B}\models\psi$ if and only if $\mc{B}\cong\mc{A}$.
Therefore, $\psi$ is a Scott sentence for $\mc{A}$ of the desired form.

In the case that $\alpha$ is not a limit ordinal, we must make a different sort of argument than the one in  \autoref{bfDef}, which only defines the back-and-forth relations in the context of an enumeration of countably many models.
So, we must use the result more indirectly.
Write $\alpha=\beta+1$ and consider $\bar{a}\in\mc{A}$.
As $\mc{A}$ is Scott rank $\beta$, \autoref{Robustness} gives a $\Sinf\beta$ definition $\chi_{\bar{a}}(\bar{x})$ for the automorphism orbit of $\bar{a}$.
We may write
\[ \chi_{\bar{a}}(\bar{x}) = \Vvee_i \exists \bar{y} \rho_i(\bar{x},\bar{y}),\]
with each $\rho_i\in\Pinf{<\beta}$.
One of these disjuncts $\exists \bar{y} \rho_i(\bar{x},\bar{y})$ is true of $\bar{a}$, so that one alone also defines the automorphism orbit of $\bar{a}$.
Say that $\rho_i\in\Pinf\delta$ with $\delta<\beta$ and let
$\bar{a}'$ be the witness for $\exists \bar{y} \rho_i(\bar{a},\bar{y})$. We apply  \autoref{bfDef} to the family $\{\mc A\}$ to get formulas $\phi_{\bar{b}}^\delta$ for $\bar{b}\in \mc A$ and ordinals $\delta\leq \beta$ so that $\bar{b}\leq_\delta \bar{c}$ if and only if $\mc A\models \phi_{\bar{b}}^\delta(\bar{c})$. Then

\[\mc{A}\models \exists \bar{y} \varphi^\delta_{\bar{a},\bar{a'}}(\bar{c},\bar{y}) \Leftrightarrow \mc{A}\models \exists \bar{y} \rho_i(\bar{c},\bar{y})\Leftrightarrow (\mc A,\bar{c})\cong(\mc A,\bar{a}). \]
In particular, the orbit of $\bar{a}\in\mc{A}$ is defined by a $\Sigma_\beta^{\mathbf{0}^{(\gamma(\delta))}}$ formula.
From here on, we continue to use $\chi_{\bar{a}}(\bar{x})$ to denote a formula defining the orbit of $\bar{a}\in\mc{A}$, but additionally assume that it is $\Sigma_\beta^{\mathbf{0}^{(\gamma(\delta))}}$.

We now determine the difficulty of finding a formula $\chi_{\bar{a}}(\bar{x})$. We use the fixed ordinal $\epsilon$ guaranteed by Lemma \ref{lem:epsilon ordinal}. Since each orbit is defined by some $\Sinf{\beta}$-formula, it suffices to find an ordinal $\delta<\beta$ and a tuple $\bar{a}'$ so that $ \phi_{\bar{a},\bar{a'}}^\delta(\bar{c},\bar{d})$ implies
 $\bar{c}\leq_\beta \bar{a}$. Then we can take $\chi_{\bar{a}}(\bar{x})=\exists \bar{y} \phi^\delta_{\bar{a},\bar{a}'}(\bar{x},\bar{y})$.

For any fixed $\bar{a}'$ and $\delta<\beta$ and tuple $\bar{c},\bar{d}$ from $\mc A$, it is $\Sigma^0_\delta(\mathbf{0}^{(\gamma(\alpha))})$ to check if  $\phi_{\bar{a},\bar{a'}}^\delta(\bar{c},\bar{d})$. It is also $\Pi^0_{2\beta}$ to check if $\bar{c}\leq_\beta \bar{a}$ by \autoref{computable version defining back-and-forth by Pi 2 alpha}.
Since $\gamma(\alpha)+\epsilon > 2\beta+1$, it is thus $\Pi^0_\epsilon(\mathbf{0}^{(\gamma(\alpha))})$ to check if  $\bar{c}\leq_\beta \bar{a}$. Thus, to check if
$$\forall \bar{c},\bar{d}\in \mc A \left( \phi_{\bar{a},\bar{a'}}^\delta(\bar{c},\bar{d}) \Rightarrow \bar{c}\leq_\beta \bar{a}\right)$$ is $\Pi^0_{\max(\delta,\epsilon)}(\mathbf{0}^{\gamma(\alpha)})$, thus it is  $\Sigma^0_{\beta}(\mathbf{0}^{\gamma(\alpha)})$ to find one such pair.

We may use orbit definitions to define the canonical Scott sentence for $\mc{A}$, as in \autoref{canonicalScott}.
We have the Scott sentence
\[\Phi = \Wwedge_{\bar{a}\in\mc{A}}\forall\bar{x} \bigg(\chi_{\bar{a}}(\bar{x})\rightarrow \Bigg( \bar{x}\equiv_0\bar{a} \land \bigg(\Wwedge_{a'\in\mc{A}}\exists y \chi_{\bar{a},a'}(\bar{x},y)\bigg) \land \bigg(\forall y\Vvee_{a'\in\mc{A}}\chi_{\bar{a},a'}(\bar{x},y)\bigg)\Bigg). \]
We now analyze the computability of $\Phi$, relative to $\mathbf{0}^{(\gamma(\alpha))}$.
Each of the $\chi_{\bar{a}}$ are $\Sigma^{\mathbf{0}^{(\gamma(\alpha))}}_\beta$, and their indices are uniformly $\Sigma^0_{\beta}(\mathbf{0}^{\gamma(\alpha)})$
 by the above argument.

By \autoref{cxAbsorption}, this gives that each of
$\bar{x}\equiv_0\bar{a}$,   $\bigg(\Wwedge_{a'\in\mc{A}}\exists y \chi_{\bar{a},a'}(\bar{x},y)\bigg)$ and $\bigg(\forall y\Vvee_{a'\in\mc{A}}\chi_{\bar{a},a'}(\bar{x},y)\bigg)$ are $\Pi_{\alpha}^{\mathbf{0}^{(\gamma(\alpha))}}$. Since the outermost conjunction is also $\mathbf{0}^{(\gamma(\alpha))}$-computable, this shows that $\Phi\in \Pi_{\alpha}^{\mathbf{0}^{(\gamma(\alpha))}}$.
\end{proof}

We can extend the above argument to other Scott complexities by using some known facts about lightface Scott sentences. Recall that if $\alpha$ is a limit, then $\Sinf\alpha$ and $\dSinf\alpha$ are not possible Scott complexities \cite{AGNHTT}, so we focus on the successor cases below.

\begin{thm}
Let $\alpha$ be a computable ordinal. If a computable structure $\mc{A}$ has a $\Sinf{\alpha+1}$ Scott sentence, then it has a $\Sigma_{\alpha+1}^{\mathbf{0}^{(\gamma(\alpha))}}$ Scott sentence.
If a computable structure $\mc{A}$ has a $\dSinf{\alpha+1}$ Scott sentence, then it has a $d\mhyph\Sigma_{\alpha+1}^{\mathbf{0}^{(\gamma(\alpha+2))}}$ Scott sentence.
\end{thm}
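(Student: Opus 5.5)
The plan is to reduce both statements to \autoref{37} via \autoref{parameterizeSR}, and then handle the parameter and the oracle bookkeeping.

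\emph{The $\Sinf{\alpha+1}$ case.} Suppose $\mc A$ has a $\Sinf{\alpha+1}$ Scott sentence. By \autoref{parameterizeSR} there are a tuple $\bar p\in\mc A$ and some $\beta<\alpha+1$ such that $(\mc A,\bar p)$ has a $\Pinf{\beta}$ Scott sentence; since the $\Pinf{\cdot}$ classes are increasing, we may take $\beta=\alpha$. The structure $(\mc A,\bar p)$ is computable, because $\bar p$ is a finite tuple of constants. So \autoref{37} applies and gives a $\Pi_\alpha^{\mathbf{0}^{(\gamma(\alpha))}}$ Scott sentence $\psi(\bar c)$ for $(\mc A,\bar p)$. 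We then replace the constants by variables and take $\exists\bar x\,\psi(\bar x)$, which is $\Sigma_{\alpha+1}^{\mathbf{0}^{(\gamma(\alpha))}}$.

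It remains to check that this is a Scott sentence for $\mc A$. If $\mc B\models\psi(\bar b)$, then $(\mc B,\bar b)\cong(\mc A,\bar p)$, so $\mc B\cong\mc A$. Conversely, $\mc A$ itself satisfies $\psi(\bar p)$. One point to verify is that \autoref{37} genuinely applies to expansions by finitely many constants. Its proof, and \autoref{bfDef} on which it relies, relativize trivially to tuples, so this should be routine. Note also that for finite $\alpha$ the theorem proved there is \autoref{thm:MHT3.5}, whose oracle $\mathbf{0}^{(\alpha)}$ equals $\mathbf{0}^{(\gamma(\alpha))}$, so the finite case fits the same scheme.

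\emph{The $\dSinf{\alpha+1}$ case.} A $\dSinf{\alpha+1}$ Scott sentence has the form $\sigma\wedge\pi$, with $\sigma\in\Sinf{\alpha+1}$ and $\pi\in\Pinf{\alpha+1}$. The known analysis from \cite{AGNHTT} shows that $\mc A$ then has a $\dSinf{\alpha+1}$ Scott sentence of the form $\exists\bar x\,\theta(\bar x)\wedge\Phi$, built as follows.
\begin{itemize}
\item The first conjunct is $\exists\bar x\,\theta(\bar x)$, where $\theta$ is a $\Pinf{\alpha}$ formula. The orbit of some tuple $\bar p$ is $\Pinf{\alpha}$-definable, with $(\mc A,\bar p)$ having a $\Pinf{\alpha+1}$ Scott sentence; $\theta$ is a $\Pinf\alpha$ formula isolating the type needed.
\item The second conjunct $\Phi$ is a $\Pinf{\alpha+1}$ sentence saying that every tuple satisfying $\theta$ satisfies the $\Pinf{\alpha+1}$ Scott sentence of $(\mc A,\bar p)$.
\end{itemize}
Equivalently, $\mc A$ has a Scott sentence $\exists\bar x\,\theta(\bar x)\wedge\forall\bar x\,(\theta(\bar x)\to\rho(\bar x))$, where $\rho$ is a $\Pinf{\alpha+1}$ Scott sentence for $(\mc A,\bar p)$.

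I then effectivize each piece.
\begin{itemize}
\item By \autoref{37} applied to $(\mc A,\bar p)$ at level $\alpha+1$, $\rho$ can be taken $\Pi_{\alpha+1}^{\mathbf{0}^{(\gamma(\alpha+1))}}$.
\item For $\theta$, I take a back-and-forth formula. Either the formula from \autoref{bfDef} at level $\alpha$ (oracle $\mathbf{0}^{(\gamma(\alpha))}$), or the formula $\psi^{\alpha}_{\bar p}$ from \autoref{Cor--IsThisRight?} (which is $\Pi_{\alpha+1}$), defines the set of $\bar x$ lying $\alpha$-back-and-forth above or below $\bar p$. Because $\theta$ isolates the relevant type, this set agrees with the set defined by $\theta$ on models of the sentence.
\end{itemize}

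\emph{Main obstacle.} The hard step is the oracle bookkeeping that explains why the statement uses $\gamma(\alpha+2)$. In the implication $\theta\to\rho$, the formula $\theta$ appears negatively, so its definition must be $\Sigma$-side at level $\le\alpha+1$. The available definitions of the relevant $\alpha$-type of $\bar p$ are of two kinds:
\begin{itemize}
\item \autoref{bfDef} at level $\alpha$, which is $\Pi_\alpha$ but lands on the wrong side for the existential conjunct;
\item \autoref{Cor--IsThisRight?}, which is $\Pi_{\alpha+1}$ with oracle $\mathbf{0}^{(\gamma(\alpha))}$.
\end{itemize}
I would try to use the formulas of \autoref{bfDef} at level $\alpha+1$ (equivalently, of \autoref{Cor--IsThisRight?} at level $\alpha+1$) to certify both directions of $\equiv_\alpha$ inside the $\Pi_{\alpha+1}$ part. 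This costs the oracle $\mathbf{0}^{(\gamma(\alpha+2))}$, since the ``$\le_{\alpha+1}$ to a fixed tuple'' formula is $\Pi_{\alpha+2}$ with that oracle. One then has to check that it can still be absorbed into the $\Pinf{\alpha+1}$ conjunct by selecting a single existential disjunct, exactly as in the proof of \autoref{37}.

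For finite $\alpha$, and for $\alpha$ with $\gamma(\alpha+2)=\gamma(\alpha)$ (every infinite $\alpha$), the oracles agree anyway. So the real content is checking that all the pieces are uniformly $\mathbf{0}^{(\gamma(\alpha+2))}$-computable and applying the relativized \autoref{cxAbsorption}.
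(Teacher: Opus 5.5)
Your $\Sinf{\alpha+1}$ case is exactly the paper's argument. The $\dSinf{\alpha+1}$ case does not work as written. The complexity analysis in your ``main obstacle'' paragraph is wrong, and it leads you to discard the one choice of $\theta$ that works. In $\exists\bar x\,\theta(\bar x)\wedge\forall\bar x\,(\theta(\bar x)\to\rho(\bar x))$ both conjuncts demand the same thing, namely $\theta\in\Pinf{\alpha}$. The first conjunct needs this so that $\exists\bar x\,\theta$ is $\Sinf{\alpha+1}$. The second needs it so that $\neg\theta\in\Sinf{\alpha}$ can be absorbed into each conjunct $\forall\bar u_i\,\psi_i$ of $\rho$ while staying $\Pinf{\alpha+1}$. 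So the $\Pinf\alpha$ formula from \autoref{bfDef} does not ``land on the wrong side''; it is exactly what is required. The replacement you settle on is back-and-forth formulas at level $\alpha+1$, which are $\Pinf{\alpha+1}$ or $\Pinf{\alpha+2}$. These make $\exists\bar x\,\theta$ at least $\Sinf{\alpha+2}$ and $\neg\theta$ at least $\Sinf{\alpha+1}$, so the sentence is no longer $\dSinf{\alpha+1}$. ``Selecting a single existential disjunct as in \autoref{37}'' does nothing to lower the complexity of $\theta$. Your side remark that the oracles coincide is also false for finite $\alpha$ ($\gamma(\alpha)=\alpha<\alpha+2$) and for limit $\alpha$ ($\gamma(\alpha)=0$). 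That error is harmless only because you need just an upper bound.

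The paper avoids all of this. A $\dSinf{\alpha+1}$ Scott sentence is both $\Pinf{\alpha+2}$ and $\Sinf{\alpha+2}$. Applying \autoref{37} at level $\alpha+2$, and the first part at level $\alpha+1$, makes these Scott sentences $\mathbf 0^{(\gamma(\alpha+2))}$- and $\mathbf 0^{(\gamma(\alpha+1))}$-computable respectively. The relativized Alvir--Knight--McCoy theorem (\autoref{AKMTheorem}) then converts the pair into a $d$-$\Sigma_\beta$ Scott sentence with $\beta\le\alpha+1$, computable in $\mathbf 0^{(\gamma(\alpha+2))}$; this is the only source of $\gamma(\alpha+2)$.

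Your direct route can also be repaired, granting the characterization you attribute to \cite{AGNHTT}. Take $\theta=\varphi^\alpha_{\bar p}$ from \autoref{bfDef} (or its finite analogue in \cite{mhtArith}), applied to the one-element list $\{\mc A\}$. Soundness of the sentence uses only that $\rho$ is a genuine Scott sentence of $(\mc A,\bar p)$. For $\mc A$ to satisfy it, it suffices that $\theta(\mc A)$ is the orbit of $\bar p$. This holds because, by Karp's theorem, $\bar p\le_\alpha\bar x$ forces $\bar x$ to satisfy the $\Pinf\alpha$ orbit definition. How $\theta$ behaves outside $\mc A$ is irrelevant. That version depends on the outside structural result, but it yields the sharper oracle $\mathbf 0^{(\gamma(\alpha+1))}$.
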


\begin{proof}
Suppose that $\mc{A}$ has a $\Sinf{\alpha+1}$ Scott sentence.
By \autoref{parameterizeSR}, there is some $\bar{p}$ such that the structure $(\mc{A},\bar{p})$ has a $\Pinf\alpha$ Scott sentence.
By \autoref{thm:compScottSent}, there is a $\Pi_\alpha^{\mathbf{0}^{(\gamma(\alpha))}}$ Scott sentence for $(\mc{A},\bar{p})$ given by $\psi(\bar p)$.
It follows that $\exists \bar x \psi(\bar x)$ is a $\Sigma_{\alpha+1}^{\mathbf{0}^{(\gamma(\alpha))}}$ Scott sentence for $\mc{A}$.

Suppose that $\mc{A}$ has a $\dSinf{\alpha+1}$ Scott sentence.
This means that $\mc{A}$ has both a $\Pinf{\alpha+2}$ and $\Sinf{\alpha+2}$ Scott sentence.
By \autoref{thm:compScottSent} and the above argument, $\mc{A}$ has both a $\Pi_{\alpha+2}^{\mathbf{0}^{(\gamma(\alpha+2))}}$ and $\Sigma_{\alpha+2}^{\mathbf{0}^{(\gamma(\alpha+1))}}$ Scott sentence.
As $\gamma(\alpha+2)\ge\gamma(\alpha+1)$,
 by \autoref{AKMTheorem}, this means that $\mc{A}$ has a $d\mhyph\Sigma_{\alpha+1}^{\mathbf{0}^{(\gamma(\alpha+2))}}$ Scott sentence, as desired.
 \end{proof}

\section{The method of Infinite Belligerent Jump Inversion}
In this section, we prove the two main technical theorems of the paper.
Namely, we prove the Belligerent Pairs Theorem and the Belligerent Jump Inversion Theorem.
We proceed in two steps, first demonstrating much of the desired claim for limit ordinals and using these results to generalize to any countable ordinal.
At limit ordinals $\lambda$, there is the attractive quality that $2\lambda=\lambda$.
In particular, a $\Sigma^0_{2\lambda+1}$ set is just a $\Sigma^0_{\lambda+1}$ set.
To code such a set as the difference of two structures that disagree at the $\lambda+1$ level (as demanded by the Belligerent Pairs Theorem) is therefore achievable even using friendly structures.
Belligerent Jump Inversion at $\lambda$ follows from replacing edges in a graph with the constructed structures that form the desired belligerent pair.
From there, we note that an arbitrary ordinal $\alpha$ can be written as $\lambda+n$ for some $n$.
We then use the construction for $\lambda$ and the finitary constructions for $n$ from \cite{mhtArith} in sequence to construct the structures needed for Belligerent Pairs and Belligerent Jump Inversion.
We then prove that these structure have all of the many qualities we desire.

\subsection{Limit ordinals}

We begin by showing \autoref{thm:Inf4.2} when $\alpha=\lambda$ is a limit ordinal.

\begin{prop}\label{prop:limit4.2}
	Fix an $X$-computable limit ordinal $\lambda$. Let $S \subseteq \mathbb{N}$ be a complete $\Sigma^0_{2\lambda+1}(X)$ set. There are $X$-computable structures $\mc{A} \ncong \mc{B}$ and a sequence of $X$-computable structures $\mc{C}_i$ such that
	\[ i \in S \Longrightarrow \mc{C}_i \cong \mc{A} \]
	and
	\[ i \notin S \Longrightarrow \mc{C}_i \cong \mc{B}.\]
	Moreover:
	\begin{enumerate}
		\item $\mc A\leq_{\lambda+1}\mc B$, but $\mc{B} \nleq_{\lambda+1} \mc{A}$ furthremore, there is a $\Sinc{\lambda+1}$
		formula $\varphi$ with $\mc{A}\models\varphi$ and  $\mc{B}\not\models\varphi$;
		\item $\mc{A}$ and $\mc{B}$ have $\Pinf{\lambda+2}$ Scott sentences. Specifically, $\mc A$ has a d-$\Sinc{\lambda+1}$ Scott sentence and $\mc B$ has a $\Pinc{\lambda+1}$ Scott sentence.
	\end{enumerate}
	This is uniform in $\lambda$.
\end{prop}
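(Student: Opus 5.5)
Since $\lambda$ is a limit, $2\lambda=\lambda$, so a $\Sigma^0_{2\lambda+1}(X)$ set is just a $\Sigma^0_{\lambda+1}(X)$ set. It therefore suffices to produce a friendly pair at level $\lambda+1$ with the advertised Scott sentences and apply the Pair of Structures Theorem (\autoref{lightface hardness}), relativized to $X$. The plan is to take the standard Ash--Knight pair for $\Sigma^0_{\lambda+1}$ versus $\Pi^0_{\lambda+1}$, built from the classical friendly pairs $\mc A_\beta,\mc B_\beta$ for $\beta<\lambda$. For example, use the ordinals $\omega^{\beta}$ together with their back-and-forth mates, or, better for control of Scott sentences, the trees/equivalence structures from \cite[Chapter 18]{AK00}. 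From these pairs we assemble, along a fixed $X$-computable fundamental sequence $\lambda_k\nearrow\lambda$, two structures:
\begin{itemize}
\item $\mc B$ consists of infinitely many copies of each $\mc B_{\lambda_k}$, all of them ``$\Pi^0_\lambda$-type'' components;
\item $\mc A$ is $\mc B$ together with exactly one extra component of a distinguished $\Pi^0_\lambda$-failing kind (a copy of some $\mc A_{\lambda_k}$-type component attached to a new marker).
\end{itemize}
Equivalently, $\mc C_i$ is a disjoint union, over $s\in\omega$, of components coding whether the $\Pi^0_\lambda(X)$ matrix $R(i,s)$ fails. Here $i\in S$ iff $\exists s\,\neg R(i,s)$ fails to be\ldots{} We arrange signs so that $i\in S$ (a witness exists) gives the structure $\mc A$ with an extra witnessed component, and $i\notin S$ gives $\mc B$. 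The distinguishing sentence $\varphi$ is then $\exists x\,\theta(x)$, with $\theta$ a $\Pinc{\lambda}$ formula describing a witness component; this is $\Sinc{\lambda+1}$, true in $\mc A$ and false in $\mc B$, which yields (1)'s formula and $\mc B\nleq_{\lambda+1}\mc A$.

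The key steps, in order, are as follows. (a) Fix uniformly $X$-computable, uniformly friendly pairs at each level $\lambda_k$. Here the pair $(\mc A_{\lambda_k},\mc B_{\lambda_k})$ has $\mc B_{\lambda_k}\le_{\lambda_k}\mc A_{\lambda_k}$, and each structure has a computable $\Pinc{\lambda_k+1}$ or d-$\Sinc{}$ Scott sentence. (b) Define the components so that a single $\Pi^0_\lambda$ fact $R(i,s)$ is coded by the isomorphism type of component $s$, with two options $\mc P$ (if $R(i,s)$) and $\mc Q$ (if not). To achieve this, invoke the Pair of Structures Theorem at level $\lambda$ for a pair $\mc P\le_\lambda \mc Q$ that is $\lambda$-friendly, obtained as $\bigoplus_k$ of the level-$\lambda_k$ pairs. (c) Conclude that $\mc C_i$ is uniformly $X$-computable and isomorphic to $\mc A$ or $\mc B$, using that infinitely many $\mc P$ copies absorb the rest. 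It is convenient to have $\mc B=\mc P^{(\omega)}$ and $\mc A=\mc Q\sqcup\mc P^{(\omega)}$; making at most one $\mc Q$ appear requires modifying $S$ to have unique witnesses, which is fine since $S$ is $\Sigma^0_{\lambda+1}$ and we can search for a least witness with a $\Delta$ approximation, or simply allow $\mc A=\mc Q^{(\le\omega)}\sqcup\mc P^{(\omega)}$ and make all nonzero counts isomorphic by padding with $\omega$ copies of $\mc Q$. I would take $\mc A=\mc Q^{(\omega)}\sqcup\mc P^{(\omega)}$.

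(d) Verify the back-and-forth facts and the Scott sentences. $\mc A\le_{\lambda+1}\mc B$ follows from $\mc Q\ge_\lambda\mc P$ componentwise, or directly from boldface hardness (\autoref{bold-face hardness}) since the pair is $\Sigma_{\lambda+1}$-hard. For $\mc B$, the Scott sentence says ``every component satisfies the $\Pinc{\lambda}$ Scott sentence of $\mc P$, and there are infinitely many components''; this is $\Pinc{\lambda+1}$, using a computable $\Pinc{\lambda}$ Scott sentence for $\mc P$ (limit case of \autoref{37}-style, via \autoref{computable version defining back-and-forth by Pi 2 alpha} with $2\lambda=\lambda$). For $\mc A$, the Scott sentence says ``every component is $\mc P$ or $\mc Q$, and infinitely many of each''. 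To make this d-$\Sinc{\lambda+1}$, I expect to need the pair chosen so that the $\mc Q$-components are $\Sinc{\lambda}$-definable; the ``infinitely many'' clause is then $\Pinc{\lambda+1}$ and the rest is $\Sinc{\lambda+1}$, or, with care, I would replace ``infinitely many $\mc Q$'' by ``exactly one $\mc Q$'' to land in d-$\Sigma$. This Scott-sentence bookkeeping, in particular getting $\mc A$ down to d-$\Sinc{\lambda+1}$ rather than $\Pinc{\lambda+2}$ while keeping the coding uniform, is the step I expect to be the main obstacle. I would first try choosing $\mc A=\mc Q\sqcup\mc P^{(\omega)}$ and coding via a $\Sigma_{\lambda+1}$-to-``exactly one'' uniformization by least witnesses.
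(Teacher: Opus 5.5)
Your opening reduction is correct and is the paper's. Since $2\lambda+1=\lambda+1$, it suffices to take an $X$-computable friendly pair at level $\lambda+1$ with the advertised sentences and apply the Pair of Structures Theorem directly to $S$.

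The component construction you then carry out (a level-$\lambda$ pair $\mc P,\mc Q$ plus an outer existential over components) does not work. At a limit, $\le_\lambda$ is the intersection of the $\le_\beta$ for $\beta<\lambda$, and $\le_{\beta+1}$ implies $\ge_\beta$. So $\mc P\le_\lambda\mc Q$ forces $\mc P\equiv_\lambda\mc Q$. Consequently:
\begin{itemize}
\item Neither $\mc P$ nor $\mc Q$ has a $\Pinf\lambda$ Scott sentence, since it would transfer to the other. The $\Pinc{\lambda+1}$ Scott sentence you build for $\mc B=\mc P^{(\omega)}$ from ``the $\Pinc\lambda$ Scott sentence of $\mc P$'' therefore does not exist.
\item No $\Pinf\lambda$ formula can say ``this is a $\mc Q$-component.'' Your $\theta$ can at best be an element-level property occurring in $\mc Q$ but not in $\mc P$, e.g.\ ``$x$ is a $\lambda$-limit'' for $\mc Q=\omega^\lambda\cdot 2$, $\mc P=\omega^\lambda$. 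That requires committing to a specific pair, which you never do.
\item A minor point: with $\mc P\le_\lambda\mc Q$, the $\Pi^0_\lambda$ outcome $R(i,s)$ yields $\mc Q$, not $\mc P$. This is harmless given the symmetry.
\end{itemize}

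The step you flag as the main obstacle is a genuine failure, not bookkeeping. Take your preferred $\mc A=\mc Q^{(\omega)}\sqcup\mc P^{(\omega)}$ and set $\mc A_m=\mc Q^{(m)}\sqcup\mc P^{(\omega)}$.
\begin{itemize}
\item Copy the finitely many components met by a tuple, and use $\mc P\equiv_\lambda\mc Q$ on the remaining components. This gives $\mc A\le_{\lambda+1}\mc A_m$ for every $m$.
\item The same copying moves any witness in $\mc A$ for a $\Sinf{\lambda+1}$ sentence into $\mc A_m$, once $m$ exceeds the number of $\mc Q$-components the witness meets.
\item Hence every d-$\Sinf{\lambda+1}$ sentence true in $\mc A$ holds in some $\mc A_m\not\cong\mc A$. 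So this $\mc A$ has no d-$\Sinf{\lambda+1}$ Scott sentence, whatever level-$\lambda$ pair you choose, and clause (2) fails.
\end{itemize}

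Your fallback $\mc A=\mc Q\sqcup\mc P^{(\omega)}$ via least witnesses cannot be coded by a level-$\lambda$ pair either. The condition ``$s$ is the least $s$ with $R(i,s)$'' is $\Pi^0_\lambda\wedge\Sigma^0_\lambda$, not $\Pi^0_\lambda$. Producing exactly one $\mc Q$-component already needs the Pair of Structures Theorem at level $\lambda+1$.

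The missing step is simply to choose the level-$(\lambda+1)$ pair outright, as the paper does: $\mc A=(\omega^\lambda\cdot 2,<)$ and $\mc B=(\omega^\lambda,<)$. Ash's analysis of ordinals supplies everything needed:
\begin{itemize}
\item $\mc A\le_{\lambda+1}\mc B$ and $\mc B\not\le_{\lambda+1}\mc A$;
\item friendly $X$-computable copies;
\item the computable $\Sinc{\lambda+1}$ sentence ``some element is a $\lambda$-limit,'' true in $\mc A$ and false in $\mc B$;
\item a d-$\Sinc{\lambda+1}$ Scott sentence for $\mc A$ and a $\Pinc{\lambda+1}$ Scott sentence for $\mc B$.
\end{itemize}
The Pair of Structures Theorem at level $\lambda+1$ then produces the $\mc C_i$ with no component coding at all.
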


\begin{proof}
Consider $\mc{A}=(\omega^\lambda\cdot 2,<)$ and $\mc{B}=(\omega^\lambda,<)$.
If follows from \cite{Ash86} that $\mc{A}\leq_{\lambda+1}\mc{B}$,  $\mc{A}\ngeq_{\lambda+1}\mc{B}$ and that there are $\lambda$-friendly copies relative to $X$ of $\mc{A}$ and $\mc{B}$.
Note that $2\lambda+1=\lambda+1$.
Therefore, $S$ is a complete $\Sigma^0_{\lambda+1}$ set.
Note that, by the Pairs of Structures theorem \cite{AK90}, there is a computable list of strucrtures $\mc{C}_i$ such that
\[ i \in S \Longrightarrow \mc{C}_i \cong \mc{A} \]
	and
\[ i \notin S \Longrightarrow \mc{C}_i \cong \mc{B}.\]

Lastly, note that $\mc{A}$ has a $\dSinc{\lambda+1}$ Scott sentence while $\mc{B}$ has a $\Pinc{\lambda+1}$ Scott sentence (this also follows from \cite{Ash86}).
Therefore, this $\mc{A}$ and $\mc{B}$ are as desired.
\end{proof}

Note that in \autoref{thm:MHT4.2}, for $n\in\omega$, the $\Sinf{n+1}$ sentence true of $\mc{A}$ yet false of $\mc{B}$ is computable in $\mathbf{0}^{(n)}$.
This trend of needing more complexity to describe the formula does not continue uniformly into the transfinite: The oracle required resets to $\mathbf 0$ at limits, and is otherwise a step function with wide plateaus. In the above theorem, the sentence that $\mc{A}$ and $\mc{B}$ differ on is always computable.
Namely, it describes the existence or non-existence of a $\lambda$-limit.
On reflection, this makes sense; for finite $n$, the $2n+1$ jumps that need to be addressed are split between $n+1$ in the alternation of quantifiers in the formula and $n$ in the complexity of calculating said formula.
However, in the limit case, $2\lambda+1=\lambda+1$, so no such split is needed, and all of the jumps can be handled by the alternation of quantifiers in the formula.

We directly use this to prove a weakened form of \TBPT when $\alpha=\lambda$ is a limit ordinal.
Our analysis demonstrates that, at limits, belligerent jump inversion coincides with other, friendly, approaches to jump inversion, such as the one found in \cite{GHKMMS}.
We prove the critical properties of this version of friendly jump inversion listed below.
We focus on these particular properties because inverting at a limit level is a critical piece of the construction of the general Belligerent Pairs Theorem, and we will need these exact properties to ensure that $\Inv{\alpha}{\mc{G}}$ is as claimed.

\begin{prop}\label{prop:limit4.3}
For each graph $\mc{G}$ and $X$-computable limit ordinal $\lambda$, there is a graph $\Inv{\lambda}{\mc{G}}$ such that:
	\begin{enumerate}
		\item For any degree $\mathbf{d}\geq X$, given a $\mathbf{d}^{(2\lambda+1)}$-computable copy of $\mc{G}$, there is a $\mathbf{d}$ computable copy of $\Inv{\lambda}{\mc{G}}$. This is uniform in $X$ on indices.
		\item For any degree $\mathbf{d}\geq X$, given a $\mathbf{d}$ computable copy of $\Inv{\lambda}{\mc{G}}$, there is a $\mathbf{d}^{(\lambda+1)}$ computable copy of $\mc{G}$. This is uniform in $X$ on indices.
		\item For any computable ordinal $\beta$, given a $\Sinf\beta$ sentence $\varphi$, there is a $\Sinf{\lambda+\beta}$ sentence $\varphi_*$ such that for all $\mc{G}$, $\mc{G} \models \varphi$ if and only if $\Inv{\lambda}{\mc{G}} \models \varphi_*$. 
		\item[(5)] For any ordinal $\beta$, if $\mc{G}$ has a $\Pinf\beta$ Scott sentence then $\Inv{\lambda}{\mc{G}}$ has a $\Pinf{\lambda+\beta}$ Scott sentence.
	\end{enumerate}
\end{prop}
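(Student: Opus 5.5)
We will build $\Inv{\lambda}{\mc G}$ by the classical Marker/GHKMMS edge-replacement, using the friendly pair $\mc A=(\omega^\lambda\cdot2,<)$, $\mc B=(\omega^\lambda,<)$ from \autoref{prop:limit4.2}. The vertex set of $\Inv{\lambda}{\mc G}$ consists of the vertices of $\mc G$, marked by a unary predicate. For each unordered pair $\{u,v\}$ of distinct vertices we attach a fresh copy of $\mc A$ if $uEv$ and of $\mc B$ otherwise, linked to $u$ and $v$ by a ternary relation. The language can then be converted back to graphs by \autoref{toGraph}, without changing anything relevant.

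Property (1) is an application of \autoref{prop:limit4.2} relativized to $\mathbf d$. A $\mathbf d^{(2\lambda+1)}$-computable copy of $\mc G$ makes the edge set $\Delta^0_{2\lambda+1}(\mathbf d)=\Delta^0_{\lambda+1}(\mathbf d)$, since $2\lambda=\lambda$. In particular the edge relation is $\Sigma^0_{\lambda+1}(\mathbf d)$, so it $m$-reduces to the complete set $S$. The uniform sequence $\mc C_i$ then lets us attach, $\mathbf d$-computably and uniformly, a structure isomorphic to $\mc A$ exactly at the edges and to $\mc B$ at the non-edges. Uniformity in $X$ on indices follows from the uniformity in \autoref{prop:limit4.2} together with uniform $m$-reductions.

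Property (2) is the reverse direction. From a $\mathbf d$-computable copy of $\Inv{\lambda}{\mc G}$, the vertices are recognized computably, and the gadget attached to $\{u,v\}$ is $\mathbf d$-computably enumerable uniformly. By \autoref{prop:limit4.2}(1) there is a computable $\Sinc{\lambda+1}$ sentence $\varphi$ true in $\mc A$ and false in $\mc B$. Relativizing it to the gadget decides edgehood, and evaluating a $\Sinc{\lambda+1}$ sentence on a $\mathbf d$-computable structure is $\Sigma^0_{\lambda+1}(\mathbf d)$. The key point is that \autoref{prop:limit4.2}(2) gives the complementary test: $\mc B$ has a $\Pinc{\lambda+1}$ Scott sentence, so non-edgehood is also $\Sigma^0_{\lambda+1}(\mathbf d)$, taking the negation of $\mc B$'s $\Pi$ sentence as a $\Sigma$ test. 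Hence the edge relation is $\Delta^0_{\lambda+1}(\mathbf d)$, which is computable in $\mathbf d^{(\lambda)}\le\mathbf d^{(\lambda+1)}$.

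Property (3) goes by induction on $\beta$, translating atomic formulas and then quantifiers. A vertex quantifier $\exists x$ becomes $\exists x\,(V(x)\wedge\ldots)$. The atomic formula $xEy$ becomes ``the gadget attached to $x,y$ satisfies the $\Sinf{\lambda+1}$ sentence $\varphi$'', and $\neg xEy$ becomes ``the gadget satisfies $\mc B$'s $\Pinf{\lambda+1}$ Scott sentence''. Both are $\Sinf{\lambda+1}$ or $\Pinf{\lambda+1}$. The care needed here is at the base level: finitary quantifier-free formulas must land in $\Sinf{\lambda+\beta}$ for every $\beta\ge1$. Since $\beta\ge1$ gives $\lambda+\beta\ge\lambda+1$, and $\Pinf{\lambda+1}$ sits inside $\Sinf{\lambda+2}$, I would compensate by padding in the case $\beta=1$. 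A $\Sinf1$ formula is a disjunction of existentials of quantifier-free formulas, and each of these translates to a $\Sinf{\lambda+1}$ formula, because existential quantifiers over a $\Delta$-ish matrix at level $\lambda+1$ stay $\Sigma_{\lambda+1}$ when written as a disjunction of conjunctions of $\Pi_\lambda$-bounded pieces. This works because $\mc A$ versus $\mc B$ is distinguished by a $\Pi_\lambda$ matrix under an existential (the existence of a $\lambda$-limit), and $\mc B$'s sentence asserts the absence of such a limit. I expect this level bookkeeping, in particular handling the non-edge formula at $\beta=1$, to be the main obstacle.

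Property (5) reduces to orbits via \autoref{Robustness} and \autoref{canonicalScott}. If $\mc G$ has a $\Pinf\beta$ Scott sentence, we write a Scott sentence for $\Inv{\lambda}{\mc G}$ as the conjunction of three parts. The first is the translation $\psi_*$ from (3), applied to the dual $\Pi$ form by negation symmetry. The second is a $\Pinf{\lambda+1}$ sentence saying every pair of vertices has exactly one attached gadget, each gadget is isomorphic to $\mc A$ or $\mc B$, and there are no extra elements. The third states that $\mc A$ itself has a $\Pinf{\lambda+2}$ Scott sentence. The gadget conditions are $\Pinf{\lambda+2}\subseteq\Pinf{\lambda+\beta}$ once $\beta\ge2$. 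For $\beta\le1$ the graph is finite-like and a direct argument handles it.
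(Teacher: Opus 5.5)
There is a genuine gap, and it is in the construction itself. Attaching one gadget per pair of vertices ($\mc A$ for an edge, $\mc B$ for a non-edge) does not suffice.

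In your structure $E(u,v)$ is $\Sinf{\lambda+1}$-definable (the gadget satisfies $\varphi$), but $\neg E(u,v)$ is only $\Pinf{\lambda+1}$. This cannot be repaired. Since $\mc A\le_{\lambda+1}\mc B$, every $\Sinf{\lambda+1}$ sentence true in $\mc B$ is true in $\mc A$, so no $\Sinf{\lambda+1}$ test can detect a $\mc B$-gadget.

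Your ``complementary test'' in (2) is a slip. The negation of $\mc B$'s $\Pinc{\lambda+1}$ Scott sentence holds exactly of gadgets \emph{not} isomorphic to $\mc B$, so it detects edges again. Hence you only get an edge relation that is $\Sigma^0_{\lambda+1}(\mathbf d)$. At infinite levels this means c.e.\ in $\mathbf d^{(\lambda+1)}$, not computable in it. Relatedly, $\Delta^0_{\lambda+1}(\mathbf d)$ means computable in $\mathbf d^{(\lambda+1)}$, not in $\mathbf d^{(\lambda)}$ as you wrote.

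In fact (2) is false for your structure. Fix a $\Sigma^0_{\lambda+1}$-complete set $S$. Let $\mc G$ be a disjoint union of finite components, the $n$th identifiable by a finite search and carrying one extra edge iff $n\in S$. Then $\mc G$ has a copy whose edge relation is $\Sigma^0_{\lambda+1}$, so the pair of structures theorem gives a computable copy of your $\Inv{\lambda}{\mc G}$. But a $\mathbf 0^{(\lambda+1)}$-computable copy of $\mc G$ would make $S$ $\Delta^0_{\lambda+1}$, contradicting completeness.

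Item (3) fails at $\beta=1$ for the same reason. Your padding argument cannot work, because ``there is no $\lambda$-limit'' is a genuinely $\Pinf{\lambda+1}$ condition. Concretely, let $\mc M_1,\mc M_2$ be your structures for $K_\omega$ and for $K_\omega$ minus one edge. Matching vertices identically and using $\mc A\le_{\lambda+1}\mc B$ on the single differing gadget gives $\mc M_1\le_{\lambda+1}\mc M_2$. So the $\Sinf{1}$ sentence $\exists x\exists y\,(x\neq y\wedge\neg E(x,y))$ has no $\Sinf{\lambda+1}$ translation. Item (5), which you route through (3), also fails: for the random graph, a non-adjacent pair of vertices has no $\Sinf{\lambda+1}$-definable orbit in your structure.

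The missing idea is the one the paper uses: attach to each pair \emph{two} gadgets of complementary type. For an edge, take $R_{u,v}\cong\mc A$ and $L_{u,v}\cong\mc B$; for a non-edge, take $R_{u,v}\cong\mc B$ and $L_{u,v}\cong\mc A$. Then $E(u,v)$ holds iff $R_{u,v}\models\varphi$, and $\neg E(u,v)$ holds iff $L_{u,v}\models\varphi$. So the edge relation is both $\Sinf{\lambda+1}$- and $\Pinf{\lambda+1}$-definable.

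This repairs each item:
\begin{itemize}
\item[(1)] Your argument carries over, but now both $E$ and $\neg E$ must be $\Sigma^0_{\lambda+1}(\mathbf d)$. This is exactly what the $\mathbf d^{(\lambda+1)}$-computability hypothesis provides.
\item[(2)] It follows directly, since the edge relation becomes $\Delta^0_{\lambda+1}(\mathbf d)$.
\item[(3)] The base case goes through by choosing, for each literal, whichever form merges with the adjacent quantifier.
\item[(5)] It then follows as $\chi_*\wedge\nu$, with $\nu$ a $\Pinf{\lambda+2}$ axiomatization of the image.
\end{itemize}
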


\begin{proof}
Given a graph $\mc{G}$ let $\Inv{\lambda}{\mc{G}}$ be a structure over the language $U,R,L$ where $U$ is a unary predicate and $R,L$ are 4-ary predicates defined as follows.
For each $v\in\mc{G}$ there is an $x_v\in \Inv{\lambda}{\mc{G}}$ with the predicate $U$ applied to it.
These are the only elements of $\Inv{\lambda}{\mc{G}}$ with the predicate $U$.
All other elements $a$ will have either $R(u,v,a,a)$ or $L(u,v,a,a)$ with exactly one pair of $U$ points given by $u\neq v$.
We will let
\[R_{u,v}=\{a\in \Inv{\lambda}{\mc{G}} | \Inv{\lambda}{\mc{G}}\models R(u,v,a,a)\},\]
and
\[L_{u,v}=\{a\in \Inv{\lambda}{\mc{G}} | \Inv{\lambda}{\mc{G}}\models L(u,v,a,a)\}.\]
Note that the $R_{u,v}$ and $L_{u,v}$ partition the $\lnot U$ elements.
Let $\mc{A}$ and $\mc{B}$ be the structures from \autoref{prop:limit4.2} at level $\alpha$ realized as graphs (possible by \autoref{toGraph}) with relation $F$.
If $\mc{G}\models E(u,v)$ then there will be an element $a\in R_{u,v}$ for each $a\in\mc{A}$ and an element $c\in L_{u,v}$ for each $c\in\mc{B}.$
Furthermore, we will let $\Inv{\lambda}{\mc{G}}\models R(u,v,a,b)$ if and only if $\mc A\models F(a,b)$ and $\Inv{\lambda}{\mc{G}}\models L(u,v,c,d)$ if and only if $\mc B\models F(c,d)$.
If $\mc{G}\models \lnot E(u,v)$ then there will be an element $a\in A_{u,v}$ for each $a\in\mc{B}$ and an element $c\in L_{u,v}$ for each $c\in\mc{A}.$
Furthermore, we will let $\Inv{\lambda}{\mc{G}}\models R(u,v,a,b)$ if and only if $\mc B\models F(a,b)$ and $\Inv{\lambda}{\mc{G}}\models L(u,v,c,d)$ if and only if $\mc A\models F(c,d)$.
This completes the description of $\Inv{\lambda}{\mc{G}}$.

Note that $2\lambda+1=\lambda+1$.
Therefore, in Item 1, we assume that we have a $\mathbf{d}^{(\lambda+1)}$ computable copy of $\mc{G}$ and construct a $\mathbf{d}$-computable copy of $\Inv{\lambda}{\mc{G}}$.
Note that in $\mc{G}$, assessing if $E(u,v)$ (or indeed $\lnot E(u,v)$) is a uniformly $\mathbf{d}^{(\lambda+1)}$ computable question.
By the pairs of structures theorem \cite{AK90} and the fact that $\mc{A}\equiv_\lambda\mc{B}$  and are $\lambda$-friendly relative to $X$ (as seen in \autoref{prop:limit4.2}) we have $\mathbf{d}\oplus X$ computable procedures that construct $\mc{D}_{u,v}$ and $\mc{C}_{u,v},$ where $\mc{D}_{u,v}\cong\mc{B}$ when $E(u,v)$, $\mc{D}_{u,v}\cong\mc{A}$ when $\lnot E(u,v)$, and $\mc{C}_{u,v}$ exhibits the exact opposite behavior.
We may construct a computable copy of $\Inv{\lambda}{\mc{G}}$ by placing $U$ elements for each vertex in $\mc{G}$ and letting $R_{u,v}\cong \mc{D}_{u,v}$ and $L_{u,v}\cong \mc{C}_{u,v}$.

We now move to Item 2.
Fix a $\mathbf{d}$-computable copy $\mc{D}$ of $\Inv{\lambda}{\mc{G}}$.
There is a $\mathbf{d}$-computable $\Pi_{\lambda+1}$ formula distinguishing $\mc{A}$ from $\mc{B}$.
Looking at two elements $u,v\in U_{\mc{D}}$ it is $\Delta_{\lambda+1}$ in $\mc{D}$ to check if $R_{u,v}\cong\mc{A}$ and $L_{u,v}\cong\mc{B}$ or vice versa.
In total, this means that it is $\Delta_{\lambda+1}$ in $\mathbf{d}$ to check if $u$ and $v$ ought to have an edge between them in $\mc{G}$.
This gives a $\mathbf{d}^{(\lambda+1)}$ copy of $\mc{G}$, as desired.

Consider now Item 3.
Let $\psi$ be the $\Sigma_{\lambda+1}^X$ formula true of $\mc{B}$ but false of $\mc{A}$.
To understand if there is an edge between $u$ and $v$ in $\mc{G}$ given $\Inv{\lambda}{\mc{G}}$, it is $\Delta_{\lambda+1}^X$.
In particular, one checks if $R_{u,v}\models \psi$ or equivalently if $L_{u,v}\models \lnot \psi$.
Translate the sentence $\varphi$ by replacing each instance of $E(u,v)$ with the statement that $R_{u,v}\models \psi$ or $L_{u,v}\models\lnot\psi$, taking care to ensure that the innermost quantifier of $\varphi$ combines with the outermost quantifier of the chosen formula.
It is a straightforward induction to check that the resulting $\psi_*$ is of the desired quantifier complexity and computability theoretic properties and has $\mc{G} \models \varphi$ if and only if $\Inv{\lambda}{\mc{G}} \models \varphi_*$.

We next move to Item 5. First note that the image of the $\Inv{\lambda}{\cdot}$ map is $\Pinf{\lambda+2}$ definable.
The image of $\Inv{\lambda}{\cdot}$ is defined by: \begin{itemize}
	\item a finitary axiom stating that $R_{u,v}$ and $L_{u,v}$ partition the $\lnot U$ elements,
	\item an axiom using the Scott senteces for $\omega^\lambda$ and $\omega^\lambda\cdot2$ to say that $R_{u,v}\cong \omega^\lambda$ or $R_{u,v}\cong \omega^\lambda\cdot2$ for each pair $u,v$,
	\item an axiom using the Scott senteces for $\omega^\lambda$ and $\omega^\lambda\cdot2$ to say that $R_{u,v}\cong \omega^\lambda\cdot 2 \Leftrightarrow L_{u,v}\cong \omega^\lambda$ and $R_{u,v}\cong \omega^\lambda \Leftrightarrow L_{u,v}\cong \omega^\lambda\cdot 2$ for each pair $u,v$.
\end{itemize}
Because $\omega^\lambda\cdot2$ has a $\dSinf{\lambda+1}$ Scott sentence while $\omega^\lambda$ has a $\Pinf{\lambda+1}$ Scott sentence, the above is $\Pinf{\lambda+2}$.
Lastly, by construction, this defines the image of $\Inv{\lambda}{\cdot}$, as desired.

Note Item 3 extends to $\Pinf{\lambda+\beta}$ formulas by taking negations.
Now, if $\chi$ is a Scott sentence for $\mathcal{G}$ and $\nu$ defines the image of $\Inv{\lambda}{\cdot}$, then $\chi_*\land \nu$ is a Scott sentence for $\Inv{\lambda}{\mc{G}}$ of the desired complexity. 
\end{proof}

\subsection{All countable ordinals}
In this section, we establish the general case of the Belligerent Pairs Theorem and the Belligerent Jump Inversion Theorem. We will make use of the following Theorem in the proof of the Belligerent Pairs Theorem:

\begin{thm}\label{512}
	Fix a computable ordinal $\alpha$ and computable structures $\mc A$ and $\mc B$ so that $\mc A \not \leq_{\alpha+1}\mc B$. Then there exists a $\Pinc{2\alpha+1}$-sentence $\phi$ so that $\mc A\models \phi$ and $\mc B\models \neg\phi$.
\end{thm}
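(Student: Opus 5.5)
Since $\mc A\not\leq_{\alpha+1}\mc B$, the definition of the back-and-forth relations yields a witness. There is a tuple $\bar d\in\mc B$ and some $1\leq\beta\leq\alpha$ such that for every $\bar c\in\mc A$ we have $(\mc B,\bar d)\not\leq_\beta(\mc A,\bar c)$. By monotonicity of the relations in the ordinal index we may take $\beta=\alpha$: if $(\mc B,\bar d)\leq_\alpha(\mc A,\bar c)$ held, then so would $\leq_\beta$. So we fix a tuple $\bar d\in\mc B$ with $(\mc B,\bar d)\not\leq_\alpha(\mc A,\bar c)$ for all $\bar c\in\mc A$.

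The plan is to write the failure using the computable formula from \autoref{computable version defining back-and-forth by Pi 2 alpha}. Applied to $(\mc B,\bar d)$, its second clause gives a computable $\Pinc{2\alpha}$ formula $\rho_{\bar d}(\bar x)$ such that, for every structure $\mc C$ and $\bar c\in\mc C$,
\[\mc C\models\rho_{\bar d}(\bar c)\iff(\mc C,\bar c)\leq_\alpha(\mc B,\bar d).\]
This formula is computable uniformly from the computable presentation of $\mc B$, the tuple $\bar d$, and a notation for $\alpha$. We then set
\[\phi:=\forall\bar x\,\neg\rho_{\bar d}(\bar x).\]
The formula $\neg\rho_{\bar d}$ is (equivalent, after pushing negations inward, to) a computable $\Sinc{2\alpha}$ formula, so $\phi$ is $\Pinc{2\alpha+1}$. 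Adding a universal quantifier in front of a $\Sinc{2\alpha}$ formula raises it to $\Pinc{2\alpha+1}$ whether $\alpha$ is finite or infinite, since $2\alpha+1$ is a successor.

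Next we verify that $\phi$ separates the structures. The failure $(\mc B,\bar d)\not\leq_\alpha(\mc A,\bar c)$ for every $\bar c\in\mc A$ is, by the display above, exactly $\mc A\models\neg\rho_{\bar d}(\bar c)$ for all $\bar c$; hence $\mc A\models\phi$. In $\mc B$, the tuple $\bar d$ satisfies $(\mc B,\bar d)\leq_\alpha(\mc B,\bar d)$ by reflexivity, so $\mc B\models\rho_{\bar d}(\bar d)$ and therefore $\mc B\models\neg\phi$.

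The only point needing care is the direction of $\rho$. We need the formula defining $\{\bar c : \bar c\leq_\alpha\bar d\}$ evaluated in $\mc A$, with $\bar d$ taken in $\mc B$, and this is precisely the second clause of \autoref{computable version defining back-and-forth by Pi 2 alpha} with the roles of the structures assigned accordingly. The choice of which tuple is fixed, and in which structure, must be made correctly. Beyond that, the remaining step is bookkeeping: confirming that the syntactic class is exactly $\Pinc{2\alpha+1}$, which holds because $2\alpha$ counts the quantifier alternations of $\rho$ and the outer $\forall$ adds one more level.
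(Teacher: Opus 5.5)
Your unwinding of $\mc A\not\leq_{\alpha+1}\mc B$ is correct: there is $\bar d\in\mc B$ with $(\mc B,\bar d)\not\leq_\alpha(\mc A,\bar c)$ for every $\bar c\in\mc A$. The error is in the next step. The formula $\rho_{\bar d}$ you take from the second clause of \autoref{computable version defining back-and-forth by Pi 2 alpha} defines $\{\bar c : (\mc C,\bar c)\leq_\alpha(\mc B,\bar d)\}$. So $\mc A\models\neg\rho_{\bar d}(\bar c)$ says $(\mc A,\bar c)\not\leq_\alpha(\mc B,\bar d)$, and since $\leq_\alpha$ is not symmetric, this is not your witness condition. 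Your closing remark repeats the slip: you need $\{\bar c : \bar d\leq_\alpha\bar c\}$, not $\{\bar c:\bar c\leq_\alpha\bar d\}$.

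The resulting $\phi$ can genuinely fail in $\mc A$. Take $\alpha=1$, $\mc A=(\mathbb{Q},<)$, $\mc B=(\mathbb{Q}+1,<)$, and $\bar d=m$, the greatest element of $\mc B$. Then $(\mc B,m)\not\leq_1(\mc A,c)$ for every $c$: choose $f>c$ in $\mc A$, and no element of $\mc B$ lies above $m$. On the other hand, $(\mc A,c)\leq_1(\mc B,m)$ for every $c$, since any finite configuration over $m$ in $\mc B$ lies at or below $m$ and can be copied at or below $c$ in $\mathbb{Q}$. Hence $\mc A\models\rho_m(c)$ for all $c$, and $\mc A\not\models\phi$.

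The repair is to use the \emph{first} clause of the lemma, applied to the computable pair $(\mc B,\bar d)$. This gives a computable $\Pinc{2\alpha}$ formula $\theta_{\bar d}(\bar x)$ with $\mc C\models\theta_{\bar d}(\bar c)\iff(\mc B,\bar d)\leq_\alpha(\mc C,\bar c)$; set $\phi:=\forall\bar x\,\neg\theta_{\bar d}(\bar x)$. Now $\mc A\models\phi$ is literally your witness condition, and $\mc B\models\theta_{\bar d}(\bar d)$ by reflexivity. Your complexity count and uniformity remarks go through unchanged, since the fixed structure is still the computable $\mc B$. With this change your argument is the paper's: fix a witness tuple in $\mc B$ and universally quantify the negation of the computable $\Pinc{2\alpha}$ back-and-forth formula.

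Be aware that the paper's written proof makes the mirror-image slip. It phrases the witness as ``no $\bar a\in\mc A$ with $(\mc A,\bar a)\leq_\alpha(\mc B,\bar b)$'' and then uses $\rho$. The example above shows such a witness need not exist: every tuple of $\mathbb{Q}+1$ is $\geq_1$ some tuple of $\mathbb{Q}$ with the same order type. So fix the formula, not the witness; your reading of the definition is the correct one.
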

\begin{proof}
	Since $\mc A \not \leq_{\alpha+1}\mc B$ there exists a tuple $\bar{b}\in \mc B$ so that there is no tuple $\bar{a}\in \mc A$ so that $(\mc A,\bar{a})\leq_{\alpha}(\mc B,\bar{b})$. Fix the formula $\rho(\bar{x})$ guaranteed by Lemma \ref{computable version defining back-and-forth by Pi 2 alpha} so that $\mc C\models \rho(\bar{c})$ if and only if $(\mc C,\bar{c})\leq_\alpha (\mc B,\bar{b})$. Thus $\mc A \models \forall \bar{x} \neg \rho(\bar{x})$ whereas $\mc B \models \exists \bar{x}\rho(\bar{x})$, showing that there is a $\Pinc{2\alpha+1}$-formula true in $\mc A$ but not in $\mc B$.
\end{proof}

Note that Lemma \ref{computable version defining back-and-forth by Pi 2 alpha} shows that whenever $\mc A$ is computable and $\mc{A}\not\leq_{\alpha+1}\mc{B}$, then there is a $\Pinc{2\alpha+2}$-formula $\psi$ so that $\mc A\models \psi$ and $\mc B\models \neg \psi$, but \autoref{512} gives a slight improvement of this.

\ThmMainA*
\begin{proof}
We assume that $\alpha$ is computable, i.e., $X=\emptyset$. The proof relativizes for any $X$.
Let $\alpha=\lambda+n$ where $\lambda$ is a limit ordinal and $n\in\omega$.
If $n=0$, then the result follows at once from  \autoref{prop:limit4.2}.
If $\lambda=0$, then the result follows at once from \autoref{thm:MHT4.2}.
Note that $2\alpha+1=2(\lambda+n)+1=\lambda+2n+1$, so $S$ is a complete $\Sigma^0_{\lambda+2n+1}=\Sigma^0_{2n+1}(\mathbf{0}^{(\lambda+1)})$ set.
Use  \autoref{thm:MHT4.2} relativized to $\mathbf{0}^{(\lambda+1)}$ to obtain the described $\mathbf{0}^{(\lambda+1)}$-computable structures $\widehat{\mc{A}},\widehat{\mc{B}},\widehat{\mc{C}}_i$.
Uniformly apply \autoref{prop:limit4.3} to obtain $\mc A = \Inv{\lambda}{\widehat{\mc{A}}},\mc{B} = \Inv{\lambda}{\widehat{\mc{B}}},\mc{C}_i = \Inv{\lambda}{\widehat{\mc{C}}_i}$, a computable sequence of structures.
We claim that this sequence of structures acts as we desire.
Note that $\Inv{\lambda}{\cdot}$ is injective up to isomorphism as it pushes forward Scott sentences.
Therefore,
	\[ i \in S \Longrightarrow 
	\mc{C}_i \cong \mc{A}\]
	and
	\[ i \notin S \Longrightarrow
	\mc{C}_i \cong \mc{B}. 
	\]

Next, let $\psi$ be the $\Sinf{n+1}$ sentence true of $\widehat{\mc{A}}$ yet false of $\widehat{\mc{B}}$.
Note that $\mc A = \Inv{\lambda}{\widehat{\mc{A}}}\models\psi_*$ and $\mc B = \Inv{\lambda}{\widehat{\mc{B}}}\models\lnot\psi_*$.
Therefore, $\mc{B} \nleq_{\lambda+n+1} \mc{A}$, and thus $\mc{B} \nleq_{\alpha+1} \mc{A}$. By  \autoref{bfDef}, there is some $\Pi_{\alpha+1}^{\mathbf{0}^{(\gamma(\alpha))}}$-sentence  $\phi$ so that $\mathcal{A}\models \phi$ and $\mathcal{B}\models\neg\phi$.
Note that \autoref{512} shows that if it were the case that $\mc A\not\leq_{\alpha+1} \mc B$, then we would have a $\Pinc{2\alpha+1}$-sentence $\rho$ so that $\mc A\models \rho$ and $\mc B\models \neg\rho$. But then the set of $i$ so that $\mc C_i\models \rho$ would give a $\Pi^0_{2\alpha+1}(X)$ description of $S$, contradicting that $S$ is $\Sigma^0_{2\alpha+1}(X)$-complete. Thus  $\mc A\leq_{\alpha+1} \mc B$.

Lastly, as $\widehat{\mc{A}}$ and $\widehat{\mc{B}}$ have $\Pinf{n+2}$ Scott sentences, it follows from \autoref{prop:limit4.3}(5) that $\mc A = \Inv{\lambda}{\widehat{\mc{A}}}$ and $\mc B = \Inv{\lambda}{\widehat{\mc{B}}}$ have a $\Pinf{\lambda+n+2}=\Pinf{\alpha+2}$ Scott sentence and a $\Pinf{\lambda+n+1}=\Pinf{\alpha+1}$ Scott sentence, respectively, as desired.
\end{proof}

\ThmMainB*
\begin{proof}
\autoref{toGraph} allows us to transform any structure into a graph without changing any of the relevant information.
Therefore, we may abuse notation and uniformly build a structure over any language and call it $\Inv{\alpha}{\mc{G}}$.
This proof will mirror the approach spelled out in \autoref{prop:limit4.3}.

Given a graph $\mc{G}$, let $\Inv{\alpha}{\mc{G}}$ be a structure over the language $U,R,L$ where $U$ is a unary predicate and $R,L$ are 4-ary predicates defined as follows.
For each $v\in\mc{G}$ there is an $x_v\in \Inv{\alpha}{\mc{G}}$ with the prediacte $U$ applied to it.
These are the only elements of $\Inv{\alpha}{\mc{G}}$ satisfying the predicate $U$.
All other elements $a$ will have either $R(u,v,a,a)$ or $L(u,v,a,a)$ with exactly one pair of $U$ points given by $u\neq v$.
We will let
\[R_{u,v}=\{a\in \Inv{\alpha}{\mc{G}} | \Inv{\alpha}{\mc{G}}\models R(u,v,a,a)\},\]
and
\[L_{u,v}=\{a\in \Inv{\alpha}{\mc{G}} | \Inv{\alpha}{\mc{G}}\models L(u,v,a,a)\}.\]
Note that the $R_{u,v}$ and $L_{u,v}$ partition the $\lnot U$ elements.
Let $\mc{A}$ and $\mc{B}$ be the structures realized as graphs with relation $F$ from \autoref{thm:Inf4.2} applied at level $\alpha$.
If $\mc{G}\models E(u,v)$ then there will be an element $a\in R_{u,v}$ for each $a\in\mc{A}$ and an element $c\in L_{u,v}$ for each $c\in\mc{B}.$
Furthermore, we will let $\Inv{\alpha}{\mc{G}}\models R(u,v,a,b)$ if and only if $\mc A\models F(a,b)$ and $\Inv{\alpha}{\mc{G}}\models L(u,v,c,d)$ if and only if $\mc B\models F(c,d)$.
If $\mc{G}\models \lnot E(u,v)$ then there will be an element $a\in A_{u,v}$ for each $a\in\mc{B}$ and an element $c\in L_{u,v}$ for each $c\in\mc{A}.$
Furthermore, we will let $\Inv{\alpha}{\mc{G}}\models R(u,v,a,b)$ if and only if $\mc B\models F(a,b)$ and $\Inv{\alpha}{\mc{G}}\models L(u,v,c,d)$ if and only if $\mc A\models F(c,d)$.
This completes the description of $\Inv{\alpha}{\mc{G}}$.

Note that in a $\Delta^0_{2\alpha+1}(X)$
copy of $\mc{G}$, assessing if $E(u,v)$ (or indeed $\lnot E(u,v)$) is a uniformly
$\Delta^0_{2\alpha+1}(X)$
question.
By \autoref{thm:Inf4.2}, we have computable procedures that construct $\mc{D}_{u,v}$ and $\mc{C}_{u,v},$ where $\mc{D}_{u,v}\cong\mc{B}$ when $E(u,v)$ and $\mc{D}_{u,v}\cong\mc{A}$ when $\neg E(u,v)$, and $\mc{C}_{u,v}$ exhibits the exact opposite behavior.
We may construct a computable copy of $\Inv{\alpha}{\mc{G}}$ by placing $U$ elements for each vertex in $\mc{G}$ and letting $R_{u,v}\cong \mc{D}_{u,v}$ and $L_{u,v}\cong \mc{C}_{u,v}$.

We now move to Item 2.
Suppose that $\mathbf{d}\geq_T X^{(\gamma(\alpha))}$ computes a copy $\mc{D}$ of $\Inv{\alpha}{\mc{G}}$. There is a  $\Pi_{\alpha+1}^{X^{(\gamma(\alpha))}}$ formula $\psi$ distinguishing $\mc A$ from $\mc B$. Thus, since
$\mathbf{d}$ can compute the formula $\psi$ and also the atomic diagram of $\mc D$, looking at two elements $u,v\in U_{\mc{D}}$ it is $\Delta_{\alpha+1}(\mathbf{d})$ to check if $R_{u,v}\cong\mc{A}$ and $L_{u,v}\cong\mc{B}$ or vice versa. Thus there is a $\Delta_{\alpha+1}(\mathbf{d})$ copy of $\mc G$. Suppose that $\mathbf{d}\geq X^{(2\alpha+1)}$ (or $\mathbf{d}\geq X^{(2\alpha)}$ if $\alpha$ is finite). It follows from \autoref{computable version defining back-and-forth by Pi 2 alpha} that $\mc A$ and $\mc B$ are $\alpha+1$-friendly relative to $\mathbf{d}$. By \autoref{lightface hardness}, if there is a $\Delta_{\alpha+1}(\mathbf{d})$ copy of $\mc G$, then $\mathbf{d}$ computes a copy of $\Inv{\alpha}{\mc{G}}$.

We now move to Item 3.
Let $\psi$ be the $\Sinf{\alpha+1}$ formula true of $\mc{A}$ but false of $\mc{B}$. Note that it is $\Delta^{\text{in}}_{\alpha+1}$ (i.e., both $\Sinf{\alpha+1}$ and also $\Pinf{\alpha+1}$ to say that $R_{u,v}\models \psi$, since $R_{u,v}\models \psi\Leftrightarrow L_{u,v}\models \neg \psi$).
We translate the sentence $\varphi$ by replacing each instance of $E(u,v)$ with the $\Delta^{\text{in}}_{\alpha+1}$ formula $R_{u,v}\models \psi$ (formally, we use either $R_{u,v}\models \psi$ or $L_{u,v}\models \neg \psi$ depending on whether $E$ appears positively or negatively and depending on the innermost quantifier in $\phi$, though it is common to simply think of such formulae as $\Delta^{\text{in}}_{\alpha+1}$ and thus combining with whichever quantifier comes next).
The resulting $\phi_*$ is of the desired complexity and has $\mc{G} \models \varphi$ if and only if $\Inv{\alpha}{\mc{G}} \models \varphi_*$. Note that this gives $\phi_*$ uniformly computable from $\mathbf{a}\oplus X^{(\gamma(\alpha))}$ if $\mathbf{a}$ computes $\phi$, since $X^{(\gamma(\alpha))}$ computes the formula $\psi$.

Next, we consider Item 4. We are given a $\Sinf{\alpha+\beta}$ formula $\phi$. Let $\mathbf d$ be large enough that that $\mc A$ and $\mc B$ are $(\alpha+1)$-friendly relative to $\mathbf{d}$ (i.e., $\mathbf{d}\geq X^{(2\alpha+1)}$ suffices for this) and that $\phi$ is $\mathbf{d}$-computable. For any structure $\mc C$, uniformly in $\mc C \oplus \mathbf{d}^{(\alpha+1)}$, we can find a set $Y$ so that $Y\geq \mathbf{d}$ and $Y^{(\alpha+1)}\equiv_T \mc C \oplus \mathbf{d}^{(\alpha+1)}$ \cite{Macintyre1977}. By \autoref{lightface hardness}, $Y$ computes a copy of $\Inv{\alpha}{\mc{C}}$. Thus uniformly in a $\Sigma_{\alpha+\beta}(Y)\subseteq \Sigma_\beta(Y^{(\alpha+1)}) = \Sigma_\beta(\mc C\oplus \mathbf{d}^{(\alpha+1)})$ way, we can tell if $\Inv{\alpha}{\mc{C}}\models \phi$. By \autoref{vdB}, there is a formula $\phi^*$ which is $\Sigma^{\mathbf{d}^{\alpha+1}}_\beta$ defining the set of $\mc C$ so that $\Inv{\alpha}{\mc{C}}\models \phi$, and by the uniformity of Vanden Boom's theorem, we can uniformly find the $\mathbf{d}^{(\alpha+1)}$-computation for $\phi^*$. Note that if $\mathbf{a}$ computes $\phi$, then $\mathbf{d}=\mathbf{a}\oplus X^{(2\alpha+1)}$ suffices.
Note that if $\alpha$ is finite, then $\mathbf{d}\geq X^{(2\alpha)}$ suffices to make $\mc A $ and $\mc B$ be $(\alpha+1)$-friendly relative to $\mathbf{d}$. Also, we have
 $\Sigma_{\alpha+\beta}(Y)=\Sigma_\beta(Y^{(\alpha)})$, so the above argument works to give a $\mathbf{d}^{(\alpha)}$-computation of $\phi^*$ with $\mathbf{d}=\mathbf{a}\oplus X^{(2\alpha)}$.

Note that (3) extends to $\Pinf{\alpha+\beta}$ by taking negations and to $\dSinf{\alpha+\beta}$ formulas by taking conjunctions. We now prove the forward direction of (5), (6), and (7).
This follows directly from our extended (3) and the fact that the image of the $\Inv{\alpha}{\cdot}$ map is $\Pinf{\alpha+2}$ definable.
In particular, if $\chi$ is a Scott sentence for $\mathcal{A}$ and $\nu$ defines the image of $\Inv{\alpha}{\cdot}$, then $\chi_*\land \nu$ is a Scott sentence for $\Inv{\alpha}{\mc{G}}$.
The image of $\Inv{\alpha}{\cdot}$ is defined by
\begin{itemize}
	\item a finitary axiom stating that $R_{u,v}$ and $L_{u,v}$ partition the $\lnot U$ elements
	\item an axiom using the Scott senteces for $\mc{A}$ and $\mc{B}$ to say that $R_{u,v}\cong \mc{A}$ or $R_{u,v}\cong \mc{B}$
	\item an axiom using the Scott senteces for $\mc{A}$ and $\mc{B}$ to say that $L_{u,v}\cong \mc{A}$ or $L_{u,v}\cong \mc{B}$
	\item an axiom stating that $R_{u,v}\models\psi\implies L_{u,v}\models\lnot\psi$ and $R_{u,v}\models\lnot\psi\implies L_{u,v}\cong \lnot\psi$ where $\psi$ is the $\Sinf{\alpha+1}$ sentence so that $\mc B\models \psi$ and $\mc A\models \neg \psi$.
\end{itemize}
Because $\mc{A}$ and $\mc{B}$ have $\Pinf{\alpha+2}$ Scott sentences and $\psi$ is $\Sinf{\alpha+1}$, the above is $\Pinf{\alpha+2}$.
Lastly, by construction, this defines the image of $\Inv{\alpha}{\cdot}$, as desired.

Note also that (4) extends to $\Pinf{\alpha+\beta}$ by taking negations and to $\dSinf{\alpha+\beta}$ formulas by taking conjunctions.
Now, suppose that $\chi$ is a Scott sentence for $\Inv{\alpha}{\mc G}$. Then for any structure $\mc M$, $\mc M\models \chi^*$ implies that $\Inv{\alpha}{\mc M}\models \chi$, thus $\Inv{\alpha}{\mc M}\cong \Inv{\alpha}{\mc G}$, thus $\mc M \cong \mc G$. So $\chi^*$ is a Scott sentence for $\mc G$. This proves the leftward direction of (5), (6), and (7).
\end{proof}

Having these results, we shift to applications of these tools, which will demonstrate both their applicability and also their sharpness.

\section{Applications}

This section contains various applications of the previous section and demonstrates that the results in the previous section are the best possible.
The applications in this section focus on several goals.
The first is to establish our results on computable Scott analysis as in \autoref{FullScottSent}. The second is to establish our results on the complexity of the back-and-forth relations as in \autoref{FullBNF}. From there, we give applications to answering a question of Chen, Gonzalez, and Harrison-Trainor as in \autoref{AnswerChenGonzalezHT} and also proving a result related to recent work of Gonzalez and Knight on the complexity of the collection of structures of Scott rank $\leq \alpha$. Finally, using the fact that our applications are sharp, we conclude that the Belligerent Pair Theorem and the Belligerent Jump Inversion Theorem are sharp: They cannot be improved since the bounds in the applications cannot be improved.

Before demonstrating applications of infinite belligerent jump inversion, we give a general outline of how the applications generally proceed, which we hope will be useful to anyone trying to apply this theorem.
The general form of a problem where belligerent jump inversion may be helpful is when you desire to prove completeness at the level $2\alpha+k$ for some property that scales with $\alpha$.
In other terms, belligerent jump inversion can add two quantifiers of coding at the cost of only sacrificing one level in $\alpha$.
This differs from the more typical ``friendly'' jump inversion (e.g., the one in \cite{GHKMMS}) that only adds one quantifier per level of $\alpha$.
Problem set-ups of this form are typical when moving between effective and non-effective versions of a particular concept, or, relatedly, when discussing back-and-forth relations.

Once the appropriate setting is identified, one way to apply infinite belligerent jump inversion is the following three-step method.
First, some sort of base case $\alpha=m$ is demonstrated using direct coding arguments.
Second, belligerent jump inversion is applied to a relativized form of the base case to obtain the desired result for all ordinals of the form $\beta+m$.
Lastly, edge cases of the form $\lambda+p$ for $p<m$ and $\lambda$ a limit ordinal are considered to fill in the gaps left by the second step of the proof.

We demonstrate this method by proving an extension of \cite[Theorem 6.1]{mhtArith}.
\begin{thm}\label{thm:lightfaceScott SentenceHard}
Let $\alpha\geq2$ be a computable ordinal and let $S$ be a complete $\Pi^0_{2\alpha}$ set. There is a computable structure $\mc{A}_\alpha$ with a $\Pinf\alpha$ Scott sentence and a uniformly computable sequence of structures $\mc{C}_{i,\alpha}$ such that
\[i\in S 	\Longleftrightarrow \mc{C}_{i,\alpha}\cong\mc{A}_\alpha\]
\end{thm}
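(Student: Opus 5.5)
We follow the three-step method just outlined, with base case $m=2$, taken from \cite[Theorem 6.1]{mhtArith}. Relativized to an oracle $Z$, that result says: for a complete $\Pi^0_4(Z)$ set $S$ there is a $Z$-computable structure $\widehat{\mc A}$ with a $\Pinf2$ Scott sentence and a uniformly $Z$-computable sequence $\widehat{\mc C}_i$ with $i\in S\iff \widehat{\mc C}_i\cong\widehat{\mc A}$. By \autoref{toGraph} we may take all of these to be graphs. The finite cases $\alpha=n\geq 2$ are exactly Harrison-Trainor's theorem, so the work is in the infinite ordinals.

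For infinite $\alpha$ we write $\alpha=\beta+2$ whenever possible; this covers every $\alpha=\lambda+n$ with $n\ge2$, where $\beta=\lambda+(n-2)$. Then $2\alpha=2\beta+4$, so a complete $\Pi^0_{2\alpha}$ set is a complete $\Pi^0_4(\mathbf 0^{(2\beta)})$ set, up to the off-by-one convention from Post's theorem at infinite levels. Relativize the base case to an oracle $Z$ with $\Delta^0_{2\beta+1}$-degree, e.g.\ $Z=\mathbf 0^{(2\beta)}$. Then apply \autoref{unfriendly jump inv thm} at level $\beta$ with $X=\emptyset$: item (1) turns the uniformly $\Delta^0_{2\beta+1}$ copies of $\widehat{\mc A}$ and $\widehat{\mc C}_i$ into uniformly computable copies $\mc A_\alpha=\Inv{\beta}{\widehat{\mc A}}$ and $\mc C_{i,\alpha}=\Inv{\beta}{\widehat{\mc C}_i}$. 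Item (5) gives $\mc A_\alpha$ a $\Pinf{\beta+2}=\Pinf\alpha$ Scott sentence. The leftward direction of (5) (pulling back Scott sentences via (4)) shows that $\Inv{\beta}{\cdot}$ is injective on isomorphism types, while (3) shows it respects isomorphism. Together these give $i\in S\iff\mc C_{i,\alpha}\cong\mc A_\alpha$.

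The remaining edge cases are $\alpha=\lambda$ and $\alpha=\lambda+1$ with $\lambda$ a limit. For $\alpha=\lambda+1$ we have $2\alpha=\lambda+2=2\lambda+2$. We use \autoref{thm:Inf4.2} at level $\lambda$, or directly the friendly pair $\omega^\lambda$, $\omega^\lambda\cdot2$ of \autoref{prop:limit4.2}, in which $\omega^\lambda$ has a $\Pinf{\lambda+1}$ Scott sentence. A complete $\Pi^0_{\lambda+2}$ set has the form $i\in S\iff\forall k\,(i,k)\in T$ with $T$ a $\Sigma^0_{\lambda+1}$ set. For each $(i,k)$ we build by the pair of structures theorem a copy of $\omega^\lambda$ if $(i,k)\in T$ and of $\omega^\lambda\cdot 2$ otherwise. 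We then glue these $\omega$ pieces, each labeled by a unary predicate for its index $k$, into a structure $\mc C_{i,\alpha}$. The target $\mc A_\alpha$ is the glued structure with all pieces equal to $\omega^\lambda$, and its Scott sentence ``every labeled piece is $\cong\omega^\lambda$'' is $\Pinf{\lambda+1}$. For $\alpha=\lambda$, a complete $\Pi^0_{2\lambda}=\Pi^0_\lambda$ set is a uniform intersection of sets of levels cofinal below $\lambda$. We code it similarly, using pieces from the pair of structures theorem at levels $\lambda_k\to\lambda$ (e.g.\ $\omega^{\lambda_k}$ versus $\omega^{\lambda_k}\cdot2$), so that the resulting all-good structure has a $\Pinf{\lambda}$ Scott sentence.

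We expect the main obstacle to be the ordinal bookkeeping in the second step. One must check that the relativized base case really needs $\Delta^0_{2\beta+1}$ information and no more, given that $X^{(2\beta)}$ is $\Delta^0_{2\beta+1}$ only for infinite $\beta$. One must also check that the complete $\Pi^0_{2\alpha}$ set lines up as $\Pi^0_4$ relative to that degree; if the indices are off by one, we would instead invoke the base case with $\Pi^0_{3}$ or $\Pi^0_5$ relative to a suitable oracle. The limit edge cases also need care that the glued structures still have Scott sentences of exactly the claimed complexity.
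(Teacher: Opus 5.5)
Your step $\alpha=\beta+2$ is the paper's argument, up to an oracle slip noted below. The edge case $\alpha=\lambda+1$, however, has a genuine gap.

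\textbf{The case $\alpha=\lambda+1$.} Your target $\mc A_\alpha$ consists of labeled copies of $\omega^\lambda$. It has a computable $\Pinc{\lambda+1}$ Scott sentence (\autoref{prop:limit4.2}), so $\{i:\mc C_{i,\alpha}\cong\mc A_\alpha\}$ would be $\Pi^0_{\lambda+1}$ and cannot be $\Pi^0_{\lambda+2}$-complete.

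Concretely, the pair of structures theorem cannot send members of a $\Sigma^0_{\lambda+1}$ set to $\omega^\lambda$ and non-members to $\omega^\lambda\cdot2$. That would need $\omega^\lambda\leq_{\lambda+1}\omega^\lambda\cdot 2$, but only $\omega^\lambda\cdot2\leq_{\lambda+1}\omega^\lambda$ holds. Reversing the orientation does not help, and no friendly pair can. The good piece must encode membership in the $\Sigma^0_{\lambda+1}$ set, so it sits on the $\Sigma^0_{\lambda+1}$ side of a pair $\mc P\leq_{\lambda+1}\mc Q$ with $\mc P\not\cong\mc Q$. There it can never have a $\Pinf{\lambda+1}$ Scott sentence, because such a sentence would transfer to $\mc Q$ and force $\mc Q\cong\mc P$. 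For instance, in $\omega^\lambda\cdot 2$, alone or glued, the point at position $\omega^\lambda$ has no $\Sinf{\lambda}$-definable orbit.

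So any witness at level $\lambda+1$ must have a $\Pinf{\lambda+1}$ Scott sentence but no computable one, and a genuinely lightface construction is required. The paper does this in four steps:
\begin{enumerate}
\item It reduces to $S$ being $\Sigma^0_{\lambda+1}$, recovering $\Pi^0_{\lambda+2}$ afterwards by adding sorts, and writes $x\in S\iff\exists y\,\langle x,y\rangle\in\bigcap_j U_j$ with $U_j\in\Pi^0_{\delta_j}$.
\item It builds a tree in which the path $0^x\smallfrown1\smallfrown\sigma^x$ is isolated exactly when $x\in S$.
\item It takes $\mc A$ to be the prime model of the associated theory of unary predicates, and $\mc C_x$ to be $\mc A$ plus one element realizing that path.
\item It applies $\Sigma^0_{\delta_j}$ Marker extensions to the $U_j$, which makes everything uniformly computable.
\end{enumerate}
The resulting $\Pinf{\lambda+1}$ Scott sentence includes the conjunction, over $x\notin S$, saying that no element realizes the path for $x$. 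This non-computable ingredient is exactly what your proposal is missing.

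\textbf{Smaller points.}
\begin{itemize}
\item In the step $\alpha=\beta+2$, the choice $Z=\mathbf 0^{(2\beta)}$ is off by one: for infinite $\beta$, $\Pi^0_4(\mathbf 0^{(2\beta)})=\Pi^0_{2\beta+3}$. Take $Z=\mathbf 0^{(2\beta+1)}$, as the paper does. For infinite $\beta$ this oracle is $\Delta^0_{2\beta+1}$, so item (1) of \autoref{unfriendly jump inv thm} applies, and $\Pi^0_4(Z)=\Pi^0_{2\beta+4}=\Pi^0_{2\alpha}$.
\item Your fallback to a ``$\Pi^0_5$ base case'' is unavailable. By \autoref{55}, a $\Pinf2$ Scott sentence always yields a $\Pi^0_4(Z)$ index set.
\item Injectivity of $\Inv{\beta}{\cdot}$ on isomorphism types should come from item (3), by pushing forward a Scott sentence of $\widehat{\mc A}$. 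The paper's proof of the leftward direction of (5) itself uses injectivity, so citing it here is circular.
\item Your limit case agrees with the paper's in outline (the paper uses $\omega^{\delta_j}$ versus $\omega^\lambda$), provided the good pieces are placed on the $\Pi$ side.
\end{itemize}
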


\begin{proof}
The base case is given by $\alpha=2$.
The proof of this fact is given by direct coding and provided in \cite{ACHT}.
The proof readily relativizes.

Let $\alpha=\beta+2$ for some ordinal $\beta$.
Relativize the above construction to $T=\mathbf{0}^{(2\beta+1)}$ to obtain a $\mc{A}_2(T)$ and $\mc{C}_{i,2}(T)$ uniformly computable in $\mathbf{0}^{(2\beta+1)}$ so $i\in S$ if and only if $\mc{C}_{i,2}(T)\cong \mc A_2(T)$. Note that $S$ is a $\Pi^0_4(\mathbf{0}^{(2\beta+1)})=\Pi^0_{2\beta+4}=\Pi^0_{2\alpha}$ set.
We now let
\[\mc{A}_\alpha :=  \Inv{\beta}{\mc{A}_2(T)} \text{   and   } \mc{C}_{i,\alpha} :=  \Inv{\beta}{\mc{C}_{i,2}(T)}.\]
By \autoref{thm:Inf4.3} Part 1, $\mc{A}_\alpha$ and $\mc{C}_{i,\alpha}$ are uniformly computable.
Note that $\Inv{\beta}{\cdot}$ is injective up to isomorphism as it pushes forward Scott sentences by \autoref{thm:Inf4.3} Part 3.
In particular, we maintain that
\[i\in S 	\Longleftrightarrow \mc{C}_{i,\alpha}\cong\mc{A}_\alpha\]
as desired. It remains to consider the cases that $\alpha=\lambda$ or $\alpha=\lambda+1$ where $\lambda$ is a limit.

We now consider the case that $\alpha=\lambda$ is a limit ordinal.
Let $\delta_i$ be a computable sequence of ordinals so that $\lambda=\sup_i \delta_i$.
Let $\mc{A}$ be the many sorted structure with $\omega^{\delta_i}$ on sort $i$.
Note that $\mc{A}$ has a $\Pinf\lambda$ Scott sentence.
Namely, the conjunction of the Scott sentences for each $\omega^{\delta_i}$ restricted to sort $i$ is such a sentence.

Let $S$ be a complete $\Pi^0_{2\lambda}=\Pi^0_{\lambda}$ set.
Write $S=\bigcap_{j\in\omega} U_j$ where $U_j$ is $\Pi^0_{\delta_j}$.
Let $\mc{M}_{j,i}$ be a computable structure that is isomorphic to $\omega^{\delta_j}$ if $i\in U_j$ and $\omega^\lambda$ if $i\not\in U_j$.
This is possible by the pair of structures theorem \cite{AK90}.
We let $\mc{C}_i$ be a many sorted structure with the $j^{th}$ sort isomorphic to $\mc{M}_{j,i}$.
If $i\in S$, it is in each $U_j$, so $\mc{M}_{j,i}\cong \omega^{\delta_j}$.
Thus, $\mc{C}_i\cong \mc{A}$.
If $i\not\in S$, there is some $i\not\in U_j$, so $\mc{M}_{j,i}\cong \omega^{\lambda}$.
As $\mc{A}$ has no sort with structure isomorphic to $\omega^{\lambda},$ we obtain that  $\mc{C}_i\not\cong \mc{A}$.
Therefore, the $\mc{A}$ and $\mc{C}_i$ are as desired.

We lastly focus on the case that $\alpha=\lambda+1$ for a limit ordinal $\lambda$. Let $\delta_i$ be a computable sequence of ordinals so that $\lambda=\sup_i \delta_i$ (without loss of generality, say $\delta_i\ge i$).
We first note that it suffices to construct $\mc A_\alpha$ when $S$ is a complete $\Sigma^0_{\lambda+1}$ set.
This is because given a $\Pi^0_{\lambda+2}$ set $S'$ we may write $S'$ as an intersection of countably many $\Sigma^0_{\lambda+1}$ sets $S'_i$.
Given the procedure for $S$, for each $i$, we may construct $\mc A_\alpha$ when $x\in S'_i$.
By adding sorts, we may construct the structure that has $\mc A_\alpha$ on each sort exactly when $x\in S'_i$.
This many-sorted structure still has a $\Pinf\alpha$ Scott sentence, giving the desired result.
So, we assume that for all $x$, $x\in S\iff\exists y\;(\langle x,y\rangle\in\bigcap_{j\in\omega} U_j)$ where $U_j$ is $\Pi_{\delta_j}$.

We build a tree $T\subseteq 2^{<\omega}$ encoding membership questions about $S$: We say that $j$ is $(x,y)$-denying if $j$ is least so that $\langle x,y\rangle\notin U_j$. We say that $j$ is $x$-negating if there is some $y$ so that $\langle x,y_0\rangle \in U_j$ for every $y_0<y$ and $j$ is $(x,y)$-denying. Note that $x\in S$ if and only if $\{j\mid j \text{ is $x$-negating}\}$ is finite.  We now define a tree $T_x$ so that $T_x$ has a non-isolated path if and only if $x\notin S$. We describe $T_x$ as a union of trees $T_x[s]$ which are $T_x$ restricted to height $s$. At each stage $s$, we have a particular node $\sigma^x_s\in T_x[s]$ which is `active'. We begin at stage $0$ with $T_x[0]=\{\lambda\}$ and $\sigma^x_s=\lambda$. If $s+1$ is not $x$-negating, then $T_x[s+1]=T_x[s]\cup \{\rho^\smallfrown 0 \mid \rho\in T_s[x]\}$, and $\sigma^x_{s+1}=\sigma^x_s\smallfrown 0$.  If $s+1$ is $x$-negating, then we let $T_x[s+1]=T_x[s]\cup \{\rho\smallfrown 0 \mid \rho\in T_x[s]\}\cup \{\sigma^x_s\smallfrown 1\}$ and we let $\sigma^x_{s+1}=\sigma^x_s\smallfrown 1$. We let $\sigma^x=\bigcup_{s}\sigma^x_s$.
Finally, we let $T$ be the prefix-closure of $\bigcup_{x\in \omega} \{0^x\smallfrown 1 \smallfrown \rho \mid \rho\in T_x\}$.

Let $\Th$ be the theory in the language $\mathcal L = \{U_i \mid i\in \omega\}$ with the following axioms, where $U_i^{0}:=\neg U_i$ and $U_i^1:=U_i$.
\begin{itemize}
	\item For every $n\in \omega$ and $\sigma\in T$: $\exists^n y \bigwedge_{m<\vert \sigma \vert} U^{\sigma(i)}_{i}(y)$
	\item For every $\sigma\notin T$: $\neg \exists y \bigwedge_{m<\vert \sigma \vert} U^{\sigma(i)}_{i}(y)$
\end{itemize}
It is straightforward to see that $\Th$ is a complete first-order elementary theory. To describe the type of an element of $\Th$, it suffices to describe the single path $\rho\in [T]$ which determines which $U_i$ hold on this element.

Let $\mc A$ be the model of $\Th$ built as follows: We describe the type of the element $\langle x,n, z\rangle$. If there are $n$ numbers $j$ which are $x$-denying, then let $j_n$ be the $n$th of these. Then the type of $\langle x,n, z\rangle$ corresponds to the path $0^x\smallfrown 1 \smallfrown \sigma^x_{j_n}\smallfrown 0^\infty$. If there are not $n$ numbers $j$ which are $x$-denying, then the type of $\langle x,n, z\rangle$ corresponds to the path $0^n\smallfrown 1 \smallfrown \sigma^x$. Note that every path corresponding to a type realized in $\mc A$ is isolated in $T$.

Let $\mc C_x$ be the model of $\Th$ built as follows: $\mc \mc C_x$ is $\mc A$ along with one more element $z_i$ whose type corresponds to $0^x\smallfrown 1 \smallfrown \sigma^x$. Note that $\mc C_x$ is isomorphic to $\mc A$ if and only if the type corresponding to $0^x\smallfrown 1 \smallfrown \sigma^x$ is realized in $\mc A$. This holds if and only if there is some $n$ so that there are \emph{not} $n$ $x$-negating stages, i.e., if and only if there are only finitely many $x$-negating stages, meaning $x\in S$. Thus $\mc C_x\cong \mc A$ if and only if $x\in S$.

We let $\Th'$ be formed from $\Th$ by performing, for each $j$, the $\Sigma^0_{\delta_j}$ Marker extension for the relation $U_j$. Formally, this means that we add a $j$th sort and a relation between the home sort and this sort, and associated to each element $y$ of the home sort, we place either a copy of $(\omega^{\delta_j},<)$ or $(\omega^{\delta_j+1},<)$ in the $j$th sort to encode either $U_j(y)$ or $\neg U_j(y)$.

By the pair of structures theorem \cite{AK90}, for any model $\mc M$ of $\Th$, if the relations $U_j$ are uniformly $\Sigma^0_{\delta_j}(\mathbf{d})$, then $\mathbf{d}$ can compute a copy of the associated extension $\mc{M}'$, which is a model of $\Th'$. Note that $\mc A$ and $\mc C_x$ each satisfy this hypothesis: To determine whether or not $U_j$ holds on an element of $\mc A$ or $\mc C_i$ requires computations from the oracle $\mathbf{0}^{(\delta_j)}$, which is thus uniformly $\Sigma^0_{\delta_j}$. We thus let $\mc {A}'$ and $\mc{C}'_x$ be the uniformly computable sequence of Marker extensions of $\mc{A}$ and $\mc{C}_x$. These are each models of the theory $\Th'$, and $\mc A' \cong \mc C'_x$ if and only if $x\in S$.

It now remains to show that $\mc A'$ has a $\Pinf{\lambda+1}$ Scott sentence. It suffices to say that $\mc{A}'$ satisfies the following sentences:
\begin{itemize}
	\item Each of the extra sorts is a label coming from the pair of structures theorem. This is the conjunction over each $r$ of the sentence saying that each element of the home sort is attached to a copy of $\omega^{\delta_j}$ or $\omega^{\delta_j+1}$. This is $\Pinf\lambda$.
	\item For each $\sigma\in T$, there are infinitely many elements whose type goes through $\sigma$. This is $\Pinf\lambda$.
	\item For each $\sigma\notin T$, there are no elements  whose type goes through $\sigma$. This is $\Pinf\lambda$.
	\item For each $x\notin S$, and each element $y$ in the home sort of $\mc A'$, the type of $y$ does not correspond to $0^x\smallfrown 1\smallfrown \sigma^x$. This is $\Pinf{\lambda+1}$.
\end{itemize}
Note that the conjunction of these statements ensures that $\mathcal{A}'$ is the extension of the prime model of $\Th$, thus determining the isomorphism type of any countable model satisfying it. Thus, the conjunction of these is a $\Pinf{\lambda+1}$ Scott sentence for $\mc A'$, as needed.
\end{proof}

\subsection{Computable Scott Analysis}
We now establish the missing results for  \autoref{FullScottSent}. The first result shows that \autoref{thm:lightfaceScott SentenceHard} is also best possible.
This follows from the following well-known result (see \cite[Lemma VI.14]{Part2}), but we include a short proof here for completeness.

\begin{lem}\label{lem:2alphacomp}\label{55}
	If $\mc{A}$ is computable and has a $\Pinf{\alpha}$ Scott sentence, then it has a $\Pinc{2\alpha}$ Scott sentence.
\end{lem}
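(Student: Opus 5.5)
The plan is to use the $\Pinc{2\alpha}$ formula from \autoref{computable version defining back-and-forth by Pi 2 alpha} that defines the set of structures $\mc B$ with $\mc A\leq_\alpha\mc B$, and to show that this formula is already a Scott sentence for $\mc A$.

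First, apply \autoref{computable version defining back-and-forth by Pi 2 alpha} to the computable structure $\mc A$ with the empty tuple. This gives a formula $\phi$ with the following property: for every structure $\mc B$, $\mc B\models\phi$ if and only if $\mc A\leq_\alpha\mc B$. Because $\mc A$ is computable and the formula is uniformly computable from a presentation of $\alpha$ and $(\mc A,\bar a)$, $\phi$ is a $\Pinc{2\alpha}$ sentence.

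Next, show that $\phi$ is a Scott sentence for $\mc A$. Fix a $\Pinf{\alpha}$ Scott sentence $\sigma$ of $\mc A$.
\begin{enumerate}
\item If $\mc B\cong\mc A$, then $\mc A\leq_\alpha\mc B$ holds trivially, so $\mc B\models\phi$.
\item If $\mc B\models\phi$, then $\mc A\leq_\alpha\mc B$. By Karp's theorem, every $\Pinf{\alpha}$ sentence true in $\mc A$ is true in $\mc B$.
\item In particular $\mc B\models\sigma$, so $\mc B\cong\mc A$.
\end{enumerate}
Hence $\phi$ is a computable $\Pinf{2\alpha}$ Scott sentence. This is the same argument as the limit case of \autoref{37}, without the collapse $2\alpha=\alpha$.

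Step 2 applies Karp's theorem, which is stated for $\alpha\ge1$. When $\alpha=0$, a $\Pinf{0}$ Scott sentence is finitary quantifier-free. That can only happen for degenerate structures, and the claim for them is immediate, so this case can be set aside.

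No step is a real obstacle. The only point to check is that the formula supplied by \autoref{computable version defining back-and-forth by Pi 2 alpha} really is $\Pinc{2\alpha}$ rather than merely $\Pinf{2\alpha}$. This follows from its uniform computability from $\mc A$ and a notation for $\alpha$, which is exactly where the computability of $\mc A$ is used.
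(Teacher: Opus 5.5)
Your proposal is correct and is essentially the paper's proof. The paper also takes the $\Pinc{2\alpha}$ sentence from \autoref{computable version defining back-and-forth by Pi 2 alpha} that defines $\{\mc B : \mc A\leq_\alpha\mc B\}$, and uses Karp's theorem to see that any model of it satisfies the given $\Pinf{\alpha}$ Scott sentence. Your aside on $\alpha=0$ is harmless, since a finitary quantifier-free sentence is already computable.
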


\begin{proof}
	If $\mc{A}$ is computable, then Lemma \ref{computable version defining back-and-forth by Pi 2 alpha} gives a $\Pinc{2\alpha}$ sentence $\psi$ such that for any structure $\mc B$,
	\[\mc{B}\models\psi \iff \mc{B}\geq_\alpha\mc{A}.\]
	This means that, if $\mc{A}$ has a $\Pi_\alpha$ Scott sentence $\varphi$, then $\psi$ is also a Scott sentence for $\mc A$ since
	\[\mc{B}\models\psi\implies \mc{B}\geq_\alpha\mc{A}\implies \mc{B}\models \varphi \implies \mc{B}\cong\mc{A}.\qedhere\]
\end{proof}

The following is an immediate corollary of \autoref{thm:lightfaceScott SentenceHard}.

\begin{cor}\label{cor:noSigma2alpha}\label{53}
For each computable $\alpha\geq2$, there is a computable structure with a $\Pinf\alpha$ Scott sentence yet no $\Sinc{2\alpha}$ Scott sentence.
\end{cor}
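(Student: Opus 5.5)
We take $\mc{A}:=\mc{A}_\alpha$ and the uniformly computable sequence $(\mc{C}_{i,\alpha})_{i\in\omega}$ given by \autoref{thm:lightfaceScott SentenceHard} for a complete $\Pi^0_{2\alpha}$ set $S$. By that theorem, $\mc{A}$ is computable and has a $\Pinf\alpha$ Scott sentence, and $i\in S$ if and only if $\mc{C}_{i,\alpha}\cong\mc{A}$. It remains to show that $\mc{A}$ has no $\Sinc{2\alpha}$ Scott sentence, and we argue by contradiction.

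Suppose $\varphi$ were a $\Sinc{2\alpha}$ Scott sentence for $\mc{A}$. Then $i\in S$ if and only if $\mc{C}_{i,\alpha}\models\varphi$. We use the standard fact that satisfaction of a $\Sinc{\beta}$ sentence in a computable structure is $\Sigma^0_\beta$ (this is the usual correspondence between computable infinitary formulas and the hyperarithmetic hierarchy, as in \cite{AK00}). Moreover, it holds uniformly in an index for the structure. Since the $\mc{C}_{i,\alpha}$ are uniformly computable, the set $\{i : \mc{C}_{i,\alpha}\models\varphi\}$ is $\Sigma^0_{2\alpha}$. So $S$ would be $\Sigma^0_{2\alpha}$, which is impossible because $S$ is $\Pi^0_{2\alpha}$-complete and $\Pi^0_{2\alpha}\neq\Sigma^0_{2\alpha}$.

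The only delicate point is indexing at limit levels: for infinite $\alpha$, $\Sinc{\beta}$ satisfaction corresponds to $\Sigma^0_\beta$ in the hyperarithmetic hierarchy, shifted by one relative to the jump notation $\mathbf{0}^{(\beta)}$. We therefore work throughout with the $\Sigma^0_\beta/\Pi^0_\beta$ classes, as \autoref{thm:lightfaceScott SentenceHard} does, rather than with jumps. With that convention, both the satisfaction bound and the completeness of $S$ are stated at the same level $2\alpha$, and the contradiction goes through.
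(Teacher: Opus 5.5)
Your proof is correct and is the argument the paper intends: the paper calls this an immediate corollary of \autoref{thm:lightfaceScott SentenceHard}. The omitted step is exactly your observation that a $\Sinc{2\alpha}$ Scott sentence would make $\{i : \mc{C}_{i,\alpha}\cong\mc{A}_\alpha\}$ a $\Sigma^0_{2\alpha}$ set, contradicting $\Pi^0_{2\alpha}$-completeness; this is the same pattern the paper uses for \autoref{54} and \autoref{513}.
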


We can also use \autoref{thm:lightfaceScott SentenceHard} to demonstrate that the oracle $\mathbf 0^{(\gamma(\alpha))}$ used in \autoref{thm:compScottSent} is optimal.

\begin{cor}\label{cor5.4}\label{54}
For each computable ordinal $\alpha\geq 2$,
there is a computable structure $\mc{A}$ with a $\Pinf\alpha$ Scott sentence but no $\Pi^{\mathbf0^{(\delta)}}_\alpha$ Scott sentence for any $\delta<\gamma(\alpha)$.
\end{cor}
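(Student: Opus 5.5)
Take $\mc{A}=\mc{A}_\alpha$ and the sequence $\mc{C}_{i,\alpha}$ from \autoref{thm:lightfaceScott SentenceHard}, with $S$ a complete $\Pi^0_{2\alpha}$ set. Suppose toward a contradiction that $\mc{A}$ has a $\Pi^{\mathbf0^{(\delta)}}_\alpha$ Scott sentence $\varphi$ for some $\delta<\gamma(\alpha)$. Then $i\in S$ iff $\mc{C}_{i,\alpha}\models\varphi$. Since the $\mc{C}_{i,\alpha}$ are uniformly computable, evaluating a $\Pi^{\mathbf0^{(\delta)}}_\alpha$ sentence on them is uniformly $\Pi^0_\alpha(\mathbf0^{(\delta)})$. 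So $S$ would be $\Pi^0_\alpha(\mathbf0^{(\delta)})$. In the arithmetical hierarchy this is $\Pi^0_{\delta+\alpha}$ when $\alpha$ is infinite, and $\Pi^0_{\delta+\alpha}$ in the finite indexing as well. The contradiction should come from the inequality $\delta+\alpha<2\alpha$, which follows from the minimality of $\gamma(\alpha)$: $\delta<\gamma(\alpha)$ forces $\delta+\alpha<2\alpha$. A complete $\Pi^0_{2\alpha}$ set cannot lie in $\Pi^0_{\delta+\alpha}$ for $\delta+\alpha<2\alpha$, by the hierarchy theorem.

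The steps are as follows.

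\textbf{Case split.} If $\alpha$ is a limit, then $\gamma(\alpha)=0$ and there is nothing to prove. If $\alpha$ is finite, then $\gamma(\alpha)=\alpha$, and $\delta<\alpha$ gives a set that is $\Pi^0_{\delta+\alpha}$ with $\delta+\alpha<2\alpha$. The infinite successor case is the main one.

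\textbf{Complexity count.} I need to check carefully that truth of a $\Pi^{X}_\alpha$ sentence in a uniformly computable family is $\Pi^0_\alpha(X)$ relative to $X=\mathbf0^{(\delta)}$. For infinite $\alpha$ there is the Post-theorem off-by-one subtlety: $\Pi^0_\alpha(\mathbf0^{(\delta)})$ corresponds to $\Pi^0_{\delta+\alpha}$, or possibly $\Pi^0_{\delta+\alpha+1}$ if jumps at limits shift things. I would use the standard fact that for a computable structure a $\Pi^{c}_\alpha$ sentence has a $\Pi^0_\alpha$ truth set, relativized to $X$, and the standard fact that $\Sigma^0_\beta(\mathbf0^{(\delta)})\subseteq\Sigma^0_{\delta+\beta}$ for infinite $\beta$.

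\textbf{Ordinal inequality.} I expect this to be the main obstacle. Write $\alpha=\lambda+n$ with $n\ge1$, so that $2\alpha=\lambda+2n$. For $\delta<\gamma(\alpha)$, minimality gives $\delta+\alpha=\alpha$, i.e.\ $\delta+\lambda=\lambda$. Hence $\delta+\alpha=\lambda+n<\lambda+2n$. Even allowing one extra jump, $\lambda+n+1\le\lambda+2n$. This is strict only when $n\ge2$, so the case $n=1$ needs care. For $n=1$ we have $\alpha=\lambda+1$ and $S$ is complete $\Pi^0_{\lambda+2}$, while evaluating the sentence gives $\Pi^0_{\lambda+1}$ (as $\Pi_{\lambda+1}$ over $\mathbf0^{(\delta)}$ with $\delta+\lambda=\lambda$). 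This is strictly smaller, so no extra jump actually arises. The point to verify precisely is that relativizing $\Pi^0_\beta$ to $\mathbf0^{(\delta)}$ with $\delta+\beta=\beta$ yields exactly $\Pi^0_\beta$ for infinite $\beta$. Granting this, $S\in\Pi^0_{\alpha}$ with $\alpha<2\alpha$, contradicting completeness.
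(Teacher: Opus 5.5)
Your proposal is correct and follows the paper's own argument. You take $\mc A$ and $\mc C_{i,\alpha}$ from \autoref{thm:lightfaceScott SentenceHard}. A $\Pi^{\mathbf 0^{(\delta)}}_\alpha$ Scott sentence would place the $\Pi^0_{2\alpha}$-complete set $S$ in $\Pi^0_{\delta+\alpha}$, and $\delta<\gamma(\alpha)$ gives $\delta+\alpha<2\alpha$. Your added check that no extra jump appears for infinite $\alpha$ (because $(\mathbf 0^{(\delta)})^{(\alpha)}\equiv_T\mathbf 0^{(\delta+\alpha)}$ with $\delta+\alpha=\alpha$) is exactly the detail the paper leaves implicit, and it holds.
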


\begin{proof}
Let $\mc{A}$ be as in \autoref{thm:lightfaceScott SentenceHard}.
Suppose that $\delta<\gamma(\alpha)$ and that there were some $\Pi_{\alpha}^{\mathbf{0}^{(\delta)}}$ Scott sentence sentence $\chi$ for $\mc{A}$.	
Then $\mathcal{C}\cong \mathcal A$ if and only if $\mathcal{C}\models \chi$, but this is a condition which is $\Pi^0_{\alpha}(\mathbf{0}^{(\delta)})$.
Since $\delta<\gamma(\alpha)$, we have $\delta+\alpha<2\alpha$.
Thus, the structures isomorphic to $\mc{A}$ form a $\Pi^0_{\delta+\alpha}$ set, contradicting its $\Pi^0_{2\alpha}$-completeness.
\end{proof}

 \autoref{thm:compScottSent},  \autoref{lem:2alphacomp}, and their optimality interplay in an interesting manner.
Both results point to the structure $\mc{A}$ from  \autoref{thm:lightfaceScott SentenceHard} being optimally misbehaved from the point of view of effective Scott analysis.
That said, the behavior of the effective Scott sentence of $\mc{A}$ differs depending on whether you would like to optimize quantifier alternation and sacrifice the sentence description or you would like to optimize the sentence description and sacrifice quantifier alternation.
The former analysis suggests that the index set of computable copies of $\mc A$ is $\Pi^0_{\gamma(\alpha)+\alpha}$, which is usually far greater than the reality of $\Pi^0_{2\alpha}$.
A bit more thought does explain this discrepancy, though.
The index set governed by a  $\Pi_{\alpha}^{\mathbf{0}^{(\delta)}}$ formula is of the form $\Pi^0_{\delta+\alpha}$.
The ordinal $\gamma(\alpha)$ is exactly the least ordinal that moves $\delta+\alpha$, so even though it may be far greater than $\Pi^0_{2\alpha}$ it is still the least $\delta$ for which $\Pi^0_{\delta+\alpha}$ eclipses $\Pi^0_{2\alpha}$.
For this reason, both optimality results can exist simultaneously, even though they may point in different directions at first blush.
Note that this differs from the finite case, where structures have both a $\Pinc{2n}$ Scott sentence and a $\mathbf{0}^{(n)}$-computable $\Pinf n$ Scott sentence, both of which point to a $\Pi^0_{2n}$ index set result (see \cite[Section 3]{mhtArith}).

\subsection{Back-and-forth relations}

In this section, we examine the complexity of the back-and-forth relations and establish the missing results in \autoref{FullBNF}.

We use the belligerent pairs theorem to show that the oracle $\mathbf 0^{(\gamma(\alpha))}$ used in  \autoref{bfDef} is optimal.
This is an example of a direct application of the Belligerent Pairs Theorem that does not require the entire machinery of Belligerent Jump Inversion.

\begin{thm}\label{sharpGamma}
	The parameter $\gamma(\alpha)$ in \autoref{bfDef} (or  \autoref{cor:twobnfDef}) is sharp. That is, there exists a computable list $(\mc A_i)_{i\in \omega}$ of computable structures (containing only two isomorphism classes of structures) containing a structure $\mc A$ so that for every $\delta<\gamma(\alpha)$, there is no $\Pi_\alpha^{{\mathbf 0}^{(\delta)}}$ sentence $\phi^\alpha$ so that $\mc A \leq_\alpha \mc A_j \Leftrightarrow \mc A_j\models \phi^\alpha$.
\end{thm}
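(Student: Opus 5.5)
We apply the Belligerent Pairs Theorem (\autoref{UnfriendlyPairs}) at level $\beta$, where $\alpha=\beta+1$; the theorem concerns successor $\alpha$, as in \autoref{FullBNF}. Fix a complete $\Sigma^0_{2\beta+1}$ set $S$. \autoref{UnfriendlyPairs} produces computable $\mc A\ncong\mc B$ and a uniformly computable sequence $(\mc C_i)$ with $\mc C_i\cong\mc A$ for $i\in S$ and $\mc C_i\cong\mc B$ for $i\notin S$. Moreover $\mc A\leq_{\beta+1}\mc B$ and $\mc B\not\leq_{\beta+1}\mc A$.

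The list will be $\mc A_0=\mc B$ followed by the $\mc C_i$. It contains only two isomorphism classes. The distinguished structure is $\mc B$, since it is the one that is not $\leq_\alpha$-below $\mc A$. For every $j$ we get $\mc B\leq_\alpha\mc A_j$ exactly when $\mc A_j\cong\mc B$. Indeed, $\mc B\leq_\alpha\mc B$ always holds, and $\mc B\not\leq_\alpha\mc A$. Hence for $i\geq 1$, $\mc B\leq_\alpha \mc C_i$ holds if and only if $i\notin S$.

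Now suppose $\phi$ is a $\Pi_\alpha^{\mathbf 0^{(\delta)}}$ sentence with $\delta<\gamma(\alpha)$ that defines this relation on the list. Then $\{i:\mc C_i\models\phi\}=\omega\setminus S$. Since the $\mc C_i$ are uniformly computable and $\phi$ is $\mathbf 0^{(\delta)}$-computable, this set is $\Pi^0_\alpha(\mathbf 0^{(\delta)})$, which is contained in $\Pi^0_{\delta+\alpha}$. Some care is needed here with the off-by-one between $\Pi^0_\alpha(\mathbf 0^{(\delta)})$ and $\Pi^0_{\delta+\alpha}$ for infinite ordinals. Relativized truth of a $\Pi^{\mathrm c}_\alpha$ formula is $\Pi^0_\alpha$ in the oracle, and for infinite $\delta$ the class $\Pi^0_\alpha(\mathbf 0^{(\delta)})$ sits inside $\Pi^0_{\delta+\alpha+1}$ in the $\Delta$/jump conventions of the paper. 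We then need $\delta+\alpha$, or $\delta+\alpha+1$ as appropriate, to be at most $2\beta+1$.

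This ordinal arithmetic step is the main obstacle. By the definition of $\gamma$, for $\delta<\gamma(\alpha)$ we have $\delta+\alpha=\alpha$ when $\alpha$ is infinite, and $\delta+\alpha<2\alpha$ when $\alpha$ is finite. For $\alpha$ infinite, $\alpha=\beta+1\le 2\beta+1$, and we must check that the extra jump from the infinite oracle does not push past $2\beta+1$. For infinite $\beta$ this holds, because $\beta+2\le 2\beta+1$. For finite $\alpha=n+1$ we have $\delta\le n$, so the set is $\Pi^0_{n+1}(\mathbf 0^{(\delta)})\subseteq\Pi^0_{2n+1}$. This also matches the bound in \autoref{UnfriendlyPairs}(1), whose distinguishing formula uses exactly $\mathbf 0^{(\gamma(\beta))}$.

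In every case the complement of $S$ would be $\Pi^0_{2\beta+1}$, so $S$ would be both $\Sigma^0_{2\beta+1}$ and $\Pi^0_{2\beta+1}$. This contradicts the completeness of $S$ and finishes the proof. It also yields the sharpness in \autoref{cor:twobnfDef}, applied to the pair $\mc B,\mc A$.
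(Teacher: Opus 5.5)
You follow the paper's route: apply \autoref{UnfriendlyPairs} at level $\beta$ with a $\Sigma^0_{2\beta+1}$-complete $S$, then count complexity. You are right that the distinguished structure must be $\mc B$, because $\mc A\leq_{\beta+1}\mc B$ means no $\Pi_{\beta+1}$ sentence is true of $\mc A$ and false of $\mc B$. (The paper's write-up counts with exactly such a sentence. That would make $S$ itself $\Pi^0$ and give a genuine contradiction, but this orientation is unavailable.)

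After reorienting, however, your final step is a non sequitur. That $\omega\setminus S=\{i:\mc C_i\models\phi\}$ is $\Pi^0_{2\beta+1}$ only says $S$ is $\Sigma^0_{2\beta+1}$, which you assumed. It does not make $S$ a $\Pi^0_{2\beta+1}$ set. Since $\omega\setminus S$ is $\Pi^0_{2\beta+1}$-complete, a contradiction needs $\Pi^0_\alpha(\mathbf 0^{(\delta)})$ to lie \emph{strictly} below level $2\beta+1$, i.e.\ $\delta+\beta+1<2\beta+1$. The non-strict bound you set out to verify is not enough. Separately, your claim $\beta+2\leq 2\beta+1$ is false for limit $\beta$.

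With the strict inequality, the argument does work when $\alpha=\lambda+n+1$ with $\lambda$ a nonzero limit and $n\geq 1$. There $\delta<\gamma(\alpha)$ forces $\delta+\lambda=\lambda$, so $\{i:\mc C_i\models\phi\}\in\Pi^0_{\lambda+n+1}\subseteq\Delta^0_{\lambda+2n+1}$, which cannot contain $\omega\setminus S$.

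In the other successor cases, this list cannot be rescued, and your own closing remark shows why. \autoref{UnfriendlyPairs}(1) separates the pair using oracle $\mathbf 0^{(\gamma(\beta))}$, and $\gamma(\beta)<\gamma(\alpha)$ exactly when $\beta$ is finite or a limit. For $\beta=\lambda$, \autoref{prop:limit4.2} even gives a computable separating sentence. The negation of that sentence defines $\{j:\mc B\leq_\alpha\mc A_j\}$ on your list.

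Worse, at $\alpha=\lambda+1$, \autoref{512} separates any two computable structures with $\mc B\not\leq_{\lambda+1}\mc A$ by a $\Pinc{2\lambda+1}=\Pinc{\lambda+1}$ sentence. So no list with only two isomorphism classes witnesses sharpness there. You need a list with infinitely many classes, e.g.\ the one from \autoref{thm:lightfaceScott SentenceHard}, arguing as in \autoref{cor5.4}. Since that $\mc A$ has a $\Pinf{\lambda+1}$ Scott sentence, $\{i:\mc A\leq_{\lambda+1}\mc C_i\}$ is $\Pi^0_{\lambda+2}$-complete. A $\Pi^{\mathbf 0^{(\delta)}}_{\lambda+1}$ sentence with $\delta+\lambda=\lambda$ only yields a $\Pi^0_{\lambda+1}$ set.

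For finite $\alpha$ the claim fails outright, already for two-class lists, since Harrison-Trainor's separating sentences are $\Pi^{\mathbf 0^{(\alpha-1)}}_\alpha$. This is why \autoref{bfDef} is stated only for infinite $\alpha$. That your argument ``proves'' the finite case is another sign that the last step is invalid.
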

\begin{proof}
	For the limit ordinal case, $\gamma(\alpha)=0$, so it is clearly sharp. If $\alpha=\beta+1$, then let $\mathcal{A}$, $\mc B$, and $\mc C_i$ be the structures guaranteed by \autoref{UnfriendlyPairs} for a $\Sigma^0_{2\beta+1}$-complete set.
	Suppose that $\delta<\gamma(\alpha)$ and that there were some  $\Pi_{\beta+1}^{\mathbf{0}^{(\delta)}}$-sentence $\psi$ 	
	so that $\mathcal{A}\models \psi$ and $\mathcal{B}\models \neg \psi$. Then $\mathcal{C}_i\cong A$ if and only if $\mathcal{C}_i\models \psi$, but this is a condition which is $\Pi^0_{\beta+1}(\mathbf{0}^{(\delta)})$. But since $\delta<\gamma(\alpha)$ this is a $\Pi^0_{\delta+\beta+1}=\Pi^0_{\beta+1}$-condition. Thus $S$ is $\Pi^0_{\beta+1}$, which contradicts $S$ being $\Sigma^0_{2\beta+1}$-complete, since $2\beta+1\geq \beta+1$.
\end{proof}

\autoref{512} showed that whenever $\mc A \not \leq_{\alpha+1}\mc B$, then there exists a $\Pinc{2\alpha+1}$-sentence $\phi$ so that $\mc A\models \phi$ and $\mc B\models \neg\phi$. We now show that the syntactic complexity $\Pinc{2\alpha+1}$ is optimal.

\begin{thm}\label{lightfaceSep}\label{513}
	Fix a computable ordinal $\alpha$.
	There are computable structures $\mc{A}$ and $\mc{B}$ such that $\mc{B}\not\leq_{\alpha+1}\mc{A}$, but for any  $\Sinc{2\alpha+1}$ sentence $\psi$,
	\[\mc{B}\models\psi\implies\mc{A}\models\psi.\]
\end{thm}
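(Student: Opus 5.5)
We will take $\mc A$ and $\mc B$ directly from \autoref{UnfriendlyPairs} (the Belligerent Pairs Theorem), applied at level $\alpha$ with $X=\emptyset$ and a complete $\Sigma^0_{2\alpha+1}$ set $S$. The theorem supplies computable $\mc A\ncong\mc B$ with $\mc A\leq_{\alpha+1}\mc B$ and $\mc B\nleq_{\alpha+1}\mc A$. It also supplies a uniformly computable sequence $(\mc C_i)$ with $\mc C_i\cong\mc A$ for $i\in S$ and $\mc C_i\cong\mc B$ for $i\notin S$. The non-comparison $\mc B\not\leq_{\alpha+1}\mc A$ is then immediate, so only the separation claim about $\Sinc{2\alpha+1}$ sentences needs proof.

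For that claim we argue by contradiction. Suppose $\psi$ is a $\Sinc{2\alpha+1}$ sentence with $\mc B\models\psi$ and $\mc A\not\models\psi$. Since the $\mc C_i$ are uniformly computable and $\psi$ is computable, the relation $\mc C_i\models\psi$ is $\Sigma^0_{2\alpha+1}$ uniformly in $i$. This is the standard fact that satisfaction of a $\Sinc\beta$ sentence in a computable structure is $\Sigma^0_\beta$, applied at $\beta=2\alpha+1$. Because each $\mc C_i$ is isomorphic to either $\mc A$ or $\mc B$, we get $i\notin S\iff \mc C_i\models\psi$. Hence $\overline S$ is $\Sigma^0_{2\alpha+1}$, so $S$ is $\Delta^0_{2\alpha+1}$, contradicting its $\Sigma^0_{2\alpha+1}$-completeness. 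Therefore every computable $\Sinc{2\alpha+1}$ sentence true in $\mc B$ is true in $\mc A$.

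The main point to check is the indexing of the satisfaction complexity at infinite levels. The off-by-one issue the paper notes for Post's theorem affects jumps $\mathbf 0^{(\beta)}$, not the classes $\Sigma^0_\beta$. Satisfaction of $\Sinc\beta$ formulas is $\Sigma^0_\beta$ on the nose for all computable $\beta\ge1$ (as in \cite{AK00}), so the argument goes through uniformly in $\alpha$. I expect no real obstacle beyond this, since all the work is packaged into \autoref{UnfriendlyPairs}.

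As a sanity check on sharpness, \autoref{512} shows that $\mc B\not\leq_{\alpha+1}\mc A$ is witnessed by a $\Pinc{2\alpha+1}$ sentence true in $\mc B$ and false in $\mc A$. So the bound is tight, and only the $\Sigma$ side is blocked.
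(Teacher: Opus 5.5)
Your proposal is correct and is essentially the paper's own proof. You take $\mc A$, $\mc B$, $\mc C_i$ from \autoref{thm:Inf4.2} for a $\Sigma^0_{2\alpha+1}$-complete set $S$, and note that a $\Sinc{2\alpha+1}$ sentence $\psi$ with $\mc B\models\psi$ and $\mc A\models\neg\psi$ would make $S=\{i \mid \mc C_i\models\neg\psi\}$ a $\Pi^0_{2\alpha+1}$ set, contradicting completeness. Your remarks on indexing satisfaction at infinite levels and on tightness via \autoref{512} are accurate, but the paper leaves them implicit.
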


\begin{proof}
	Fix $S$ a $\Sigma_{2\alpha+1}^0$ complete set and let $\mc{A}$, $\mc{B}$, and $\mc{C}_i$ be as in \autoref{thm:Inf4.2}.
	The fact that $\mc{B}\not\leq_{\alpha+1}\mc{A}$ is immediate from the theorem statement.
	Suppose towards a contradiction that there were some $\Sinc{2\alpha+1}$ sentence $\psi$ such that $\mc{B}\models\psi$ yet $\mc{A}\models\lnot\psi$.
	Then
	\[S=\{i~|~\mc{C}_i\cong\mc{A}\} = \{i~|~ \mc{C}_i\models \lnot\psi\}.\]
	Note that the last set is $\Pi_{2\alpha+1}^0,$ a contradiction to the completeness of $S$.
\end{proof}

We note that this asymmetry between the  $\Pinc{2\alpha+1}$ and $\Sinc{2\alpha+1}$ sentences separating $\mc B\not\leq_{\alpha+1} \mc A$ does not exist under the assumption that $\mc B\not\leq_{\lambda} \mc A$ for $\lambda$ a limit. 

\begin{prop}
	Let $\lambda$ be a computable limit ordinal and $\mc{A}$ and $\mc{B}$ be computable structures.
	If $\mc{B}\not\leq_{\lambda}\mc{A}$, then there is a computable $\Sigma_{<\lambda}^{\text{in}}$ sentence $\psi$ such that $\mc{B}\models\psi$ yet $\mc{A}\models\lnot\psi$.
\end{prop}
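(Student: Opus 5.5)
We unwind the definition of $\mc B\not\leq_\lambda\mc A$ at a limit and then use the computable back-and-forth formulas of \autoref{computable version defining back-and-forth by Pi 2 alpha}. Since $\lambda$ is a limit, $\mc B\leq_\lambda\mc A$ holds exactly when $\mc B\leq_\beta\mc A$ for every $\beta<\lambda$: for each $\bar d\in\mc A$ and $1\le\beta<\lambda$ we need some $\bar c\in\mc B$ with $(\mc A,\bar d)\leq_\beta(\mc B,\bar c)$, and this condition is exactly $\mc B\leq_{\beta+1}\mc A$ restricted to a single $\beta$. Hence $\mc B\not\leq_\lambda\mc A$ yields a successor ordinal $\beta+1<\lambda$ with $\mc B\not\leq_{\beta+1}\mc A$, and we fix such a $\beta$.

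Now apply \autoref{512} at level $\beta$, with the roles of the two structures chosen so that $\mc B\not\leq_{\beta+1}\mc A$. This gives a $\Pinc{2\beta+1}$ sentence $\phi$ with $\mc B\models\phi$ and $\mc A\models\neg\phi$. Explicitly, pick $\bar d\in\mc A$ such that no $\bar c\in\mc B$ satisfies $(\mc B,\bar c)\leq_\beta(\mc A,\bar d)$. Let $\rho$ be the computable $\Pinf{2\beta}$ formula from \autoref{computable version defining back-and-forth by Pi 2 alpha} defining $\{\bar c : (\mc C,\bar c)\leq_\beta(\mc A,\bar d)\}$. Then $\phi=\forall\bar x\,\neg\rho(\bar x)$ holds in $\mc B$ and fails in $\mc A$.

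Because $\lambda$ is a limit, $2\beta+1<\lambda$; more precisely, if $\beta=\mu+k$ with $\mu\le\beta$ a limit or $0$, then $2\beta+1=\mu+2k+1<\lambda$. So $\psi:=\phi$ is a computable formula of complexity $\Pinc{2\beta+1}\subseteq\Sinc{2\beta+2}$, that is, a computable $\Sigma^{\text{in}}_{<\lambda}$ sentence. It is computable because $\mc A$ is computable and the formula of \autoref{computable version defining back-and-forth by Pi 2 alpha} is uniformly computable from $(\mc A,\bar d)$ and a notation for $\beta$, the latter obtained from the fixed notation for $\lambda$.

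The only delicate point is the first step: checking that at a limit level the relation really decomposes as $\leq_\lambda=\bigcap_{\beta<\lambda}\leq_{\beta+1}$, so that failure is witnessed at some bounded successor level. This follows directly from the definition, since the clause for $\leq_\lambda$ quantifies over $\beta<\lambda$ separately.
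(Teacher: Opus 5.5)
Your argument is correct and is essentially the paper's: both drop to a level $\delta<\lambda$ (you take $\delta=\beta+1$) at which $\mc B\not\leq_\delta\mc A$ already holds, use the computable back-and-forth formulas of \autoref{computable version defining back-and-forth by Pi 2 alpha}, and note that doubling a level below $\lambda$ stays below $\lambda$. The paper negates the $\Pinc{2\delta}$ definition of the $\leq_\delta$-cone directly, while you route through \autoref{512} and get the slightly sharper $\Pinc{2\beta+1}$.

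There is one small slip in your explicit unpacking. By the definition, $\mc B\not\leq_{\beta+1}\mc A$ gives $\bar d\in\mc A$ such that no $\bar c\in\mc B$ satisfies $(\mc A,\bar d)\leq_\beta(\mc B,\bar c)$, as you correctly wrote in your first paragraph. So in place of $\rho$ you should use the lemma's other formula, the one defining $\{\bar c : (\mc A,\bar d)\leq_\beta(\mc C,\bar c)\}$. It has the same complexity, so nothing else changes.
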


\begin{proof}
	If $\mc{B}\not\leq_{\lambda}\mc{A}$, there must be some $\delta<\lambda$ such that $\mc{B}\not\leq_{\delta}\mc{A}$.
	Consider the set
	\[A_\delta:=\{\mc{C}~|~\mc{C}\geq_\delta\mc{A}\}.\]
	Because $\mc{A}$ is computable, $A_\delta$ is $\Pinc{2\delta}$-definable by Lemma \ref{computable version defining back-and-forth by Pi 2 alpha}.
	Note that $2\delta$ is only finitely far from $\delta$, so remains less than $\lambda$.
	However, it is clear that $\mc{A}\in A_\delta$ yet $\mc{B}\notin A_\delta.$
	Thus, there is a $\Sinc{2\delta+1}$ formula $\psi$ which defines the complement of $A_\delta$ so $\mc{B}\models\psi$ yet $\mc{A}\models\lnot\psi$, as desired.
\end{proof}

We now see another applications of infinite belligerent jump inversion that generalize results in \cite{mhtArith}.
We do not follow Harrison-Trainor's proof, preferring to use \autoref{thm:lightfaceScott SentenceHard} directly.
\begin{thm}
	For each computable $\alpha$ there is a computable structure $\mc{M}$ such that the relation
	\[\{(\bar{a},\bar{b})\in \mc{M}^{<\omega}\times \mc{M}^{<\omega} | \bar{a}\leq_\alpha\bar{b}\},\]
	is $\Pi^0_{2\alpha}$-complete.
\end{thm}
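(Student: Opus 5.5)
The plan is to get the upper bound from \autoref{computable version defining back-and-forth by Pi 2 alpha} and the hardness from \autoref{thm:lightfaceScott SentenceHard}, gluing all the structures into one computable structure.

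\textbf{Upper bound.} For any computable $\mc M$, \autoref{computable version defining back-and-forth by Pi 2 alpha} produces, uniformly in $\bar a$, a $\Pinc{2\alpha}$ formula $\phi_{\bar a}$ with $\mc M\models\phi_{\bar a}(\bar b)$ iff $\bar a\leq_\alpha\bar b$. Evaluating a $\Pinc{2\alpha}$ formula on a computable structure is $\Pi^0_{2\alpha}$, uniformly. Hence the relation is $\Pi^0_{2\alpha}$.

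\textbf{Hardness, $\alpha\ge 2$.} Fix a complete $\Pi^0_{2\alpha}$ set $S$, and let $\mc A_\alpha$ and the uniformly computable $\mc C_{i,\alpha}$ be as in \autoref{thm:lightfaceScott SentenceHard}. Define $\mc M$ to be the disjoint union of $\mc A_\alpha$ with all the $\mc C_{i,\alpha}$. Each component is tagged by an element: the component for $\mc C_{i,\alpha}$ gets a special point $c_i$, the copy of $\mc A_\alpha$ gets a point $a^*$, and a binary relation $P$ links each special point to the elements of its component. Since $\mc A_\alpha$ has a $\Pinf\alpha$ Scott sentence, $\mc C\geq_\alpha\mc A_\alpha$ implies $\mc C\cong\mc A_\alpha$. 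I aim to show
\[i\in S\iff a^*\leq_\alpha c_i.\]
The key step is to pass from $(\mc M,a^*)\leq_\alpha(\mc M,c_i)$ to $\mc A_\alpha\leq_\alpha\mc C_{i,\alpha}$. Karp's theorem handles this. If $\theta$ is a $\Pinf\alpha$ sentence true in $\mc A_\alpha$, then its relativization to $\{y: P(x,y)\}$ is a $\Pinf\alpha$ formula in $x$ true of $a^*$. Relativizing quantifiers to $P(x,\cdot)$ adds only bounded guards, so for $\alpha\ge 1$ the complexity is preserved. That formula then holds of $c_i$, so $\mc C_{i,\alpha}\models\theta$. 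This gives $\mc C_{i,\alpha}\geq_\alpha\mc A_\alpha$, hence $\mc C_{i,\alpha}\cong\mc A_\alpha$, hence $i\in S$. Conversely, if $i\in S$, swapping the two components is an automorphism of $\mc M$ sending $a^*$ to $c_i$, so $a^*\equiv_\alpha c_i$. Thus $i\mapsto(a^*,c_i)$ is a computable reduction of $S$ to the relation, and the relation is $\Pi^0_{2\alpha}$-complete.

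\textbf{Small cases and the expected obstacle.} The cases $\alpha=0$ and $\alpha=1$ remain. For $\alpha=0$ the relation is computable, so I read the statement as requiring $\alpha\ge 1$, or as trivially meaning $\Pi^0_0$. For $\alpha=1$ I need $\Pi^0_2$-completeness. I would use a direct coding, following \cite{mhtArith}: take components that are each a finite or an infinite set of points hanging off a special point, so that $a^*\leq_1 c_i$ holds iff the component of $c_i$ is infinite, which encodes $\Pi^0_2$. I expect the main care to lie in the Karp transfer and in checking that the relativized formula stays at level $\Pinf\alpha$; the small cases are routine.
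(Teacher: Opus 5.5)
For $\alpha\ge 2$ your argument is correct and is essentially the paper's. The paper also glues the $\mc C_i$ from \autoref{thm:lightfaceScott SentenceHard} into one computable structure with labelled components, and it also uses the $\Pinf\alpha$ Scott sentence for one direction and an automorphism for the other. The differences are minor. Your relativization-plus-Karp step justifies the transfer of $\le_\alpha$ from the labels to the components, which the paper simply asserts. You add a separate copy of $\mc A_\alpha$ as base point, whereas the paper fixes some $u_i$ with $i\in S$.

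Your $\alpha=1$ sketch, however, has the direction reversed. The paper does not treat this case, since \autoref{thm:lightfaceScott SentenceHard} requires $\alpha\ge 2$.
\begin{itemize}
\item By Karp's theorem, $a^*\le_1 c_i$ means every $\Sinf1$ formula true of $c_i$ is true of $a^*$.
\item If $a^*$'s component is infinite, any finite configuration around $c_i$ embeds around $a^*$. If $a^*$ or its points occur in the tuple, relocate them to one of the infinitely many other infinite components. Hence $a^*\le_1 c_i$ holds for every $i$.
\item If $a^*$'s component is finite, $a^*\le_1 c_i$ only bounds the size of $c_i$'s component.
\end{itemize}
So no choice makes ``$a^*\le_1 c_i$ iff $c_i$'s component is infinite'' true.

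The fix is to use the reverse pair $(c_i,a^*)$. Take $a^*$'s component infinite. The $\Sinf1$ formulas ``there are $n$ distinct $y$ with $P(x,y)$'' hold of $a^*$ for every $n$, so they fail at $c_i$ exactly when $c_i$'s component is finite. If $c_i$'s component is infinite, swapping the two components is an automorphism, so $c_i\equiv_1 a^*$. Thus $c_i\le_1 a^*$ iff $c_i$'s component is infinite, and $i\mapsto (c_i,a^*)$ gives the $\Pi^0_2$-hardness.
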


\begin{proof}
	For $\mc A$ computable, $\{(\bar{a},\bar{b})\in \mc{M}^{<\omega}\times \mc{M}^{<\omega} | \bar{a}\leq_\alpha\bar{b}\}$ is $\Pi^0_{2\alpha}$ by \autoref{computable version defining back-and-forth by Pi 2 alpha}, so we need only find $\mc A$ computable so that this set is $\Pi^0_{2\alpha}$-hard.
	Let $\mc A$ and $\mc{C}_i$ be the computable sequence of structures from \autoref{thm:lightfaceScott SentenceHard} so $\mc A$ has $\Pinf \alpha$ Scott sentence and $\mc{C}_i\cong \mc A$ if and only if $i\in S$, where $S$ is a $\Pi^0_{2\alpha}$-complete set. Let $\mc L$ be the common language of $\mc A$ and $\mc{C}_i$. Let $\mc M$ be the structure in language $\mc{L}\cup \{E\}\cup \{U\}$ where $U$ is unary and defines an infinite set $\{u_i\mid i\in \omega\}$, and $E$ is binary so that the sets $E(u,\mc{M})$ for $u\in U(\mc M)$, partition $M \smallsetminus U(\M)$ into infinite sets. Finally, let $\mc{C}_i$ appear as an $\mc L$-structure on the set  $E(u_i,\mc{M})$.
	
	Fix $u_i$ so $\mc{C}_i\cong \mc A$.
	For any $u_j$, if $u_i\leq_\alpha u_j$ holds then the $\mc L$-structure on $E(u_i,\mc M)$ must be $\leq_\alpha$ the $\mc L$-structure on $E(u_j,\mc M)$. Thus $u_i\leq_\alpha u_j\implies \mc{C}_j\cong \mc A$. On the other hand, if $\mc{C}_j\cong \mc A$, then there is an automorphism of $\mc M$ moving $u_i$ to $u_j$, so $u_i\leq_\alpha u_j$.
	Thus $u_i\leq_\alpha u_j$ if and only if $j\in S$, which is $\Pi^0_{2\alpha}$-hard.
\end{proof}

\subsection{Other applications}

We now use these techniques to give a complete answer to a question of Chen, Gonzalez, and Harrison-Trainor  \cite[Question 1.4]{CGHT} regarding the complexity of being  $\alpha$-back-and-forth above a structure.

\ThmMainC*

\begin{proof}
	The fact that for any computable $\mc A$, $\{\mc{B} | \mc{A}\leq_\alpha \mc{B}\} $ is lightface $\Pi^0_{2\alpha}$ follows from Lemma \ref{computable version defining back-and-forth by Pi 2 alpha}.
	Let $\mc A$ be as in \autoref{thm:lightfaceScott SentenceHard}, and let $f$ be the map sending $i$ to $ \mc{C}_{i,\alpha}$.
	Suppose towards a contradiction that $\{\mc{B} | \mc{A}\leq_\alpha \mc{B}\} $ were lightface $\Sigma^0_{2\alpha}$. Then the index set of computable structures $\mc B$ so that $\mc{A}\leq_\alpha \mc{B}$ would be $\Sigma^0_{2\alpha}$. On the other hand,
if $i\in S$, then $\mc{C}_{i,\alpha}\cong \mc{A}$, so certainly $\mc{C}_{i,\alpha}\geq_\alpha \mc{A}$,
If $i\not\in S$ then $\mc{C}_{i,\alpha}\not\cong \mc{A}$.
As $\mc{A}$ has a $\Pinf{\alpha}$ Scott sentence, this is the same as saying that  $\mc{C}_{i,\alpha}\not\geq_\alpha\mc{A}$. Thus $f$ witnesses the $\Pi^0_{2\alpha}$-hardness of index set of computable structures $\mc B$ so that $\mc A \leq_\alpha \mc B$, contradicting this set being $\Sigma^0_{2\alpha}$.
\end{proof}

We now examine another application of a similar strategy based on the work of Gonzalez and Knight.
This work aims to understand the complexity of saying that a structure has a particular Scott rank.
The upper bound is given by the following lemma.
\begin{lem}[\cite{Part2}, Lemma II.67]
	For each computable ordinal $\alpha$ and language $\mc L$, there exists a $\Pinc{2\alpha+2}$ sentence $\phi$ so that for any structure $\mc A$ \[\mc A \models \phi \Longleftrightarrow \text{SR}(\mc A)\leq \alpha. \]
	In particular, the set of $\mc L$-structures that have Scott rank at most $\alpha$ is lightface $\Pi_{2\alpha+2}$. Also, the index set of computable $\mc L$-structures that have Scott rank at most $\alpha$ is $\Pi^0_{2\alpha+2}$.
\end{lem}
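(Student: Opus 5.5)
The plan is to combine Montalbán's robustness theorem (\autoref{Robustness}) with \autoref{computable version defining back-and-forth by Pi 2 alpha}, and then appeal to Vanden Boom's theorem (\autoref{vdB}) for the uniform passage from an arithmetical description to a computable infinitary sentence. Concretely, I will use the standard characterization that $SR(\mc A)\leq\alpha$ if and only if for all tuples $\bar a,\bar b\in\mc A$, $\bar a\leq_\alpha\bar b$ implies that $\bar a$ and $\bar b$ are automorphic. Equivalently, the $\alpha$-back-and-forth relation on tuples of $\mc A$ is a back-and-forth system, i.e. $\bar a\leq_\alpha\bar b$ implies $\bar a\leq_{\alpha+1}\bar b$ (so $\leq_\alpha$ agrees with $\equiv_\beta$ for all $\beta$). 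This is the version of robustness from \cite{MonSR}, and I would quote it (it is in the same chapter of \cite{Part2} as the lemma).

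The key step is then to express this property as a lightface $\Pi^0_{2\alpha+2}$ condition on the atomic diagram of $\mc A$, uniformly. The statement is
\[\forall \bar a\,\forall\bar b\,\bigl(\bar a\leq_\alpha\bar b\rightarrow \forall c\,\exists d\;(\bar a c\leq_\alpha \bar b d\ \wedge\ \bar b d\leq_\alpha \bar a c)\bigr),\]
together with the symmetric clause. The "forth" direction comes for free from $\bar a\leq_\alpha\bar b$; only the extension property needs stating. Unpacking \autoref{computable version defining back-and-forth by Pi 2 alpha} relative to the structure, the relation $\bar a\leq_\alpha\bar b$ within $\mc A$ is $\Pi^0_{2\alpha}$ in the diagram, uniformly in a notation for $\alpha$. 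The formula above therefore has the form $\forall(\Sigma^0_{2\alpha}\vee\forall\exists\Pi^0_{2\alpha})$, which is $\Pi^0_{2\alpha+2}$. I would check that the hypothesis "$\bar a\leq_\alpha\bar b$" contributes $\Sigma^0_{2\alpha}$ after negation, so it does not raise the complexity; the inner $\forall c\exists d$ adds exactly two levels.

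The set of structures with $SR\leq\alpha$ is closed under isomorphism and, by the above, lightface $\Pi^0_{2\alpha+2}$ with a description computable from $\alpha$. So \autoref{vdB} yields a $\Pinc{2\alpha+2}$ sentence $\phi$ defining it. The "in particular" clauses follow immediately: satisfaction of $\phi$ is lightface $\Pi_{2\alpha+2}$ on $\Mod(\mc L)$, and restricting to computable structures (via indices) gives a $\Pi^0_{2\alpha+2}$ index set.

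The main obstacle is the correctness of the characterization in the first paragraph. One must verify that the extension property for $\leq_\alpha$ really forces $\leq_\alpha$-related tuples to be automorphic; for countable $\mc A$ this follows by a back-and-forth construction using the relation $\bar a\leq_\alpha\bar b\wedge\bar b\leq_\alpha\bar a$. Conversely, one must show that $SR(\mc A)\leq\alpha$ yields it: orbits are $\Sinf\alpha$-definable, so $\bar a\leq_\alpha\bar b$ implies $\bar b$ satisfies the $\Sinf\alpha$ orbit formula of $\bar a$ by Karp's theorem (with the direction of $\leq_\alpha$ handled carefully), and hence they are automorphic. Getting the asymmetric direction of $\leq_\alpha$ right in this step is where I expect care is needed.
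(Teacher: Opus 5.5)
Your plan (express a back-and-forth characterization of $\text{SR}(\mc A)\leq\alpha$ as a uniform $\Pi^0_{2\alpha+2}$ condition, then apply \autoref{vdB}) is reasonable. However, the characterization you start from is false, so your sentence does not define $\{\mc A : \text{SR}(\mc A)\leq\alpha\}$.

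Only one direction holds. If $\text{SR}(\mc A)\le\alpha$, then $\bar a\le_\alpha\bar b$ forces $\bar a,\bar b$ to be automorphic; your Karp argument shows this. The converse fails, and it is exactly the direction your ``main obstacle'' paragraph never checks. You check that the extension property yields automorphisms and that small Scott rank yields the property, but not that the property yields $\Sinf{\alpha}$-definable orbits. If $\le_\alpha$ implies automorphic, the orbit of $\bar b$ is $\{\bar a : \bar a\le_\alpha\bar b\}$, the set of realizations of the $\Sinf{\alpha}$-type of $\bar b$. This is a countable conjunction of $\Sinf{\alpha}$ formulas, i.e.\ $\Pinf{\alpha+1}$, and need not be $\Sinf{\alpha}$.

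Here is a counterexample for $\alpha=1$. Take a graph with a unary predicate $M$ and $M$-points $b,c_0,c_1,\dots$. Join $c_n$ to one vertex of an otherwise disjoint cycle of length $n+3$, and leave $b$ isolated.
\begin{itemize}
\item All components are finite and pairwise non-isomorphic. So $\bar a\le_1\bar b$ (every finite configuration around $\bar b$ has a copy around $\bar a$) already forces an automorphism.
\item Any existential formula true of $b$ is true of $c_N$ for $N$ large: swap $b$ with a $c_N$ whose component avoids the witnesses.
\item Hence the orbit $\{b\}$ is not $\Sinf{1}$-definable, and $\text{SR}(\mc A)\geq 2$ by \autoref{Robustness}.
\end{itemize}
In general your condition only yields $\text{SR}(\mc A)\le\alpha+1$.

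The missing ingredient is Montalb\'an's characterization by $\alpha$-free tuples. A tuple $\bar a$ is $\alpha$-free if for every $\beta<\alpha$ and every $\bar b$ there are $\bar a',\bar b'$ with $\bar a\bar b\le_\beta\bar a'\bar b'$ and $\bar a\not\le_\alpha\bar a'$. Then $\text{SR}(\mc A)\le\alpha$ iff no tuple of $\mc A$ is $\alpha$-free.

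This captures precisely the ``$\Sinf{\alpha}$-support'' your condition lacks: the $\le_\beta$-type of some extension $\bar a\bar b$, with $\bar b$ existentially quantified, must pin down the $\le_\alpha$-type of $\bar a$. The condition reads
\[\forall\bar a\;\exists\beta<\alpha\;\exists\bar b\;\forall\bar a'\bar b'\;\bigl(\bar a\bar b\not\le_\beta\bar a'\bar b'\ \vee\ \bar a\le_\alpha\bar a'\bigr).\]
Its matrix is $\Sigma_{2\beta}\vee\Pi_{2\alpha}\subseteq\Pi_{2\alpha}$, since $2\beta+2\le2\alpha$, so the whole statement is $\Pi_{2\alpha+2}$. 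Using the uniformly $\Pinc{2\beta}$ formulas that define $\le_\beta$ inside a single structure, you can write the $\Pinc{2\alpha+2}$ sentence directly, or pass through \autoref{vdB} as you planned. The two ``in particular'' clauses then follow as you describe.
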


Gonzalez and Knight demonstrated that the above lemma is best possible in the setting of effective descriptive set theory.
They used a modified version of Harrison-Trianor's jump inversion, more suitable for that setting, alongside other tools more common in that area of study.
In particular, they show the following.

\begin{thm}[\cite{GK}]
	Let $\alpha\ge1$ be a computable ordinal.
	The set of structures of Scott rank $\leq \alpha$ is lightface $\Pi^0_{2\alpha+2}$-hard.

    This means that given a computable ordinal $\alpha$, and a lightface $\Pi^0_{2\alpha+2}$ subset $S$ of $\omega^\omega$, there is a Turing-computable operator $F:\omega^\omega\to Mod(Graphs)$ such that
\[x\in S \iff SR(F(x))\leq \alpha. \]
\end{thm}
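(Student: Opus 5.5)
The plan is to follow the three-step template used for \autoref{thm:lightfaceScott SentenceHard}: prove a relativized base case by direct coding, push it up with \autoref{thm:Inf4.3}, and handle the limit edge cases separately. The key observation is that, by \autoref{thm:Inf4.3}(5), $\mc G$ has a $\Pinf{k+1}$ Scott sentence if and only if $\Inv{\beta}{\mc G}$ has a $\Pinf{\beta+k+1}$ Scott sentence. That is, $SR(\mc G)\le k$ if and only if $SR(\Inv{\beta}{\mc G})\le \beta+k$.

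\textbf{Base case.} I would take as base the finite statement: for $\alpha=1$ (and for finite $\alpha$ in general), the set of structures of Scott rank $\le 1$ is $\Pi^0_{4}$-hard via a Turing operator. This is a direct coding argument in the style of \cite{mhtArith} and Gonzalez--Knight. I would check that it relativizes uniformly: for every oracle $Y$ and every $\Pi^0_4(Y)$ set $S$ there is an operator $x\mapsto \mc G(x\oplus Y)$, computable in $x\oplus Y$, with $x\in S\iff SR(\mc G(x\oplus Y))\le 1$.

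\textbf{Successor step.} Let $\alpha=\beta+1$ with $\beta$ infinite, and let $S$ be a lightface $\Pi^0_{2\alpha+2}=\Pi^0_{2\beta+4}$ subset of $\omega^\omega$. Since $\beta$ is infinite, Post's theorem gives that $S$ is uniformly $\Pi^0_4(x^{(2\beta)})$ in $x$. Apply the relativized base case with $Y=\mathbf 0^{(2\beta)}$ and input $x$. This produces a graph $\mc G_x$ computable from $x^{(2\beta)}$, hence a $\Delta^0_{2\beta+1}(x)$ copy, uniformly in $x$. By \autoref{thm:Inf4.3}(1), which is uniform in the oracle on indices, there is an $x$-computable copy $F(x)$ of $\Inv{\beta}{\mc G_x}$, and $F$ is a single Turing operator. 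By the observation above,
\[x\in S\iff SR(\mc G_x)\le 1\iff SR(F(x))\le \beta+1=\alpha.\]
Because this uses the full biconditional in \autoref{thm:Inf4.3}(5), and not only the forward direction available in \autoref{prop:limit4.3}, it handles every $\alpha=\beta+1$ with $\beta$ infinite. Ordinals $\lambda+n$ with $n\ge2$ are covered as well, either through $\beta=\lambda+n-1$ or by using base $k=n$ with $\beta=\lambda$. Finite $\alpha$ is the base case itself.

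\textbf{Limit case, the main obstacle.} The remaining case is $\alpha=\lambda$ a limit. Here $\Pi^0_{2\lambda+2}=\Pi^0_{\lambda+2}$, and there is no $\beta$ with $\beta+1=\lambda$. Using $\beta=\lambda$ with base rank $0$ would require $\Pi^0_2$-hardness of Scott rank $0$, which fails, since such structures are essentially trivial.

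For this case I would first try a direct construction in the style of the $\lambda+1$ case of \autoref{thm:lightfaceScott SentenceHard}. Write $x\in S\iff \forall y\,\exists z\; R(x,y,z)$ with $R$ being $\Delta^0_{\lambda+1}$. Then use friendly $\lambda$-level labels from the pair of structures theorem, namely copies of $\omega^{\delta_j}$ versus larger ordinals on auxiliary sorts. The labels should be arranged so that each $y$-sort stabilizes to a structure whose orbits are $\Sinf{\lambda}$-definable exactly when a witness $z$ exists. When instead the search is cofinally refuted, a non-isolated type, as in the tree $T_x$ construction, should be forced, pushing the Scott rank to at least $\lambda+1$.

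Verifying the upper bound $SR\le\lambda$ in the positive case via \autoref{Robustness}, by exhibiting $\Sinf\lambda$ orbit definitions, is the delicate step. Here I expect to reuse the Marker-extension analysis from that proof.
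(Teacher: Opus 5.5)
There is a genuine gap: the $\alpha=1$ case and the limit case are never proved. There is also a fixable jump-count error in your successor step. For context, the paper does not prove this statement; it is quoted from Gonzalez--Knight. The paper's own argument is for the index-set analog stated just after it. Your successor step is the same as the paper's there, but the count is backwards.

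\textbf{The successor step.} For infinite $\beta$ one has $\Pi^0_{2\beta+4}(x)=\Pi^0_4(x^{(2\beta+1)})$. The class $\Pi^0_4(x^{(2\beta)})$ is only $\Pi^0_{2\beta+3}(x)$, because $x^{(2\beta)}$ is $\Delta^0_{2\beta}(x)$ once $\beta$ is infinite; your count is the one valid for finite $\beta$. The fix costs nothing. Apply the $\alpha=1$ operator $\Gamma$ directly to the real $x^{(2\beta+1)}$. Relativizing to a fixed $\mathbf 0^{(\cdot)}$ and feeding $x$ does not work, since that only handles sets that are $\Pi^0_4$ in $x\oplus\mathbf 0^{(2\beta+1)}$. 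Then $\Gamma(x^{(2\beta+1)})$ is uniformly $\Delta^0_{2\beta+1}(x)$, which is exactly what \autoref{thm:Inf4.3}(1) accepts. Both directions of \autoref{thm:Inf4.3}(5) then finish the step. For finite $\beta$ your count is correct, so finite $\alpha\ge 2$ also reduces to $\alpha=1$ and needs no separate base case.

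\textbf{The missing cases.} The two cases that carry the content, $\alpha=1$ and $\alpha$ a limit, are not proved: you assert the first and only sketch the second. In the paper both are imported from Gonzalez--Knight (\autoref{thm:SRHardAtLimits}). So your argument reduces the theorem to its own hardest instances.

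Your limit plan, adapting the $\alpha=\lambda+1$ case of \autoref{thm:lightfaceScott SentenceHard}, is at the right level, since $2(\lambda+1)=\lambda+2=2\lambda+2$. However, you have put the difficulty in the wrong place. The upper bound is already in that proof: $\mc A'$ has a $\Pinf{\lambda+1}$ Scott sentence, so $SR(\mc C'_x)\le\lambda$ for $x\in S$.

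What is missing is the lower bound. For $x\notin S$ that proof only gives $\mc C'_x\not\cong\mc A'$. You need $\mc C'_x$ to have no $\Pinf{\lambda+1}$ Scott sentence at all. By \autoref{Robustness}, this means showing some orbit is not $\Sinf\lambda$-definable, e.g.\ that of the element realizing the non-isolated path $0^x1\sigma^x$. That requires a real back-and-forth argument, with these pieces:
\begin{itemize}
\item Given $\delta<\lambda$ and existential witnesses $\bar b$, produce an element on an isolated path that agrees with $0^x1\sigma^x$ on every $U_j$ whose Marker label is distinguishable at level about $\delta$, together with matching witnesses.
\item Verify $\le_\delta$ between the two blocks, Marker sorts included, so that no $\exists\bar y\,\Pinf{\delta}$ formula separates them.
\item Check that spreading the $\Sigma^0_{\lambda+1}$ pieces over disjoint sorts preserves ``$SR\le\lambda$'' in both directions.
\end{itemize}

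As it stands, once the jump count is fixed, your proposal proves the theorem for successor $\alpha\ge 2$ conditional on the case $\alpha=1$, and not at all for limit $\alpha$.
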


Their methods blend index set computations into their proof strategy.
In particular, they show the following results about index sets.

\begin{thm}[{\cite[Lemma 4.4 and Theorem 4.11]{GK}}]\label{thm:SRHardAtLimits}
Let $\alpha=1$ or a limit ordinal.
Let $S$ be a complete $\Pi^0_{2\alpha+2}$ set.
There is a uniformly computable sequence of structures $\mc{C}_i$ such that
\[i\in S\Rightarrow SR(\mc{C}_i)=\alpha\] and
\[i\notin S\Rightarrow SR(\mc{C}_i)>\alpha\]
\end{thm}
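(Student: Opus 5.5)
The plan is to reduce to coding a single existential-over-$\Pi$ predicate into one component, and then to take a labeled disjoint union over components. Because $\lambda$ is a limit, $2\lambda+2=\lambda+2$. So for a limit $\alpha=\lambda$ the set $S$ is a complete $\Pi^0_{\lambda+2}$ set, and I write
\[ i\in S \iff \forall n\;(i,n)\in R, \]
where $R$ is $\Sigma^0_{\lambda+1}$. For $\alpha=1$ we get $2\alpha+2=4$, and the same normal form holds with $R$ a $\Sigma^0_3$ set. In both cases I want a uniformly computable family of structures $\mc M_{i,n}$ such that $SR(\mc M_{i,n})\le\alpha$ when $(i,n)\in R$ and $SR(\mc M_{i,n})=\alpha+1$ otherwise. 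I then let $\mc C_i$ be the many-sorted structure (convertible to a graph by \autoref{toGraph}) with $\mc M_{i,n}$ on sort $n$. In addition, $\mc C_i$ carries one extra fixed sort holding a computable structure of Scott rank exactly $\alpha$, which guarantees $SR(\mc C_i)\ge\alpha$.

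The first step is the gluing lemma: if a structure is a disjoint union of named sorts, then its Scott rank is the supremum of the Scott ranks of the sorts. I would prove this via \autoref{Robustness}. The automorphism group is the product of the automorphism groups of the sorts, so the orbit of a tuple is the finite conjunction of the orbits of its restrictions to the finitely many sorts it meets. Hence orbits are $\Sinf{\alpha}$-definable if and only if the orbits in each sort are. Conversely, an orbit in $\mc M_{i,n}$ that is not $\Sinf{\alpha}$-definable stays non-definable in $\mc C_i$, since sorts are named. This gives $SR(\mc C_i)=\alpha$ if $i\in S$, and $SR(\mc C_i)>\alpha$ if $i\notin S$.

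The second step, and the main obstacle, is coding the $\Sigma^0_{\alpha+1}$ predicate $R$ by a pair of structures: a computable structure $\mc D_0$ with $SR(\mc D_0)\le\alpha$ and a structure $\mc D_1$ with $SR(\mc D_1)=\alpha+1$. These must satisfy $\mc D_0\le_{\alpha+1}\mc D_1$ and be $(\alpha+1)$-friendly, so that \autoref{lightface hardness} produces $\mc M_{i,n}\cong\mc D_0$ exactly when $(i,n)\in R$. For the limit case, my first attempt imitates the $\lambda+1$ case of the proof of \autoref{thm:lightfaceScott SentenceHard}. Let $\mc D_0$ be the Marker-extended prime model of a theory $\Th$ built from a tree, so that all realized types are isolated and orbits are $\Sinf\lambda$. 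Let $\mc D_1$ be the same model with one extra element realizing a non-isolated path, whose orbit should need a $\Pinf{\lambda+1}$ definition. Alternatively, I would try ordinal-sum pairs: $\mc D_0=\bigsqcup_k\omega^{\delta_k}$ together with infinitely many copies of $\omega^\lambda$, and $\mc D_1=\mc D_0\sqcup\omega^\lambda\cdot2$, with friendliness taken from \cite{Ash86}. In either version I must verify three things: the exact Scott ranks, the relation $\le_{\alpha+1}$, and friendliness. Rather than using a uniform pair, I may instead build $\mc M_{i,n}$ directly, as in the tree construction: the tree is coded uniformly from $R$ via its $\exists m\,P(i,n,m)$ form with $P\in\Pi^0_\lambda$, and the $U_j$-relations are Marker-extended with $\omega^{\delta_j}$ versus $\omega^{\delta_j+1}$.

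For $\alpha=1$, I would do the same with a direct finite-level coding of a $\Sigma^0_3$ predicate. Concretely, take equivalence structures or unary-predicate trees in which $(i,n)\notin R$ corresponds to realizing an extra non-principal type. This makes the Scott rank $2$ rather than $1$, so this case is an explicit combinatorial construction in the style of \cite{ACHT}. Finally, uniformity in $i$ follows because every ingredient (the normal form of $S$, the pair-of-structures procedures, and the sort assembly) is uniform.
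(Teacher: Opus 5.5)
\textbf{Your second step requires a pair that cannot exist.} If $SR(\mc{D}_0)\le\alpha$, then $\mc{D}_0$ has a $\Pinf{\alpha+1}$ Scott sentence $\varphi$. By Karp, $\mc{D}_0\le_{\alpha+1}\mc{D}_1$ says that every $\Pinf{\alpha+1}$ sentence true in $\mc{D}_0$ holds in $\mc{D}_1$. So $\mc{D}_1\models\varphi$ and $\mc{D}_1\cong\mc{D}_0$, contradicting $SR(\mc{D}_1)=\alpha+1$. Hence none of your candidate pairs can pass all three checks. For the tree pair, the $\Pinf{\lambda+1}$ clause of $\mc{D}_0$'s Scott sentence that forbids the non-isolated path already witnesses $\mc{D}_0\not\le_{\lambda+1}\mc{D}_1$. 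Reversing the roles is consistent (e.g.\ $\omega^\lambda\cdot 2\le_{\lambda+1}\omega^\lambda$), but then $SR(\mc{C}_i)\le\alpha$ becomes $\forall n\,(i,n)\notin R$, which is only $\Pi^0_{\alpha+1}$. This is the obstruction behind \autoref{bold-face hardness}: with one fixed pair, a structure with a $\Pinf{\alpha+1}$ Scott sentence can never be the output on the $\Sigma^0_{\alpha+1}$ side. The same applies at $\alpha=1$, where coding a $\Sigma^0_3$ predicate by a pair would require even $\mc{D}_0\le_3\mc{D}_1$.

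\textbf{What is missing.} The negative instances must produce pairwise non-isomorphic structures. This is what happens in the tree construction in the $\lambda+1$ case of the proof of \autoref{thm:lightfaceScott SentenceHard}, where $\mc{C}_x$ for $x\notin S$ realizes its own non-isolated path $0^x\smallfrown 1\smallfrown\sigma^x$. Your fallback sentence points there, and your outer architecture would then be fine: the normal form, the sortwise gluing, and an extra sort of rank exactly $\alpha$. But the substance of the theorem is exactly what remains unproved.
\begin{itemize}
\item The paper shows only that $\mc{A}'$ has a $\Pinf{\lambda+1}$ Scott sentence. You also need $SR(\mc{C}'_x)>\lambda$ for $x\notin S$, i.e.\ that the singleton orbit of the added element $z$ is not $\Sinf\lambda$-definable.
\item That requires a back-and-forth argument. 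For each $\beta<\lambda$ you must produce another home-sort element $y$ with $y\le_\beta z$. This would use that every node of $T$ is realized infinitely often, and that the two Marker labels for $U_j$ become $\equiv_\beta$ once $\delta_j$ is large relative to $\beta$.
\item For $\alpha=1$ you give no construction at all. This base case is the content of Lemma~4.4 of \cite{GK}, which the paper only cites. It is delicate because a $\Sigma^0_3$ predicate must be coded while every orbit of the positive structure stays $\Sinf1$-definable.
\end{itemize}

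\textbf{A smaller point.} The converse half of your gluing lemma needs an argument, since a defining formula in $\mc{C}_i$ may quantify over other sorts. One way to supply it is to pull back the isomorphism class along the continuous map $\mc{M}'\mapsto\mc{M}'\sqcup\mc{N}$ and then apply \autoref{Lopez-Escobar}.
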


Applying the above methods, we obtain the following general index set analog of Gonzalez and Knight's result.

\begin{thm}
	Let $\alpha\ge1$ be a computable ordinal.
	The index set of computable structures of Scott rank $\leq \alpha$ is $\Pi^0_{2\alpha+2}$-hard.

This means that given a complete $\Pi^0_{2\alpha+2}$ set $S$,
there is a uniformly computable sequence of structures $\mc{C}_i$ such that
\[i\in S\Rightarrow SR(\mc{C}_i)=\alpha\] and
\[i\notin S\Rightarrow SR(\mc{C}_i)>\alpha\]
\end{thm}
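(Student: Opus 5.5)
We follow the three-step template used for \autoref{thm:lightfaceScott SentenceHard}. The base cases come from \autoref{thm:SRHardAtLimits}: it already handles $\alpha=1$ and every limit $\alpha=\lambda$. To reach the remaining ordinals we apply belligerent jump inversion to a relativized base case.

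\textbf{Successor ordinals $\alpha=\beta+1$ with $\beta\geq 1$.} Let $T=\mathbf{0}^{(2\beta)}$. By Post's theorem in the paper's $\Delta$-convention, a $\Delta^0_{2\beta+1}$ copy is exactly a $T$-computable copy when $\beta$ is infinite; when $\beta$ is finite we instead use $\mathbf{0}^{(2\beta)}$ and the finite version of the theorem. Note that
\[\Pi^0_{2\alpha+2}=\Pi^0_{2\beta+4}=\Pi^0_4(T),\]
with the usual off-by-one adjustment at infinite $\beta$. So $S$ is a complete $\Pi^0_4(T)$ set. Apply \autoref{thm:SRHardAtLimits} for $\alpha=1$, relativized to $T$, to get uniformly $T$-computable graphs $\widehat{\mc C}_i$ such that $SR(\widehat{\mc C}_i)=1$ when $i\in S$ and $SR(\widehat{\mc C}_i)>1$ otherwise. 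Set $\mc C_i=\Inv{\beta}{\widehat{\mc C}_i}$. By \autoref{unfriendly jump inv thm}(1), uniformly in indices, these form a uniformly computable sequence.

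For the Scott rank transfer we use \autoref{unfriendly jump inv thm}(5). A graph $\mc G$ has a $\Pinf{2}$ Scott sentence if and only if $\Inv{\beta}{\mc G}$ has a $\Pinf{\beta+2}=\Pinf{\alpha+1}$ Scott sentence. Hence $SR(\widehat{\mc C}_i)\leq 1$ if and only if $SR(\mc C_i)\leq\alpha$. For the equality $SR(\mc C_i)=\alpha$ when $i\in S$: $\widehat{\mc C}_i$ has no $\Pinf 1$ Scott sentence, so by (5) $\mc C_i$ has no $\Pinf{\beta+1}$ Scott sentence, giving $SR(\mc C_i)\geq\beta+1$. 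Alternatively, one only needs $\leq\alpha$ and $>\alpha$ to separate $S$.

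\textbf{Ordinals $\alpha=\lambda+1$, $\lambda$ a limit.} Here $2\alpha+2=\lambda+4=\lambda+2+2$, which fits the same scheme with base rank $1$ and jump $\lambda$ if we use $T=\mathbf{0}^{(2\lambda)}$. Since $2\lambda+1=\lambda+1$, the computation is
\[\Pi^0_{\lambda+4}=\Pi^0_{4}(\mathbf{0}^{(\lambda)}),\]
which we need to match with $\Delta^0_{2\lambda+1}=\Delta^0_{\lambda+1}$ copies, i.e.\ $\mathbf{0}^{(\lambda)}$-computable copies. So the same argument with $\beta=\lambda$ covers it. In fact the successor argument above applies to all $\beta\geq1$, limits included. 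The cases $\alpha=1$ and $\alpha$ a limit are exactly \autoref{thm:SRHardAtLimits}, so every $\alpha\geq 1$ is covered.

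\textbf{Main obstacle.} The delicate step is the jump arithmetic. We must check that a complete $\Pi^0_{2\alpha+2}$ set is $\Pi^0_4$ relative to an oracle $T$ for which $T$-computable copies count as $\Delta^0_{2\beta+1}$ copies in \autoref{unfriendly jump inv thm}(1). The point is $2\beta+1+3=2\beta+4$ in $\Delta$-indexing, and this must be checked separately at finite and infinite $\beta$ because of the off-by-one in Post's theorem. One must also confirm that \autoref{thm:SRHardAtLimits} relativizes uniformly, which it does since it is a direct construction.
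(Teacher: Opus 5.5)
Your overall route is the paper's: relativize the $\alpha=1$ case of \autoref{thm:SRHardAtLimits}, apply $\Inv{\beta}{\cdot}$, and transfer Scott rank in both directions using item (5) of the Belligerent Jump Inversion Theorem. Your check that $SR(\mc C_i)$ is exactly $\alpha$ when $i\in S$ is fine. The gap is in the jump bookkeeping you yourself flagged: at infinite $\beta$ your oracle is one jump too weak.

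The paper uses the Ash--Knight convention, in which for infinite $\gamma$ the $\Sigma^0_\gamma$ sets are those c.e.\ in $\mathbf{0}^{(\gamma)}$. This is the remark after the statement of the Belligerent Jump Inversion Theorem: for infinite $\alpha$, $X^{(2\alpha)}$ is $\Delta^0_{2\alpha}$, not $\Delta^0_{2\alpha+1}$. Two consequences follow for infinite $\beta$:
\begin{itemize}
\item a $\Delta^0_{2\beta+1}$ copy is a $\mathbf{0}^{(2\beta+1)}$-computable copy, not a $\mathbf{0}^{(2\beta)}$-computable one;
\item $\Pi^0_4(\mathbf{0}^{(2\beta)})=\Pi^0_{2\beta+3}$.
\end{itemize}
In particular, your identity $\Pi^0_{\lambda+4}=\Pi^0_4(\mathbf{0}^{(\lambda)})$ is false; the correct one is $\Pi^0_{\lambda+4}=\Pi^0_4(\mathbf{0}^{(\lambda+1)})$. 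Carried out as written, relativizing the base case to $T=\mathbf{0}^{(2\beta)}$ only codes $\Pi^0_4(T)=\Pi^0_{2\alpha+1}$ sets. So you would get only $\Pi^0_{2\alpha+1}$-hardness, one level below the claimed $\Pi^0_{2\alpha+2}$. That claimed level is sharp because of the $\Pinc{2\alpha+2}$ upper bound.

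The repair is to take $T$ to be the oracle whose computable sets are exactly the $\Delta^0_{2\beta+1}$ sets:
\begin{itemize}
\item $T=\mathbf{0}^{(2\beta)}$ for finite $\beta$;
\item $T=\mathbf{0}^{(2\beta+1)}$ for infinite $\beta$.
\end{itemize}
As written, your case split is vacuous: you define $T=\mathbf{0}^{(2\beta)}$ and then say that for finite $\beta$ you ``instead'' use $\mathbf{0}^{(2\beta)}$, which is the same oracle. With the corrected $T$, $\Pi^0_4(T)=\Pi^0_{2\beta+4}=\Pi^0_{2\alpha+2}$ in both cases, item (1) applies verbatim, and the rest of your argument goes through unchanged. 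For $\alpha=\lambda+1$ this means relativizing to $\mathbf{0}^{(\lambda+1)}$.

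The paper uses $\mathbf{0}^{(2\beta+1)}$ throughout, which is right for infinite $\beta$. For finite $\beta$ your $\mathbf{0}^{(2\beta)}$ is the correct choice, and the paper's uniform oracle would need the same one-jump adjustment in the opposite direction.
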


\begin{proof}
If $\alpha$ is a limit ordinal, apply \autoref{thm:SRHardAtLimits} directly.
Otherwise, $\alpha=\beta+1$.
Write $S$ as a $\Pi^0_4(\mathbf{0}^{(2\beta+1)})$ set.
Relativize the construction for $\alpha=1$ to $\mathbf{0}^{(2\beta+1)}$ to obatin a uniformly $\mathbf{0}^{(2\beta+1)}$-computable sequence $\mc{D}_i$ such that
\[SR(\mc{D}_i)=1 \iff i\in S.\]
Use \autoref{thm:Inf4.3} to obtain a uniformly computable sequence $\mc{C}_i=\Inv{\beta}{\mc{D}_i}$.
By \autoref{thm:Inf4.3} (4), the Scott complexity of $\mc{C}_i$ is exactly $\beta$ more than the Scott complexity of $\mc{D}_i$. Thus $i\in S$ implies that the Scott rank of $D_i$ is $1$, thus the Scott rank of $C_i$ is $\beta+1=\alpha$. Similarly, $i\notin S$ implies that the Scott rank of $D_i$ is $>1$, thus the Scott rank of $C_i$ is $\beta+\text{SR}(D_i)>\beta+1=\alpha$.
\end{proof}

\subsection{Optimality of the Belligerent Pairs Theorem and the Belligerent Jump Inversion Theorem	}

We can now show that our main technical theorems \autoref{thm:Inf4.2} and \autoref{thm:Inf4.3} are best possible.
This is because we can show that the applications above are as good as possible, and any improvement to  \autoref{thm:Inf4.2} and \autoref{thm:Inf4.3} would lead to a commensurate and contradictory improvement in the application.
\begin{prop}\label{prop:bestPossible}
	\begin{enumerate}
		\item For each $X$-computable ordinal $\beta$, there is no uniform procedure $\mc{G}\mapsto \hat{\mc{G}}^{(-\beta)}$ such that a) every $X^{(2\beta+2)}$ computable copy of $\hat{\mc{G}}^{(-\beta)}$ uniformly yields an $X$-computable copy of $\mc{G}$ and b) $SR(\mc{G})\leq \delta$ implies $SR(\hat{\mc{G}}^{(-\beta)})\leq \beta+\delta$.
		\item For each $X$-computable ordinal $\alpha$, there is no $X$-computable sequence of structures $\mc{A},\mc{B},\mc{C}_i$ with $\mc{A} \ncong \mc{B}$ such that for $S \subseteq \mathbb{N}$ a complete $\Sigma^0_{2\alpha+2}(X)$ set,
		\[ i \in S \Longrightarrow \mc{C}_i \cong \mc{A} \]
		and
		\[ i \notin S \Longrightarrow \mc{C}_i \cong \mc{B}.\]
		so that:
		\begin{enumerate}
			\item $\mc{B} \nleq_{\alpha+1} \mc{A}$
			\item $\mc{A}$ and $\mc{B}$ have $\Pinf{\alpha+2}$ Scott sentences.
		\end{enumerate}
	\end{enumerate}
	
\end{prop}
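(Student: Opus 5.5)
Both parts are proved by contradiction: if such an object existed, feeding it into one of the applications above would give a bound that beats an upper bound proved earlier in the paper. Throughout we take $X=\emptyset$; the argument relativizes.

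For (2), suppose $\mc A,\mc B,\mc C_i$ exist for a complete $\Sigma^0_{2\alpha+2}$ set $S$, with $\mc B\nleq_{\alpha+1}\mc A$. Since $\mc A$ and $\mc B$ are computable, \autoref{512} (with the roles of $\mc A$ and $\mc B$ swapped) gives a $\Pinc{2\alpha+1}$ sentence $\rho$ with $\mc B\models\rho$ and $\mc A\models\neg\rho$. Then
\[S=\{i\mid \mc C_i\cong\mc A\}=\{i\mid \mc C_i\models\neg\rho\}.\]
Satisfaction of a $\Sinc{2\alpha+1}$ sentence by a uniformly computable sequence is $\Sigma^0_{2\alpha+1}$, so $S$ would be $\Sigma^0_{2\alpha+1}$. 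This contradicts $\Sigma^0_{2\alpha+2}$-completeness. Hypothesis (b) is not even needed. There is one care point: \autoref{512} is stated for computable ordinals and structures, and I would check that it is used with the correct orientation. If the roles were the other way, I would use \autoref{computable version defining back-and-forth by Pi 2 alpha} instead, which gives a $\Pinc{2\alpha+2}$ bound; that would only yield $\Delta$-type information, so the orientation must be right.

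For (1), suppose the procedure $\mc G\mapsto\hat{\mc G}^{(-\beta)}$ exists. Reading (a) in the jump-inversion direction of \autoref{thm:Inf4.3}(1), it turns $\Delta^0_{2\beta+3}$ copies of $\mc G$ into computable copies of $\hat{\mc G}^{(-\beta)}$, while (b) transfers Scott rank bounds. I would mimic the Scott rank index set theorem above, aiming to show that the index set of computable structures of Scott rank $\le\beta+1$ would be $\Pi^0_{2\beta+5}$-hard. This contradicts the lemma from \cite{Part2} that this index set is $\Pi^0_{2(\beta+1)+2}=\Pi^0_{2\beta+4}$.
\begin{itemize}
\item Take $S$ complete $\Pi^0_{2\beta+5}=\Pi^0_{4}(\mathbf 0^{(2\beta+1)})$.
\item Relativize \autoref{thm:SRHardAtLimits} for $\alpha=1$ to $\mathbf 0^{(2\beta+1)}$. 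This gives uniformly $\mathbf 0^{(2\beta+1)}$-computable graphs $\mc D_i$ with $SR(\mc D_i)=1$ if and only if $i\in S$.
\item Apply the hypothetical procedure to obtain computable $\hat{\mc D}_i^{(-\beta)}$.
\item When $i\in S$, (b) gives $SR\le\beta+1$.
\item When $i\notin S$, a decoding direction shows $SR>\beta+1$: lower Scott sentences are pulled back through the procedure, as in \autoref{thm:Inf4.3}(4).
\end{itemize}

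The main obstacle is this decoding step in (1). Clause (a) as literally stated goes in the decoding direction: high-oracle copies of the inverse yield low copies of $\mc G$. So I would instead derive the contradiction computability-theoretically. Take $\mc G$ to be a graph with no $X$-computable copy but with a copy computable in $X^{(2\beta+2)}$, such as a graph coding $X^{(2\beta+2)}$. Apply the procedure to it to get a structure of Scott rank about $\beta+\delta$. By \autoref{thm:Inf4.3}(2)-style reasoning and the friendly pair of structures theorem, that structure has an $X^{(2\beta+2)}$-computable copy. Clause (a) then returns an $X$-computable copy of $\mc G$, which contradicts the choice of $\mc G$. The delicate part is arranging that $\hat{\mc G}^{(-\beta)}$ has an $X^{(2\beta+2)}$-computable copy using only the Scott rank bound from (b). I would get this from \autoref{37}-style effective Scott sentences, and I expect the bookkeeping with $\gamma$ and $2\beta+2$ to be the hardest point.
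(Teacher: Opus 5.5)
\textbf{Part (2) is correct, by a different and more economical route than the paper's.} You apply \autoref{512} with $\mc A$ and $\mc B$ exchanged; the orientation is right. This gives a $\Pi^X_{2\alpha+1}$ sentence $\rho$ with $\mc B\models\rho$ and $\mc A\models\neg\rho$, so $S=\{i\mid \mc C_i\models\neg\rho\}$ is $\Sigma^0_{2\alpha+1}(X)$. That is the same index-set use of \autoref{512} that the paper makes in the proof of \autoref{thm:Inf4.2} and in \autoref{513}. The paper instead derives (2) from (1): it turns $\mc A,\mc B,\mc C_i$ into an edge/non-edge replacement map, as in the proof of \autoref{thm:Inf4.3}, and then invokes (1). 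That route needs (b) to transfer Scott ranks and inherits everything (1) depends on. Your argument bypasses the jump-inversion machinery entirely and shows that hypothesis (b) is superfluous.

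\textbf{Part (1) has a genuine gap.} Your first plan needs $SR(\hat{\mc D}_i^{(-\beta)})>\beta+1$ whenever $i\notin S$. Clause (b) only pushes upper bounds on Scott rank forward, and nothing in (a) or (b) as stated reflects Scott rank back. The pullback in \autoref{thm:Inf4.3}(4) relies on the concrete construction: every oracle $Y$ computes $\Inv{\alpha}{\mc C}$ from a $Y^{(\alpha+1)}$-computable $\mc C$ via the friendly pair of structures theorem, combined with Macintyre's jump inversion and Vanden Boom's theorem. A black-box procedure supplies none of this.

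Your fallback fails as well. It switches to the literal, evidently misprinted reading of (a); the paper's proof uses (a) in the encoding direction, where $X^{(2\beta+2)}$-computable copies of $\mc G$ uniformly yield $X$-computable copies of $\hat{\mc G}^{(-\beta)}$. Its key step, that the Scott rank bound from (b) gives $\hat{\mc G}^{(-\beta)}$ an $X^{(2\beta+2)}$-computable copy, is false in general. Scott rank says nothing about the degrees of copies: a graph of disjoint cycles whose lengths code a non-hyperarithmetic set has Scott rank $1$ but no hyperarithmetic copy. Also, \autoref{37} starts from a computable copy; it does not produce one.

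\textbf{The missing idea} is to use an isomorphism index set instead of the Scott-rank index set, since this needs no lower bound on Scott rank.
\begin{enumerate}
\item Relativize the $\alpha=2$ base case of \autoref{thm:lightfaceScott SentenceHard} to $T=\mathbf 0^{(2\beta+2)}$. This gives $\mc A_2(T)$ with a $\Pinf2$ Scott sentence and $\mc C_{i,2}(T)\cong\mc A_2(T)$ iff $i\in S$. Here $S$ is complete $\Pi^0_4(T)$, which is $\Pi^0_{2\beta+5}$ for infinite $\beta$ and $\Pi^0_{2\beta+6}$ for finite $\beta$.
\item Applying the procedure gives computable $\hat{\mc A}$ and uniformly computable $\hat{\mc C}_i$.
\item By (b) with $\delta=1$, $\hat{\mc A}$ has a $\Pinf{\beta+2}$ Scott sentence. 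By \autoref{55} it then has a $\Pinc{2\beta+4}$ one, so $\{i\mid \hat{\mc C}_i\cong\hat{\mc A}\}$ is $\Pi^0_{2\beta+4}$.
\item This set equals $S$, provided the procedure sends non-isomorphic graphs to non-isomorphic structures. The paper tacitly assumes this, as for $\Inv{\beta}{\cdot}$, and it is far weaker than reflecting Scott rank.
\end{enumerate}

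Also fix your bookkeeping. For infinite $\beta$, $\Pi^0_4(\mathbf 0^{(2\beta+1)})$ is $\Pi^0_{2\beta+4}$, not $\Pi^0_{2\beta+5}$; this is the off-by-one remarked after \autoref{thm:Inf4.3}. So relativizing to $\mathbf 0^{(2\beta+1)}$ produces no contradiction, and you must spend the extra jump that (a) grants.
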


\begin{proof}
	We begin with the first claim. We let $X=\emptyset$ and relativize below. Suppose towards a contradiction that such a uniform procedure $\mc{G}\mapsto \hat{\mc{G}}^{(-\beta)}$ existed for some $\beta$. Let $\alpha=\beta+2$. In \autoref{thm:lightfaceScott SentenceHard}, we produce a computable structure $\mc A$ with a $\Pinf\alpha$ Scott sentence, yet a sequence of computable structures so that isomorphism with $\mc A$ is $\Pi^0_{2\alpha}$-complete. We do this by relativizing the analogous result for $\alpha=2$ to the degree $\mathbf{0}^{(2\beta+1)}$ and applying the $\beta$-belligerent jump inversion $\mc G\mapsto \Inv{\beta}{\mc{G}}$. Supposing that we had the map $\mc{G}\mapsto \hat{\mc{G}}^{(-\beta)}$, we instead relativize to $\mathbf{0}^{(2\beta+2)}$ and apply the map $\mc{G}\mapsto \hat{\mc{G}}^{(-\beta)}$ instead. This yields a computable structure $\mc A$ and a computable sequence $\mc{C}_i$ so that $\mc{C}_i\cong \mc A$ if and only if $i\in S$ where $S$ is a $\Pi^0_{2\beta+5}$-complete set. Further, $\mc A$ has a $\Pinf{\alpha}$ Scott sentence. But \autoref{computable version defining back-and-forth by Pi 2 alpha} implies that $\{i | \mc{C}_{i}\cong\mc{A}\}$ is a $\Pi^0_{2\alpha+4}$ set, which is a contradiction to $S$ being a $\Pi^0_{2\beta+5}$-complete set. We can relativize this entire argument to any set $X$ and $\beta$ computable in $X$.

	We now analyze the second claim.
	This follows from the first claim along with the construction specified in the proof of \autoref{thm:Inf4.3}.
	To be more sepcific, given such a sequence $\mc{A},\mc{B},\mc{C}_i$ and an $X^{2\alpha+2}$ computable graph $\mc{G}$, we obtain a map $\mc{G}\mapsto \hat{\mc{G}}^{(-\alpha)}$ defined by replacing each edge with the pair $\langle\mc{A},\mc{B}\rangle$ and each non-edge with the pair $\langle\mc{B},\mc{A}\rangle$.
	It follows from the properties of $\mc{A},\mc{B},\mc{C}_i$ along with the exact argument of \autoref{thm:Inf4.3} that $\mc{G}\mapsto \hat{\mc{G}}^{(-\alpha)}$ is as in the first claim.
	This is a contradiction to the first claim.
	Therefore, no such sequence $\mc{A},\mc{B},\mc{C}_i$ can exists.
\end{proof}

	\bibliographystyle{alpha}
	\bibliography{JumpInversions}

\end{document}